\documentclass[12pt]{amsart}
\usepackage{graphicx}
\usepackage{amssymb,amsmath,mathtools}
\usepackage{amsthm}
\usepackage{indentfirst}
\usepackage{tikz}
\usepackage{color,graphicx}
\usepackage{hyperref}
\usepackage{color}
\usepackage{epstopdf}
\usepackage{lpic}
\usepackage{enumerate}
\usepackage{caption}
\usepackage{tgpagella}
\usepackage{float} 
\usepackage[square,numbers]{natbib}

\usepackage{mathrsfs}

% \pgfplotsset{compat=1.15}

\setlength{\textwidth}{16cm} 
\setlength{\textheight}{21.3cm}
\addtolength{\oddsidemargin}{-1.0cm} 
\addtolength{\evensidemargin}{-2.0cm}
\allowdisplaybreaks[4]

\newcommand{\be}{\begin{equation}}

\newcommand{\ee}{\end{equation}}
\newtheorem{thm}{Theorem}[section]
\newtheorem{lem}{Lemma}[section]
\newtheorem{prop}{Proposition}[section]
\newtheorem{defn}{Definition}[section]
\newtheorem{rmk}{Remark}[section]
\newtheorem{cor}{Corollary}[section]
\newtheorem{ques}{Question}[section]

\newtheorem{exm}{Example}[section]

\newenvironment{refthm}[1]{%
  \manualtheoreminner
}{\endmanualtheoreminner}

\newcommand{\tr}{{\mathrm{tr}}}
\newcommand{\isom}{{\mathrm{Isom}}}
%%%%%%%%%%%%%%%%%%%%%%%%%%%%%%%%%%%%%%%%%%%%%%%%%%%%%%%%%%%%%%%%%%%%%%%
%%%%%%%%%%%%%%%%%%%%%%%%%%%%%%%%%%%%%%%%%%%%%%%%%%%%%%%%%%%%%%%%%%%%%%%

%%%%%%%%%%%%%%%%%%%%%%%%%%%%%%%%%%%%%%%%%%%%%%%%%%%%%%%%%%%%%%%%%
%%%%%%%%%%%%%%%%%%%%%%%%%%%%%%%%%%%%%%%%%%%%%%%%%%%%%%%%%%%%%%%%%
%\newcommand{\bbox}{\rule[1mm]{1ex}{1ex}}
%\newcommand{\seccion}[1]{\section{#1}\setcounter{equation}{0}}
\numberwithin{equation}{section}
\numberwithin{figure}{section}

% \newtheorem{proposition}[thm]{Proposition}
% \newtheorem{corollary}[thm]{Corollary}
% \newtheorem{example}[thm]{Example}
%%%%%%%%%%%%%%%%%%%%%%%%%%%%%%%%%%%%%%%%%%%%%%%%%%%%%%%%%%%%%%%%
%%%%%%%%%%%%%%%%%%%%%%%%%%%%%%%%%%%%%%%%%%%%%%%%%%%%%%%%%%%%%%%%

\begin{document}

\title[Geometry of Selberg's bisectors]{Geometry of Selberg's bisectors in the symmetric space $SL(n,\mathbb{R})/SO(n,\mathbb{R})$}

\author{Yukun Du}
\address[Y. Du]{Department of Mathematics, University of California, Davis, California 95616}
\email{yukdu@ucdavis.edu}

%\subjclass[2010]{Primary 14H81, 58A50}

\keywords{}

\begin{abstract}
We research some properties of a family of symmetric spaces, namely\\ $SL(n,\mathbb{R})/SO(n,\mathbb{R})=:\mathcal{P}(n)$, where we replace the Riemannian metric on $\mathcal{P}(n)$ with a premetric suggested by Selberg. These include intersecting criteria of Selberg's bisectors, the shape of Dirichlet-Selberg domains, and angle-like functions between Selberg's bisectors. These properties are generalizations of properties on the hyperbolic spaces $\mathbf{H}^n$ related to Poincar\'e's fundamental polyhedron theorem. 
\end{abstract}

\date{\today}
\maketitle
%--------------------------------------------
%|-------SECTION--1-------------------------|
%--------------------------------------------
\tableofcontents
\vspace{12pt}
\section{Introduction}
We study in this paper several problems related to $\mathcal{P}(n)$, the symmetric space associated with the non-compact simple Lie group $SL(n,\mathbb{R})$. These problems are motivated by a proposed generalized version of Poincar\'e's algorithm on $\mathcal{P}(n)$\cite{kapovich2023geometric}, designed to determine the geometric finiteness of the subgroup of $SL(n,\mathbb{R})$ generated by given elements $\gamma_1$, \dots, $\gamma_k\in SL(n,\mathbb{R})$. Different from the original algorithm developed for the hyperbolic $n$-space, we replace the Riemannian metric on $\mathcal{P}(n)$ with a premetric suggested by Selberg, \cite{selberg1962discontinuous}. 
\subsection{Main results}
We begin with a question concerning the ridge cycle condition in the generalized Poincar\'e's algorithm for the symmetric space $\mathcal{P}(n)$. To formulate this condition, we construct an angle-like function for pairs of hyperplanes in $\mathcal{P}(n)$, satisfying particular natural properties described in Section \ref{sec:s5}:
\begin{refthm}{\ref{thm:main:2}}
Let $\sigma_1 = A^\perp$ and $\sigma_2 = B^\perp$ be co-oriented hyperplanes in $\mathcal{P}(n)$ and suppose that the matrix pencil $(A,B)$ is regular.
\begin{enumerate}
    \item Suppose that the set of generalized eigenvalues of $(A,B)$ contains nonreal numbers, denoted by $\lambda_1,\dots,\lambda_k$ and $\lambda_1^*,\dots,\lambda_k^*$. The following serves as an invariant angle function:
    \[
        \theta(\sigma_1, \sigma_2) = \dfrac{1}{k}\sum_{i=1}^k\left|\arg(\lambda_i)\right|.
    \]
    \item Suppose that all distinct generalized eigenvalues of $(A,B)$ are real or infinity, ordered as $\lambda_k>\dots>\lambda_1$, where $k\geq 3$. The following serves as an invariant angle function (which is the limit as $\lambda_k\to\infty$ if $\infty$ is a generalized eigenvalue):
    \[
        \theta(\sigma_1, \sigma_2) = \arccos \frac{\sum_{i=1}^k\frac{\lambda_{i+1}+\lambda_i}{\lambda_{i+1}-\lambda_i}}{\sqrt{\left(\sum_{i=1}^k\frac{1}{\lambda_{i+1}-\lambda_i}\right)\left(\sum_{i=1}^k\frac{(\lambda_{i+1}+\lambda_i)^2}{\lambda_{i+1}-\lambda_i}\right)}}.
    \]
    \item If $(A,B)$ has at most $2$ distinct generalized eigenvalues and all of these are real, then $(\sigma_1,\sigma_2)$ is not in the domain of any invariant angle function.
\end{enumerate}
\end{refthm}

Another aspect of our research is motivated by the proposed Poincar\'e's algorithm for $\mathcal{P}(n)$. A natural question emerges regarding how to ascertain whether two hyperplanes intersect. Based on a result due to Finsler, \cite{finsler1936vorkommen}, we characterized intersecting hyperplanes as follows:
\begin{refthm}{\ref{thm:main:1}}
Hyperplanes $A^\perp$ and $B^\perp$ in $\mathcal{P}(n)$ are disjoint if and only if either of the following holds, up to a congruence transformation of $(A, B)$:
    \begin{enumerate}
        \item The matrix pencil $(A,B)$ is diagonal and semi-definite.
        \item The matrix pencil $(A,B)$ is block-diagonal, where the blocks are at most $2$-dimensional. Moreover, all blocks $(A_i,B_i)$ of dimension $2$ share the same generalized eigenvalue $\lambda$, while $A-\lambda B$ is semi-definite.
    \end{enumerate}
\end{refthm}
Additionally, we establish a sufficient condition for the disjointness of Selberg bisectors $Bis(X, Y)$ and $Bis(Y, Z)$, given in Theorem \ref{thm:4:2:2}.

The proposed Poincar\'e's algorithm contains a crucial sub-algorithm dedicated to computing the partially ordered set (poset) structure of finitely-sided polyhedra in $\mathcal{P}(n)$. Our sub-algorithm exhibits both similarities and differences from the original algorithm formulated for hyperbolic spaces by Epstein and Petronio\cite{epstein1994exposition}. A detailed exposition of this algorithm is included in Section \ref{sec:s7}.

Notably, Poincar\'e's algorithm for a given point $X\in\mathcal{P}(n)$ and a given discrete subgroup $\Gamma<SL(n,\mathbb{R})$ may not terminate if the Dirichlet-Selberg domain $DS(X,\Gamma)$ is infinitely-sided. We classify Abelian subgroups of $SL(3,\mathbb{R})$ with positive eigenvalues based on whether their Dirichlet-Selberg domains are finitely-sided, as illustrated in Proposition \ref{2-gen} and Theorem \ref{main:s4}. Detailed examples of finitely-sided Dirichlet domains are provided in Subsection \ref{subsec:1:2}.

Using Dirichlet-Selberg domains in $\mathcal{P}(n)$, we confirm in Theorem \ref{schot} that certain subgroups in $SL(n,\mathbb{R})$ are free groups over their generators. Such subgroups exhibit Dirichlet-Selberg domains with pairwise disjoint facets, akin to Schottky groups in $SO^+(n,1)$\cite{maskit1967characterization}. The freeness of such subgroups was initially proved by Tits\cite{tits1972free} through a different approach.

\subsection{Examples of Dirichlet-Selberg domains}\label{subsec:1:2}
We implement the algorithm described in Section \ref{sec:s7} using \emph{Mathematica}. Our implementation allowed us to obtain the poset structures of specific finitely-sided Dirichlet-Selberg domains in $\mathcal{P}(3)$.
\begin{exm}
    Let $\Gamma = \langle g\rangle$, where
    \[
    g = \begin{pmatrix}
        2 & 1 & 1\\1 & 2 & 0\\1 & 0 & 1
    \end{pmatrix}\in SL(3,\mathbb{Z}),
    \]
    and
    \[
    X = \begin{pmatrix}
        2 & 3 & -2\\ 3 & 5 & -3\\ -2 & -3 & 3
    \end{pmatrix}\in \mathcal{P}(3).
    \]
    Notably, $X^k\in \mathcal{P}(3)$ for all $k\in\mathbb{R}$, constituting a geodesic in $\mathcal{P}(3)$.

    Computation suggests that the Dirichlet-Selberg domain $DS(X,\Gamma)$ comprises two facets: $Bis(X,g.X)$ and $Bis(X,g^{-1}.X)$. This domain does not contain any ridges or other proper faces.

    The Dirichlet-Selberg domain $DS(X^2,\Gamma)$ includes $8$ facets, denoted by $F_i$, where $i=\pm 1,\pm 2,\pm 3, \pm 4$, with each $F_i$ contained in $Bis(X^2,g^i.X^2)$. Additionally, this domain contains $12$ ridges, expressed as $F_i\cap F_{i'}$, where $(i,i')$ satisfies one of the following conditions:
    \begin{itemize}
        \item $i=1$ and $-3\leq i'\leq 3$.
        \item $i = -1$ and $-4\leq i'\leq 2$.
        \item $i' = i+1$ and $-3\leq i\leq 3$, $i\neq -1,0$.
    \end{itemize}
    
    The Dirichlet-Selberg domain $DS(X^3,\Gamma)$ includes $12$ facets, denoted by $F_i$, $i=\pm 1,\dots, \pm 6$, with each $F_i$ contained in $Bis(X^3,g^i.X^3)$. Additionally, this domain contains $18$ ridges, expressed as $F_i\cap F_{i'}$, where $(i,i')$ satisfies one of the following conditions:
    \begin{itemize}
        \item $i=1$ and $-5\leq i'\leq 3$.
        \item $i = -1$ and $-6\leq i'\leq 2$.
        \item $i' = i+1$ and $-3\leq i\leq 5$, $i\neq -1,0$.
    \end{itemize}
\end{exm}
We also computed the poset structure of Dirichlet-Selberg domains for two-generated Abelian subgroups of $SL(3,\mathbb{Z})$:
\begin{exm}
    Let $\Gamma = \langle g,h\rangle$, where
    \[
        g = \begin{pmatrix}
        2 & 1 & 1\\1 & 2 & 0\\1 & 0 & 1
        \end{pmatrix}, h = \begin{pmatrix}
            2 & 0 & -1\\ 0 & 1 & 1\\ -1 & 1 & 2
        \end{pmatrix}\in SL(3,\mathbb{Z}),
    \]
    and $X$ is defined as in the previous example. Notably, $g$ commutes with $h$, hence $\Gamma$ is a two-generated Abelian subgroup of $SL(3,\mathbb{Z})$.

    Our computation suggests that the Dirichlet-Selberg domain $DS(X,\Gamma)$ contains $10$ facets $F_\gamma \subset Bis(X,\gamma.X)$, where $\gamma$ takes the following elements:
    \[
    g,\ g^{-1},\ h,\ h^{-1},\ gh,\ g^{-1}h^{-1},\ gh^{-1},\ g^{-1}h,\ g^2h,\ \mathrm{or}\ g^{-2}h^{-1}.
    \]
    We depict these elements in the graphs below:
    \begin{table}[h!]
        \centering
        \begin{minipage}{.48\textwidth}
            \centering
            \begin{tikzpicture}
                \fill[black] (1,0) circle (1.5pt) node[above] {\scriptsize $g$};
                \fill[black] (-1,0) circle (1.5pt) node[above] {\scriptsize $g^{-1}$};
                \fill[black] (-1.5,0.866) circle (1.5pt) node[above] {\scriptsize $g^{-1}h$};
                \fill[black] (-0.5,0.866) circle (1.5pt) node[above] {\scriptsize $h$};
                \fill[black] (0.5,0.866) circle (1.5pt) node[above] {\scriptsize $g h$};
                \fill[black] (1.5,0.866) circle (1.5pt) node[above] {\scriptsize $g^2 h$};
                \fill[black] (-1.5,-0.866) circle (1.5pt) node[above] {\scriptsize $g^{-2}h^{-1}$};
                \fill[black] (-0.5,-0.866) circle (1.5pt) node[above] {\scriptsize $g^{-1}h^{-1}$};
                \fill[black] (0.5,-0.866) circle (1.5pt) node[above] {\scriptsize $h^{-1}$};
                \fill[black] (1.5,-0.866) circle (1.5pt) node[above] {\scriptsize $gh^{-1}$};
            \end{tikzpicture}
        \end{minipage}
        \begin{minipage}{.48\textwidth}
            \centering
            \begin{tikzpicture}
                \draw[-] (1,0) -- (0.5,0.866);
                \draw[-] (1,0) -- (0.5,-0.866);
                \draw[-] (1,0) -- (1.5,0.866);
                \draw[-] (1,0) -- (1.5,-0.866);
                \draw[-] (-1,0) -- (-0.5,0.866);
                \draw[-] (-1,0) -- (-0.5,-0.866);
                \draw[-] (-1,0) -- (-1.5,0.866);
                \draw[-] (-1,0) -- (-1.5,-0.866);
                \draw[-] (1,0) -- (-0.5,0.866);
                \draw[-] (1,0) -- (-0.5,-0.866);
                \draw[-] (-1,0) -- (0.5,0.866);
                \draw[-] (-1,0) -- (0.5,-0.866);
                \draw[-] (-1.5,0.866) -- (-0.5,0.866);
                \draw[-] (-0.5,0.866) -- (0.5,0.866);
                \draw[-] (0.5,0.866) -- (1.5,0.866);
                \draw[-] (-1.5,-0.866) -- (-0.5,-0.866);
                \draw[-] (-0.5,-0.866) -- (0.5,-0.866);
                \draw[-] (0.5,-0.866) -- (1.5,-0.866);
                \fill[gray] (1,0) circle (1.5pt) node[above] {};
                \fill[gray] (-1,0) circle (1.5pt) node[above] {};
                \fill[gray] (-1.5,0.866) circle (1.5pt) node[above] {};
                \fill[gray] (-0.5,0.866) circle (1.5pt) node[above] {};
                \fill[gray] (0.5,0.866) circle (1.5pt) node[above] {};
                \fill[gray] (1.5,0.866) circle (1.5pt) node[above] {};
                \fill[gray] (-1.5,-0.866) circle (1.5pt) node[above] {};
                \fill[gray] (-0.5,-0.866) circle (1.5pt) node[above] {};
                \fill[gray] (0.5,-0.866) circle (1.5pt) node[above] {};
                \fill[gray] (1.5,-0.866) circle (1.5pt) node[above] {};
            \end{tikzpicture}
        \end{minipage}
        \caption{In the left graph, a point labeled with $\gamma\in \Gamma$ specifies the facet $F_\gamma$ of $DS(X,\Gamma)$. In the right graph, an edge between points $\gamma$ and $\gamma'$ (labels omitted) specifies a ridge $F_\gamma\cap F_{\gamma'}$ of $DS(X,\Gamma)$.}
    \end{table}
    Here, we arranged the points in a hexagonal pattern for clarity. This domain contains $18$ ridges, also shown in the graph. 

    The Dirichlet-Selberg domain $DS(X^2,\Gamma)$ contains $26$ facets and $75$ ridges, shown in the graph below.
    \begin{table}[h!]
        \centering
        \begin{minipage}{.48\textwidth}
            \centering
            \begin{tikzpicture}[scale=0.75,every node/.style={scale=0.75}]
                \fill[black] (1,0) circle (1.5pt) node[above] {\scriptsize $g$};
                \fill[black] (-1,0) circle (1.5pt) node[above] {\scriptsize $g^{-1}$};
                \fill[black] (-1.5,0.866) circle (1.5pt) node[above] {\scriptsize $g^{-1}h$};
                \fill[black] (-0.5,0.866) circle (1.5pt) node[above] {\scriptsize $h$};
                \fill[black] (0.5,0.866) circle (1.5pt) node[above] {\scriptsize $g h$};
                \fill[black] (1.5,0.866) circle (1.5pt) node[above] {\scriptsize $g^2 h$};
                \fill[black] (-1.5,-0.866) circle (1.5pt) node[above] {\scriptsize $g^{-2}h^{-1}$};
                \fill[black] (-0.5,-0.866) circle (1.5pt) node[above] {\scriptsize $g^{-1}h^{-1}$};
                \fill[black] (0.5,-0.866) circle (1.5pt) node[above] {\scriptsize $h^{-1}$};
                \fill[black] (1.5,-0.866) circle (1.5pt) node[above] {\scriptsize $gh^{-1}$};
                \fill[black] (2,0) circle (1.5pt) node[above] {\scriptsize$g^2$};
                \fill[black] (3,0) circle (1.5pt) node[above] {\scriptsize$g^3$};
                \fill[black] (2.5,-0.866) circle (1.5pt) node[above] {\scriptsize$g^2h^{-1}$};
                \fill[black] (3.5,0.866) circle (1.5pt) node[above] {\scriptsize$g^4h$};
                \fill[black] (-2,0) circle (1.5pt) node[above] {\scriptsize$g^{-2}$};
                \fill[black] (-3,0) circle (1.5pt) node[above] {\scriptsize$g^{-3}$};
                \fill[black] (-2.5,0.866) circle (1.5pt) node[above] {\scriptsize$g^{-2}h$};
                \fill[black] (-3.5,-0.866) circle (1.5pt) node[above] {\scriptsize$g^{-4}h^{-1}$};
                \fill[black] (0,1.732) circle (1.5pt) node[above] {\scriptsize$gh^2$};
                \fill[black] (1,1.732) circle (1.5pt) node[above] {\scriptsize$g^2h^2$};
                \fill[black] (2,1.732) circle (1.5pt) node[above] {\scriptsize$g^3h^2$};
                \fill[black] (1.5,2.598) circle (1.5pt) node[above] {\scriptsize$g^3h^3$};
                \fill[black] (0,-1.732) circle (1.5pt) node[above] {\scriptsize$g^{-1}h^{-2}$};
                \fill[black] (-1,-1.732) circle (1.5pt) node[above] {\scriptsize$g^{-2}h^{-2}$};
                \fill[black] (-2,-1.732) circle (1.5pt) node[above] {\scriptsize$g^{-3}h^{-2}$};
                \fill[black] (-1.5,-2.598) circle (1.5pt) node[above] {\scriptsize$g^{-3}h^{-3}$};
            \end{tikzpicture}
        \end{minipage}
        \begin{minipage}{.48\textwidth}
            \centering
            \begin{tikzpicture}[scale=0.75,every node/.style={scale=0.75}]
                \draw[-] (1,0) -- (0.5,0.866);
                \draw[-] (1,0) -- (0.5,-0.866);
                \draw[-] (1,0) -- (1.5,0.866);
                \draw[-] (1,0) -- (1.5,-0.866);
                \draw[-] (-1,0) -- (-0.5,0.866);
                \draw[-] (-1,0) -- (-0.5,-0.866);
                \draw[-] (-1,0) -- (-1.5,0.866);
                \draw[-] (-1,0) -- (-1.5,-0.866);
                \draw[-] (1,0) -- (-0.5,0.866);
                \draw[-] (1,0) -- (-0.5,-0.866);
                \draw[-] (-1,0) -- (0.5,0.866);
                \draw[-] (-1,0) -- (0.5,-0.866);
                \draw[-] (-1.5,0.866) -- (-0.5,0.866);
                \draw[-] (-0.5,0.866) -- (0.5,0.866);
                \draw[-] (0.5,0.866) -- (1.5,0.866);
                \draw[-] (-1.5,-0.866) -- (-0.5,-0.866);
                \draw[-] (-0.5,-0.866) -- (0.5,-0.866);
                \draw[-] (0.5,-0.866) -- (1.5,-0.866);
                \draw[-] (1,0) -- (-1,0);
                \draw[-] (0.5,0.866) -- (0.5,-0.866);
                \draw[-] (-0.5,0.866) -- (-0.5,-0.866);
                \draw[-] (0.5,0.866) -- (-0.5,-0.866);
                \draw[-] (1,0) -- (2,0);
                \draw[-] (1.5,-0.866) -- (2,0);
                \draw[-] (1.5,-0.866) -- (-1,0);
                \draw[-] (2,0) -- (3,0);
                \draw[-] (2,0) -- (2.5,-0.866);
                \draw[-] (1.5,-0.866) -- (2.5,-0.866);
                \draw[-] (2.5,-0.866) -- (3,0);
                \draw[-] (3,0) -- (3.5,0.866);
                \draw[-] (0.5,-0.866) -- (0,-1.732);
                \draw[-] (-0.5,-0.866) -- (0,-1.732);
                \draw[-] (-0.5,-0.866) -- (-1,-1.732);
                \draw[-] (-1.5,-0.866) -- (-1,-1.732);
                \draw[-] (-1.5,-0.866) -- (-2,-1.732);
                \draw[-] (-2,-1.732) -- (-1,-1.732);
                \draw[-] (-2,-1.732) -- (-1.5,-2.598);
                \draw[-] (-1,-1.732) -- (-1.5,-2.598);
                \draw[-] (-2,0) -- (-1,0);
                \draw[-] (-1.5,-0.866) -- (-3.5,-0.866);
                \draw[-] (-1.5,0.866) -- (-2.5,0.866);
                \draw[-] (0.5,0.866) -- (0,1.732);
                \draw[-] (-0.5,0.866) -- (0,1.732);
                \draw[-] (0.5,0.866) -- (1,1.732);
                \draw[-] (1.5,0.866) -- (2,1.732);
                \draw[-] (-2,-1.732) -- (-0.5,-0.866);
                \draw[-] (-1.5,-0.866) -- (-3,0);
                \draw[-] (-1.5,-0.866) -- (0.5,0.866);
                \draw[-] (-0.5,-0.866) -- (-2,0);
                \draw[-] (-0.5,-0.866) -- (-3,0);
                \draw[-] (-0.5,-0.866) -- (1.5,0.866);
                \draw[-] (0.5,-0.866) -- (-2,0);
                \draw[-] (1.5,-0.866) -- (3,0);
                \draw[-] (2.5,-0.866) -- (1,0);
                \draw[-] (2,0) -- (3.5,0.866);
                \draw[-] (1,0) -- (-1.5,0.866);
                \draw[-] (-1,0) -- (-2.5,0.866);
                \draw[-] (-1,0) -- (1,1.732);
                \draw[-] (-2,0) -- (1.5,0.866);
                \draw[-] (-3,0) -- (0.5,0.866);
                \draw[-] (0.5,0.866) -- (2,1.732);
                \draw[-] (-0.5,0.866) -- (1,1.732);
                \draw[-] (-0.5,0.866) -- (1.5,2.598);
                \draw[-] (-3.5,-0.866) to[out=-15,in=-165] (-0.5,-0.866);
                \draw[-] (0.5,-0.866) to[out=30,in=-90] (2,1.732);
                \draw[-] (0.5,-0.866) to[out=45,in=-105] (1.5,0.866);
                \draw[-] (1,0) to[out=45,in=-105] (2,1.732);
                \draw[-] (-2,0) to[out=-15,in=-165] (1,0);
                \draw[-] (-3,0) to[out=-15,in=-165] (-1,0);
                \draw[-] (-0.5,-0.866) to[out=45,in=-105] (1,1.732);
                \draw[-] (0.5,0.866) to[out=45,in=-105] (1.5,2.598);
                \draw[-] (-2.5,0.866) to[out=15,in=165] (-0.5,0.866);
                \draw[-] (-1.5,0.866) to[out=15,in=165] (0.5,0.866);
                \draw[-] (-2.5,0.866) to[out=30,in=150] (0.5,0.866);
                \draw[-] (-1.5,0.866) to[out=30,in=150] (1.5,0.866);
                \fill[gray] (1,0) circle (1.5pt) node[above] {};
                \fill[gray] (-1,0) circle (1.5pt) node[above] {};
                \fill[gray] (-1.5,0.866) circle (1.5pt) node[above] {};
                \fill[gray] (-0.5,0.866) circle (1.5pt) node[above] {};
                \fill[gray] (0.5,0.866) circle (1.5pt) node[above] {};
                \fill[gray] (1.5,0.866) circle (1.5pt) node[above] {};
                \fill[gray] (-1.5,-0.866) circle (1.5pt) node[above] {};
                \fill[gray] (-0.5,-0.866) circle (1.5pt) node[above] {};
                \fill[gray] (0.5,-0.866) circle (1.5pt) node[above] {};
                \fill[gray] (1.5,-0.866) circle (1.5pt) node[above] {};
                \fill[gray] (2,0) circle (1.5pt) node[above] {};
                \fill[gray] (3,0) circle (1.5pt) node[above] {};
                \fill[gray] (2.5,-0.866) circle (1.5pt) node[above] {};
                \fill[gray] (3.5,0.866) circle (1.5pt) node[above] {};
                \fill[gray] (-2,0) circle (1.5pt) node[above] {};
                \fill[gray] (-3,0) circle (1.5pt) node[above] {};
                \fill[gray] (-2.5,0.866) circle (1.5pt) node[above] {};
                \fill[gray] (-3.5,-0.866) circle (1.5pt) node[above] {};
                \fill[gray] (0,1.732) circle (1.5pt) node[above] {};
                \fill[gray] (1,1.732) circle (1.5pt) node[above] {};
                \fill[gray] (2,1.732) circle (1.5pt) node[above] {};
                \fill[gray] (1.5,2.598) circle (1.5pt) node[above] {};
                \fill[gray] (0,-1.732) circle (1.5pt) node[above] {};
                \fill[gray] (-1,-1.732) circle (1.5pt) node[above] {};
                \fill[gray] (-2,-1.732) circle (1.5pt) node[above] {};
                \fill[gray] (-1.5,-2.598) circle (1.5pt) node[above] {};
            \end{tikzpicture}
        \end{minipage}
        \caption{In the left graph, a point labeled with $\gamma\in \Gamma$ specifies the facet $F_\gamma$ of $DS(X^2,\Gamma)$. In the right graph, an edge between points $\gamma$ and $\gamma'$ (labels omitted) specifies a ridge $F_\gamma\cap F_{\gamma'}$ of $DS(X^2,\Gamma)$.}
    \end{table}
\end{exm}
\begin{exm}
    Let us consider the group $\Gamma = SL(3,\mathbb{Z})$. Since the stabilizer of the identity matrix $I$ in $\Gamma$, which is $SO(3)\cap SL(3,\mathbb{Z})$, has an order of $24$, $DS(I,\Gamma)$ does not serve as a fundamental domain of $\Gamma$. Nonetheless, we can introduce
    \[
    X_\epsilon = I + \epsilon \begin{pmatrix}
        1 & -1 & -1 \\ -1 & 0 & -1\\ -1 & -1 & -1
    \end{pmatrix},\ \epsilon\in\mathbb{R}_+,
    \]
    which has a trivial stabilizer in $\Gamma$ for small $\epsilon$. Subsequently, we consider the half-spaces $\{Y|s(\gamma.X_\epsilon,Y)\geq s(X_\epsilon,Y)\}$ for $\gamma\in SO(3)\cap SL(3,\mathbb{Z})$, and take the limit $\epsilon\to 0^+$. This procedure yields half-spaces that partition $DS(I,\mathbb{Z})$ into $24$ convex polyhedra, bounding a fundamental domain $P$ of $\Gamma = SL(3,\mathbb{Z})$. Our program shows that $P$ consists of $9$ facets, each contained in one of the following hyperplanes of $\mathcal{P}(3)$:
    \[
    \begin{split}
        & \begin{pmatrix}
        0 & 1 & 0\\ 1 & 1 & 0\\ 0 & 0 & 0
    \end{pmatrix}^\perp, \begin{pmatrix}
        0 & 0 & 1\\ 0 & 0 & 0\\ 1 & 0 & 1
    \end{pmatrix}^\perp, \begin{pmatrix}
        0 & 0 & 0\\ 0 & 0 & 1\\ 0 & 1 & 1
    \end{pmatrix}^\perp, \begin{pmatrix}
        0 & 1 & 1\\ 1 & 1 & 1\\ 1 & 1 & 1
    \end{pmatrix}^\perp, \\
    & \begin{pmatrix}
        1 & 0 & 0\\ 0 & -1 & 0\\ 0 & 0 & 0
    \end{pmatrix}^\perp, \begin{pmatrix}
        0 & 0 & 0\\ 0 & 1 & 0\\ 0 & 0 & -1
    \end{pmatrix}^\perp, \begin{pmatrix}
        0 & 1 & 0\\ 1 & 0 & 1\\ 0 & 1 & 0
    \end{pmatrix}^\perp, \begin{pmatrix}
        0 & 1 & 1\\ 1 & 0 & 0\\ 1 & 0 & 0
    \end{pmatrix}^\perp, \begin{pmatrix}
        0 & 0 & 1\\ 0 & 0 & 1\\ 1 & 1 & 0
    \end{pmatrix}^\perp.
    \end{split}
    \]
    Furthermore, $P$ has $7$ vertices, namely:
    \[
    \begin{split}
        & \begin{pmatrix}
            1 & 0 & 0 \\0 & 1 & 0\\ 0 & 0 & 1
        \end{pmatrix},\begin{pmatrix}
            1 & -1/2 & -1/2 \\-1/2 & 1 & 0\\ -1/2 & 0 & 1
        \end{pmatrix},\begin{pmatrix}
            1 & -1/2 & 0 \\-1/2 & 1 & -1/2\\ 0 & -1/2 & 1
        \end{pmatrix},\begin{pmatrix}
            1 & 0 & -1/2 \\0 & 1 & -1/2\\ -1/2 & -1/2 & 1
        \end{pmatrix},\\
        & \begin{pmatrix}
            1 & -1/2 & -1/2 \\-1/2 & 1 & 1/2\\ -1/2 & 1/2 & 1
        \end{pmatrix},\begin{pmatrix}
            1 & -1/2 & 1/2 \\-1/2 & 1 & -1/2\\ 1/2 & -1/2 & 1
        \end{pmatrix},\begin{pmatrix}
            1 & 1/2 & -1/2 \\1/2 & 1 & -1/2\\ -1/2 & -1/2 & 1
        \end{pmatrix},\\
    \end{split}
    \]
    and $3$ vertices on the Satake boundary, namely:
    \[
    \begin{pmatrix}
        1 & 0 & 0\\ 0 & 0 & 0\\ 0 & 0 & 0
    \end{pmatrix},\begin{pmatrix}
        1 & 0 & 0\\ 0 & 1 & 0\\ 0 & 0 & 0
    \end{pmatrix},\begin{pmatrix}
        1 & -1/2 & 0\\ -1/2 & 1 & 0\\ 0 & 0 & 0
    \end{pmatrix}.
    \]
    We observe that the Satake vertices of $P$ project onto the following points in the hyperbolic plane $\mathcal{P}(2)$:
    \[
    \begin{pmatrix}
        1 & 0 \\ 0 & 0
    \end{pmatrix}, \begin{pmatrix}
        1 & 0 \\ 0 & 1
    \end{pmatrix},\begin{pmatrix}
        1 & -1/2 \\ -1/2 & 1
    \end{pmatrix}.
    \]
    The polyhedron formed by these points constitutes one-half of the fundamental domain of the modular group $SL(2,\mathbb{Z})$.
\end{exm}
\subsection{Organization of the paper}
In Section \ref{sec:s2}, we review the background material and preliminaries. In Section \ref{sec:s5}, we prove Theorem \ref{thm:main:2}, indicating the existence of invariant angle functions. In Section \ref{sec:s3}, we prove Theorem \ref{thm:main:1}, the criterion that determines if two hyperplanes in $\mathcal{P}(n)$ intersect. We also describe an algorithm to examine this practically. We propose a sufficient condition for the disjointness of Selberg bisectors in Subsection \ref{sec:4:2}. In Section \ref{sec:s7}, we describe and implement an algorithm that computes the poset structure of finitely-sided polyhedra in $\mathcal{P}(n)$. In Section \ref{sec:s4}, we classify Abelian subgroups of $SL(3,\mathbb{R})$ with positive eigenvalues, according to the finite-sidedness of their Dirichlet-Selberg domains. In Section \ref{sec:s8}, we show the existence of Schottky groups in $SL(n,\mathbb{R})$, which is generic for $n$ even. We present future directions in Section \ref{sec:99}. Research beside the main results, including the motivation of the angle formula \eqref{equ:main:2}, is included in the appendix. 
\vspace{12pt}
\section{Background}\label{sec:s2}
\subsection{The symmetric space \texorpdfstring{$SL(n,\mathbb{R})/SO(n,\mathbb{R})$}{Lg}}
The space $\mathcal{P}(n) = SL(n,\mathbb{R})/SO(n,\mathbb{R})$ is the Riemannian symmetric space for the Lie group $SL(n,\mathbb{R})$. In Cartan's classification of irreducible simply connected symmetric spaces, $\mathcal{P}(n)$ is the non-compact symmetric space of type $(A_{n-1}\mathrm{I})$\cite{helgason1979differential}. This family of symmetric spaces is one of the most important examples among symmetric spaces; for instance, every symmetric space of non-compact type isometrically embeds in some $\mathcal{P}(n)$ as a totally geodesic submanifold\cite{eberlein1996geometry}.

Using the Cartan decomposition of the Lie group $SL(n,\mathbb{R})$, and the Killing form on the Lie algebra $\mathfrak{sl}(n)$\cite{eberlein1996geometry}, one obtains the \textbf{symmetric matrices model} of the space $\mathcal{P}(n)$:
\begin{defn}
The symmetric space $\mathcal{P}(n)$ is the set
    \begin{equation}
        \mathcal{P}(n) = \mathcal{P}_{mat}(n) = \{X\in Sym_n(\mathbb{R})|\det(X) = 1,\ X>0\},
    \end{equation}
    equipped with the metric tensor
    \[
    \langle A,B\rangle = \mathrm{tr}(X^{-1}AX^{-1} B),\ \forall A,B\in T_X\mathcal{P}(n).
    \]
Here, $Sym_n(\mathbb{R})$ is the vector space of $n\times n$ real symmetric matrices, and $X>0$ (or $X\geq 0$) means that $X$ is positive definite (or positive semi-definite, respectively).
\end{defn}
Proofs of the following properties can be found in\cite{helgason1979differential}:
\begin{prop}\label{prop:2:1}
\begin{enumerate}
    \item The Riemannian distance between $X, Y\in \mathcal{P}(n)$ equals:
\begin{equation}
    d(X,Y) = \sqrt{\tr(\log((X^{-1}Y)^2))},
\end{equation}
where $\log$ denotes the principal matrix logarithm for matrices with positive eigenvalues.
    \item The identity component $\isom_0(\mathcal{P}(n))$ of the isometry group equals $SL(n,\mathbb{R})$ if $n$ is odd, and $PSL(n,\mathbb{R}) = SL(n,\mathbb{R})/\{\pm I\}$ if $n$ is even. Here, the action $SL(n,\mathbb{R})\curvearrowright \mathcal{P}(n)$ is given by congruence transformations: $g.X = g^T Xg$.
\end{enumerate}
\end{prop}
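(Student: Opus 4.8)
The plan is to derive both parts from the symmetric-space structure of $\mathcal{P}(n)$ together with the congruence action $\phi_g\colon X\mapsto g^{T}Xg$ of $G = SL(n,\R)$, which is transitive on $\mathcal{P}(n)$ with $\mathrm{Stab}_G(I) = SO(n)=:K$; all the facts I will invoke are standard (see \cite{helgason1979differential,eberlein1996geometry}), so the task is mainly to assemble them.

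For part (1), the first step is that congruence acts by isometries: since $(d\phi_g)_X(A) = g^{T}Ag$, one computes $\langle g^{T}Ag,\, g^{T}Bg\rangle_{g^{T}Xg} = \tr\big((g^{T}Xg)^{-1}g^{T}Ag(g^{T}Xg)^{-1}g^{T}Bg\big) = \tr(X^{-1}AX^{-1}B) = \langle A,B\rangle_X$, so $d$ is $G$-invariant. Hence, taking $g = X^{-1/2}$ (so $g.X = I$ and $g.Y = X^{-1/2}YX^{-1/2}$) and then a suitable $Q\in K$ to diagonalize, it suffices to evaluate $d(I,D)$ with $D=\mathrm{diag}(e^{t_1},\dots,e^{t_n})$, $\sum_i t_i=0$. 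The second step is that $\gamma(t)=\exp(tH)$ with $H=\mathrm{diag}(t_1,\dots,t_n)\in T_I\mathcal{P}(n)$ is the geodesic of the symmetric space from $I$ to $D$; since $\gamma(t)^{-1}\dot\gamma(t)\equiv H$ it has constant speed $\langle\dot\gamma,\dot\gamma\rangle = \tr(H^2)=\sum_i t_i^2$, and being minimizing in this nonpositively curved space, $d(I,D)=\sqrt{\sum_i t_i^2}$. Finally $X^{-1}Y$ is conjugate to $X^{-1/2}YX^{-1/2}$, hence to $D$, so it has positive eigenvalues $\lambda_i=e^{t_i}$ and $d(X,Y)^2=\sum_i t_i^2=\sum_i(\log\lambda_i)^2=\tr\big((\log(X^{-1}Y))^2\big)$, which is the claimed distance formula.

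For part (2), the congruence action gives a homomorphism $\rho\colon G\to\isom_0(\mathcal{P}(n))$ whose kernel is $\{g: g^{T}Xg=X\text{ for all }X>0\}$; this forces $g^{T}g=I$ and $g$ to commute with every symmetric matrix, hence $g=cI$, and together with $\det g = 1$ this gives $\ker\rho=\{I\}$ if $n$ is odd and $\ker\rho=\{\pm I\}$ if $n$ is even, i.e. $\ker\rho = Z(SL(n,\R))$ in both cases. So $SL(n,\R)$ (resp. $PSL(n,\R)$) embeds in $\isom_0$. For the reverse inclusion I would use that $\mathcal{P}(n)$ is an irreducible symmetric space of non-compact type, so $\isom_0(\mathcal{P}(n))$ equals its \emph{transvection group}, the group generated by all products $s_P\circ s_Q$ of geodesic symmetries \cite{helgason1979differential,eberlein1996geometry}. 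The geodesic symmetry at $P\in\mathcal{P}(n)$ is $s_P(X)=PX^{-1}P$, so $s_P\circ s_I(X)=PXP=\rho(P)(X)$, and more generally $s_P\circ s_Q=\rho(PQ^{-1})$ with $PQ^{-1}\in SL(n,\R)$; since such elements generate $\mathrm{im}\,\rho$, the transvection group is exactly $\mathrm{im}\,\rho$, whence $\isom_0(\mathcal{P}(n))=\mathrm{im}\,\rho\cong SL(n,\R)/Z(SL(n,\R))$, which is $SL(n,\R)$ for $n$ odd and $PSL(n,\R)$ for $n$ even. A more self-contained route to the reverse inclusion: an isometry fixing $I$ is determined by its differential at $I$ (as $\exp_I$ is a global diffeomorphism), which is a linear isometry of $(T_I\mathcal{P}(n),\tr(AB))$ preserving the Lie-triple bracket $(A,B,C)\mapsto[[A,B],C]$; one then checks all such maps are the conjugations $A\mapsto Q^{-1}AQ$, $Q\in SO(n)$, so the isotropy of $\isom_0$ at $I$ already lies in $\mathrm{im}\,\rho$, and transitivity of $\rho$ finishes.

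The routine pieces are the invariance computation, the identification of geodesics with one-parameter subgroups, and the kernel computation; the real content is the reverse inclusion in part (2) — that congruence exhausts $\isom_0$. This is where one must either invoke the structure theory of symmetric spaces (which is what the cited references do) or carry out the classification of curvature-preserving linear maps of the orthogonal Lie triple system of traceless symmetric matrices, in particular ruling out extra isometries such as the geodesic symmetry $X\mapsto X^{-1}$ (which for $n\ge 3$ is not a congruence) and tracking how the parity of $n$, through $Z(SL(n,\R))$, produces the dichotomy in the statement.
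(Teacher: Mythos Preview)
The paper does not actually prove this proposition: it simply states ``Proofs of the following properties can be found in \cite{helgason1979differential}'' and moves on, treating the result as background. Your proposal therefore goes well beyond what the paper itself does, and your argument is correct in substance---the reduction by the congruence action to $d(I,D)$ with $D$ diagonal, the identification of one-parameter subgroups as geodesics, the kernel computation for $\rho$, and the appeal to the transvection group (or equivalently the isotropy classification) to obtain the reverse inclusion are all standard and sound.

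Two minor remarks. First, the formula as printed in the paper, $\tr(\log((X^{-1}Y)^2))$, is literally $2\tr(\log(X^{-1}Y))=2\sum t_i=0$; what is meant (and what you correctly derive) is $\tr\big((\log(X^{-1}Y))^2\big)$. Second, in your transvection computation $s_P\circ s_Q(X)=PQ^{-1}XQ^{-1}P=\rho(Q^{-1}P)(X)$, not $\rho(PQ^{-1})$; this does not affect the conclusion since $Q^{-1}P\in SL(n,\mathbb{R})$ in any case, but it is worth writing correctly.
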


We also consider another model of the symmetric space $\mathcal{P}(n)$, which is an open region in the projective space $\mathbb{R}\mathbf{P}^{(n-1)(n+2)/2} = \mathbf{P}(Sym_n(\mathbb{R}))$:
\begin{defn}
    The \textbf{projective model} of $\mathcal{P}(n)$ is the following set:
    \begin{equation}\label{equ:s1:1}
        \mathcal{P}(n) = \mathcal{P}_{proj}(n) = \{[X]\in \mathbf{P}(Sym_n(\mathbb{R}))|X>0\},
    \end{equation}
    which is identified with the symmetric matrix model by the following diffeomorphism:
    \[
    \mathcal{P}_{proj}(n)\to \mathcal{P}_{mat}(n),\quad [X]\mapsto (\det(X))^{-1/n})\cdot X.
    \]
\end{defn}
One defines the \textbf{standard Satake compactification} and \textbf{Satake boundary} of $\mathcal{P}(n)$ through the projective model.
\begin{defn}\label{defn:satake}
    The standard Satake compactification of $\mathcal{P}(n)$ is the set
    \[
    \overline{\mathcal{P}(n)}_S = \{[X]\in \mathbf{P}(Sym_n(\mathbb{R}))|X\geq 0\},
    \]
    and the Satake boundary of $\mathcal{P}(n)$ is the set
    \[
    \partial_S \mathcal{P}(n) = \overline{\mathcal{P}(n)}_S\backslash \mathcal{P}(n).
    \]
\end{defn}

Throughout the paper, we consider the vector space $Sym_n(\mathbb{R})$ equipped with a bilinear form $\langle A,B\rangle: = \mathrm{tr}(A\cdot B)$ and understand the orthogonality as well as orthogonal complements with respect to this bilinear form.

\subsection{Planes, Selberg's bisectors, and convex polyhedra}
The direct analog of Dirichlet domains\cite{ratcliffe1994foundations} in $\mathcal{P}(n)$ is non-convex and impractical to study. This is due to the nonlinear nature of the Riemannian distance on $\mathcal{P}(n)$ defined in Proposition \ref{prop:2:1}. Selberg proposed a two-point invariant under the $SL(n,\mathbb{R})$-action on $\mathcal{P}(n)$ as a substitute of the Riemannian distance, \cite{selberg1962discontinuous}:
\begin{defn}
    For $X,Y\in \mathcal{P}(n)$, define the \textbf{Selberg's invariant} from $X$ to $Y$ as
    \[
    s(X,Y) = \mathrm{tr}(X^{-1}Y).
    \]
\end{defn}
\begin{prop}
    The function $s(-,-)$ satisfies the following properties:
    \begin{itemize}
        \item The function $s(-,-)$ behaves similarly to a distance function: $s(X,Y)\geq n$ for any $X,Y\in \mathcal{P}(n)$, and $s(X,Y) = n$ if and only if $X = Y$.
        \item The function $s(-,-)$ is $SL(n,\mathbb{R})$-invariant: for any $g\in SL(n,\mathbb{R})$ and $X,Y\in \mathcal{P}(n)$, one has $s(X,Y) = s(g.X,g.Y)$.
    \end{itemize}
\end{prop}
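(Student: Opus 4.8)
The plan is to treat the two bullet points separately, handling the inequality via a similarity reduction to a symmetric positive-definite matrix followed by AM--GM, and the invariance by a direct conjugation computation.

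First I would address the invariance, which is the easier half. Given $g\in SL(n,\mathbb{R})$, the action is $g.X = g^TXg$, so
\[
(g.X)^{-1}(g.Y) = (g^TXg)^{-1}(g^TYg) = g^{-1}X^{-1}g^{-T}g^TYg = g^{-1}\bigl(X^{-1}Y\bigr)g .
\]
Hence $(g.X)^{-1}(g.Y)$ is conjugate to $X^{-1}Y$, and since the trace is a conjugation invariant, $s(g.X,g.Y)=\tr\bigl(g^{-1}(X^{-1}Y)g\bigr)=\tr(X^{-1}Y)=s(X,Y)$.

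Next I would prove the distance-like property. The key point is that although $X^{-1}Y$ need not be symmetric, it is similar to a symmetric positive-definite matrix: writing $X^{1/2}$ for the positive-definite square root of $X$, we have $X^{-1}Y = X^{-1/2}\bigl(X^{-1/2}YX^{-1/2}\bigr)X^{1/2}$, so $X^{-1}Y$ is conjugate to $X^{-1/2}YX^{-1/2}$, which is symmetric and positive definite. Therefore $X^{-1}Y$ has real positive eigenvalues $\lambda_1,\dots,\lambda_n>0$, and because $X,Y\in\mathcal{P}(n)$ have determinant $1$, we get $\prod_{i=1}^n\lambda_i=\det(X^{-1}Y)=\det(X)^{-1}\det(Y)=1$. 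The AM--GM inequality then yields
\[
s(X,Y)=\tr(X^{-1}Y)=\sum_{i=1}^n\lambda_i\ \geq\ n\Bigl(\prod_{i=1}^n\lambda_i\Bigr)^{1/n}=n .
\]
For the equality case, AM--GM forces $\lambda_1=\dots=\lambda_n$, hence all $\lambda_i=1$; since $X^{-1/2}YX^{-1/2}$ is symmetric with all eigenvalues equal to $1$, it must equal $I$, i.e.\ $Y=X$. Conversely $s(X,X)=\tr(I)=n$.

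I do not anticipate a genuine obstacle here; the only subtlety worth stating carefully is the reduction showing the eigenvalues of $X^{-1}Y$ are real and positive, which is exactly what licenses the application of AM--GM to an a priori non-symmetric matrix. Everything else is bookkeeping with $\det X=\det Y=1$.
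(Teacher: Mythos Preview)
Your proof is correct. The paper itself does not supply a proof of this proposition; it is stated as background material (attributed to Selberg) without argument, so there is nothing to compare against. For the record, your approach---reducing $X^{-1}Y$ to the symmetric positive-definite matrix $X^{-1/2}YX^{-1/2}$ by similarity, then applying AM--GM to its eigenvalues with the constraint $\prod\lambda_i=\det(X)^{-1}\det(Y)=1$, and handling the invariance by the conjugation identity $(g.X)^{-1}(g.Y)=g^{-1}(X^{-1}Y)g$---is the standard one and is fully rigorous as written.
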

For any $X\in\mathcal{P}(n)$, Selberg's invariant $s(X,Y)$ is linear in the entries of $Y$. Consequently, the bisectors in $\mathcal{P}(n)$ with respect to Selberg's invariant are also linear:
\begin{prop}
    Define the (Selberg) \textbf{bisector} $Bis(X,Y)$ for $X,Y\in\mathcal{P}(n)$:
    \[
    Bis(X,Y) = \{Z\in\mathcal{P}(n)|s(X,Z) = s(Y,Z)\}.
    \]
    Then for any $X, Y\in \mathcal{P}(n)$, $Bis(X, Y)$ is defined by a linear equation over the entries of the symmetric matrix $Z$.
\end{prop}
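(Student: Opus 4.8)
The plan is to trace through the definition of $Bis(X,Y)$ and show that the defining condition $s(X,Z)=s(Y,Z)$ is an affine-linear equation in the entries of $Z$, which then descends to a genuine linear (homogeneous) equation on the projective model. First I would write out $s(X,Z)=\tr(X^{-1}Z)$ and $s(Y,Z)=\tr(Y^{-1}Z)$; both are, for fixed $X$ and $Y$, linear functionals of the symmetric matrix $Z$, since $\tr(MZ) = \sum_{i,j} M_{ij}Z_{ij}$ is visibly linear in the entries $Z_{ij}$ (using symmetry of $Z$ and of the symmetric part of $M$). Hence $s(X,Z)-s(Y,Z) = \tr\bigl((X^{-1}-Y^{-1})Z\bigr)$ is a single linear functional in the entries of $Z$, and $Bis(X,Y)$ is the zero set of this functional intersected with $\mathcal{P}(n)$.

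Next I would make the connection with the notation $A^\perp$ used throughout the paper: setting $M = X^{-1}-Y^{-1}$, which is a symmetric matrix, the equation $s(X,Z)=s(Y,Z)$ reads $\tr(MZ) = \langle M, Z\rangle = 0$, i.e. $Z \in M^\perp$ with respect to the trace form $\langle A,B\rangle = \tr(AB)$ fixed in the Background section. So $Bis(X,Y) = M^\perp \cap \mathcal{P}(n)$ with $M = X^{-1}-Y^{-1}$, exhibiting the bisector as (the trace of) a hyperplane in $\mathbf{P}(Sym_n(\mathbb{R}))$. I would also remark that $M\neq 0$ when $X\neq Y$ (since $X\mapsto X^{-1}$ is injective on positive-definite matrices), so the equation is nontrivial and $Bis(X,Y)$ is a genuine hyperplane section rather than all of $\mathcal{P}(n)$; if the statement is read in the projective model one checks the functional $[Z]\mapsto \tr(MZ)$ is well-defined up to scalar, so its vanishing locus is a projective hyperplane.

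There is essentially no hard part here: the only thing to be careful about is the bookkeeping between the affine symmetric-matrix model (where $\det Z = 1$ is imposed, so "$s(X,Z)=s(Y,Z)$" is affine-linear but lives on the determinant-one slice) and the projective model (where the condition becomes honestly homogeneous linear and the normalization is irrelevant). I would state the conclusion in whichever model the paper prefers and note in one sentence that the identification $\mathcal{P}_{proj}(n)\cong\mathcal{P}_{mat}(n)$ from Definition of the projective model intertwines the two descriptions, so that $Bis(X,Y)$ is cut out by the linear equation $\tr\bigl((X^{-1}-Y^{-1})Z\bigr)=0$ in either picture. That completes the proof.
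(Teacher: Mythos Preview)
Your proposal is correct and matches the paper's reasoning exactly: the paper does not give a formal proof of this proposition, but simply remarks just before it that ``Selberg's invariant $s(X,Y)$ is linear in the entries of $Y$'' and then states the proposition as an immediate consequence. Your write-up spells out this observation in detail (including the identification $Bis(X,Y) = (X^{-1}-Y^{-1})^\perp$, which the paper uses later), so it is essentially an expanded version of the paper's one-line justification.
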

This observation allows Selberg to define a polyhedral analog of the Dirichlet domain:
\begin{defn}
    The \textbf{Dirichlet-Selberg domain} for a discrete subset $\Gamma\subset SL(n,\mathbb{R})$ centered at $X\in \mathcal{P}(n)$ is the set
    \[
    DS(X,\Gamma) = \{Y\in\mathcal{P}(n)|s(X,Y)\leq s(g.X,Y),\ \forall g\in\Gamma\}.
    \]
\end{defn}
To realize the polyhedral nature of Dirichlet-Selberg domains, we turn to define \textbf{planes}, \textbf{hyperplanes}, \textbf{half-spaces} and \textbf{convex polyhedra} in $\mathcal{P}(n)$, analogously to the corresponding definitions in hyperbolic spaces\cite{ratcliffe1994foundations}:
\begin{defn}\label{def:s1:1}
\begin{enumerate}
    \item A \textbf{$d$-plane} of $\mathcal{P}(n)$ is the non-empty intersection of $\mathcal{P}(n)$ with a $(d+1)$-dimensional linear subspace of the vector space $Sym_n(\mathbb{R}) = \mathbb{R}^{n(n+1)/2}$. An $((n-1)(n+2)/2-1)$-plane of $\mathcal{P}(n)$ is called a \textbf{hyperplane} of $\mathcal{P}(n)$.
    \item The complement of a hyperplane in $\mathcal{P}(n)$ consists of two connected components; each of them is called an \textbf{open half-space} of $\mathcal{P}(n)$. A \textbf{closed half-space} is the closure of an open half-space.
    \item A (closed) \textbf{convex polyhedron} in $\mathcal{P}(n)$ is the intersection of a locally finite collection of closed half-spaces in $\mathbf{H}^n$.
\end{enumerate}
\end{defn}

\begin{defn}
\begin{enumerate}
    \item The \textbf{dimension} of a convex polyhedron $P$ is the dimension of the minimal plane $\Sigma\subset \mathcal{P}(n)$ containing $P$. Such a plane is unique and denoted by $span(P)$.
    \item A \textbf{facet} of a convex polyhedron $P$ is a maximal subset $F\subset \partial P$ such that $F$ itself is a convex polyhedron in $\mathcal{P}(n)$. We denote by $\mathcal{S}(P)$ the set of facets of $P$.
    \item A \textbf{face} of a convex polyhedron $P$ is defined inductively as follows:
    \begin{itemize}
        \item The convex polyhedron $P$ is a face of itself.
        \item A facet of a face of $P$ is yet also a face of $P$.
    \end{itemize}
    We denote by $\mathcal{F}(P)$ the set of proper faces of $P$.
    \item A \textbf{ridge} of $P$ is a codimension $2$ face of $P$. We denote by $\mathcal{R}(P)$ the set of ridges of $P$.
    \item A face $F$ of $P$ is said to be minimal if there are no other faces of $P$ which are contained in $F$.
\end{enumerate}
\end{defn}
\begin{rmk}
    For a convex polyhedron $P$ in $\mathcal{P}(n)$, the set $\mathcal{F}(P)$ forms a poset, where the inclusion relation between faces serves as the partial relation it. The minimal faces of $P$ correspond to the minimal elements of the poset $\mathcal{F}(P)$.
\end{rmk}
The group $SL(n,\mathbb{R})$ acts on convex polyhedra in $\mathcal{P}(n)$. We define \textbf{fundamental polyhedra} for a discrete subgroup $\Gamma< SL(n,\mathbb{R})$ analogously to fundamental polyhedra in hyperbolic spaces \cite{ratcliffe1994foundations}.
\begin{prop}[\cite{kapovich2023geometric}]
    For a discrete subgroup $\Gamma< SL(n,\mathbb{R})$ and a point $X\in \mathcal{P}(n)$, the Dirichlet-Selberg domain $DS(X,\Gamma)$ is a convex polyhedron in $\mathcal{P}(n)$. Moreover, if $Stab_\Gamma(X)$ is trivial, $DS(X,\Gamma)$ is a fundamental polyhedron for $\Gamma$.
\end{prop}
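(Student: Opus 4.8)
The plan is to establish, one after the other, the two clauses of the proposition: that $DS(X,\Gamma)$ is cut out by a \emph{locally finite} family of closed half-spaces (which is precisely what makes it a convex polyhedron in the sense of Definition \ref{def:s1:1}), and then, when $Stab_\Gamma(X)$ is trivial, that the $\Gamma$-translates of $DS(X,\Gamma)$ cover $\mathcal{P}(n)$ and have pairwise disjoint interiors. Two standard inputs will do most of the work. The first is that $SL(n,\mathbb{R})$ acts \emph{properly} on $\mathcal{P}(n)$, point stabilizers being conjugates of the compact group $SO(n)$; hence the discrete subgroup $\Gamma$ acts properly discontinuously and, for every $R>0$, only finitely many $g\in\Gamma$ satisfy $d(X,g.X)\le R$. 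The second is a one-sided comparison $s(X,Y)\ge e^{d(X,Y)/n}$: writing the eigenvalues of $X^{-1}Y$ as $e^{t_1},\dots,e^{t_n}$, Proposition \ref{prop:2:1} gives $\sum_i t_i^2=d(X,Y)^2$ while $\sum_i t_i=\log\det(X^{-1}Y)=0$, so $\max_i t_i\ge d(X,Y)/n$ and therefore $s(X,Y)=\sum_i e^{t_i}\ge e^{d(X,Y)/n}$.

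First I would record the half-space structure. With $H_g:=\{Y\in\mathcal{P}(n):s(X,Y)\le s(g.X,Y)\}$ we have $DS(X,\Gamma)=\bigcap_{g\in\Gamma}H_g$, and the difference $s(X,Y)-s(g.X,Y)=\tr\big((X^{-1}-(g.X)^{-1})Y\big)$ is a linear functional of $Y\in Sym_n(\mathbb{R})$. It is identically zero exactly when $g.X=X$, that is, $g\in Stab_\Gamma(X)$, in which case $H_g=\mathcal{P}(n)$; otherwise it is a nonzero functional, negative at $Y=X$ and positive at $Y=g.X$, hence vanishing somewhere on the connected set $\mathcal{P}(n)$, so $Bis(X,g.X)$ is a genuine hyperplane and $H_g$ is one of the two closed half-spaces it bounds. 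For local finiteness, fix $Y_0$, put $r=d(X,Y_0)$, pick $\epsilon>0$, and use that the closed ball $\bar B=\bar B(Y_0,\epsilon)$ is compact (as $\mathcal{P}(n)$ is complete, by Hopf--Rinow), so $s(X,\cdot)\le C$ on $\bar B$ for some $C$. If $d(X,g.X)>r+\epsilon+n\log C+1=:R$, then for every $Y\in\bar B$ the triangle inequality gives $d(g.X,Y)\ge d(X,g.X)-(r+\epsilon)>n\log C$, whence $s(g.X,Y)\ge e^{d(g.X,Y)/n}>C\ge s(X,Y)$ and so $\bar B\subset\operatorname{int}(H_g)$. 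As only finitely many $g$ have $d(X,g.X)\le R$, only finitely many $H_g$ fail to contain a neighborhood of $Y_0$; thus $\{H_g\}_{g\in\Gamma}$ is locally finite, $DS(X,\Gamma)$ is a convex polyhedron, and moreover $\operatorname{int}(DS(X,\Gamma))=\{Y:s(X,Y)<s(g.X,Y)\ \text{for all }g\in\Gamma\setminus Stab_\Gamma(X)\}$.

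Now suppose $Stab_\Gamma(X)=\{e\}$. To see $\Gamma\cdot DS(X,\Gamma)=\mathcal{P}(n)$, fix $Y\in\mathcal{P}(n)$ and minimize $g\mapsto s(g.X,Y)$ over $\Gamma$; by the comparison above, $s(g.X,Y)\le N$ forces $d(g.X,Y)$, hence $d(X,g.X)$, to be bounded, so the sublevel sets of this function are finite, and since its values are $\ge n$ a minimizer $g_0$ exists. Then $g_0^{-1}.Y\in DS(X,\Gamma)$ by $SL(n,\mathbb{R})$-invariance of $s$. For disjointness of interiors, suppose $Y$ and $g.Y$ both lie in $\operatorname{int}(DS(X,\Gamma))$ with $g\ne e$: the interior description with $h=g^{-1}$ gives $s(X,Y)<s(g^{-1}.X,Y)=s(X,g.Y)$, and applied to the point $g.Y$ with $h=g$ it gives $s(X,g.Y)<s(g.X,g.Y)=s(X,Y)$, a contradiction. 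Finally $X\in\operatorname{int}(DS(X,\Gamma))$ because $s(g.X,X)>n=s(X,X)$ for every $g\ne e$, so this convex polyhedron has nonempty interior and hence equals the closure of its interior --- the last ingredient in the definition of a fundamental polyhedron.

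The step I expect to be the real content is the local-finiteness estimate: one must upgrade the pointwise comparison between Selberg's invariant and the Riemannian distance to a bound that holds uniformly over an entire neighborhood of $Y_0$, which is exactly what the compactness of $\bar B$ together with the exponential lower bound on $s$ supplies. The remaining assertions --- surjectivity of the tiling and disjointness of interiors --- are then formal consequences of the invariance of $s$ and the proper discontinuity of $\Gamma$.
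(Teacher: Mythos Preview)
The paper does not give its own proof of this proposition: it is stated with a citation to \cite{kapovich2023geometric} and then used as a black box. So there is nothing in the paper to compare your argument against line by line.

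That said, your proof is correct and is precisely the standard argument one would expect for this result. The essential point---local finiteness of the family $\{H_g\}$---is handled exactly as it should be: properness of the $SL(n,\mathbb{R})$-action on $\mathcal{P}(n)$ gives that $\{g\in\Gamma:d(X,g.X)\le R\}$ is finite for every $R$, and you combine this with the coercivity estimate $s(X,Y)\ge e^{c\,d(X,Y)}$ (your exponent $1/n$ is valid, since $\max_i t_i\ge d/\sqrt{n(n-1)}>d/n$ whenever $\sum t_i=0$ and $\sum t_i^2=d^2$) to force $\bar B(Y_0,\epsilon)\subset\operatorname{int}(H_g)$ for all but finitely many $g$. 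The covering and disjointness arguments are the usual ones for Dirichlet-type domains, transported verbatim via the $SL(n,\mathbb{R})$-invariance of $s$, and your observation that $X\in\operatorname{int}(DS(X,\Gamma))$ ensures the polyhedron is full-dimensional. One small remark: the identity $\operatorname{int}\bigl(\bigcap_g H_g\bigr)=\bigcap_g\operatorname{int}(H_g)$ that you use implicitly does require the local finiteness you just established, so it is worth saying explicitly that this is where it enters.
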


Sometimes (especially in Section \ref{sec:s5}), we realize a hyperplane $\sigma$ in $\mathcal{P}(n)$ as the normal space of a matrix.
\begin{defn}
    The normal space of a nonzero matrix $A\in Sym_n(\mathbb{R})$ is defined as
    \[
     A^\perp = \{X\in \mathcal{P}(n)|\mathrm{tr}(X\cdot A) = 0\},
    \]
    which is a hyperplane in $\mathcal{P}(n)$ whenever it is non-empty. We designate $A$ as a \textbf{normal vector} of the hyperplane $A^\perp$. A hyperplane associated with a normal vector is called a \textbf{co-oriented hyperplane}. 
    
    The normal vector of a hyperplane is unique up to a nonzero multiple. Identical co-oriented hyperplanes with normal vectors that differ by a positive multiple are regarded as the same co-oriented hyperplanes. Identical co-oriented hyperplanes with normal vectors that differ by a negative multiple from each other are said to be oppositely oriented. If $\sigma$ is a co-oriented hyperplane given by $A^\perp$, then the co-oriented hyperplane with the opposite orientation is denoted by $-\sigma$ or $(-A)^\perp$.

    We say that a co-oriented hyperplane $\sigma$ \textbf{lies between} two co-oriented hyperplanes $A^\perp$ and $B^\perp$ if the normal vector associated with $\sigma$ is a positive linear combination of $A$ and $B$.
\end{defn}

The idea of a co-oriented hyperplane lying between two co-oriented hyperplanes is presented in Figure \ref{fig:my_label1}.
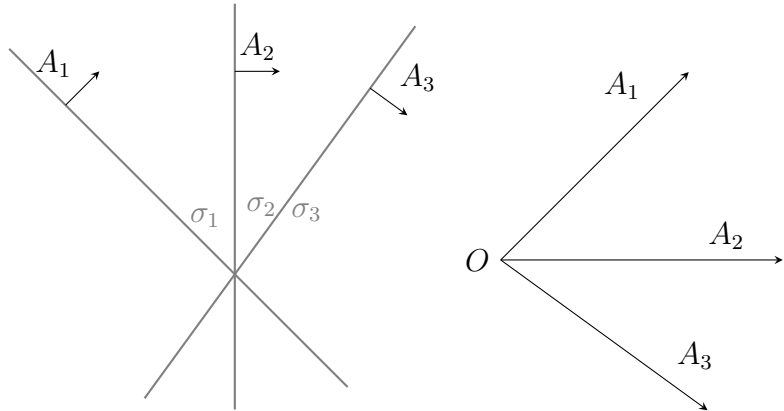
\begin{figure}[!ht]
    \centering
    \begin{tikzpicture}[scale=1.5]
    \draw[gray,thick] (-0.8,-1.1) -- (1.6,2.2) node[pos=0.5,right]{$\sigma_3$};
    \draw [-stealth] (1.2,1.65) -- (1.53,1.41) node[pos=0.5,above right]{$A_3$};
    \draw[gray,thick] (0,-1.2) -- (0,2.4) node[pos=0.5,right]{$\sigma_2$};
    \draw [-stealth] (0,1.8) -- (0.4,1.8) node[pos=0.5,above]{$A_2$};
    \draw[gray,thick] (1,-1) -- (-2,2) node[pos=0.5,right]{$\sigma_1$};
    \draw [-stealth] (-1.5,1.5) -- (-1.2,1.8) node[pos=0.5,above left]{$A_1$};
    \end{tikzpicture}
    \begin{tikzpicture}[scale=2.5]
    \draw [-stealth] (0,0) -- (1.1,-0.8) node[pos=0.8,above right]{$A_3$};
    \draw [-stealth] (0,0) -- (1.5,0) node[pos=0.8,above]{$A_2$};
    \draw [-stealth] (0,0) -- (1,1) node[pos=0.8,above left]{$A_1$};
    \draw (0,0) node[left]{$O$};
    \end{tikzpicture}
    \caption{The hyperplane $\sigma_2$ lies between the hyperplanes $\sigma_1$ and $\sigma_3$.}
    \label{fig:my_label1}
\end{figure}
Open half-spaces in $\mathcal{P}(n)$ can be also described in terms of normal vectors:
\begin{equation*}
    \{X\in \mathcal{P}(n)|\mathrm{tr}(X\cdot A) > 0\}.
\end{equation*}
One replaces the $>$ with $\geq$ for closed half-spaces. Thus, we define the co-orientation for facets of convex polyhedra in $\mathcal{P}(n)$:
\begin{defn}
    Let $P$ be a convex polyhedron in $\mathcal{P}(n)$. A facet $F\in \mathcal{S}(P)$ associated with a normal vector $A$ of $span(F)$ is called a \textbf{co-oriented facet} of $P$. 
    
    For any facet $F\in \mathcal{S}(P)$, the convex polyhedron $P$ lies within one of the two closed half-spaces bounded by $span(F)$. When there are no special instructions, we make a convention that the normal vector $A$ associated with $F$ is selected so that
    \[
    P\subset \{X\in\mathcal{P}(n)|\mathrm{tr}(X\cdot A)\leq 0\},
    \]
    and say that $A$ is outward-pointing (and $-A$ is inward-pointing). We also make a convention that $span(F)$ is associated with the same normal vector as $F$.
\end{defn}
\subsection{Poincar\'e's Fundamental Polyhedron Theorem and Poincar\'e's Algorithm}\label{subsec:2:2}
The well-known Poincar\'e's Fundamental Polyhedron Theorem \cite{ratcliffe1994foundations} is an essential tool in geometric group theory. Initially proven for spaces of constant curvature, Poincar\'e's Theorem determines if a given exact polyhedron $P\subset \mathbf{H}^n$ with a side pairing $\Phi\subset SO^+(n,1)$ is a fundamental domain of the subgroup $\Gamma = \langle \Phi\rangle$. Here, the side pairing $\Phi$ is a set $\{g_F\in SO^+(n,1)|F\in\mathcal{S}(P)\}$, and $g_F: F'\to F$, called a side pairing transformation, is an isometry between a pair of facets $F$ and $F'$.

Poincar\'e's Theorem in hyperbolic spaces asserts that $P$ is a fundamental domain of $\Gamma$ if two conditions are satisfied. One condition requires that $M = P/\sim$, the quotient space of $P$ by the facet-pairing $\Phi$, is a hyperbolic manifold or orbifold. This condition is usually formulated in terms of \textbf{ridge-cycles} and the angle sum of these ridge-cycles \cite{ratcliffe1994foundations}. The other condition requires that $M$ is complete, and is formulated as a \textbf{cusp link condition}, which is omitted here.
% The well-known Poincar\'e's fundamental polyhedron theorem \cite{ratcliffe1994foundations} is an essential tool in hyperbolic geometry. It determines if a given polyhedron $P\subset \mathbf{H}^n$ tiles the entire hyperbolic space $\mathbf{H}^n$ under the action of some discrete group $\Gamma$ of isometries. Formally speaking, it determines if $P$ is a fundamental polyhedron of $\Gamma$. 

An algorithm, based on Poincar\'e's Theorem, checks if a subgroup of $SO^+(n,1)$ is geometrically finite. The algorithm was initially suggested by Riley \cite{riley1983applications} for the case $n=3$ and by Epstein and Petronio \cite{epstein1994exposition} for the general case.

\textbf{Poincar\'e's Algorithm.}
\begin{enumerate}
    \item Starting with $l = 1$, compute the subset $\Gamma_l\subset \Gamma$ of elements represented by words of length $\leq l$ in the letters of $g_i$ and $g_i^{-1}$, $i=1,\dots,m$. The result is a finite subset of $SO^+(n,1)$.
    \item Compute the Dirichlet domain $D(x,\Gamma_l)$ centered at $x$ for the finite set $\Gamma_l$. Namely, we compute the equations for all ridges of $D(x,\Gamma_l)$. Epstein and Petronio\cite{epstein1994exposition} provide an algorithm for this task.
    \item Having the data for all ridges of $D(x,\Gamma_l)$, we check if this convex polyhedron is exact. That is, for any $g\in \Gamma_l$ and facets $F_{g}$, $F_{g^{-1}}$ of $D(x,\Gamma_l)$ contained in $Bis(x,g.x)$ and $Bis(x,g^{-1}.x)$ respectively, we check if $g.F_{g^{-1}} = F_g$. This is solved by linear programming, e.g., \cite{eaves1982optimal}.
    \item We check if this convex polyhedron satisfies the ridge cycle condition in Poincar\'e's Fundamental Polyhedron Theorem.
    \item If the condition is not satisfied, replace $l$ with $(l+1)$ and repeat the steps above.
    \item If the ridge cycle condition is satisfied, then Proposition \ref{prop:2:3:1} implies that $D = D(x,\Gamma_l)$ satisfies the requirements for Poincar\'e's theorem. Therefore, $D(x,\Gamma_l)$ is the fundamental domain for the group $\Gamma' = \langle\Gamma_l\rangle$. Following \cite{riley1983applications}, check if $g_i\in\Gamma'$ for each generator $g_i$, $i=1,\dots,m$.
    \item If there is a generator $g_i\notin \Gamma'$, we replace $l$ with $(l+1)$ and repeat the steps from the beginning.
    \item If all generators $g_i\in \Gamma'$, then $\Gamma = \Gamma'$ is a discrete subgroup of $SO^+(n,1)$. Moreover, $\Gamma$ is finitely presented; we derive the relators for the presentation of $\Gamma$ from the ridge-cycle data of $D(x,\Gamma_l)$.
\end{enumerate}
We note that we do not check the complete condition in the algorithm, due to the following fact:
\begin{prop}[\cite{kapovich2023geometric}]\label{prop:2:3:1}
    Suppose that $\Gamma$ is a finite subset of $SO^+(n,1)$, $x\in \mathbf{H}^n$, and the Dirichlet domain $D(x,\Gamma)$ satisfies the ridge cycle condition. Then the quotient space for $D(x,\Gamma)$ is complete, hence the polyhedron $D(x,\Gamma)$ satisfies the assumptions for Poincar\'e's Fundamental Polyhedron Theorem.
\end{prop}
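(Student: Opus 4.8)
The plan is to isolate the one hypothesis of Poincar\'e's Fundamental Polyhedron Theorem that is \emph{not} already guaranteed by the ridge cycle condition, namely the completeness of the quotient $M = D(x,\Gamma)/\!\sim$ (equivalently, the cusp link condition at each ideal vertex of $D = D(x,\Gamma)$), and to show it holds automatically because $D$ is a \emph{Dirichlet} domain. Since $\Gamma$ is finite, $D$ is bounded by at most $|\Gamma|$ bisectors, so it is finite-sided and has only finitely many ideal vertices. By the part of the proof of Poincar\'e's theorem that does not involve completeness (cf. \cite{ratcliffe1994foundations, epstein1994exposition}), the ridge cycle condition already equips $M$ with a hyperbolic orbifold structure together with a developing map; it remains only to verify completeness.

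The geometric heart of the argument is the following remark about side-pairings of a Dirichlet domain. For an ideal vertex $\xi$ of $D$, let $H_\xi$ be the horosphere centered at $\xi$ through the center $x$, i.e. $H_\xi = \{z : \beta_\xi(z) = \beta_\xi(x)\}$ for the Busemann function $\beta_\xi$. If $F$ is a facet of $D$ with $\xi \in \overline{F}$, then $F \subset Bis(x, g.x)$ for some $g \in \Gamma$, and the associated side-pairing transformation is $g_F = g^{-1}$. Since $\xi$ lies in the ideal boundary of $Bis(x,g.x)$ one gets $\beta_\xi(x) = \beta_\xi(g.x)$, hence $g.x \in H_\xi$; applying the isometry $g^{-1}$ and using that isometries send horospheres to horospheres yields $g_F(H_\xi) = H_{g_F(\xi)}$. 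Thus the side-pairings permute the family of ``canonical'' horospheres $\{H_\eta\}$ through $x$. Consequently the holonomy transformation of any ridge cycle emanating from $\xi$ — a composition of such $g_F$'s, which fixes $\xi$ — also maps $H_\xi$ to itself. An isometry of $\mathbf{H}^n$ fixing an ideal point and preserving a horosphere about it has no dilation part: in the upper half-space model with $\xi = \infty$ it is a Euclidean isometry of $\mathbb{R}^{n-1}$. Hence the similarity structure induced on the link of $\xi$ in $M$ has holonomy with trivial dilation, i.e. the link is a \emph{complete} Euclidean orbifold, which is precisely the cusp link condition at $\xi$.

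With the cusp link condition verified at all (finitely many) ideal vertices, completeness of $M$ follows by the standard argument: a Cauchy sequence in $M$ either eventually remains in a compact region, where it converges, or it eventually enters the complete cusp neighborhood of some ideal vertex; but in a complete cusp a sequence escaping to infinity is not Cauchy, so the second alternative cannot occur. Therefore $M$ is complete and $D(x,\Gamma)$ satisfies all hypotheses of Poincar\'e's theorem.

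The step I expect to be the main obstacle is the bookkeeping at ideal vertices: carefully justifying that $\xi \in \overline{F}$ forces $\beta_\xi(x) = \beta_\xi(g.x)$, fixing the side-pairing convention $g_F = g^{-1}$ consistently through a full ridge cycle so that the horospheres $H_{\xi_i}$ really do compose as claimed, and — most delicately — treating ideal vertices whose horospherical cross-section in $D$ is itself a non-compact finite-sided Euclidean polyhedron rather than a compact one, where one must track the induced side-pairings on the cross-section and confirm that the completeness argument for $M$ still goes through at such mixed ends. The remaining ingredients (finiteness of the set of faces and of ideal vertices, and localization of an escaping Cauchy sequence near a single ideal vertex) are routine consequences of $\Gamma$ being finite.
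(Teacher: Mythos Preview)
The paper does not actually prove Proposition~\ref{prop:2:3:1}; it is quoted without proof from \cite{kapovich2023geometric}. So there is no ``paper's own proof'' to compare against.

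Your proposal is the standard argument and is essentially correct. The key observation---that for a Dirichlet domain the side-pairing transformations permute the horospheres passing through the fixed center $x$, so that the holonomy of any cusp cycle preserves a horosphere and hence has trivial dilation---is exactly the mechanism used in \cite{kapovich2023geometric} and, in the analogous setting, in \cite{epstein1994exposition} and \cite{ratcliffe1994foundations}. The justification that $\xi\in\partial_\infty Bis(x,g.x)$ implies $\beta_\xi(x)=\beta_\xi(g.x)$ is straightforward: the bisector is a totally geodesic hyperplane equidistant from $x$ and $g.x$, and taking a sequence on it converging to $\xi$ gives the equality of Busemann functions in the limit. Your worry about non-compact horospherical cross-sections is legitimate in general, but here the finiteness of $\Gamma$ makes $D$ a finite intersection of half-spaces, so each horospherical cross-section is a finite-sided Euclidean polyhedron and the induced side-pairings are finitely many Euclidean isometries; the usual completeness argument for the developing map then goes through without further subtlety.
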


We have seen in the previous subsection that Dirichlet-Selberg domains in the space $\mathcal{P}(n)$ are convex polyhedra. Therefore, we can tentatively generalize Poincar\'e's Algorithm to the group $SL(n,\mathbb{R})$ acting on $\mathcal{P}(n)$, with the Riemannian distance being replaced by Selberg's invariant. This is the motivation for many results in our paper.
% Poincar\'e's theorem in hyperbolic spaces begins with a convex polyhedron $P\subseteq \mathbf{H}^n$ satisfying the exactness condition: $P$ is equipped with isometric facet-pairing transformations $\varphi_S: S\to S'$ between the facets of $P$. Given these, we form a quotient space $M:=P/\sim$. One condition of Poincar\'e's theorem requires $M$ to be a hyperbolic manifold (or orbifold). This condition is usually formulated in terms of \emph{ridge-cycles} and the angle sum of these ridge-cycles \cite{ratcliffe1994foundations}.
% The idea of facet pairings and ridge-cycles can be generalized to the space $\mathcal{P}(n)$. However, we do not have a natural dihedral angle coming from the Riemannian metric on $\mathcal{P}(n)$. Indeed, the Riemannian angle varies depending on the choice of a base point in the intersection of two hyperplanes. Thus we need to find an alternative way of defining dihedral angles in $\mathcal{P}(n)$. Moreover, to check the exactness condition and to identify the ridge-cycles of polyhedra in $\mathcal{P}(n)$, one also has to determine intersections between hyperplanes, and more generally, the combinatorics of polyhedra in $\mathcal{P}(n)$. This is the motivation for many results in our paper.
\subsection{Matrix pencils and generalized eigenvalues}
Some of our main results use matrix pencils. We briefly review the concepts related to our research.
\begin{defn}
    A real (or complex) \textbf{matrix pencil} is a set $\{A-\lambda B|\lambda\in\mathbb{R}\}$ (or $\lambda\in\mathbb{C}$, respectively), where $A$ and $B$ are real $n\times n$ matrices. We denote this matrix pencil by $(A,B)$.

    We say a matrix pencil $(A,B)$ is \textbf{regular} if $\det(A-\lambda B)\neq 0$ for at least one value $\lambda\in\mathbb{C}$ (equivalently, for almost every $\lambda$). We say $(A,B)$ is \textbf{singular} if both $A$ and $B$ are singular and $A-\lambda B$ is singular for all $\lambda\in\mathbb{C}$.
\end{defn}

We define the \textbf{generalized eigenvalues} of a matrix pencil:
\begin{defn}
    A \textbf{generalized eigenvalue} of a matrix pencil $(A,B)$ is a number $\lambda_0\in\mathbb{C}$ such that $A - \lambda_0 B$ is singular.
    
    For a regular pencil $(A,B)$, the \textbf{multiplicity} of a generalized eigenvalue $\lambda_0$ is the multiplicity of the root $\lambda = \lambda_0$ for the polynomial $\det(A-\lambda B)$ over $\lambda$.

    If $B$ is singular, we adopt the convention that $\infty$ is a generalized eigenvalue of the pencil $(A,B)$. The multiplicity of $\infty$ is $n-\deg \left(\det(A-\lambda B)\right)$.

    In particular, every $\lambda\in \overline{\mathbb{C}} = \mathbb{C}\cup\{\infty\}$ is a generalized eigenvalue of a singular matrix pencil.
\end{defn}

A matrix pencil $(A,B)$ is \textbf{symmetric} if both $A$ and $B$ are symmetric matrices. We define \textbf{definiteness} for symmetric matrix pencils:
\begin{defn}
    We say that a symmetric matrix pencil $(A,B)$ is (semi-) definite, if either $A$ or $B$ is (semi-) definite, or if $A-\lambda B$ is (semi-) definite for at least one number $\lambda\in\mathbb{R}$.
\end{defn}

We define congruence transformations of symmetric matrix pencils as $$(A,B)\to (Q^{\mathrm{T}}AQ, Q^{\mathrm{T}}BQ),$$where $Q\in GL(n,\mathbb{R})$, and $A,B\in Sym_n(\mathbb{R})$. Generalized eigenvalues are invariant under these transformations:
\begin{prop}\label{prop:2}
    For any $Q\in GL(n,\mathbb{R})$, the matrix pencils $(A,B)$ and $(Q^{\mathrm{T}}AQ,Q^{\mathrm{T}}BQ)$ have the same generalized eigenvalues as well as the same multiplicities of them.
\end{prop}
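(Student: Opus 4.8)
The plan is to reduce everything to a single algebraic identity for the characteristic polynomial of the pencil. First I would observe that congruence distributes over the pencil: for any $\lambda$,
\[
Q^{\mathrm{T}}AQ - \lambda\, Q^{\mathrm{T}}BQ = Q^{\mathrm{T}}(A-\lambda B)Q .
\]
Taking determinants and using multiplicativity gives
\[
\det\!\left(Q^{\mathrm{T}}AQ - \lambda\, Q^{\mathrm{T}}BQ\right) = (\det Q)^2\,\det(A-\lambda B),
\]
an identity of polynomials in $\lambda$ in which $(\det Q)^2$ is a nonzero real constant (here $Q\in GL(n,\mathbb{R})$ is used). So the two characteristic polynomials $p(\lambda)=\det(A-\lambda B)$ and $\tilde p(\lambda)=\det(Q^{\mathrm{T}}AQ - \lambda\, Q^{\mathrm{T}}BQ)$ differ only by a nonzero scalar.

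From this identity the conclusions follow in three quick steps. (i) Regularity is preserved: $p\not\equiv 0$ iff $\tilde p\not\equiv 0$, and likewise $(A,B)$ singular iff the transformed pencil is singular, since $A-\lambda B$ is singular for all $\lambda$ exactly when $p\equiv 0$. (ii) For a regular pencil, $p$ and $\tilde p$ have the same roots in $\mathbb{C}$ with the same multiplicities (scaling a polynomial by a nonzero constant changes neither its root set nor the order of vanishing at any point); this is precisely the statement that the finite generalized eigenvalues and their multiplicities agree. (iii) For the eigenvalue $\infty$: its multiplicity is $n-\deg p$ by definition, and since $\tilde p=(\det Q)^2\,p$ with $(\det Q)^2\neq 0$ we have $\deg\tilde p=\deg p$, so the multiplicity of $\infty$ is unchanged as well; in particular $B$ singular (i.e.\ $\deg p<n$) is a congruence-invariant condition, consistent with the convention that $\infty$ is then a generalized eigenvalue.

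There is essentially no serious obstacle here; the only point requiring a moment's care is the treatment of $\infty$, where one must phrase the argument in terms of $\deg p$ rather than in terms of roots, and note that multiplying by the nonzero constant $(\det Q)^2$ leaves the degree intact. I would also remark that the singular case is covered uniformly by the same identity, so no separate argument is needed there.
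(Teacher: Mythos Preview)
Your proof is correct and uses exactly the same key identity as the paper, namely $\det(Q^{\mathrm{T}}AQ-\lambda Q^{\mathrm{T}}BQ)=(\det Q)^2\det(A-\lambda B)$ with $\det Q\neq 0$. If anything, you are more careful than the paper's one-line argument: you explicitly handle the eigenvalue $\infty$ via $\deg p$ and note that regularity is preserved, points the paper leaves implicit.
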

\begin{proof}
    Notice that $\det(P^{\mathrm{T}}AP-\lambda P^{\mathrm{T}}BP) = \det(P)^2\det(A-\lambda B)$, while $\det(P)\neq 0$. Thus, these polynomials have the same roots as well as the same multiplicities of roots.
\end{proof}

If $A'$ and $B'$ are linearly independent linear combinations of $A$ and $B$, the generalized eigenvalues of $(A',B')$ relate to those of $(A,B)$ by a M\"obius transformation:
\begin{lem}\label{lem:mob}
    Suppose that $\lambda_1,\dots,\lambda_n$ are the generalized eigenvalues of the matrix pencil $(A,B)$. Then for any $p,q,r,s\in\mathbb{R}$ with $ps-qr\neq 0$, the generalized eigenvalues of $(p A+q B,r A+s B)$ are $\lambda_i': = \frac{p\lambda_i +q}{r\lambda_i + s}$, $i=1,\dots,n$.
\end{lem}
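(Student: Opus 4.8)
The plan is to homogenize the characteristic polynomial. For a pencil $(A,B)$ write
\[
F_{(A,B)}(x,y) := \det(xA - yB),
\]
a binary form homogeneous of degree $n$ in $(x,y)$. Dehomogenizing by $x=1$ gives $F_{(A,B)}(1,\lambda) = \det(A-\lambda B)$, so the finite generalized eigenvalues of $(A,B)$ are exactly the $\lambda$ with $[1:\lambda]$ a projective root of $F_{(A,B)}$, while the multiplicity of $\infty$ equals the multiplicity of the root $[0:1]$. Thus, assuming $(A,B)$ is regular (the singular case is immediate, since then every element of $\overline{\mathbb{C}}$ is a generalized eigenvalue of both pencils), the form $F_{(A,B)}$ is not identically zero and factors over $\mathbb{C}$ into $n$ linear forms, $F_{(A,B)}(x,y) = c\prod_{j=1}^{n}(\beta_j x - \alpha_j y)$ with $c\neq 0$; the multiset of roots $[\alpha_j:\beta_j]\in\mathbf{P}^1(\mathbb{C})$ records the generalized eigenvalues $\lambda_j = \beta_j/\alpha_j$ with multiplicity.

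The key step is to observe that passing from $(A,B)$ to $(A',B') = (pA+qB,\ rA+sB)$ is, after homogenizing, just a linear change of the variables $(x,y)$. A direct expansion gives
\[
xA' - yB' = (px - ry)A + (qx - sy)B = (px - ry)A - (sy - qx)B,
\]
hence $F_{(A',B')}(x,y) = F_{(A,B)}(px - ry,\ -qx + sy)$, i.e.\ $F_{(A',B')} = F_{(A,B)}\circ M$ where $M$ has matrix $\left(\begin{smallmatrix} p & -r\\ -q & s\end{smallmatrix}\right)$ with $\det M = ps - qr \neq 0$. Since $M$ is invertible it induces a bijection of $\mathbf{P}^1(\mathbb{C})$ that preserves the degree $n$ and the multiplicity of each root; in particular $(A',B')$ is again regular. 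Substituting the factorization, $F_{(A',B')}(x,y) = c\prod_{j=1}^n\big((p\beta_j + q\alpha_j)x - (r\beta_j + s\alpha_j)y\big)$, so the roots of $F_{(A',B')}$ are the points $[\,r\beta_j + s\alpha_j : p\beta_j + q\alpha_j\,]$, with the same multiplicities. Reading off the generalized eigenvalues, $\lambda_j' = \dfrac{p\beta_j + q\alpha_j}{r\beta_j + s\alpha_j} = \dfrac{p\lambda_j + q}{r\lambda_j + s}$ after dividing through by $\alpha_j$, with the cases $\alpha_j = 0$ (so $\lambda_j = \infty$) and $r\beta_j + s\alpha_j = 0$ (so $\lambda_j' = \infty$) handled uniformly by the projective picture; this is the claim.

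I do not expect a genuine obstacle here: the content is precisely the identity $F_{(A',B')} = F_{(A,B)}\circ M$ together with the elementary fact that an invertible linear substitution permutes the roots of a binary form and preserves their multiplicities. The only thing demanding care is the bookkeeping at $\infty$ and with multiplicities, and that is exactly what the homogeneous formalism absorbs. One could instead argue affinely: from $A' - \mu B' = (p - \mu r)A + (q - \mu s)B$ one gets $\det(A' - \mu B') = (p - \mu r)^n\det\!\big(A - \tfrac{\mu s - q}{p - \mu r}B\big)$ whenever $p - \mu r \neq 0$, so that $\mu$ is a generalized eigenvalue of $(A',B')$ iff $\tfrac{\mu s - q}{p - \mu r}$ is one of $(A,B)$; but turning this into the stated formula forces separate treatment of $p - \mu r = 0$, of $r = 0$, and of any $\lambda_i = -s/r$, so I would present the homogeneous version.
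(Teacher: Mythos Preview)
Your proof is correct. The paper takes precisely the affine route you sketch at the end as an alternative: it writes down the identity
\[
(pA+qB)-\frac{p\lambda+q}{r\lambda+s}(rA+sB)=\frac{(ps-qr)(A-\lambda B)}{r\lambda+s}
\]
and reads off the equivalence of singularity, then treats $\lambda=\infty$ by hand via $A'-\frac{p}{r}B'=\frac{(qr-ps)B}{r}$. Your homogenization $F_{(A,B)}(x,y)=\det(xA-yB)$ and the observation $F_{(A',B')}=F_{(A,B)}\circ M$ amount to the same computation, but packaged so that $\infty$ and multiplicities are absorbed by the projective formalism rather than handled as separate cases. The paper's argument is shorter to state but, as you note, leaves the bookkeeping at $r\lambda+s=0$, at $r=0$, and for multiplicities implicit; your version makes these uniform at the cost of introducing the binary form. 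Either is fine here.
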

\begin{proof}
    Notice that
\begin{equation*}
    (p A+q B) - \frac{p\lambda +q}{r\lambda + s}(r A+s B) = \frac{(ps-qr)(A - \lambda B)}{r\lambda + s}.
\end{equation*}
Since $ps-qr\neq 0$, one has that $\frac{p\lambda +q}{r\lambda + s}$ is a generalized eigenvalue of $(p A+q B,r A+s B)$ if and only if $\lambda$ is a generalized eigenvalue of $(A,B)$.

If $\infty$ is a generalized eigenvalue of $(A,B)$, then $B$ is singular. Therefore,
\begin{equation*}
    A'-\frac{p}{r}B' = \frac{r A'-p B'}{r} = \frac{(qr-ps)B}{r}
\end{equation*}
is singular. That is, $\frac{p}{r}$ is a generalized eigenvalue of $(A',B')$. This agrees with the statement of the lemma if one interprets the formal expression $\frac{p\infty + q}{r\infty +s}$ as $\frac{p}{r}$.

In both cases, the corresponding eigenvalues are related by a M\"obius transformation: $\overline{\mathbb{C}}\to \overline{\mathbb{C}},\lambda\mapsto \frac{p\lambda +q}{r\lambda+s}$.
\end{proof}

Our work uses a normal form of matrix pencils under congruence transformation. For this reason, we introduce block-diagonal matrix pencils:
\begin{defn}
    A \textbf{block-diagonal matrix pencil} is a matrix pencil $(A,B)$, where\\ $A = diag(A_1,\dots,A_m)$ and $B = diag(B_1,\dots,B_m)$; for $i=1,\dots,m$, $A_i$ and $B_i$ are square matrices of the same dimension $d_i$.

    The blocks of an $n\times n$ block-diagonal matrix pencil $(A,B)$ define a partition of the set $\{1,\dots,n\}$. We say the matrix pencil $(A',B')$ is (strictly) \textbf{finer} than the matrix pencil $(A,B)$ if the partition corresponding to the pencil $(A',B')$ is (strictly) finer than the one corresponding to $(A,B)$, up to a permutation of the $n$ numbers.
\end{defn}

Uhlig characterizes the ``finest'' block-diagonalization of regular symmetric matrix pencils:
\begin{lem}[\cite{uhlig1973simultaneous}]\label{lem:new:0}
    Let $(A,B)$ be a symmetric matrix pencil with $B$ invertible. Suppose that the Jordan canonical form of $B^{-1}A$ is $Q^{-1}B^{-1}AQ = J = diag(J_1,\dots,J_m)$, where $J_i$ is a Jordan block of dimension $d_i$, $i=1,\dots,m$. Then $(A',B') = (Q^{\mathrm{T}}AQ,Q^{\mathrm{T}}BQ)$ is a block-diagonal matrix pencil; the block $(A_i,B_i)$ is of dimension $d_i$ for $i=1,\dots,m$. Moreover, $(A',B')$ is finer than any matrix pencil in its congruence equivalence class.
\end{lem}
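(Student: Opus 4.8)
The plan is to reduce the statement to the structure theory of a single self-adjoint operator acting on a vector space equipped with a nondegenerate symmetric bilinear form, applied to $C:=B^{-1}A$ and to the form $\langle x,y\rangle_B:=x^{\mathrm T}By$. First I would record the elementary identities: since $A$ and $B$ are symmetric, $C^{\mathrm T}B=(B^{-1}A)^{\mathrm T}B=A^{\mathrm T}B^{-1}B=A=BC$, so $C$ is \emph{$B$-self-adjoint}; and $Q^{\mathrm T}AQ=(Q^{\mathrm T}BQ)(Q^{-1}CQ)=B'J$ with $B'=Q^{\mathrm T}BQ$, so $(A',B')$ is block diagonal with respect to the Jordan decomposition of $J$ if and only if $B'$ is. Thus everything reduces to understanding the symmetric matrix $B'$ satisfying $B'J=J^{\mathrm T}B'$, equivalently the pair $(C,\langle\cdot,\cdot\rangle_B)$ up to congruence; this is exactly the canonical form of a self-adjoint operator in an indefinite inner product space.

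Second, I would decompose $\mathbb R^n$ into the primary (generalized-eigenspace) components of $C$: the spaces $V_\lambda=\ker(C-\lambda I)^n$ for real eigenvalues $\lambda$, and the real $C$-invariant primary components attached to each non-real conjugate pair. These are $C$-invariant, and I claim they are mutually $\langle\cdot,\cdot\rangle_B$-orthogonal: the restriction of $\langle\cdot,\cdot\rangle_B$ to $V_\lambda\times V_\mu$ intertwines $C|_{V_\mu}$ with the adjoint of $C|_{V_\lambda}$, and these operators have disjoint spectra when the components are distinct, so by a Sylvester-type argument the pairing vanishes. Since $B$ is nondegenerate on all of $\mathbb R^n$ and the components are $B$-orthogonal, $B$ is nondegenerate on each $V_\lambda$, and the problem reduces to a single primary component, where $C=\lambda I+N$ with $N$ nilpotent and $B$-self-adjoint (and an analogous complex-type situation for a conjugate pair).

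Third — and this is the heart — I would handle the nilpotent case by peeling off one cyclic subspace at a time. Let $h$ be the size of the largest Jordan block of $N$. The function $q(v):=\langle N^{h-1}v,v\rangle_B=v^{\mathrm T}(BN^{h-1})v$ is a quadratic form whose matrix $BN^{h-1}$ is symmetric, and it is not identically zero, because $N^{h-1}\neq0$ and $B$ is invertible force $BN^{h-1}\neq 0$. Hence one may pick $v$ with $N^{h-1}v\neq0$ and $q(v)\neq0$ simultaneously, as each condition is the complement of a proper algebraic subset of $V$. Using $B$-self-adjointness one checks $\langle N^iv,N^jv\rangle_B=v^{\mathrm T}BN^{i+j}v$ depends only on $i+j$, so on the cyclic subspace $W=\mathrm{span}\{v,Nv,\dots,N^{h-1}v\}$ the Gram matrix of $B$ is anti-triangular with anti-diagonal entries equal to $q(v)\neq0$; therefore $B|_W$ is nondegenerate, $V=W\oplus W^{\perp_B}$, the complement $W^{\perp_B}$ is $N$-invariant and $B$-nondegenerate, and by Krull--Schmidt the Jordan blocks of $N|_{W^{\perp_B}}$ are those of $N$ with exactly one block of size $h$ removed. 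Induction produces a $B$-orthogonal decomposition into $N$-cyclic blocks whose sizes are precisely the Jordan block sizes; the non-real case is analogous after complexifying (or by working with real Jordan blocks directly). Collecting the resulting bases across all primary components yields the matrix $Q$ of the statement — this is also the step where one selects $Q$ within the freedom of the Jordan basis, since the bare hypothesis $Q^{-1}B^{-1}AQ=J$ alone does not pin down $Q^{\mathrm T}BQ$.

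Finally, for the ``finest'' assertion: if $Q'$ is any congruence for which $(A'',B'')=(Q'^{\mathrm T}AQ',Q'^{\mathrm T}BQ')$ is block diagonal with blocks $(A''_j,B''_j)$, then each $B''_j$ is invertible (a diagonal block of the invertible $B''$), and $(B'')^{-1}A''=Q'^{-1}(B^{-1}A)Q'=Q'^{-1}CQ'$ is similar to $C$ while being block diagonal with $j$-th block $(B''_j)^{-1}A''_j$. Hence the multiset of Jordan blocks of $C$ is distributed among the blocks $j$, so $\dim(A''_j,B''_j)=\sum_{i\in S_j}d_i$ for a partition $\{S_j\}$ of $\{1,\dots,m\}$; this says exactly that the partition $\{d_1,\dots,d_m\}$ refines the partition determined by $(A'',B'')$, i.e.\ $(A',B')$ is finer than every pencil congruent to $(A,B)$. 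I expect the main obstacle to be the nilpotent peeling step: justifying that a $B$-nondegenerate cyclic subspace of maximal height can always be split off rests on the non-vanishing of $q$ together with the Krull--Schmidt bookkeeping of Jordan blocks under passage to the $B$-orthogonal complement, and the non-real eigenvalue case needs a little extra care to carry out the cyclic/peeling argument over $\mathbb R$.
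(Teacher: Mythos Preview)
The paper does not supply its own proof of this lemma; it is quoted from Uhlig \cite{uhlig1973simultaneous} and used as a black box. Your sketch is essentially the standard structure-theoretic proof of Uhlig's result: view $C=B^{-1}A$ as a $\langle\cdot,\cdot\rangle_B$-self-adjoint operator, take the $B$-orthogonal primary decomposition, and within each primary component peel off $B$-nondegenerate cyclic subspaces of maximal height. The argument is correct, and your handling of the ``finest'' clause via the Jordan-block count of $(B'')^{-1}A''$ is clean.

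One point worth keeping: you rightly note that the lemma as phrased overstates slightly. An \emph{arbitrary} $Q$ satisfying $Q^{-1}B^{-1}AQ=J$ need not render $Q^{\mathrm T}BQ$ block-diagonal along the individual Jordan blocks; the Sylvester equation $B'_{ij}J_j=J_i^{\mathrm T}B'_{ij}$ forces $B'_{ij}=0$ only when $J_i$ and $J_j$ have distinct eigenvalues, and there are nonzero solutions when the eigenvalues coincide. Your construction instead produces a \emph{specific} $Q$ (the $B$-orthogonal cyclic basis) for which the conclusion holds, and that is exactly what the paper needs downstream in Lemma~\ref{lem:ant:tri}.
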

\begin{defn}
    For a regular symmetric matrix pencil $(A,B)$, suppose that there exists $c\in \mathbb{R}$ such that $B+ cA$ is invertible, and $Q^{-1}(B+c A)^{-1}AQ$ is the Jordan canonical form of $(B+c A)^{-1}A$. Define the \textbf{normal form} of $(A,B)$ under congruence transformations as
    \[
    (A',B') = (Q^{\mathrm{T}}AQ,Q^{\mathrm{T}}BQ).
    \]
\end{defn}

Blocks of $(A',B')$ satisfy additional properties:
\begin{lem}\label{lem:ant:tri}
    In the notation of Lemma\ref{lem:new:0}, let $(A_i,B_i)$ be the diagonal blocks of the congruence normal form $(A',B')$ of the matrix pencil $(A,B)$, $i=1,\dots,m$. Suppose that $A_i = (a_i^{j,k})_{j,k=1}^{d_i}$ and $B_i = (b_i^{j,k})_{j,k=1}^{d_i}$. Then the entries $a_i^{j,k}$ satisfy:
\begin{enumerate}
    \item $a_i^{j,k} = a_i^{j',k'}$, for any $j+k = j'+k'$,
    \item $a_i^{j,k} = 0$, for any $j+k\leq d_i$.
\end{enumerate}
The entries $b_i^{j,k}$ satisfy the same property.
\end{lem}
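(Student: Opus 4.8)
The plan is to reduce everything to the structure of a single Jordan block. Since $(A',B')$ is block-diagonal and each diagonal pair $(A_i,B_i)$ is obtained from the corresponding Jordan block $J_i$ of $M := (B+cA)^{-1}A$ via the same change of basis that brings $M$ to Jordan form, it suffices to analyze one block; I will therefore drop the subscript $i$ and write $J$ for a single $d\times d$ Jordan block with eigenvalue $\mu$, together with the associated symmetric pair $(A_0, B_0)$ produced by the congruence normal form. The first step is to make the relationship between $A_0$, $B_0$ and $J$ completely explicit: by construction $B_0 + cA_0$ is the matrix of the change of basis applied to the identity, and $J = (B_0+cA_0)^{-1}A_0$, so $A_0 = (B_0 + cA_0)J$ and hence $B_0 = (B_0+cA_0)(I - cJ)$. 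Writing $S := B_0 + cA_0$, symmetry of $A_0$ and $B_0$ forces $SJ = J^{\mathrm T}S$ and $S(I-cJ) = (I-cJ)^{\mathrm T}S$, i.e. $S$ intertwines $J$ and $J^{\mathrm T}$.

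The second step is to identify all symmetric $S$ with $SJ = J^{\mathrm T}S$ for $J$ a single Jordan block. This is a classical computation: such $S$ form a one-parameter family given (up to scalar) by the anti-diagonal "exchange-type" matrix $S_0$ whose $(j,k)$ entry depends only on $j+k$ and which vanishes unless $j+k \geq d+1$ — concretely $S_0 = (s^{j,k})$ with $s^{j,k}$ a fixed nonzero constant when $j+k = d+1$ and the pattern propagated upward by the relation coming from $SJ=J^{\mathrm T}S$. (One checks that the commutant of a single Jordan block is the algebra of upper-triangular Toeplitz matrices, and intersecting with the symmetric matrices intertwining $J$ and $J^{\mathrm T}$ yields exactly the anti-triangular Toeplitz matrices.) The key structural consequences to extract are: (a) $S$ is constant along anti-diagonals, $s^{j,k} = s^{j',k'}$ whenever $j+k = j'+k'$; and (b) $s^{j,k} = 0$ whenever $j+k \leq d$.

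The third step is to transfer properties (a) and (b) from $S$ to $A_0$ and $B_0$. Since $A_0 = SJ$ and $J = \mu I + N$ with $N$ the nilpotent shift, $A_0 = \mu S + SN$; both $S$ and $SN$ are anti-triangular Toeplitz (right-multiplication by $N$ shifts columns, preserving the "constant on anti-diagonals, vanishing for $j+k\le d$" shape — indeed $SN$ vanishes for $j+k \leq d+1$), hence $A_0$ inherits both properties, which are precisely conditions (1) and (2) of the Lemma. The identical argument applies to $B_0 = S - cA_0$, a linear combination of two anti-triangular Toeplitz matrices, giving the final sentence of the statement. I expect the main obstacle to be the bookkeeping in the second step: pinning down exactly which anti-diagonals of $S$ must vanish (and verifying the Toeplitz/anti-triangular pattern is forced, not merely allowed, by the simultaneous symmetry of $A_0$ and $B_0$ rather than by $S$ alone), so that the shift by one in "$SN$ vanishes for $j+k\le d+1$" lines up correctly with the claimed bound "$a^{j,k}=0$ for $j+k\le d$". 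Everything downstream is routine linear algebra on Toeplitz matrices.
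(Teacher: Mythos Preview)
Your approach is essentially the paper's own proof recast in intertwiner language: the paper works directly with the relation $A_i = B_i J_i$ and the symmetry of both $A_i$ and $B_i$ to derive the anti-diagonal constancy and the vanishing pattern entry by entry, while you package the identical computation as ``symmetric $S$ with $SJ = J^{\mathrm T}S$ is anti-triangular Toeplitz'' and then push forward along $J$. One small slip worth correcting: for a single $d\times d$ Jordan block the symmetric intertwiners form a $d$-parameter family (one free constant on each anti-diagonal $j+k = d+1,\dots,2d$), not a one-parameter family; since you only use properties (a) and (b), which hold for every such $S$, this does not affect the argument.
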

\begin{proof}
    The matrices satisfy the relation $A_i = B_iJ_i$, where $J_i = J_{\lambda_i,d_i}$ is the $d_i\times d_i$ Jordan block matrix with the eigenvalue $\lambda_i$. Thus, for any $j$ and any $k>1$,
\begin{equation}\label{equ:s5:1}
    a_i^{j,k} = \lambda_i b_i^{j,k} + b_i^{j,k-1},
\end{equation}
and $a_i^{j,1} = \lambda_i b_i^{j,1}$.

Since both $A_i$ and $B_i$ are symmetric, for any $j,k>1$,
\begin{equation*}
    b_i^{j-1,k} = b_i^{k-1,j} = a_i^{k,j} - \lambda_i b_i^{k,j} = a_i^{j,k} - \lambda_i b_i^{j,k} = b_i^{j,k-1},
\end{equation*}
which is the property (1).

For any $k<d_i$,
\begin{equation*}
    b_i^{1,k} = a_i^{1,k+1} - \lambda_i b_i^{1,k+1} = a_i^{k+1,1} - \lambda_i b_i^{k+1,1} = 0.
\end{equation*}

Therefore, if $j+k\leq d_i$, property (1) implies that
\begin{equation*}
    b_i^{j,k} = \dots = b_i^{1,j+k-1} = 0,
\end{equation*}
which is the property (2). The entries of $A_i$ satisfy the same property since $A_i = B_iJ_i$.
\end{proof}

The normal form in Lemma\ref{lem:new:0} does not apply to singular symmetric matrix pencils. Nevertheless, Jiang and Li prove the following result:
\begin{lem}[\cite{jiang2016simultaneous}]\label{lem:2:3:5:2}
    Let $(A,B)$ be a singular symmetric $n\times n$ matrix pencil. Then $(A,B)$ is congruent to $(A',B')$, where the matrices $A'$ and $B'$ satisfy
    \[
    A' = \begin{pmatrix}A_1 & O & O\\ O & O & O\\ O & O & O\end{pmatrix},\quad B' = \begin{pmatrix}B_1 & B_2 & O\\ B_2^{\mathrm{T}} & O & O\\ O & O & B_3\end{pmatrix},
    \]
    for $n_1\times n_1$ matrices $A_1$ and $B_1$, an $n_1\times n_2$ matrix $B_2$, and an $n_3\times n_3$ matrix $B_3$, $n_1+n_2+n_3 = n$. Moreover, $A_1$ and $B_3$ are invertible.
\end{lem}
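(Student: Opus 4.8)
The statement is a normal form for singular symmetric matrix pencils under congruence, and the natural line of attack is the symmetric refinement of Kronecker's canonical-form theory, organized as an induction on $n$. The plan is to repeatedly split $(A,B)$ as a congruence direct sum of lower-dimensional symmetric pencils, peeling off one ``singular block'' at a time, until a regular symmetric pencil remains; the regular remainder is then put into block form using the Jordan-based normal form already recorded (Lemmas \ref{lem:new:0} and \ref{lem:ant:tri}), after a preliminary M\"obius substitution (Lemma \ref{lem:mob}) chosen so that the regular part no longer has $0$ as an eigenvalue. Finally the basis vectors are regrouped into three families to produce the asserted shape.

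For the singular part, I would start from the fact that $A-\lambda B$ is singular for every $\lambda$, so there is a polynomial vector $v(\lambda)=v_0+\lambda v_1+\dots+\lambda^\epsilon v_\epsilon$ of minimal degree $\epsilon$ with $(A-\lambda B)v(\lambda)\equiv 0$. Matching coefficients of $\lambda$ gives a chain $Av_0=0$, $Av_j=Bv_{j-1}$ for $1\le j\le\epsilon$, and $Bv_\epsilon=0$; minimality of $\epsilon$ forces $v_0,\dots,v_\epsilon$ to be linearly independent. The key point, where symmetry of $A$ and $B$ is used, is that the identities $v_i^{\mathrm T}Av_j=v_i^{\mathrm T}Bv_{j-1}=v_{j-1}^{\mathrm T}Bv_i$ (and $v_0^{\mathrm T}A(\cdot)=0$, $B v_\epsilon=0$) telescope to show that $W=\mathrm{span}(v_0,\dots,v_\epsilon)$ is totally isotropic for \emph{every} member of the pencil, so $A$ and $B$ both vanish identically on $W\times W$; when $\epsilon=0$ this just says $v_0\in\ker A\cap\ker B$. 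One then adjoins a dual chain to enlarge $W$ to a subspace $U$ of dimension $2\epsilon+1$ (or $1$ when $\epsilon=0$) on which some member $A-\lambda_0 B$ restricts nondegenerately; then $\mathbb R^n=U\oplus U^{\perp}$ is an orthogonal decomposition for that member, hence a congruence decomposition of the pencil, and $(A,B)|_{U^{\perp}}$ is again symmetric, so the induction continues on $U^{\perp}$.

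Once all singular blocks have been peeled off and the regular remainder put into Jordan block form, I would regroup coordinates as follows. The chain vectors of all the singular blocks, on which both $A$ and $B$ vanish and which $B$ only couples to their dual partners, are collected into the middle family of size $n_2$; a M\"obius substitution ensures the regular remainder has no eigenvalue $0$, so its $A$-component is invertible, and this, together with the dual chains (whose $A$-form can be normalized nondegenerately) and the Jordan blocks at finite nonzero eigenvalues and at $\infty$, forms $(A_1,B_1)$ with $A_1$ invertible, giving $n_3=0$; if instead one prefers the general form with $n_3>0$, a retained semisimple eigenvalue-$0$ summand is congruent to a pencil with zero $A$-component and invertible $B$-component and becomes $B_3$. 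The surviving coupling term $B_2$ is precisely the $B$-coupling between the dual chains in the first family and the chain vectors in the second; a short check shows that no analogous coupling survives in $A$ and that $B$ has no coupling to the third family, so $A'$ and $B'$ take exactly the stated form with $A_1$ and $B_3$ invertible by construction.

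The main obstacle is the singular-block step: showing that the minimal chain $v_0,\dots,v_\epsilon$ can be completed, by a well-chosen dual chain, to a subspace $U$ that is simultaneously a congruence direct summand (nondegenerate for a generic member of the pencil) and carries the pencil in exactly the isotropic-chain-plus-off-diagonal-coupling shape needed for the regrouping above. This is the technical heart of the symmetric Kronecker reduction; once it is in hand, the rest is bookkeeping with the Jordan normal form and M\"obius lemmas already available.
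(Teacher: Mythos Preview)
The paper does not prove this lemma: it is quoted verbatim from Jiang and Li (the bracketed citation \cite{jiang2016simultaneous} in the lemma header), and no argument is supplied. So there is no ``paper's own proof'' to compare against.

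That said, your Kronecker-style approach is considerably heavier than what the statement requires, and the part you flag as the main obstacle (completing the isotropic chain to a congruence direct summand with the right shape) is genuinely delicate and not something you have actually carried out. A much more direct argument gets the stated form in three elementary congruence moves, and does not even use singularity of the pencil:
\begin{enumerate}
\item Choose $Q_1\in GL(n,\mathbb{R})$ so that $Q_1^{\mathrm T}AQ_1=\mathrm{diag}(A_1,0)$ with $A_1$ invertible of size $n_1=\mathrm{rank}(A)$; write $Q_1^{\mathrm T}BQ_1=\begin{pmatrix}B_1&\tilde B\\ \tilde B^{\mathrm T}&B_4\end{pmatrix}$.
\item Choose $Q_2\in GL(n-n_1,\mathbb{R})$ with $Q_2^{\mathrm T}B_4Q_2=\mathrm{diag}(0_{n_2},B_3)$, $B_3$ invertible; the congruence by $\mathrm{diag}(I_{n_1},Q_2)$ leaves $A$ unchanged and splits $\tilde B Q_2=(B_2\;\; B_2')$.
\item Apply the shear $P=\begin{pmatrix}I&0&0\\0&I&0\\ -B_3^{-1}B_2'^{\mathrm T}&0&I\end{pmatrix}$; one checks $P^{\mathrm T}AP=A$ and that the $(1,3)$-block of $P^{\mathrm T}BP$ becomes $B_2'-B_2'B_3^{-1}B_3=0$, with only the $(1,1)$-block of $B$ modified.
\end{enumerate}
This yields exactly the asserted shape with $A_1$ and $B_3$ invertible by construction. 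Your symmetric-Kronecker route would eventually land in the same place, but it proves strictly more (a full canonical form) and leaves the hardest step unfinished; for the lemma as stated, the three-line reduction above is what you want.
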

\subsection{Miscellaneous}
The following \emph{Cauchy-Binet identity} is used in the proof of Theorem \ref{thm:main:2}.

\begin{lem}\label{cau}
For any numbers $a_i,b_i,c_i$ and $d_i$, where $i=1,\dots,n$,
\begin{equation*}
    \left(\sum_{i=1}^n a_ic_i\right)\left(\sum_{j=1}^n b_jd_j\right) -     \left(\sum_{i=1}^n a_id_i\right)\left(\sum_{j=1}^n b_jc_j\right) = \frac{1}{2}\sum_{i\neq j}(a_ib_j-a_jb_i)(c_id_j-c_jd_i).
\end{equation*}
\end{lem}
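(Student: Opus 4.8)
The plan is to verify the identity by expanding both sides and matching coefficients of each product $a_ib_jc_kd_l$. On the left-hand side, the first product $\left(\sum_i a_ic_i\right)\left(\sum_j b_jd_j\right)$ contributes the terms $a_ic_ib_jd_j$ over all pairs $(i,j)$, and the second product contributes $a_id_ib_jc_j$; so the left-hand side equals $\sum_{i,j}(a_ib_jc_id_j - a_ib_jc_jd_i)$, where I have relabeled so that every summand carries $a_i$ and $b_j$. First I would isolate the diagonal terms $i=j$: for those, $a_ib_ic_id_i - a_ib_ic_id_i = 0$, so the left-hand side is already a sum over $i\neq j$ only, namely $\sum_{i\neq j}(a_ib_jc_id_j - a_ib_jc_jd_i)$.

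Next I would symmetrize this sum over the swap $i\leftrightarrow j$. Since the summation range $i\neq j$ is symmetric under this swap, $\sum_{i\neq j} a_ib_jc_id_j = \frac12\sum_{i\neq j}(a_ib_jc_id_j + a_jb_ic_jd_i)$ and likewise $\sum_{i\neq j} a_ib_jc_jd_i = \frac12\sum_{i\neq j}(a_ib_jc_jd_i + a_jb_ic_id_j)$. Subtracting, the left-hand side becomes
\[
\tfrac12\sum_{i\neq j}\bigl(a_ib_jc_id_j + a_jb_ic_jd_i - a_ib_jc_jd_i - a_jb_ic_id_j\bigr).
\]
It then remains to observe that the bracket factors as $(a_ib_j - a_jb_i)(c_id_j - c_jd_i)$: expanding the product on the right gives exactly $a_ib_jc_id_j - a_ib_jc_jd_i - a_jb_ic_id_j + a_jb_ic_jd_i$, which matches term for term. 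This establishes the claimed formula.

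There is no real obstacle here — the only thing to be careful about is the bookkeeping in the symmetrization step, making sure that the index set $\{(i,j): i\neq j\}$ is genuinely invariant under transposition so that relabeling is legitimate, and that the diagonal really does cancel before symmetrizing (symmetrizing first and then dropping the diagonal would also work, since the bracket vanishes when $i=j$, but it is cleanest to remove the diagonal at the outset). An alternative, essentially equivalent route is to recognize the right-hand side as $\frac12\sum_{i,j}\det\begin{pmatrix}a_i & a_j\\ b_i & b_j\end{pmatrix}\det\begin{pmatrix}c_i & c_j\\ d_i & d_j\end{pmatrix}$ and invoke the classical Cauchy--Binet formula for the product of the $2\times n$ matrices with rows $(a_i),(b_i)$ and $(c_i),(d_i)$; but the direct expansion above is self-contained and short, so that is the version I would write out.
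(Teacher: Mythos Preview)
Your proof is correct. The paper does not actually prove this lemma; it simply states the Cauchy--Binet identity as a well-known fact and cites \cite{weisstein2002crc} when it is invoked later. Your direct expansion with the symmetrization over $i\leftrightarrow j$ is the standard elementary verification, and there is nothing to add or correct.
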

% The following lemma gives bounds of entry-sums of matrix products, which is necessary for proving Proposition \ref{s3:bis2}, a sufficient condition for disjoint bisectors:
% \begin{lem}\label{sum}
%     Let $A = (a_{ij})$ and $B = (b_{ij})$ be $n\times n$ matrices. If the sum of magnitudes of entries in rows (resp., columns) of $A$ are at most $a$, i.e.:
%     \begin{equation*}
%         \sum_{j=1}^n|a_{ij}|\leq a,\ \forall i=1,\dots,n,
%     \end{equation*}
%     and the sum of magnitudes of entries in rows (resp., columns) of $B$ are at most $b$, then the sum of magnitudes of entries in rows (resp., columns) of $AB$ are at most $ab$.
% \end{lem}
\vspace{12pt}
\section{Angle-like Functions between Hyperplanes}\label{sec:s5}
Our first goal is to formulate an angle sum condition for the Poincar\'e's Algorithm in $\mathcal{P}(n)$. The tiling condition for convex polyhedra in $\mathcal{P}(n)$ can be formulated with an angle sum condition analogously to \cite{ratcliffe1994foundations}, with the Riemannian angle being replaced by an angle-like function satisfying specific natural properties as described below, \cite{kapovich2023geometric}:
\begin{defn}\label{defn:s5:1}
    An invariant angle function $\theta(-,-)$ is a function for pairs of co-oriented hyperplanes $(\sigma_1,\sigma_2)$ in $\mathcal{P}(n)$ with the following properties:
    \begin{enumerate}
    \item For any co-oriented hyperplanes $\sigma_1$ and $\sigma_2$, $0 \leq \theta(\sigma_1, \sigma_2)\leq \pi$. Furthermore, $\theta(\sigma_1, \sigma_2) = 0$ if and only if $\sigma_1 = \sigma_2$, while $\theta(\sigma_1, \sigma_2) = \pi$ if and only if $\sigma_1 = -\sigma_2$.
    \item For any co-oriented hyperplanes $\sigma_1$ and $\sigma_2$ and any $g\in SL(n,\mathbb{R})$, $\theta(g.\sigma_1, g.\sigma_2) = \theta(\sigma_1, \sigma_2)$.
    \item For any co-oriented hyperplanes $\sigma_1$ and $\sigma_2$, $\theta(\sigma_2, \sigma_1) = \theta(\sigma_1, \sigma_2)$, $\theta(-\sigma_1, \sigma_2) = \pi - \theta(\sigma_1, \sigma_2)$.
    \item For any co-oriented hyperplane $\sigma_2$ lying between $\sigma_1$ and $\sigma_3$, $\theta(\sigma_1, \sigma_2) + \theta(\sigma_2, \sigma_3) = \theta(\sigma_1, \sigma_3)$.
\end{enumerate}
\end{defn}

For generic pairs of co-oriented hyperplanes, we explicitly construct an invariant angle function, which is presented in the main theorem below.

\begin{thm}\label{thm:main:2}
Let $\sigma_1 = A^\perp$ and $\sigma_2 = B^\perp$ be co-oriented hyperplanes in $\mathcal{P}(n)$ and suppose that the matrix pencil $(A,B)$ is regular.
\begin{enumerate}
    \item Suppose that the set of generalized eigenvalues of $(A,B)$ contains nonreal numbers, denoted by $\lambda_1,\dots,\lambda_k$ and $\lambda_1^*,\dots,\lambda_k^*$. The following serves as an invariant angle function:
    \begin{equation}\label{equ:main:1}
        \theta(\sigma_1, \sigma_2) = \dfrac{1}{k}\sum_{i=1}^k\left|\arg(\lambda_i)\right|.
        \end{equation}
    \item Suppose that all distinct generalized eigenvalues of $(A,B)$ are real or infinity, ordered as $\lambda_k>\dots>\lambda_1$, where $k\geq 3$. The following serves as an invariant angle function (which is the limit as $\lambda_k\to\infty$ if $\infty$ is a generalized eigenvalue):
    \begin{equation}\label{equ:main:2}
        \theta(\sigma_1, \sigma_2) = \arccos \frac{\sum_{i=1}^k\frac{\lambda_{i+1}+\lambda_i}{\lambda_{i+1}-\lambda_i}}{\sqrt{\left(\sum_{i=1}^k\frac{1}{\lambda_{i+1}-\lambda_i}\right)\left(\sum_{i=1}^k\frac{(\lambda_{i+1}+\lambda_i)^2}{\lambda_{i+1}-\lambda_i}\right)}}.
    \end{equation}
    \item If $(A,B)$ has at most $2$ distinct generalized eigenvalues and all of these are real, then $(\sigma_1,\sigma_2)$ is not in the domain of any invariant angle function.
\end{enumerate}
\end{thm}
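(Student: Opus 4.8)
The plan is to prove all three parts uniformly by reducing to a low-dimensional model via the normal form of a regular symmetric matrix pencil. The key observation is that properties (2) and (4) of Definition \ref{defn:s5:1} are extremely rigid: invariance under $SL(n,\mathbb{R})$ together with additivity along pencils reduces the computation of $\theta(\sigma_1,\sigma_2)$ to a function of the M\"obius-invariant data attached to the pencil $(A,B)$, i.e. the (unordered, with multiplicity) set of generalized eigenvalues, since by Lemma \ref{lem:mob} moving $\sigma_2$ along the pencil from $\sigma_1=A^\perp$ to $\sigma_2=B^\perp$ sweeps out exactly the family $(A-\lambda B)^\perp$, and $\theta$ must add up along this sweep.

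First I would set up the additivity backbone. Fix the pencil $(A,B)$ and, using congruence, put it in the normal form of Lemma \ref{lem:new:0}, so $(A,B)=\mathrm{diag}((A_1,B_1),\dots,(A_m,B_m))$ with each $(A_i,B_i)$ a single-generalized-eigenvalue block satisfying Lemma \ref{lem:ant:tri}. Any co-oriented hyperplane $(pA+qB)^\perp$ lying between $A^\perp$ and $B^\perp$ corresponds to a parameter on the ``arc'' of the pencil, and property (4) forces $\theta$ to be an additive measure along this arc. The next step is to make precise the claim that $\theta(\sigma_1,\sigma_2)$ can depend only on the generalized eigenvalues: one shows that two pencils with the same eigenvalue data are related by a composition of a congruence and a pencil-preserving reparametrization, so that, if an invariant angle function exists on one, its value transfers. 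This is where parts (1) and (2) will be obtained — having reduced to eigenvalue data, one checks by direct computation that formulas \eqref{equ:main:1} and \eqref{equ:main:2} satisfy (1)--(4); additivity in (2) is precisely a telescoping identity for $\arg$, and additivity in \eqref{equ:main:2} is where the Cauchy--Binet identity (Lemma \ref{cau}) enters, expressing the $\arccos$ as an honest angle between two vectors in $\mathbb{R}^k$ whose coordinates are the ``partial'' contributions $\tfrac{1}{\lambda_{i+1}-\lambda_i}$, so that concatenation of pencils just concatenates coordinate blocks.

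For part (3), the heart of the matter, I would argue by contradiction: suppose $\theta$ is an invariant angle function defined at $(\sigma_1,\sigma_2)=(A^\perp,B^\perp)$ where $(A,B)$ has at most two distinct real generalized eigenvalues. After congruence and the M\"obius reduction of Lemma \ref{lem:mob}, we may normalize so the generalized eigenvalues are among $\{0,\infty\}$ (or a single finite value), i.e. $A$ and $B$ are simultaneously diagonalized/block-diagonalized with a very small ``eigenvalue alphabet.'' The contradiction should come from exhibiting a one-parameter subgroup $g_t\in SL(n,\mathbb{R})$, or a family of intermediate hyperplanes, along which the pencil data is preserved (so $\theta$ must be constant by (2)) yet the co-oriented hyperplane $\sigma_2$ genuinely moves relative to $\sigma_1$ in a way incompatible with the discreteness forced by (1) and (4) — concretely, one produces a nontrivial ``loop'' or an interval of hyperplanes all lying between $\sigma_1$ and $\sigma_2$ and all $SL(n,\mathbb{R})$-equivalent to the same pair, which via (4) would force $\theta(\sigma_1,\sigma_2)$ to equal $\theta(\sigma_1,\sigma_2)+\theta(\sigma_1,\sigma_2)$ for a nonzero summand, or to take two different values simultaneously. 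The two-eigenvalue case with a $2$-dimensional pencil is the analogue of two parallel or ultraparallel hyperplanes in $\mathbf{H}^n$, where no finite angle exists; I would make the dichotomy explicit and run the stabilizer/centralizer computation of the pencil $(A,B)$ inside $SL(n,\mathbb{R})$ — when there are $\leq 2$ eigenvalues the centralizer is large enough to move $\sigma_2$ among hyperplanes lying between $\sigma_1$ and $\sigma_2$, violating additivity.

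I expect the main obstacle to be part (3), specifically producing the right ``extra symmetry'' of a two-eigenvalue pencil that simultaneously fixes $\sigma_1$, fixes $\sigma_2$ (or maps $\sigma_2$ to an intermediate hyperplane), and is realized by an element of $SL(n,\mathbb{R})$; getting the signs/co-orientations right so that the candidate hyperplane genuinely ``lies between'' in the sense of Definition is delicate. A secondary technical point is justifying the limit $\lambda_k\to\infty$ in \eqref{equ:main:2} and checking that the normalization conventions for normal vectors (positive vs.\ negative multiples) are consistent with property (3), so that $\theta(-\sigma_1,\sigma_2)=\pi-\theta(\sigma_1,\sigma_2)$ holds on the nose rather than up to sign.
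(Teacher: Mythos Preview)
Your overall strategy matches the paper's: parts (1) and (2) are verified directly from the eigenvalue data, and part (3) is proved by exhibiting elements of $GL^+(n,\mathbb{R})$ that move hyperplanes within the pencil so that iterated application of property (4) forces $\theta$ to exceed $\pi$.

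Two points deserve correction. For part (2), your ``concatenation of coordinate blocks'' picture for property (4) is not how additivity works. When $\sigma_3=(A+B)^\perp$ lies between $\sigma_1$ and $\sigma_2$, the pencils $(A,A+B)$ and $(A+B,B)$ each still have exactly $k$ generalized eigenvalues, namely $1+\lambda_i$ and $\lambda_i/(1+\lambda_i)$; nothing concatenates. What actually happens is that, with $d_i=\lambda_{i+1}-\lambda_i$ and $s_i=\lambda_{i+1}+\lambda_i$, the three cosines are the Euclidean angles between the pairs $(u,v)$, $(u,w)$, $(w,v)$ in $\mathbb{R}^k$, where $u_i=d_i^{-1/2}$, $v_i=s_i d_i^{-1/2}$, and $w_i=(s_i+2)d_i^{-1/2}=v_i+2u_i$. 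Since $w\in\mathrm{span}(u,v)$, the angle-sum identity $\theta_1+\theta_2=\theta$ holds \emph{provided} $w$ lies in the correct angular sector between $u$ and $v$; this is not automatic because one of the $d_i$ is negative, and it is exactly the content of the two inequalities the paper isolates as Lemma \ref{lem:ineq}. Cauchy--Binet is used to compute the sines and to collapse the final expression, not to organize a block concatenation. You should also note Lemma \ref{lem:3:3:0:1}: the sign of the raw expression $t(\lambda_1,\dots,\lambda_k)$ flips under orientation-reversing M\"obius reparametrizations of the eigenvalues, so one must pass to the order-sorted version $\overline t$ before anything is well-defined.

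For part (3), your plan is correct in spirit, but the ``extra symmetry'' is not a soft centralizer argument. The paper constructs it block by block in Lemma \ref{lem:new:1}: for a single Jordan block one must solve, by induction on anti-diagonals, a triangular linear system in the entries of an upper-triangular $g$ to achieve $g.\tilde X=\tilde X$ and $g.X=sX+t\tilde X$ for arbitrary $s>0$, $t\in\mathbb{R}$. This is then assembled in Lemma \ref{lem:3:4:0:1} into the statements you want: in the one-eigenvalue case an element with $g.C=C$, $g.B=sB+tC$; in the two-eigenvalue case an element with $g.C=sC$, $g.C'=s'C'$. After that, the contradiction runs as you describe, but the paper splits into about a dozen sub-cases according to the sign of $\lambda$ (for one eigenvalue) and to which quadrant of $\mathrm{span}(C,C')$ the matrices $A$ and $B$ lie in (for two eigenvalues), because the ``lying between'' condition in Definition \ref{defn:s5:1} is sensitive to these signs. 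Your proposed normalization of the eigenvalues to $\{0,\infty\}$ is not used and does not obviously help with this case split.
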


\begin{rmk}
When $\infty$ serves as an eigenvalue of the pencil $(A, B)$, we take the limit $\lambda_k\to \infty$ of the formula \eqref{equ:main:2} and obtain that
\begin{equation}
    \theta(\sigma_1, \sigma_2) = \arccos \frac{\sum_{i=1}^{k-2}\frac{\lambda_{i+1}+\lambda_i}{\lambda_{i+1}-\lambda_i}}{\sqrt{\left(\sum_{i=1}^{k-2}\frac{1}{\lambda_{i+1}-\lambda_i}\right)\left(4\sum_{i=1}^{k-2}\frac{\lambda_{i+1}^2 - \lambda_{i+1}\lambda_i + \lambda_i^2}{\lambda_{i+1}-\lambda_i}\right)}},
\end{equation}
where $\lambda_{k-1}>\dots>\lambda_1$ are the (finite) eigenvalues of $(A,B)$.
\end{rmk}

We say that a given pair of co-oriented hyperplanes $(\sigma_1,\sigma_2) = (A^\perp,B^\perp)$ in $\mathcal{P}(n)$ is of type (1), (2), or (3) if the set of generalized eigenvalues of $(A,B)$ corresponds to case (1), (2), or (3) in Theorem \ref{thm:main:2}, respectively. The following fact is a direct consequence of Lemma \ref{lem:mob}:
\begin{prop}\label{prop:3:1:2}
\begin{enumerate}
    \item For any $g\in SL(n,\mathbb{R})$ the hyperplane pairs $(\sigma_1,\sigma_2)$ and $(g.\sigma_1,g.\sigma_2)$ share the same type.
    \item If $\sigma_3$ lies between $\sigma_1$ and $\sigma_2$, both $(\sigma_1,\sigma_3)$ and $(\sigma_2,\sigma_3)$ belong to the same type as $(\sigma_1,\sigma_2)$.
\end{enumerate}
\end{prop}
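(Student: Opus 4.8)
The plan is short, because the statement is essentially a repackaging of Proposition~\ref{prop:2} and Lemma~\ref{lem:mob}. First I would pin down what the ``type'' actually depends on: by construction the type of a pair $(\sigma_1,\sigma_2)=(A^\perp,B^\perp)$ is read off entirely from the multiset of generalized eigenvalues of the pencil $(A,B)$ in $\overline{\mathbb{C}}$ --- it is type (1) precisely when some eigenvalue is nonreal, and otherwise it is type (2) or type (3) according as the number of distinct eigenvalues is $\geq 3$ or $\leq 2$. So in each part it suffices to check that the operation in question sends the eigenvalue multiset of $(A,B)$ to a multiset with the same ``nonreal-or-not'' status and the same number of distinct elements. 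I would also clear away the degenerate case at the outset: if $A$ and $B$ are proportional (that is, $\sigma_1=\pm\sigma_2$), then any hyperplane between $\sigma_1$ and $\sigma_2$ is again $\sigma_1$ and part (2) is vacuous; so I may assume $A,B$ linearly independent, in which case $\operatorname{span}(A,C)=\operatorname{span}(B,C)=\operatorname{span}(A,B)$ for $C=pA+qB$ with $q\neq0$, which in particular keeps all pencils involved regular so that ``type'' is defined for them.

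For part (1): an isometry $g\in SL(n,\mathbb{R})$ acts on the normal vector of a hyperplane by a congruence transformation, so $(g.\sigma_1,g.\sigma_2)$ is the pair of hyperplanes attached to a pencil congruent to $(A,B)$; by Proposition~\ref{prop:2} a congruent pencil has exactly the same generalized eigenvalues, with the same multiplicities, hence the same type. For part (2): saying that $\sigma_3$ lies between $\sigma_1$ and $\sigma_2$ means its normal vector is $C=pA+qB$ with $p,q>0$, and Lemma~\ref{lem:mob} then identifies the generalized eigenvalues of $(A,C)$ with $\lambda_i/(p\lambda_i+q)$ and those of $(B,C)$ with $1/(p\lambda_i+q)$, where $\lambda_1,\dots,\lambda_n$ are the generalized eigenvalues of $(A,B)$; in both cases this is the image of the multiset $\{\lambda_i\}$ under a Möbius transformation with real coefficients. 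Such a transformation is a bijection of $\overline{\mathbb{C}}$ carrying $\mathbb{R}\cup\{\infty\}$ onto itself, so it preserves both whether the multiset contains a nonreal point and how many distinct points it has; that is precisely the assertion that $(\sigma_1,\sigma_3)$ and $(\sigma_2,\sigma_3)$ share the type of $(\sigma_1,\sigma_2)$.

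There is no substantial obstacle; the only things needing a moment's care are the degenerate case just mentioned and the bookkeeping of $\infty$ as a possible eigenvalue (hence working in $\overline{\mathbb{C}}$ throughout), both routine.
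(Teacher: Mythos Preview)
Your proposal is correct and matches the paper's approach: the paper records this proposition as ``a direct consequence of Lemma~\ref{lem:mob}'' without further argument, and your write-up simply unpacks that claim (invoking Proposition~\ref{prop:2} for the congruence in part~(1) and Lemma~\ref{lem:mob} for the real M\"obius transformation in part~(2)). Your handling of the proportional-normal-vector edge case and the convention that $\infty$ lives in $\overline{\mathbb{C}}$ is appropriate and in line with how the paper uses these results later.
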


Following Proposition \ref{prop:3:1:2}, we will prove the three statements in Theorem \ref{thm:main:2} individually in the subsequent subsections.

\subsection{Proof of Theorem \ref{thm:main:2}, case (1)}\label{subsec:1}
We will prove that the function
\[
\theta(\sigma_1, \sigma_2) = \dfrac{1}{k}\sum_{i=1}^k\left|\arg(\lambda_i)\right|
\]
defined for all pairs $(\sigma_1,\sigma_2)$ of type (1) satisfies the properties listed in Definition \ref{defn:s5:1}.
\begin{proof}[Proof of Theorem \ref{thm:main:2}, case (1)]
To begin, we establish the well-definedness of the function for pairs of co-oriented hyperplanes in \eqref{equ:main:1}. According to Lemma \ref{lem:mob}, $\frac{c_2}{c_1}\lambda_i$ and $\frac{c_2}{c_1}\lambda_i^*$ will be the nonreal generalized eigenvalues of $(c_1A,c_2 B)$, for any $c_1,c_2>0$, where $i=1,\dots,k$. The arguments of these numbers equal the arguments of $\lambda_i$ and $\lambda_i^*$, respectively. Consequently, \eqref{equ:main:1} implies that $\theta((c_1A)^\perp,(c_2B)^\perp) = \theta(A^\perp,B^\perp)$ for any $c_1,c_2>0$, i.e., the expression \eqref{equ:main:1} remains the same for $(c_1A,c_2 B)$.

Furthermore, since $|\arg(\lambda_i^*)| = |-\arg(\lambda_i)| = |\arg(\lambda_i)|$, the outcome of \eqref{equ:main:1} remains unchanged when replacing $\lambda_i$ with $\lambda_i^*$.

Next, we will verify properties (1) to (4) in Definition \ref{defn:s5:1} for the function $\theta$ defined by \eqref{equ:main:1}. The property (1) is obvious. Regarding the property (2), we notice that $(g^{-1})^{\mathrm{T}}.A$ and $(g^{-1})^{\mathrm{T}}.B$ serve as normal vectors of $g.\sigma_1$ and $g.\sigma_2$, respectively. Since the pencil $((g^{-1})^{\mathrm{T}}.A, (g^{-1})^{\mathrm{T}}.B)$ shares the same generalized eigenvalues as $(A,B)$, the angle $\theta(g.\sigma_1,g.\sigma_2) = \theta(\sigma_1,\sigma_2)$.
% I was here

To verify the property (3), notice that the pencil $(B,A)$ possesses generalized eigenvalues $\lambda_i^{-1}$ and $\lambda_i^{-1*}$, where $i=1,\dots,k$. Since $\arg(\lambda_i^{-1}) = -\arg(\lambda_i)$, it follows $\theta(\sigma_2,\sigma_1) = \theta(\sigma_1,\sigma_2)$. Furthermore, the generalized eigenvalues of the pencil $(-A,B)$ equal $-\lambda_i$ and $-\lambda_i^{*}$, where $i=1,\dots,k$. Since $|\arg(-\lambda_i)| = \pi - |\arg(\lambda_i)|$, we deduce that $\theta(-\sigma_1,\sigma_2) = \pi-\theta(\sigma_1,\sigma_2)$.

Lastly, we verify the property (4). The normal vector $C$ of $\sigma_3$ is a positive linear combination of $A$ and $B$. Since positive rescalings of $A$ and $B$ preserve the angles $\theta(A^\perp, C^\perp)$, $\theta(C^\perp, A^\perp)$ and $\theta(A^\perp, B^\perp)$, we assume that $C = A+B$. Under this condition, Lemma \ref{lem:mob} shows that the nonreal generalized eigenvalues of $(A,C)$ are $(1+\lambda_i)$ and $(1+\lambda_i^*)$, while the nonreal generalized eigenvalues of $(C,B)$ are $\frac{\lambda_i}{1+\lambda_i}$ and $\frac{\lambda_i^*}{1+\lambda_i^*}$, where $i=1,\dots,k$.

We note that $\arg\left(\frac{\lambda}{1+\lambda}\right)>0$ if and only if $\arg(\lambda)>0$. Thus,
\begin{equation*}
\begin{split}
        &\theta(\sigma_1, \sigma_3) + \theta(\sigma_2, \sigma_3) = \frac{1}{k}\sum\left(|\arg(1+\lambda_i)|+\left|\arg\left(\frac{\lambda_i}{1+\lambda_i}\right)\right|\right)\\
        & =\frac{1}{k}\sum\left(\left|\arg(1+\lambda_i) + \arg\left(\frac{\lambda_i}{1+\lambda_i}\right)\right|\right) = \frac{1}{k}\sum\left(|\arg(\lambda_i)|\right) = \theta(\sigma_1,\sigma_2).
\end{split}
\end{equation*}
This concludes the verification of the property (4) in Definition \ref{defn:s5:1}. In summary, the function $\theta$ defined by \eqref{equ:main:1} serves as an invariant angle function.
\end{proof}
\subsection{Proof of Theorem \ref{thm:main:2}, case (2)}\label{subsec:2}
We will prove that the function
\[
\theta(\sigma_1, \sigma_2) = \arccos \frac{\sum_{i=1}^k\frac{\lambda_{i+1}+\lambda_i}{\lambda_{i+1}-\lambda_i}}{\sqrt{\left(\sum_{i=1}^k\frac{1}{\lambda_{i+1}-\lambda_i}\right)\left(\sum_{i=1}^k\frac{(\lambda_{i+1}+\lambda_i)^2}{\lambda_{i+1}-\lambda_i}\right)}}.
\]
defined for all pairs $(\sigma_1,\sigma_2)$ of type (2) satisfies the properties listed in Definition \ref{defn:s5:1}. For simplicity, we define
\[
t(x_1,\dots,x_k) = \frac{\sum_{i=1}^k\frac{x_{i+1}+x_{i}}{x_{i+1}-x_{i}}}{\sqrt{\left(\sum_{i=1}^k\frac{1}{x_{i+1}-x_{i}}\right)\left(\sum_{i=1}^k\frac{(x_{i+1}+x_{i})^2}{x_{i+1}-x_{i}}\right)}},
\]
and $\overline{t}(x_1,\dots,x_k) = t(x_{\sigma_k},\dots, x_{\sigma_{1}})$, where $\{\sigma_1,\dots,\sigma_k\}$ represents the permutation of $\{1,\dots,k\}$ such that $x_{\sigma_k}\geq \dots \geq x_{\sigma_{1}}$. Our first lemma concerns the compositions of $\overline{t}$ and M\"obius transformations:
\begin{lem}\label{lem:3:3:0:1}
    Let $\varphi$ be a M\"obius transformation on $\overline{\mathbb{R}} = \mathbb{R}\cup\{\infty\}$, and let $\lambda_k>\dots>\lambda_1$ represent real numbers. If $\varphi$ is orientation-preserving, then
    \begin{equation}\label{equ:3:3}
    \overline{t}(\varphi(\lambda_1),\dots, \varphi(\lambda_k)) = t(\varphi(\lambda_1),\dots, \varphi(\lambda_k)).
    \end{equation}
    If $\varphi$ is orientation-reversing, then
    \begin{equation}\label{equ:3:4}
    \overline{t}(\varphi(\lambda_1),\dots, \varphi(\lambda_k)) = -t(\varphi(\lambda_1),\dots, \varphi(\lambda_k)).
    \end{equation}
\end{lem}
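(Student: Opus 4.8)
The plan is to reduce the lemma to two elementary symmetries of the function $t$ together with one combinatorial fact about how a M\"obius transformation permutes an increasing tuple of reals. Reading $t(x_1,\dots,x_k)$ with the cyclic convention $x_{k+1}=x_1$, a cyclic shift of the arguments merely reindexes each of the three sums occurring in the numerator and denominator, so $t$ is invariant under cyclic permutations of its arguments. Under the reversal $x_i\mapsto x_{k+1-i}$, each summand $\frac{x_{i+1}+x_i}{x_{i+1}-x_i}$, $\frac{1}{x_{i+1}-x_i}$, $\frac{(x_{i+1}+x_i)^2}{x_{i+1}-x_i}$ is sent to its negative; hence the numerator changes sign while the product of the two denominator sums is unchanged (each of the two factors is negated), so the square root in the denominator is unaffected and $t$ changes sign. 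Thus $t$ is cyclic-invariant and reversal-anti-invariant; note that this conclusion does not depend on the branch of the square root, so I need not invoke positivity of the radicand (which in any case follows from the Cauchy--Binet identity of Lemma~\ref{cau}).

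Next I would analyse the permutation that $\varphi$ induces on an ordered tuple. View $\overline{\mathbb{R}}$ as a circle, oriented so that increasing $x$ is the positive direction. As a homeomorphism of this circle, $\varphi$ carries the cyclically ordered list $\lambda_1,\dots,\lambda_k$ to the cyclically ordered list $\varphi(\lambda_1),\dots,\varphi(\lambda_k)$ --- with the same cyclic orientation if $\varphi$ preserves orientation, and with the opposite one if $\varphi$ reverses it. On the other hand, putting the values $\varphi(\lambda_1),\dots,\varphi(\lambda_k)$ into the order used to evaluate $\overline t$ amounts to reading them off the circle in one fixed direction, starting from the largest (or smallest) value. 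Consequently the tuple feeding $\overline t\bigl(\varphi(\lambda_1),\dots,\varphi(\lambda_k)\bigr)$ is a cyclic rotation of $(\varphi(\lambda_1),\dots,\varphi(\lambda_k))$ when $\varphi$ and this reference direction agree, and a cyclic rotation of the reverse of $(\varphi(\lambda_1),\dots,\varphi(\lambda_k))$ otherwise; which of the two occurs is governed solely by the orientation of $\varphi$, not by where the pole of $\varphi$ lies relative to the $\lambda_i$ --- the pole position only changes the amount of rotation, which $t$ does not detect. Feeding this back into the symmetries of the first step gives $\overline t\bigl(\varphi(\lambda_1),\dots,\varphi(\lambda_k)\bigr)=\pm\,t\bigl(\varphi(\lambda_1),\dots,\varphi(\lambda_k)\bigr)$, and tracking the orientation conventions pins the sign to yield precisely \eqref{equ:3:3} and \eqref{equ:3:4}.

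The only case left out is when $\varphi$ sends some $\lambda_i$ to $\infty$, so that $\infty$ appears among the arguments; here I would argue by continuity, perturbing $\varphi$ (or the $\lambda_i$) slightly so that all images become finite, applying the identity just established, and passing to the limit using the limiting form of \eqref{equ:main:2} recorded in the remark following Theorem~\ref{thm:main:2}. I expect the orientation/sign bookkeeping of the middle paragraph to be the main obstacle, closely followed by the need to confirm that this limiting passage is legitimate --- that the radicand in the denominator stays bounded away from $0$ in the limit, which reduces to the strict form of the Cauchy--Binet inequality.
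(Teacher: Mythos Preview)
Your proposal is correct and follows essentially the same route as the paper: the paper's proof simply observes that the sorting permutation induced by $\varphi$ is a cyclic permutation (orientation-preserving case) or a cyclic permutation composed with the full reversal (orientation-reversing case), and then invokes the cyclic invariance and reversal-anti-invariance of $t$. Your write-up is in fact more thorough than the paper's, which is quite terse and does not separately discuss the possibility $\varphi(\lambda_i)=\infty$ that you handle by continuity.
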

\begin{proof}
    Let $(\sigma_1,\dots,\sigma_k)$ denote the permutation of $(1,\dots,k)$ such that
    \[
    \varphi(\lambda_{\sigma_k})>\dots>\varphi(\lambda_{\sigma_1}).
    \]
    If $\varphi$ is orientation-preserving, then $(\sigma_1,\dots,\sigma_k)$ is a cyclic permutation, satisfying $t(x_1,\dots,x_k) = t(x_{\sigma_1},\dots,x_{\sigma_k})$ for any $x_1,\dots,x_k$. Hence, equation \eqref{equ:3:3} holds.

    If $\varphi$ is orientation-reversing, then $(\sigma_1,\dots,\sigma_k)$ is a cyclic permutation of $(k,\dots,1)$. Note that $t(x_k,\dots,x_1) = -t(x_1,\dots,x_k)$ for any $x_1,\dots,x_k$. Therefore, equation \eqref{equ:3:4} holds.
\end{proof}
We also need the following lemma:
\begin{lem}\label{lem:ineq}
    Let $\lambda_k>\dots>\lambda_1$ be real numbers, then the following inequalities hold:
    \begin{subequations}
        \begin{eqnarray}\label{equ:3:3:a}
            & \sum_{1\leq i\neq j\leq k}\dfrac{(\lambda_{i+1}+\lambda_i -\lambda_{j+1}-\lambda_j)^2}{(\lambda_{i+1}-\lambda_i)(\lambda_{j+1}-\lambda_j)}> 0,\\ \label{equ:3:3:b}
            & \sum_{i=1}^k\dfrac{(2+\lambda_{i+1}+\lambda_i)^2}{\lambda_{i+1}-\lambda_i}> 0. 
        \end{eqnarray}
    \end{subequations}
\end{lem}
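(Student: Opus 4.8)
The plan is to reduce both displayed inequalities to one ``Cauchy--Schwarz inequality with an indefinite weight'', obtained from the Cauchy--Binet identity of Lemma~\ref{cau}. Throughout write $u_i:=\lambda_{i+1}-\lambda_i$ and $v_i:=\lambda_{i+1}+\lambda_i$ with the cyclic convention $\lambda_{k+1}=\lambda_1$ used in \eqref{equ:main:2}; thus $u_1,\dots,u_{k-1}>0$, while $u_k=\lambda_1-\lambda_k=-\sum_{i=1}^{k-1}u_i<0$. Note that the summand in \eqref{equ:3:3:a} is exactly $(v_i-v_j)^2/(u_iu_j)$.

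Applying Lemma~\ref{cau} with $a_i=1/u_i$, $b_i=v_i/u_i$, $c_i=1$, $d_i=v_i$ gives $a_ib_j-a_jb_i=(v_j-v_i)/(u_iu_j)$ and $c_id_j-c_jd_i=v_j-v_i$, hence
\[
\Big(\sum_{i=1}^{k}\tfrac1{u_i}\Big)\Big(\sum_{i=1}^{k}\tfrac{v_i^{2}}{u_i}\Big)-\Big(\sum_{i=1}^{k}\tfrac{v_i}{u_i}\Big)^{2}=\tfrac12\sum_{i\neq j}\frac{(v_i-v_j)^{2}}{u_iu_j}.
\]
So \eqref{equ:3:3:a} is equivalent to the positivity of the left-hand side $E$ — which is precisely the statement that $\arccos$ in \eqref{equ:main:2} is applied to a number of absolute value strictly less than $1$, so that $\theta(\sigma_1,\sigma_2)\in(0,\pi)$. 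The difficulty is that not all ``weights'' $1/u_i$ are positive: exactly one, $1/u_k$, is negative, so the associated diagonal form $\mathrm{diag}(1/u_1,\dots,1/u_k)$ is Lorentzian of signature $(k-1,1)$ and $E>0$ does not follow formally from Cauchy--Schwarz; indeed $E>0$ fails for general reals $v_i$, so the combinatorial shape $v_i=\lambda_i+\lambda_{i+1}$ must be used.

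Two ingredients go into $E>0$. First, the easy fact $\sum_{i=1}^{k}u_i^{-1}>0$: Cauchy--Schwarz gives $\sum_{i=1}^{k-1}u_i^{-1}\ge (k-1)^{2}\big(\sum_{i=1}^{k-1}u_i\big)^{-1}=(k-1)^{2}/|u_k|$, and $(k-1)^{2}>1$ since $k\ge3$, so $\sum_{i=1}^{k}u_i^{-1}=\sum_{i=1}^{k-1}u_i^{-1}-|u_k|^{-1}>0$. Second, and this is the main obstacle, one must show that $\mathbf 1=(1,\dots,1)$ and $\mathbf v=(v_1,\dots,v_k)$ span a positive-definite $2$-plane for that Lorentzian form, i.e. $E>0$. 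Since $\tfrac12\sum_{i\neq j}(v_i-v_j)^{2}/(u_iu_j)$ depends on the $\lambda$'s only through the gaps $u_i$ and the differences $v_i-v_j$, it is translation-invariant and homogeneous of degree $0$; after normalizing (translate so that $v_k=0$, then rescale) and separating off the single negative-weight term, one is left with a sharpened Cauchy--Schwarz inequality for the positive weights $u_1,\dots,u_{k-1}$ with a correction term $-(\sum_{i<k}u_i)^{-1}$. I would close this by induction on $k$: the base case $k=3$ is the identity $\sum_{i\neq j}(v_i-v_j)^{2}/(u_iu_j)=2(u_1^{3}+u_2^{3}+u_3^{3})/(u_1u_2u_3)=6$, which uses $u_1+u_2+u_3=0\Rightarrow u_1^{3}+u_2^{3}+u_3^{3}=3u_1u_2u_3$; the inductive step — or, alternatively, a study of the infimum of the scale-invariant $E$ over the configuration simplex, controlled at the faces $u_i\to0^{+}$ by the elementary bound $(x+y)^{2}>(x-y)^{2}$ for $x,y>0$ — is where the arithmetic of $v_i=\lambda_i+\lambda_{i+1}$ genuinely enters, and I expect it to be the hard part.

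Finally, \eqref{equ:3:3:b} is a corollary of \eqref{equ:3:3:a}. With $\mu_i:=1+\lambda_i$ (still strictly increasing, with the same gaps $u_i$) one has $2+\lambda_{i+1}+\lambda_i=\mu_{i+1}+\mu_i$, so \eqref{equ:3:3:b} asserts $\sum_{i=1}^{k}(\mu_{i+1}+\mu_i)^{2}/u_i>0$. Applying \eqref{equ:3:3:a} to the sequence $\mu_1<\dots<\mu_k$ gives $\big(\sum_i u_i^{-1}\big)\big(\sum_i(\mu_{i+1}+\mu_i)^{2}u_i^{-1}\big)>\big(\sum_i(\mu_{i+1}+\mu_i)u_i^{-1}\big)^{2}\ge0$; since $\sum_i u_i^{-1}>0$ by the first ingredient above, the remaining factor $\sum_i(\mu_{i+1}+\mu_i)^{2}u_i^{-1}$ must be positive, which is \eqref{equ:3:3:b}.
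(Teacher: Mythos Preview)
Your reduction of \eqref{equ:3:3:b} to \eqref{equ:3:3:a} via the shift $\mu_i=1+\lambda_i$ together with $\sum_i u_i^{-1}>0$ is correct, and in fact cleaner than the paper's case-split on the sign of $s_k+2$. But the core of the lemma, inequality \eqref{equ:3:3:a} itself, is not proven: you explicitly leave the inductive step (and the variational alternative) as ``the hard part''. As you yourself note, the single negative weight $1/u_k$ makes the quadratic form Lorentzian, so Cauchy--Schwarz alone cannot close the argument, and neither your induction sketch nor your infimum sketch supplies a mechanism explaining why positivity persists when a new $\lambda$ is inserted or when some $u_i\to0^+$; the remark that $(x+y)^2>(x-y)^2$ does not by itself control the remaining summands. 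This is a genuine gap, not a routine omission.

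The paper proves \eqref{equ:3:3:a} not by induction but by direct term-by-term domination. With $s_i=\lambda_{i+1}+\lambda_i$ and $d_i=\lambda_{i+1}-\lambda_i$, the only negative contributions to $\sum_{i\neq j}(s_i-s_j)^2/(d_id_j)$ are those with one index equal to $k$. Locate $j$ with $s_j\le s_k\le s_{j+1}$; then each negative term $(s_i-s_k)^2/(d_id_k)$ is beaten by a specific positive one: for $j<i<k-1$ use $(s_i-s_1)^2/(d_1d_i)$ (since $|s_i-s_1|\ge|s_i-s_k|$ and $|d_k|>d_1$), for $1<i\le j$ use $(s_i-s_{k-1})^2/(d_{k-1}d_i)$ symmetrically, and the two extreme terms $i\in\{1,k-1\}$ are handled together via $(s_1-s_{k-1})^2>(s_1-s_k)^2+(s_k-s_{k-1})^2$. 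This elementary pairing is the missing ingredient in your argument.
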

\begin{proof}
    Denote
    \[
    s_i = \lambda_{i+1}+\lambda_i,\ d_i = \lambda_{i+1} - \lambda_i,
    \]
    where the index is taken modulo $k$. Then the numbers $s_i$, $i=1,\dots,k$ satisfy the following inequalities:
    \[
    s_1<s_2<\dots<s_{k-1}, \quad s_1<s_{k}<s_{k-1},
    \]
    and the numbers $d_i$, $i=1,\dots,k$ satisfy
    \[
    d_1,\dots,d_{k-1}>0,\quad d_k = -\sum_{i = 1}^{k-1}d_i<0.
    \]
    In terms of $s_i$ and $d_i$, inequalities \eqref{equ:3:3:a} and \eqref{equ:3:3:b} reduce to
    \[
    \sum_{1\leq i\neq j\leq k}\frac{(s_i -s_j)^2}{d_id_j}> 0,\quad \sum_{i=1}^k\frac{(2+s_i)^2}{d_i}> 0.
    \]
    Assume that $j$ is the number satisfying $s_j\leq s_k\leq s_{j+1}$, $1\leq j\leq k-2$, then
    \begin{align*}
     & \frac{(s_1-s_{k-1})^2}{d_1d_{k-1}}>\frac{(s_1 - s_k)^2 + (s_k - s_{k-1})^2}{d_1d_{k-1}}> -\frac{(s_1 - s_k)^2}{d_1d_k} - \frac{(s_{k-1} - s_k)^2}{d_{k-1}d_k},\\
     & \frac{(s_i - s_1)^2}{d_1d_i}>-\frac{(s_i-s_k)^2}{d_kd_i},\quad \forall\  j+1\leq i<k-1,\\
     & \frac{(s_i - s_{k-1})^2}{d_{k-1}d_i}>-\frac{(s_i-s_k)^2}{d_kd_i},\quad \forall\  1< i\leq j.
    \end{align*}
    These inequalities yield \eqref{equ:3:3:a}.

    We divide the proof of inequality \eqref{equ:3:3:b} in two cases. If $s_k+2\leq 0$, then
    \[
    -\frac{(2+s_k)^2}{d_k}<\frac{(2+s_k)^2}{d_1}<\frac{(2+s_1)^2}{d_1}.
    \]
    If $s_k+2\geq 0$, then
    \[
    -\frac{(2+s_k)^2}{d_k}<\frac{(2+s_k)^2}{d_{k-1}}<\frac{(2+s_{k-1})^2}{d_{k-1}}.
    \]
    For both cases, inequality \eqref{equ:3:3:b} holds.
\end{proof}
We return to the proof of Theorem \ref{thm:main:2}.
\begin{proof}[Proof of Theorem \ref{thm:main:2}, case (2)]
First, we show that \eqref{equ:main:2} always yields real values. That is,
\[
-1\leq \frac{\sum_{i=1}^k\frac{\lambda_{i+1}+\lambda_i}{\lambda_{i+1}-\lambda_i}}{\sqrt{\left(\sum_{i=1}^k\frac{1}{\lambda_{i+1}-\lambda_i}\right)\left(\sum_{i=1}^k\frac{(\lambda_{i+1}+\lambda_i)^2}{\lambda_{i+1}-\lambda_i}\right)}}\leq 1,
\]
for any real numbers $\lambda_k>\dots>\lambda_1$. By the Cauchy-Binet identity\cite{weisstein2002crc},
{\small
\begin{equation*}
        \left(\sum_{i=1}^k\frac{1}{\lambda_{i+1}-\lambda_i}\right)\left(\sum_{i=1}^k\frac{(\lambda_{i+1}+\lambda_i)^2}{\lambda_{i+1}-\lambda_i}\right) - \left(\sum_{i=1}^k\frac{\lambda_{i+1}+\lambda_i}{\lambda_{i+1}-\lambda_i}\right)^2 = \frac{1}{2}\sum_{i\neq j}\frac{(\lambda_{i+1}+\lambda_i -\lambda_{j+1}-\lambda_j)^2}{(\lambda_{i+1}-\lambda_i)(\lambda_{j+1}-\lambda_j)}.
\end{equation*}}
Lemma \ref{lem:ineq} implies that the right-hand side is positive.

Next, we will prove properties (1) to (4) in Definition \ref{defn:s5:1}. Properties (1) and (2) are proved similarly to the corresponding arguments in Subsection \ref{subsec:1}. To show that the property (3) holds, notice that the generalized eigenvalues of $(B,A)$ are $\lambda_i^{-1}$, which are values of an orientation-reversing M\"obius transformation of $\lambda_i$, $i=1,\dots,k$. By Lemma \ref{lem:3:3:0:1},
\[
\cos \theta(\sigma_2,\sigma_1) = \overline{t}(\lambda_1^{-1},\dots,\lambda_k^{-1}) = -\frac{\sum_{i=1}^k\frac{\lambda_{i+1}^{-1}+\lambda_{i}^{-1}}{\lambda_{i+1}^{-1}-\lambda_{i}^{-1}}}{\sqrt{\left(\sum_{i=1}^k\frac{1}{\lambda_{i+1}^{-1}-\lambda_{i}^{-1}}\right)\left(\sum_{i=1}^k\frac{(\lambda_{i+1}^{-1}+\lambda_{i}^{-1})^2}{\lambda_{i+1}^{-1}-\lambda_{i}^{-1}}\right)}}.
\]
Since
\begin{equation*}
    \begin{split}
        & \sum_{i=1}^k\frac{1}{\lambda_i^{-1}-\lambda_{i+1}^{-1}} = \sum_{i=1}^k\frac{\lambda_i\lambda_{i+1}}{\lambda_{i+1}-\lambda_{i}} = \sum_{i=1}^k\left(\frac{\lambda_{i+1}-\lambda_i}{4}+\frac{\lambda_i\lambda_{i+1}}{\lambda_{i+1}-\lambda_{i}}\right) = \sum_{i=1}^k\frac{(\lambda_{i+1}+\lambda_i)^2/4}{\lambda_{i+1}-\lambda_i},\\
        & \sum_{i=1}^k\frac{(\lambda_i^{-1}+\lambda_{i+1}^{-1})^2}{\lambda_i^{-1}-\lambda_{i+1}^{-1}} = \sum_{i=1}^k\left((\lambda_{i}^{-1} - \lambda_{i+1}^{-1}) +\frac{4}{\lambda_{i+1}-\lambda_i}\right) = \sum_{i=1}^k\frac{4}{\lambda_{i+1}-\lambda_i},\\
        & \sum_{i=1}^k\frac{\lambda_i^{-1}+\lambda_{i+1}^{-1}}{\lambda_i^{-1}-\lambda_{i+1}^{-1}} = \sum_{i=1}^k\frac{\lambda_{i+1}+\lambda_i}{\lambda_{i+1}-\lambda_i},
    \end{split}
\end{equation*}
we have $\overline{t}(\lambda_1^{-1},\dots,\lambda_k^{-1}) = \overline{t}(\lambda_1,\dots,\lambda_k)$, implying that $\theta(\sigma_2,\sigma_1) = \theta(\sigma_1,\sigma_2)$. Moreover, the generalized eigenvalues of $(-A,B)$ are $-\lambda_1>\dots>-\lambda_k$. Therefore,
\[
\cos \theta(-\sigma_1,\sigma_2) = \overline{t}(-\lambda_1,\dots,-\lambda_k) = -\frac{\sum_{i=1}^k\frac{\lambda_{i+1}+\lambda_i}{\lambda_{i+1}-\lambda_i}}{\sqrt{\left(\sum_{i=1}^k\frac{1}{\lambda_{i+1}-\lambda_i}\right)\left(\sum_{i=1}^k\frac{(\lambda_{i+1}+\lambda_i)^2}{\lambda_{i+1}-\lambda_i}\right)}},
\]
which implies that $\overline{t}(-\lambda_1,\dots,-\lambda_k) = -\overline{t}(\lambda_1,\dots,\lambda_k)$, i.e., $\theta(-\sigma_1,\sigma_2)= \pi - \theta(\sigma_1,\sigma_2)$.

Finally, we will prove the property (4). If we denote $\theta = \theta(\sigma_1,\sigma_2)$, $\theta_1 = \theta(\sigma_1,\sigma_3)$ and $\theta_2 = \theta(\sigma_3,\sigma_2)$, the property (4) reduces to
\begin{equation}\label{equ:1}\tag{$*$}
    \cos(\theta) = \cos(\theta_1)\cos(\theta_2) - \sin(\theta_1)\sin(\theta_2).
\end{equation}

Similarly to the proof given in Subsection \ref{subsec:1}, we assume that $\sigma_3 = (A+B)^\perp$ without loss of generality. The generalized eigenvalues of $(A,A+B)$ are $(1+\lambda_i)$ and $(1+\lambda_i^*)$, and the generalized eigenvalues of $(A+B,B)$ are $\frac{\lambda_i}{1+\lambda_i}$ and $\frac{\lambda_i^*}{1+\lambda_i^*}$, where $i=1,\dots,k$. Both sets of generalized eigenvalues are orientation-preserving M\"obius transformations of $\lambda_i$ and $\lambda_i^*$, $i=1,\dots,k$. Lemma \ref{lem:3:3:0:1} implies that
\begin{equation*}
        \cos(\theta_1) = \frac{\sum_{i=1}^k\frac{2+\lambda_{i+1}+\lambda_i}{\lambda_{i+1}-\lambda_i}}{\sqrt{\left(\sum_{i=1}^k\frac{1}{\lambda_{i+1}-\lambda_i}\right)\left(\sum_{i=1}^k\frac{(2+\lambda_{i+1}+\lambda_i)^2}{\lambda_{i+1}-\lambda_i}\right)}},
\end{equation*}
and
\begin{equation*}
    \begin{split}
        & \cos(\theta_2) = \frac{\sum_{i=1}^k\frac{\lambda_{i+1}+\lambda_i+2\lambda_i\lambda_{i+1}}{\lambda_{i+1}-\lambda_i}}{\sqrt{\left(\sum_{i=1}^k\frac{(1+\lambda_i)(1+\lambda_{i+1})}{\lambda_{i+1}-\lambda_i}\right)\left(\sum_{i=1}^k\frac{(\lambda_{i+1}+\lambda_i+2\lambda_i\lambda_{i+1})^2}{(1+\lambda_i)(1+\lambda_{i+1})(\lambda_{i+1}-\lambda_i)}\right)}}\\
        = & \frac{\sum_{i=1}^k\frac{(2+\lambda_i+\lambda_{i+1})(\lambda_{i+1}+\lambda_i)}{2(\lambda_{i+1}-\lambda_i)} + \frac{\lambda_i-\lambda_{i+1}}{2}}{\sqrt{\left(\sum_{i=1}^k\frac{(2+\lambda_i+\lambda_{i+1})^2}{4(\lambda_{i+1}-\lambda_i)}+\frac{\lambda_i-\lambda_{i+1}}{4}\right)\left(\sum_{i=1}^k\frac{(\lambda_{i+1}+\lambda_i)^2}{\lambda_{i+1}-\lambda_i} + \frac{\lambda_i^2}{1+\lambda_i} - \frac{\lambda_{i+1}^2}{1+\lambda_{i+1}}\right)}}\\
        = & \frac{\sum_{i=1}^k\frac{(2+\lambda_i+\lambda_{i+1})(\lambda_{i+1}+\lambda_i)}{\lambda_{i+1}-\lambda_i}}{\sqrt{\left(\sum_{i=1}^k\frac{(2+\lambda_i+\lambda_{i+1})^2}{\lambda_{i+1}-\lambda_i}\right)\left(\sum_{i=1}^k\frac{(\lambda_{i+1}+\lambda_i)^2}{\lambda_{i+1}-\lambda_i}\right)}}.
    \end{split}
\end{equation*}
By applying the Cauchy-Binet identity, we have
\begin{equation*}
    \begin{split}
        & \sin(\theta_1) = \frac{\sqrt{\left(\sum_{i=1}^k\frac{1}{\lambda_{i+1}-\lambda_i}\right)\left(\sum_{i=1}^k\frac{(2+\lambda_{i+1}+\lambda_i)^2}{\lambda_{i+1}-\lambda_i}\right) - \left(\sum_{i=1}^k\frac{2+\lambda_{i+1}+\lambda_i}{\lambda_{i+1}-\lambda_i}\right)^2}}{\sqrt{\left(\sum_{i=1}^k\frac{1}{\lambda_{i+1}-\lambda_i}\right)\left(\sum_{i=1}^k\frac{(2+\lambda_{i+1}+\lambda_i)^2}{\lambda_{i+1}-\lambda_i}\right)}}\\
        = & \frac{\sqrt{\frac{1}{2}\sum_{i\neq j}\frac{(\lambda_{i+1}+\lambda_i -\lambda_{j+1}-\lambda_j)^2}{(\lambda_{i+1}-\lambda_i)(\lambda_{j+1}-\lambda_j)}}}{\sqrt{\left(\sum_{i=1}^k\frac{1}{\lambda_{i+1}-\lambda_i}\right)\left(\sum_{i=1}^k\frac{(2+\lambda_{i+1}+\lambda_i)^2}{\lambda_{i+1}-\lambda_i}\right)}},
    \end{split}
\end{equation*}
and
\begin{equation*}
    \begin{split}
        & \sin(\theta_2) = \frac{\sqrt{\left(\sum_{i=1}^k\frac{(2+\lambda_i+\lambda_{i+1})^2}{\lambda_{i+1}-\lambda_i}\right)\left(\sum_{i=1}^k\frac{(\lambda_{i+1}+\lambda_i)^2}{\lambda_{i+1}-\lambda_i}\right) - \left(\sum_{i=1}^k\frac{(2+\lambda_i+\lambda_{i+1})(\lambda_{i+1}+\lambda_i)}{\lambda_{i+1}-\lambda_i}\right)^2}}{\sqrt{\left(\sum_{i=1}^k\frac{(2+\lambda_i+\lambda_{i+1})^2}{\lambda_{i+1}-\lambda_i}\right)\left(\sum_{i=1}^k\frac{(\lambda_{i+1}+\lambda_i)^2}{\lambda_{i+1}-\lambda_i}\right)}}\\
        = & \frac{\sqrt{\frac{1}{2}\sum_{i\neq j}\frac{4(\lambda_{i+1}+\lambda_i -\lambda_{j+1}-\lambda_j)^2}{(\lambda_{i+1}-\lambda_i)(\lambda_{j+1}-\lambda_j)}}}{\sqrt{\left(\sum_{i=1}^k\frac{(2+\lambda_i+\lambda_{i+1})^2}{\lambda_{i+1}-\lambda_i}\right)\left(\sum_{i=1}^k\frac{(\lambda_{i+1}+\lambda_i)^2}{\lambda_{i+1}-\lambda_i}\right)}}.
    \end{split}
\end{equation*}

Inequalities \eqref{equ:3:3:a} and \eqref{equ:3:3:b} imply that
\[
\begin{split}
    & \sin(\theta_1)\sin(\theta_2) = \frac{\frac{1}{2}\left|\sum_{i\neq j}\frac{2(\lambda_{i+1}+\lambda_i -\lambda_{j+1}-\lambda_j)^2}{(\lambda_{i+1}-\lambda_i)(\lambda_{j+1}-\lambda_j)}\right|}{\left|\sum_{i=1}^k\frac{(2+\lambda_{i+1}+\lambda_i)^2}{\lambda_{i+1}-\lambda_i}\right|\sqrt{\left(\sum_{i=1}^k\frac{1}{\lambda_{i+1}-\lambda_i}\right)\left(\sum_{i=1}^k\frac{(\lambda_{i+1}+\lambda_i)^2}{\lambda_{i+1}-\lambda_i}\right)}} \\
    & = \frac{\frac{1}{2}\sum_{i\neq j}\frac{2(\lambda_{i+1}+\lambda_i -\lambda_{j+1}-\lambda_j)^2}{(\lambda_{i+1}-\lambda_i)(\lambda_{j+1}-\lambda_j)}}{\left(\sum_{i=1}^k\frac{(2+\lambda_{i+1}+\lambda_i)^2}{\lambda_{i+1}-\lambda_i}\right)\sqrt{\left(\sum_{i=1}^k\frac{1}{\lambda_{i+1}-\lambda_i}\right)\left(\sum_{i=1}^k\frac{(\lambda_{i+1}+\lambda_i)^2}{\lambda_{i+1}-\lambda_i}\right)}}.
\end{split}
\]

By combining the equations above and using the Cauchy-Binet identity again, we have
\begin{equation*}
    \begin{split}
        &\cos(\theta_1)\cos(\theta_2) - \sin(\theta_1)\sin(\theta_2)\\
        = & \frac{\left(\sum_{i=1}^k\frac{2+\lambda_{i+1}+\lambda_i}{\lambda_{i+1}-\lambda_i}\right)\left(\sum_{i=1}^k\frac{(2+\lambda_i+\lambda_{i+1})(\lambda_{i+1}+\lambda_i)}{\lambda_{i+1}-\lambda_i}\right) - \frac{1}{2}\sum_{i\neq j}\frac{2(\lambda_{i+1}+\lambda_i -\lambda_{j+1}-\lambda_j)^2}{(\lambda_{i+1}-\lambda_i)(\lambda_{j+1}-\lambda_j)} }{\left(\sum_{i=1}^k\frac{(2+\lambda_{i+1}+\lambda_i)^2}{\lambda_{i+1}-\lambda_i}\right)\sqrt{\left(\sum_{i=1}^k\frac{1}{\lambda_{i+1}-\lambda_i}\right)\left(\sum_{i=1}^k\frac{(\lambda_{i+1}+\lambda_i)^2}{\lambda_{i+1}-\lambda_i}\right)}}\\
        = & \frac{\left(\sum_{i=1}^n\frac{\lambda_{i+1}+\lambda_i}{\lambda_{i+1}-\lambda_i}\right)\left(\sum_{i=1}^n\frac{(2+\lambda_{i+1}+\lambda_i)^2}{\lambda_{i+1}-\lambda_i}\right)}{\left(\sum_{i=1}^k\frac{(2+\lambda_{i+1}+\lambda_i)^2}{\lambda_{i+1}-\lambda_i}\right)\sqrt{\left(\sum_{i=1}^k\frac{1}{\lambda_{i+1}-\lambda_i}\right)\left(\sum_{i=1}^k\frac{(\lambda_{i+1}+\lambda_i)^2}{\lambda_{i+1}-\lambda_i}\right)}} = \cos(\theta).
    \end{split}
\end{equation*}
This proves the property (4) in Definition \ref{defn:s5:1}. In conclusion, the function $\theta$ given by \eqref{equ:main:2} is an invariant angle function.
\end{proof}
\subsection{Proof of Theorem \ref{thm:main:2}, case (3)}
To prove the statement (3) in Theorem \ref{thm:main:2}, we begin by establishing the following lemma: 
\begin{lem}\label{lem:new:1}
    Let $K_l = \sum_{s+t = r+l}\mathbf{e}_s\otimes \mathbf{e}_t\in Mat_r(\mathbb{R})$, $l=1,\dots,r$, and define
    \[
    X = \sum_{l=1}^r x_lK_l,\quad \tilde{X} = \sum_{l=1}^{r-1} x_lK_{l+1}.
    \]
    Then for any $s>0$ and $t\in\mathbb{R}$, there exists an element $g\in GL^+(r,\mathbb{R})$ satisfying the conditions:
    \begin{equation}\label{equ:101}
        g.\tilde{X} = \tilde{X},
    \end{equation}
    \begin{equation}\label{equ:102}
        g.X = s X+t \tilde{X}.
    \end{equation}
\end{lem}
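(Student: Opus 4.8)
My plan is to pass from matrices to the truncated polynomial ring $R=\mathbb{R}[x]/(x^r)$ and then write $g$ down explicitly.

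First I would record the algebraic dictionary. Let $N\in Mat_r(\mathbb{R})$ be the nilpotent shift $N\mathbf{e}_k=\mathbf{e}_{k-1}$ ($k\ge 2$, $N\mathbf{e}_1=0$) and let $W=K_1$ be the reversal matrix. Then $W^2=I$, $N^{\mathrm T}=WNW$, $K_{l+1}=K_lN$ (so $K_l=WN^{l-1}$), and $Wm(N)$ is symmetric for every polynomial $m$. Hence $X=Wp(N)$ and $\tilde X=Wp(N)N=XN$, where $p(x)=\sum_{l=1}^{r}x_l x^{l-1}$; I will assume $x_1=p(0)\neq 0$, which is the only case needed in the applications (then $p(N)$ is invertible), and dispose of $x_1=0$ by a direct, simpler computation. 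Identifying $V=\langle K_1,\dots,K_r\rangle=\{Wm(N):m\in R\}$ with $R$ via $Wm(N)\leftrightarrow m$, one has $X\leftrightarrow p$, $\tilde X\leftrightarrow xp$, and \eqref{equ:101}--\eqref{equ:102} become: find $g$ whose congruence action on $V$ sends $p\mapsto(s+tx)p$ and $xp\mapsto xp$.

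Next I would reduce to two clean conditions. Set $\psi(x):=x/(s+tx)\in R$ (an automorphism of $R$, since $\psi(0)=0$ and $\psi'(0)=1/s\neq 0$) and $c(x):=(s+tx)\,p(x)\,p(\psi(x))^{-1}\in R^{\times}$ (a unit, since $p(\psi(0))=p(0)=x_1\neq 0$). I claim $g$ works as soon as
\begin{itemize}
\item[(i)] $Wg^{\mathrm T}Wg=c(N)$, and
\item[(ii)] $g^{-1}Ng=\psi(N)$.
\end{itemize}
Indeed, for any polynomial $m$, $g^{\mathrm T}Wm(N)g=(g^{\mathrm T}Wg)(g^{-1}m(N)g)=Wc(N)\,m(\psi(N))$ by (i)--(ii), so congruence by $g$ acts on $V\cong R$ by $m\mapsto c\cdot(m\circ\psi)$. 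Applying this to $m=p$ gives $X\mapsto W\bigl((s+tx)p\bigr)(N)=Wp(N)(sI+tN)=sX+t\tilde X$, and to $m=xp$ gives $\tilde X\mapsto W\bigl((s+tx)p\cdot\psi\bigr)(N)=W(xp)(N)=\tilde X$, because $(s+tx)\psi(x)=x$. So (i)--(ii) suffice.

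Then I would construct such a $g$. Take $g=g_0\,z(N)$, where $g_0=U^{-1}$ with $U$ the matrix of the automorphism $m\mapsto m\circ\psi$ of $R$ in the basis $(x^{r-1},\dots,x,1)$ (in which multiplication by $x$ is exactly $N$), and $z(N)\in R^{\times}$ a square root of $c(N)c_0(N)^{-1}$, where $c_0:=Wg_0^{\mathrm T}Wg_0$. In this basis $U$ is lower triangular with diagonal $\bigl(\psi'(0)^{r-1},\dots,\psi'(0),1\bigr)=(s^{1-r},\dots,s^{-1},1)$ and satisfies $UNU^{-1}=\psi(N)$, so (ii) holds for $g_0$ and persists for $g$ since $z(N)$ commutes with $N$. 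Transposing $g_0^{-1}Ng_0=\psi(N)$ and conjugating by $W$ shows $a_0:=Wg_0^{\mathrm T}W$ satisfies $a_0Na_0^{-1}=\psi(N)$, from which a short computation gives that $c_0=a_0g_0$ commutes with $N$, i.e.\ $c_0\in\mathbb{R}[N]^{\times}$; therefore $Wg^{\mathrm T}Wg=z(N)^2c_0(N)=c(N)$, which is (i). The square root $z(N)$ exists because $c(N)c_0(N)^{-1}$ has positive constant term: $c(0)=s\cdot x_1\cdot x_1^{-1}=s>0$, and $c_0(0)=(c_0)_{11}=(U^{-1})_{r,r}(U^{-1})_{1,1}=1\cdot s^{r-1}=s^{r-1}>0$ (only $k=r$ survives in $(c_0)_{11}$ because $U^{-1}$ is lower triangular). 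Finally $\det g=\det(U^{-1})\,z(0)^r=s^{\binom r2}\cdot s^{r(2-r)/2}=s^{r/2}>0$, so $g\in GL^{+}(r,\mathbb{R})$, and by (i)--(ii) it satisfies \eqref{equ:101}--\eqref{equ:102}.

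The conceptual crux is the reduction to (i)--(ii): recognizing that the required transformation is the $R$-semilinear operation ``rescale by the unit $c$, precompose with the ring automorphism $\psi$'', and that such an operation is induced by a congruence exactly under (i)--(ii). Once that is in place the construction is essentially forced, and the one genuinely delicate verification is the positivity $c_0(0)=s^{r-1}>0$ (equivalently $\det g=s^{r/2}>0$), which is precisely where the lower-triangularity of the substitution matrix $U$ is used; handling the degenerate case $x_1=0$ separately is the only remaining loose end.
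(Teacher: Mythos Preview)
Your argument is correct and takes a genuinely different route from the paper. The paper proceeds by brute force: it posits an explicit upper-triangular ansatz $g=\sum_{l\le j}s^{r/2-j+1}p_l^{(j-l)}\mathbf{e}_l\otimes\mathbf{e}_j$ and solves for the unknowns $p_l^{(k)}$ by induction on the anti-diagonal index $k$, showing at each stage that the resulting $(r-k)\times(r-k)$ linear system has an invertible Jordan-type coefficient matrix. Your approach is structural: you identify $\mathrm{span}(K_1,\dots,K_r)$ with the truncated polynomial ring $R=\mathbb{R}[x]/(x^r)$, reduce the problem to the pair of conditions $Wg^{\mathrm T}Wg=c(N)$ and $g^{-1}Ng=\psi(N)$, and then build $g$ from the substitution automorphism $m\mapsto m\circ\psi$ corrected by a square root in $R^\times$. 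This explains \emph{why} the lemma holds (congruence by such $g$ acts on $V\cong R$ as ``twist by a unit, precompose by a ring automorphism'') and yields $\det g=s^{r/2}$ in one line, whereas the paper's method gives a concrete recursion for the entries of $g$ but no structural insight. Two minor slips: in the basis $(x^{r-1},\dots,1)$ your $U$ (hence $g_0$) is \emph{upper} triangular, and in $(c_0)_{11}$ it is $k=1$ that survives; the value $(U^{-1})_{r,r}(U^{-1})_{1,1}=s^{r-1}$ is unaffected. The ``short computation'' that $c_0=a_0g_0\in\mathbb{R}[N]$ goes as follows: since both $a_0$ and $g_0^{-1}$ conjugate $N$ to $\psi(N)$, $g_0a_0$ centralizes $N$ and hence lies in $\mathbb{R}[N]$; then $a_0g_0=a_0(g_0a_0)a_0^{-1}\in\mathbb{R}[N]$ as well, because conjugation by $a_0$ sends $N$ to $\psi(N)\in\mathbb{R}[N]$. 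Finally, both your proof and the paper's silently assume $x_1\neq 0$ (the paper divides by $x_1$ when solving its linear systems), and this is the only case used downstream in Lemma~\ref{lem:3:4:0:1}, so deferring $x_1=0$ is harmless.
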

\begin{proof}
    We claim that there exists a matrix $g$ of the form
\begin{equation}\label{equ:111}
    g = \sum_{l\leq j}s^{r/2-j+1}p_{l}^{(j-l)}\mathbf{e}_l\otimes \mathbf{e}_j
\end{equation}
that satisfies equations \eqref{equ:101} and \eqref{equ:102}.

First, we note that the entries above the anti-diagonal on both sides of both equations vanish for $g$ that follows equation \eqref{equ:111}. The entries on the anti-diagonal of both sides of \eqref{equ:101} vanish, as well. 
% Furthermore, for $k=1,\dots,r-1$ and $l=2,\dots,r-k$, the $(l+k,r+2-l)$-entries of both sides of \eqref{equ:101} equal to $p_l^{(k)}$. For $k=1,\dots,r-1$ and $l=2,\dots,r-k$, the $(l+k,r+1-l)$-entries of both sides of \eqref{equ:102} equal to $p_l^{(k)}$, as well.

Next, we will prove by induction on $k$ that there exist numbers $p_l^{(k)}\in\mathbb{R}$, where $l = 1,\dots, r-k$, such that all entries under the anti-diagonal of both sides of \eqref{equ:101} are equal, and those under or on the anti-diagonal of both sides of \eqref{equ:102} are equal.

We start with the base case $k=0$. If we set $p_l^{(0)} = 1$ for $l=1,\dots,r$, then the $(l+1,r+2-l)$ entries of both sides of \eqref{equ:101} are equal to $x_1$, and the $(l+1,r+1-l)$ entries of both sides of \eqref{equ:102} are equal to $sx_1$, where $l=1,\dots, r-1$. For the entries above $(l+1,r+2-l)$ of \eqref{equ:101} and above $(l+1,r+1-l)$ of \eqref{equ:102}, their expressions involve $p_l^{(1)}$, which will be determined in the $k=1$ case of the induction. Thus, we do not need to discuss these entries in the $k=0$ case.

We proceed to the general case $k>0$. Assume that the solutions $p_l^{(k')}$ are determined for $0\leq k'<k$. For $l=2,\dots, r-k$, we compare the $(l+k,r+2-l)$ entries of both sides of \eqref{equ:101}. Equality of both sides yields $(r-k-1)$ equations in unknowns $p_1^{(k)},\dots,p_{r-k}^{(k)}$:
\begin{equation}\label{equ:121}
    s^{-k}\left(x_{k+1} + x_1(p_l^{(k)}+p_{r-k+2-l}^{(k)})+R_l^{(k)}\right) = x_{k+1},
\end{equation}
where $R_l^{(k)}$ is a polynomial in terms of $x_1,\dots,x_{k}$ and $p_j^{(1)},\dots,p_j^{(k-1)}$. Since $g^{\mathrm{T}}\tilde{X}g$ is symmetric, $R_l^{(k)} = R_{r-k+2-l}^{(k)}$, which implies that the $l$-th equation coincides with the $(r-k+2-l)$-th equation. Thus, the number of distinct equations reduces to $\lfloor\frac{r-k}{2}\rfloor$.

For $l=1,\dots, r-k$, we compare the $(l+k,r+1-l)$ entries of both sides of \eqref{equ:102}. Equality of both sides yields $(r-k)$ equations in unknowns $p_1^{(k)},\dots,p_{r-k}^{(k)}$:
\begin{equation}\label{equ:122}
    s^{1-k}\left(x_{k+1} + x_1(p_l^{(k)}+p_{r-k+1-l}^{(k)})+Q_l^{(k)}\right) = s x_{k+1} + t x_k,
\end{equation}
where $Q_l^{(k)}$ is a polynomial in terms of $x_1,\dots,x_{k}$ and $p_j^{(1)},\dots,p_j^{(k-1)}$. Since $g^{\mathrm{T}}Xg$ is symmetric, $Q_l^{(k)} = Q_{r-k+1-l}^{(k)}$, which implies that the $l$-th equation coincides with the $(r-k+1-l)$-th equation. Thus, the number of distinct equations reduces to $\lfloor\frac{r-k+1}{2}\rfloor$.
% I was here

By combining equations \eqref{equ:121} and \eqref{equ:122} together, we derive a linear equation system consisting of $\lfloor\frac{r-k+1}{2}\rfloor + \lfloor\frac{r-k}{2}\rfloor = (r-k)$ equations in unknowns $p_1^{(k)},\dots,p_{r-k}^{(k)}$:
\begin{equation*}
    \begin{split}
        & p_1^{(k)}+p_{r-k}^{(k)} = x_1^{-1}(s^kx_{k+1}+t s^{k-1}x_k-x_{k+1}-Q_1^{(k)}): = Q_1'^{(k)},\\
        & p_2^{(k)}+p_{r-k}^{(k)} = x_1^{-1}(s^kx_{k+1}-x_{k+1}-R_2^{(k)}): = R_2'^{(k)},\\
        & p_2^{(k)}+p_{r-k-1}^{(k)} = x_1^{-1}(s^kx_{k+1}+t s^{k-1}x_k-x_{k+1}-Q_2^{(k)}): = Q_2'^{(k)}\dots
    \end{split}
\end{equation*}
If $(r-k)$ is even, the last equation is $2p_{\frac{r-k}{2}+1}^{(k)} = R_{\frac{r-k}{2}+1}'^{(k)}$; if $(r-k)$ is odd, the last equation is $2p_{\frac{r-k+1}{2}}^{(k)} = Q_{\frac{r-k+1}{2}}'^{(k)}$. 

If we arrange the $(r-k)$ unknowns as $p_1^{(k)},p_{r-k}^{(k)},p_2^{(k)},\dots, p_{\lfloor\frac{r-k}{2}+1\rfloor}^{(k)}$, the coefficient matrix for this linear equation system is an invertible Jordan matrix $J_{1,r-k}$. Thus, a unique solution $p_1^{(k)},\dots,p_{r-k}^{(k)}$ for \eqref{equ:121} and \eqref{equ:122} exists, dependent on $s$, $t$, $x_1,\dots, x_{k+1}$ and $p_j^{(k')}$, where $1\leq j\leq r-k'$ and $k'<k$. 

By induction, a solution set $p_l^{(k)}$ for $\eqref{equ:121}$ and $\eqref{equ:122}$ exists in terms of $x_1,\dots, x_{r}$, $s$, and $t$, where $k=1,\dots,r-1$ and $l=1,\dots,r-k$. That is to say, there exists a matrix $g = \sum_{i\leq j}s^{r/2-j+1}p_{l}^{(j-l)}\mathbf{e}_l\otimes \mathbf{e}_j$ that satisfies $\eqref{equ:101}$ and $\eqref{equ:102}$.
\end{proof}

Lemma \ref{lem:new:1} implies the following:
\begin{lem}\label{lem:3:4:0:1}
    (1) Suppose that $(A,B)$ is a regular pencil of symmetric $n\times n$ matrices with only one distinct eigenvalue $\lambda\in\mathbb{R}$, and let $C = A-\lambda B$. Then, for any $s>0$ and $t\in\mathbb{R}$, there is an element $g\in GL^+(n,\mathbb{R})$ such that:
    \[
    g.C = C,\quad g.B = s B+t C.
    \]

    (2) Suppose that $(A,B)$ is a regular pencil of symmetric $n\times n$ matrices with only two distinct eigenvalues $\lambda,\lambda'\in\mathbb{R}$, and let $C = A-\lambda B$, $C' = A - \lambda' B$. Then for any $s,s'>0$, there is an element $g\in GL^+(n,\mathbb{R})$ such that:
    \[
    g.C = s C,\quad g.C' = s' C'.
    \]
\end{lem}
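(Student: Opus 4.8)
The plan is to put $(A,B)$ into the congruence normal form of Lemma~\ref{lem:new:0}, recognize each diagonal block as an instance of the pair $(X,\tilde X)$ appearing in Lemma~\ref{lem:new:1}, solve the problem one block at a time, and then transport the answer back through the congruence. For part~(1): since $\lambda\in\mathbb{R}$ is the only generalized eigenvalue of $(A,B)$, the value $\infty$ is not one, so $\det(A-\mu B)$ has degree $n$, the matrix $B$ is invertible, and $B^{-1}A$ has the single eigenvalue $\lambda$. By Lemma~\ref{lem:new:0} there is $Q\in GL(n,\mathbb{R})$ with $(A',B'):=(Q^{\mathrm{T}}AQ,Q^{\mathrm{T}}BQ)=\operatorname{diag}\big((A_1,B_1),\dots,(A_m,B_m)\big)$, where $(A_i,B_i)$ has size $d_i$ and $A_i=B_iJ_i$ for $J_i$ the $d_i\times d_i$ Jordan block of eigenvalue $\lambda$. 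By Lemma~\ref{lem:ant:tri} each $B_i$ is symmetric, constant along anti-diagonals, and supported on the anti-diagonals $j+k\ge d_i+1$; in the notation of Lemma~\ref{lem:new:1} this means $B_i=\sum_{l=1}^{d_i}x_l K_l$, and $x_1\neq 0$ because $B_i$ is a block of the invertible matrix $B'$. Writing $C'=A'-\lambda B'=\operatorname{diag}(C_1,\dots,C_m)$, the block $C_i=A_i-\lambda B_i=B_i(J_i-\lambda I)$ is exactly the one-step downward anti-diagonal shift of $B_i$, so an entrywise check gives $C_i=\sum_{l=1}^{d_i-1}x_l K_{l+1}=\tilde X$.

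It then remains to invoke Lemma~\ref{lem:new:1} with $X=B_i$, $\tilde X=C_i$ and the given $s,t$, producing $g_i\in GL^+(d_i,\mathbb{R})$ with $g_i.C_i=C_i$ and $g_i.B_i=sB_i+tC_i$ (equations \eqref{equ:101} and \eqref{equ:102}). Taking $g'=\operatorname{diag}(g_1,\dots,g_m)\in GL^+(n,\mathbb{R})$ gives $g'.C'=C'$ and $g'.B'=sB'+tC'$, and then $g:=Qg'Q^{-1}\in GL^+(n,\mathbb{R})$ satisfies $g.C=C$ and $g.B=sB+tC$, since $Q^{\mathrm{T}}CQ=C'$, $Q^{\mathrm{T}}BQ=B'$, and conjugating $g'$ by $Q$ intertwines the two congruence actions; also $\det g=\det g'>0$. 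Finally, replacing $g$ by $c\,g$ with $c>0$ and rescaling $(s,t)$ yields the sharper form I will use in part~(2): for any $u,\sigma>0$ and $\tau\in\mathbb{R}$ there is $g\in GL^+(n,\mathbb{R})$ with $g.C=uC$ and $g.B=\sigma B+\tau C$.

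For part~(2), $B$ is again invertible and $B^{-1}A$ has exactly the eigenvalues $\lambda,\lambda'$, so Lemma~\ref{lem:new:0} block-diagonalizes $(A,B)$; collecting the Jordan blocks by eigenvalue, the normal form becomes $(A_\lambda,B_\lambda)\oplus(A_{\lambda'},B_{\lambda'})$, each summand a regular symmetric pencil with a single finite eigenvalue. From $A-\lambda B=C$ and $A-\lambda' B=C'$ one gets $B=(C-C')/(\lambda'-\lambda)$, so on the $\lambda$-summand $C$ restricts to $C_\lambda:=A_\lambda-\lambda B_\lambda$ and $C'$ restricts to $C_\lambda-(\lambda'-\lambda)B_\lambda$, while on the $\lambda'$-summand $C'$ restricts to $C'_{\lambda'}:=A_{\lambda'}-\lambda' B_{\lambda'}$ and $C$ restricts to $C'_{\lambda'}+(\lambda'-\lambda)B_{\lambda'}$. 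I would apply the sharpened part~(1) to the $\lambda$-summand with $u=s$, $\sigma=s'$ and $\tau=(s-s')/(\lambda'-\lambda)$; a one-line computation then shows the resulting $g_\lambda$ sends $C_\lambda\mapsto sC_\lambda$ and $C_\lambda-(\lambda'-\lambda)B_\lambda\mapsto s'\big(C_\lambda-(\lambda'-\lambda)B_\lambda\big)$. Symmetrically, applying it to the $\lambda'$-summand with $u=s'$, $\sigma=s$ and the same $\tau$ gives $g_{\lambda'}$ sending $C'_{\lambda'}\mapsto s'C'_{\lambda'}$ and $C'_{\lambda'}+(\lambda'-\lambda)B_{\lambda'}\mapsto s\big(C'_{\lambda'}+(\lambda'-\lambda)B_{\lambda'}\big)$. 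Then $g'=g_\lambda\oplus g_{\lambda'}$ scales every restriction of $C$ by $s$ and every restriction of $C'$ by $s'$, i.e.\ $g'.C=sC$ and $g'.C'=s'C'$, and transporting back through $Q$ as in part~(1) gives the required $g\in GL^+(n,\mathbb{R})$.

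I expect the main obstacle to be twofold. First, one must correctly match a normal-form block $(A_i,B_i)$ with the data $(X,\tilde X)$ of Lemma~\ref{lem:new:1}; this rests on the relation $A_i=B_iJ_i$ together with the anti-triangular Hankel shape from Lemma~\ref{lem:ant:tri}, and on the observation that $C_i$ is precisely the one-step anti-diagonal shift of $B_i$. Second, in part~(2) one must pick the block-wise parameters so that the assembled transformation scales $C$ and $C'$ by the two prescribed \emph{independent} positive factors; this is a short linear computation whose only delicate points are that the rescaling parameters that appear ($\sigma=s'$ on the $\lambda$-side and $\sigma=s$ on the $\lambda'$-side) are positive, so that the sharpened part~(1) genuinely applies, and that the same additive parameter $\tau=(s-s')/(\lambda'-\lambda)$ works simultaneously on both summands.
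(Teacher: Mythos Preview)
Your proposal is correct and follows essentially the same route as the paper: reduce to the congruence normal form of Lemma~\ref{lem:new:0}, identify each block $(B_i,C_i)$ with the pair $(X,\tilde X)$ of Lemma~\ref{lem:new:1}, solve blockwise, and assemble. The only cosmetic difference is packaging: you absorb the positive scaling into a ``sharpened'' part~(1) and then apply it to the two eigenvalue-summands, whereas the paper applies Lemma~\ref{lem:new:1} to every Jordan block separately and inserts the factors $\sqrt{s},\sqrt{s'}$ at the end; the computations are line-for-line equivalent.
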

\begin{proof}
    (1) Suppose that the pencil $(A,B)$ has one distinct eigenvalue $\lambda\in\mathbb{R}$. According to Lemma \ref{lem:mob}, we may assume that the matrix pencil is in the normal form:
    \[
    A = \mathrm{diag}(A_1,\dots,A_k),
    \]
    and
    \[
    B = \mathrm{diag}(B_1,\dots,B_k),
    \]
    where $A_j = B_j J_{\lambda,r_j}$. Here and after, $J_{\lambda,r}$ denotes the Jordan block matrix of dimension $r$ and eigenvalue $\lambda$. Thus,
    \[
    C = A - \lambda B = \mathrm{diag}(\tilde{B}_1,\dots,\tilde{B}_k),
    \]
    where $\tilde{B}_j = B_j J_{0,r_j}$. 

    According to Lemma \ref{lem:new:1}, for any $s>0$ and $t\in\mathbb{R}$, there exist elements $g_j\in GL^+(r_j,\mathbb{R})$ such that:
    \begin{equation*}
    g_j.\tilde{B}_j = \tilde{B}_j,\quad g_j. B_j = s B_j+ t \tilde{B}_j.
    \end{equation*}

    Let $g = \mathrm{diag} (g_1,\dots,g_k)\in GL^+(n,\mathbb{R})$, then $g.C = C$ and $g.B = s B+t C$.

    (2) Suppose that the pencil $(A,B)$ has exactly two distinct eigenvalues $\lambda,\lambda'\in\mathbb{R}$. We may assume that the matrix pencil is in the normal form:
    \[A = \mathrm{diag}(A_1,\dots,A_k,A_1',\dots,A_l'),
    \]
    and \[
    B = \mathrm{diag}(B_1,\dots,B_k,B_1',\dots,B_l'),
    \]
    where $A_j = B_jJ_{\lambda,r_j}$ and $A_j' = B_j'J_{\lambda',r_j'}$. Thus, 
    \[
    C = \mathrm{diag}(\tilde{B}_1,\dots,\tilde{B}_k,\tilde{B}_1'+(\lambda' -\lambda )B_1',\dots,\tilde{B}_l'+(\lambda' -\lambda )B_l')
    \]
    and
    \[
    C' = \mathrm{diag}(\tilde{B}_1+(\lambda -\lambda' )B_1,\dots,\tilde{B}_k+(\lambda -\lambda' )B_k,\tilde{B}_1',\dots,\tilde{B}_l'),
    \]
    where $\tilde{B}_j = B_j J_{0,r_j}$, and $\tilde{B}_j' = B_j' J_{0,r_j'}$.
    According to Lemma \ref{lem:new:1}, for any $s,t>0$, there exist matrices $g_j$, $j=1,\dots, k$, such that
    \begin{equation*}
    g_j.\tilde{B}_j = \tilde{B}_j,\quad g_j.B_j = \frac{t}{s}B_j + \frac{t-s}{s(\lambda -\lambda' )}\tilde{B}_j,
    \end{equation*}
    and matrices $g_j'$, $j=1,\dots, l$, such that
    \begin{equation*}
    g_j'.\tilde{B}_j' = \tilde{B}_j',\quad g_j.B_j' = \frac{s}{t}B_j' + \frac{s-t}{t(\lambda' -\lambda )}\tilde{B}_j'.
    \end{equation*}
    Let $g = \mathrm{diag}(\sqrt{s}g_1,\dots,\sqrt{s}g_k,\sqrt{t}g_1',\dots,\sqrt{t}g_l')\in GL^+(n,\mathbb{R})$, then $g.C = sC$ and $g.C' = tC'$.
\end{proof}
Lastly, we come back to the proof of the main theorem.
\begin{proof}[Proof of Theorem \ref{thm:main:2}, case (3)]
    Recall that a positive scaling of the normal vector does not change the associated co-oriented hyperplane. Therefore, we can replace the part ``$g\in SL(n,\mathbb{R})$'' in Definition \ref{defn:s5:1} with ``$g\in GL^+(n,\mathbb{R})$''. We will prove the statement (3) of the theorem by exhausting all possible cases.

    (1). Suppose that $(A,B)$ has only one eigenvalue $\lambda\in\mathbb{R}$, and $\lambda>0$. By Lemma \ref{lem:3:4:0:1}, there is an element $g\in GL^+(n,\mathbb{R})$, such that $g.C = C$ and $g.B = \lambda B+C = A$. For $k\in\mathbb{Z}$, denote $A_k = g^{1-k}.A$ and $\sigma_k = A_k^\perp$. Then $A_1 = A$, $A_2 = B$, and
    \begin{equation*}
    \theta(\sigma_k,\sigma_{k+1}) = \theta(A_{k}^\perp,A_{k+1}^\perp) = \theta((g^{1-k}.A)^\perp,(g^{1-k}.B)^\perp) = \theta(A^\perp,B^\perp) = \theta(\sigma_1,\sigma_2).
    \end{equation*}
    % $A_{k} = 1/(1+\lambda) A_{k-1} + \lambda/(1+\lambda)A_{k+1}$. This relation implies
    The relations $g.C = C$ and $g.B = \lambda B+C = A$ imply that $g.(A-\lambda B) = (A-\lambda B)$ and $g.(A - B) = \lambda(A - B)$. Thus,
    \[
    (A_k-\lambda A_{k+1}) = g^{1-k}.(A-\lambda B) = A-\lambda B,\quad (A_k - A_{k+1}) = g^{1-k}.(A - B) = \lambda^{1-k}(A-B),
    \]
    and
    \[
    (\lambda^k - 1)A_k = (\lambda^k-\lambda)A_{k+1} + (\lambda - 1)A_1,
    \]
    for all $k>1$. Therefore, $A_k$ is a positive linear combination of $A_{1}$ and $A_{k+1}$, i.e., $\theta_k$ lies between $\theta_1$ and $\theta_{k+1}$. The property (4) of invariant functions implies that
    \begin{equation*}
    \theta(\sigma_1,\sigma_m) = \sum_{k=1}^{m-1}\theta(\sigma_k,\sigma_{k+1}) = (m-1)\theta(\sigma_1,\sigma_2)
    \end{equation*}
    for any $m>1$. However, for $m$ large enough, $\theta(\sigma_1,\sigma_m) = (m-1)\theta(\sigma_1,\sigma_2)>\pi$, a contradiction with the property (1) of invariant angle functions.

    (2). Suppose that $(A,B)$ has only has only one eigenvalue $\lambda\in\mathbb{R}$, and $\lambda<0$. The previous case implies that $\theta((-A)^\perp,B^\perp)$ does not exist. If $\theta(A^\perp,B^\perp)$ exists, then $\theta((-A)^\perp,B^\perp) = \pi - \theta(A^\perp,B^\perp)$, a contradiction.

    (3). Suppose that $(A,B)$ has only one eigenvalue $\lambda$ and $\lambda = 0$. In this case, $C = A$. There exists an element $g\in GL^+(n,\mathbb{R})$ such that
    \[
    g.A = A,\quad g.B = B+A.
    \]
    Thus
    \[\theta(A^\perp,B^\perp) = \theta(A^\perp, (A+B)^\perp) + \theta(B^\perp, (A+B)^\perp)> \theta(A^\perp, (A+B)^\perp) = \theta((g.A)^\perp,(g.B)^\perp),
    \]
    a contradiction with property (2) of invariant angles.
    
    (4). Suppose that the pencil $(A,B)$ has exactly two distinct eigenvalues $\lambda$ and $\lambda'$, both $A$ and $B$ are positive linear combinations of $C = A-\lambda B$ and $C' = A-\lambda' B$. Lemma \ref{lem:3:4:0:1} implies the existence of an element $g\in GL^+(n,\mathbb{R})$ such that $g.A = B$, $g.C\sim C$, and $g.C'\sim C'$. Here, $g.C\sim C$ denotes that $g.C$ differs from $C$ by a positive multiple. Let $A_k = g^{1-k}.A$, and $\theta_k = A_k^\perp$, then
    \[
    \theta(\sigma_1,\sigma_m) = (m-1)\theta(\sigma_1,\sigma_2),
    \]
    similarly to case (1). The angle exceeds $\pi$ for $m$ large enough, leading to a contradiction with the property (1) of invariant angle functions.

    (5-7). Suppose that the pencil $(A,B)$ has exactly two distinct eigenvalues $\lambda$ and $\lambda'$, each of $A$ and $B$ is either a positive linear combination of $C = A-\lambda B$ and $C' = A-\lambda' B$, or a positive linear combination of $-C$ and $-C'$. Then either $(A,-B)$, $(-A,B)$, or $(-A,-B)$ is in case (4). Therefore, $\theta(A^\perp,B^\perp)$ does not exist.

    (8). Suppose that the pencil $(A,B)$ has exactly two distinct eigenvalues $\lambda$ and $\lambda'$, where $A$ is a positive linear combination of $C$ and $C'$, while $B$ is a positive linear combination of $-C$ and $C'$. Note that $C'$ is a positive linear combination of $A$ and $B$. Lemma \ref{lem:3:4:0:1} yields an element $g\in GL^+(n,\mathbb{R})$ such that $g.C = s C$, $g.C' = s' C'$, where $s'>s$. The latter assumption implies that $g.A$ is a positive linear combination of $A$ and $C'$, and $g.B$ is a positive linear combination of $B$ and $C'$. Consequently,
    \begin{equation}
    \theta(A^\perp,B^\perp) = \theta(A^\perp,(g.A)^\perp) +\theta((g.A)^\perp,(g.B)^\perp) + \theta(B^\perp,(g.B)^\perp)>\theta((g.A)^\perp,(g.B)^\perp),
    \end{equation}
    which is a contradiction with property (2) of invariant angles.

    (9-11). Suppose that the pencil $(A,B)$ has exactly two distinct eigenvalues $\lambda$ and $\lambda'$, where $A$ is either a positive linear combination of $C$ and $C'$, or a positive linear combination of $-C$ and $-C'$, and $B$ is either a positive linear combination of $C$ and $-C'$, or a positive linear combination of $-C$ and $C'$. Then either $(A,-B)$, $(-A,B)$, or $(-A,-B)$ is in case (8). Hence, $\theta(A^\perp,B^\perp)$ does not exist.

    In conclusion, for all symmetric matrix pencils $(A,B)$ of type (3), $(A^\perp,B^\perp)$ is not in the domain of any invariant angle function.
\end{proof}

When $(A, B)$ is singular, we can define an invariant angle function by generalizing the angle function defined previously. Indeed, let $r = \max rank(t A-B)$, define $\lambda$ be a \textbf{pseudo-generalized eigenvalue} if $rank(\lambda A-B)<r$. There are finitely many pseudo-generalized eigenvalues for a singular pencil $(A, B)$. Moreover, the set of pseudo-generalized eigenvalues is invariant under congruence transformations. Hence, when there are some complex pseudo-generalized eigenvalues or at least $3$ distinct real pseudo-generalized eigenvalues, we can analogously define an angle function for $(A, B)$.

If there are only $2$ or less (even none) pseudo-generalized eigenvalues, and all of these (if any) are real, it is unclear whether such an angle function exists. The difficulty is to transform $(A, B)$ into a canonical form, presumably block-diagonal matrices, by some congruence transformations. Recently, the (strict) simultaneous diagonalization problem was studied in \cite{jiang2016simultaneous}; however, we do not know an analogous conclusion for block diagonal matrices. Nevertheless, we can show that the angle function is undefined in special cases, for example in $\mathcal{P}(3)$:
\begin{prop}\label{prop:non_exist}
Let $A,B\in Sym(3)$ be indefinite matrices (i.e., $A^\perp$ and $B^\perp$ are non-empty), such that $rank(B) = 2$, and $rank(A+\lambda B) = 2$ for any $\lambda\in\mathbb{R}$. Then for any $C_1\neq C_2\in (A,B)$, an invariant angle between $C_1^\perp$ and $C_2^\perp$ is undefined.
\end{prop}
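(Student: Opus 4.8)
The plan is to follow the proof of case~(3) of Theorem~\ref{thm:main:2}. Suppose, for contradiction, that some invariant angle function $\theta$ has the pair $\sigma_1:=C_1^\perp$, $\sigma_2:=C_2^\perp$ (with $C_1,C_2$ linearly independent members of the pencil) in its domain. I will exhibit $g\in GL^+(3,\mathbb{R})$ that fixes the co-oriented hyperplane $\sigma_1$ and sends $\sigma_2$ to a co-oriented hyperplane $\tau$ lying strictly between $\sigma_1$ and $\sigma_2$. Then property~(2) of Definition~\ref{defn:s5:1} gives $\theta(\sigma_1,\tau)=\theta(\sigma_1,\sigma_2)$ and property~(4) gives $\theta(\sigma_1,\sigma_2)=\theta(\sigma_1,\tau)+\theta(\tau,\sigma_2)$, so $\theta(\tau,\sigma_2)=0$; by the equality case of property~(1), $\tau=\sigma_2$, which is impossible since the normal of $\tau$ will be $aC_1+bC_2$ with $a,b>0$ while $C_1,C_2$ are independent. (As in case~(3), $GL^+(3,\mathbb{R})$ may replace $SL(3,\mathbb{R})$ throughout; the needed instances of property~(4) are legitimate because the domain of $\theta$, being $GL^+(3,\mathbb{R})$-invariant, contains every pair $(D_1^\perp,D_2^\perp)$ with $D_1,D_2$ independent members of the pencil, these forming a single orbit as the third paragraph will show.)

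The substantive step is normalizing $(A,B)$. Since $\mathrm{rank}(A+\lambda B)=2$ for every $\lambda$, the pencil is singular, so Lemma~\ref{lem:2:3:5:2} gives a congruent block form with blocks of sizes $n_1,n_2,n_3$ and invertible corner blocks $A_1$ (size $n_1$) and $B_3$ (size $n_3$). From $\mathrm{rank}(A)=2$ and invertibility of $A_1$ one gets $n_1=2$, and $n_3=1$ is incompatible with the rank hypotheses; so $n_2=1$, $n_3=0$ and, with $B_2\in\mathbb{R}^2$,
\[
A\ \cong\ \begin{pmatrix}A_1 & 0\\ 0 & 0\end{pmatrix},\qquad B\ \cong\ \begin{pmatrix}B_1 & B_2\\ B_2^{\mathrm T} & 0\end{pmatrix}.
\]
The bordered-determinant identity $\det(A+\lambda B)=-\lambda^{2}\,B_2^{\mathrm T}\,\mathrm{adj}(A_1+\lambda B_1)\,B_2$, which must vanish for all $\lambda$, forces $B_2\neq 0$ together with $B_2^{\mathrm T}\,\mathrm{adj}(A_1)\,B_2=B_2^{\mathrm T}\,\mathrm{adj}(B_1)\,B_2=0$; in particular $A_1$ is indefinite. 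Normalizing $A_1$ and then $B_2$ by congruence, and absorbing the remaining entries of $B_1$ by a further congruence together with a reparametrization of the pencil, one reaches
\[
A=\begin{pmatrix}0 & 1 & 0\\ 1 & 0 & 0\\ 0 & 0 & 0\end{pmatrix},\qquad B=\begin{pmatrix}0 & 0 & 1\\ 0 & 0 & 0\\ 1 & 0 & 0\end{pmatrix}.
\]
Every nonzero $sA+tB$ in this pencil is indefinite (its eigenvalues are $0,\pm\sqrt{s^{2}+t^{2}}$) and $I\in(sA+tB)^\perp$; so each member of the pencil genuinely defines a co-oriented hyperplane, and Definition~\ref{defn:s5:1} is applicable throughout.

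It remains to record the symmetries and run the contradiction. The congruences $\mathrm{diag}(1,h)$, $h\in GL^+(2,\mathbb{R})$, preserve the pencil, acting on $\mathrm{span}(A,B)\subset Sym_3(\mathbb{R})$ by $A\mapsto h_{11}A+h_{12}B$, $B\mapsto h_{21}A+h_{22}B$; hence, exactly as in the proof of case~(3) of Theorem~\ref{thm:main:2}, for any linear self-map of $\mathrm{span}(A,B)$ of positive determinant there is an element $g\in GL^+(3,\mathbb{R})$ inducing it. Writing $C_1,C_2$ as linearly independent members of the pencil and taking the map that fixes the ray $\mathbb{R}_{>0}C_1$ and sends $C_2$ to $aC_1+bC_2$ with $a,b>0$ (upper triangular in the basis $\{C_1,C_2\}$, hence of positive determinant), we obtain $g\in GL^+(3,\mathbb{R})$ with $g.\sigma_1=\sigma_1$ and $g.\sigma_2=(aC_1+bC_2)^\perp=:\tau$, which lies between $\sigma_1$ and $\sigma_2$ and is distinct from $\sigma_2$. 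The contradiction of the first paragraph now applies; as $C_1,C_2$ were arbitrary linearly independent members of $(A,B)$, no invariant angle function has $(C_1^\perp,C_2^\perp)$ in its domain.

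I expect the normalization in the second paragraph to be the only non-routine part: one must verify that the rank conditions really single out that congruence class, which hinges on the determinant bookkeeping ruling out the other block partitions and the case $B_2=0$ — precisely the case in which $(A,B)$ would carry a pair of complex generalized eigenvalues and an invariant angle could be built as in case~(1) of Theorem~\ref{thm:main:2}. Once $(A,B)$ is in the displayed form, the computation $\mathrm{diag}(1,h).A=h_{11}A+h_{12}B$, $\mathrm{diag}(1,h).B=h_{21}A+h_{22}B$ and the additivity argument are immediate.
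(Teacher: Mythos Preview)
Your approach mirrors the paper's: normalize the pencil, exhibit a congruence action transitive on ordered pairs of independent elements, then run the contradiction from case~(3) of Theorem~\ref{thm:main:2}. Your use of Lemma~\ref{lem:2:3:5:2} and the clean identification of the symmetry group as $\mathrm{diag}(1,GL^+(2,\mathbb{R}))$ are somewhat tidier than the paper's direct normalization of $A$ and explicit matrix $Q$, and your normal form (with $b_{11}=0$) is one further congruence past the paper's, but the substance is identical.

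There is a genuine gap at ``forces $B_2\neq 0$''. The bordered determinant vanishes trivially when $B_2=0$, so no determinant bookkeeping excludes that case. With $B_2=0$ the pencil is $(\mathrm{diag}(A_1,0),\mathrm{diag}(B_1,0))$ with $A_1$ invertible, and the hypothesis $\mathrm{rank}(A+\lambda B)=2$ for all real $\lambda$ says exactly that the $2\times 2$ pencil $(A_1,B_1)$ has no \emph{real} generalized eigenvalue. This is compatible with both $A$ and $B$ being indefinite of rank~$2$: take $A_1=\bigl(\begin{smallmatrix}0&1\\1&0\end{smallmatrix}\bigr)$, $B_1=\mathrm{diag}(1,-1)$, giving eigenvalues $\pm i$. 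For such a pencil the pseudo-generalized eigenvalues are nonreal, your transitivity argument fails (congruences preserve those eigenvalues, so there is no $g$ taking $(C_1,C_2)$ to $(C_1,aC_1+bC_2)$ with $a>0$), and indeed the paragraph preceding the proposition asserts that an invariant angle \emph{is} definable in this situation. The paper's own proof has the identical gap --- it announces ``a contradiction'' at the corresponding step without supplying one --- so the statement evidently carries an implicit hypothesis from the surrounding discussion (no complex pseudo-eigenvalues, equivalently constant rank over $\mathbb{C}$). You were right to single this out as the non-routine point; the fix is to add that hypothesis rather than to claim the rank conditions already handle it.
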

\begin{proof}
Since $rank(A) = 2$ and $A$ is indefinite, assume that
$$ A = \begin{pmatrix}0 & 1 & 0\\ 1 & 0 & 0\\0 & 0 & 0\end{pmatrix},$$
up to a congruence transformation. Let $B = (b_{ij})$; since $\det(A+t B)\equiv 0$, we have $b_{33} = 0$; and $b_{31}b_{32} = 0$, thus $b_{31} = 0$ or $b_{32} = 0$. Assume $b_{32} = 0$, up to a congruence transformation that swaps the first two columns and rows. Then, $b_{22}b_{13}^2 = 0$; if $b_{13} = 0$, then the pencil $(A,B)$ comes from an invertible pencil in $Sym(2)$, which has two generalized eigenvalues; thus $(A,B)$ has two pseudo-generalized eigenvalues, a contradiction. Thus $b_{22} = 0$, and we can assume $b_{13} = 1$ by a scaling. We can in addition assume that $b_{12} = 0$, by replacing $B$ with $B-b_{12}A$. 

Under the assumptions above, for any $t_1\neq t_2$, let 
$$ Q = \begin{pmatrix}1 & 0 & 0\\ b_{11}t_1/2 & 1 & t_1\\ b_{11}(t_2-1)/2 & 1 & t_2\end{pmatrix}\in GL(3,\mathbb{R}),$$
then straightforward computation shows that
$$ Q^T AQ = A+t_1 B,\quad Q^T BQ = A+t_2 B.$$
That is to say, for any two pairs of distinct matrices in the pencil $(A,B)$, there exists a congruence transformation between them. Similarly to the proof of Theorem \ref{thm:main:2} (indeed, the current condition is stronger), we can show that the invariant angle function is undefined for any $(C_1^\perp,C_2^\perp)$, with $C_1\neq C_2\in (A,B)$.
\end{proof}

\vspace{12pt}
\section{Criterion for intersecting hyperplanes}\label{sec:s3}
We will describe in this section an equivalent condition for that two hyperplanes in $\mathcal{P}(n)$ are disjoint. We will also present a sufficient condition for that bisectors $Bis(X,Y)$ and $Bis(Y,Z)$ are disjoint, in terms of the Selberg's invariants and angles between points $X$, $Y$, and $Z\in\mathcal{P}(n)$.
\subsection{A criterion for intersecting hyperplanes}\label{sec:4:1}
We seek an equivalent condition for $\bigcap_{i\in\mathcal{I}}\sigma_i \neq \varnothing$, where $\mathcal{I}$ is a finite set. Let $A_i\in Sym_n(\mathbb{R})$ be the normal vector of $\sigma_i$ for $i\in\mathcal{I}$, we denote the collection of symmetric matrices,
\[
\mathcal{A} = \{A_i\in Sym_n(\mathbb{R})| i\in\mathcal{I}\}.
\]
Moreover, we denote the collection of hyperplanes,
\[
\Sigma = \{\sigma_i\subset \mathcal{P}(n)|i\in\mathcal{I}\}.
\]
The definiteness of a collection of symmetric $n \times n$ matrices is defined as follows:
\begin{defn}
    We say the collection $\mathcal{A} = \{A_i\in Sym_n(\mathbb{R})| i\in\mathcal{I}\}$ is (semi-) definite if there exist numbers $c_i\in\mathbb{R}$ for $i\in\mathcal{I}$ such that
    \[
    A = \sum_{i\in\mathcal{I}}c_i A_i
    \]
    is a non-zero positive (semi-) definite matrix.
\end{defn}
\begin{rmk}
    The collection $\{A\}$ consisting of a single $n\times n$ symmetric matrix is definite if and only if $A$ is either positive or negative definite. The collection $\{A,B\}$ is definite if and only if the symmetric matrix pencil $(A,B)$ is definite.
\end{rmk}
We will need more notations, related to the Satake compactification $\overline{\mathcal{P}(n)}\subset \mathbf{P}(Sym_n(\mathbb{R}))$ (See Definition \ref{defn:satake}):
\begin{defn}
    For $A\in Sym_n(\mathbb{R})$, define
    \[
    N(A) = \{X\in \mathbf{P}(Sym_n(\mathbb{R}))| \mathrm{tr}(A\cdot X) = 0\},
    \]
    and define $\overline{A^\perp} = \overline{\mathcal{P}(n)}\cap N(A)$.
\end{defn}
The main result of this subsection establishes a relationship between the definiteness of $\mathcal{A} = \{A_i\}$ and the emptiness of the intersection $\bigcap A_i^\perp$:
\begin{lem}\label{lem:1}
    The collection $\mathcal{A} = \{A_i\}_{i=1}^k$ of $n\times n$ symmetric matrices is semi-definite if and only if the intersection $\bigcap_{i=1}^k A_i^\perp$ is empty. Furthermore, $\mathcal{A}$ is (strictly) definite if and only if $\bigcap \overline{A_i^\perp}=\varnothing$.
\end{lem}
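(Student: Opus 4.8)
The plan is to reduce the statement to a convex-duality / separation argument on the finite-dimensional vector space $Sym_n(\mathbb{R})$, using the cone $\mathcal{P}_{\geq 0}$ of positive semi-definite matrices. Write $\mathcal{C} = \mathcal{P}_{\geq 0}\subset Sym_n(\mathbb{R})$; it is a closed convex cone with nonempty interior (the positive definite matrices), and it is self-dual with respect to the trace form $\langle A, X\rangle = \mathrm{tr}(AX)$ — that is, $\mathcal{C}^* = \{A : \mathrm{tr}(AX)\geq 0\ \forall X\in\mathcal{C}\} = \mathcal{C}$. Let $V = \mathrm{span}\{A_1,\dots,A_k\}$ and let $V^\perp = \{X\in Sym_n(\mathbb{R}) : \mathrm{tr}(A_iX) = 0\ \forall i\}$ be the annihilator. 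The key dictionary entries are: $\bigcap_i \overline{A_i^\perp} = \varnothing$ means $V^\perp \cap \mathcal{C} = \{0\}$ (no nonzero PSD matrix is trace-orthogonal to all $A_i$, since $\overline{\mathcal{P}(n)}$ is the projectivization of $\mathcal{C}\setminus\{0\}$); and $\bigcap_i A_i^\perp = \varnothing$ means $V^\perp$ contains no positive \emph{definite} matrix, i.e. $V^\perp\cap \mathrm{int}\,\mathcal{C} = \varnothing$. Meanwhile $\mathcal{A}$ is definite (resp. semi-definite) means $V\cap \mathrm{int}\,\mathcal{C}\neq\varnothing$ (resp. $V$ contains a nonzero PSD matrix, $V\cap\mathcal{C}\neq\{0\}$).

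For the \emph{semi-definite} equivalence, I would argue: if $V\cap\mathcal{C}\neq\{0\}$, pick nonzero PSD $A = \sum c_iA_i\in V$; then for any $X\in V^\perp$ one has $\mathrm{tr}(AX)=0$, but for $X\in\mathcal{C}$ we need more — actually the cleanest route is the other direction first. Suppose $V^\perp\cap\mathcal{C} = \{0\}$, i.e. $\bigcap\overline{A_i^\perp}$ is nonempty — no wait, I want to prove the contrapositive carefully, so let me instead invoke a conic separation theorem directly. The clean statement is: for a subspace $V$ and the self-dual cone $\mathcal{C}$, one has $V\cap\mathcal{C}\neq\{0\}$ if and only if $V^\perp\cap\mathrm{int}\,\mathcal{C} = \varnothing$; and $V\cap\mathrm{int}\,\mathcal{C}\neq\varnothing$ if and only if $V^\perp\cap\mathcal{C} = \{0\}$. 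These are exactly Lemma \ref{lem:1} once translated through the dictionary above (the first line is the semi-definite/$\bigcap A_i^\perp=\varnothing$ equivalence, the second is the definite/$\bigcap\overline{A_i^\perp}=\varnothing$ equivalence). Each of these two biconditionals is a standard consequence of the Hahn–Banach separation theorem applied to the cone: e.g. if $V\cap\mathrm{int}\,\mathcal{C}=\varnothing$ then $V$ and $\mathrm{int}\,\mathcal{C}$ (convex, disjoint, the latter open) can be separated by a hyperplane $\{Y=0\}$ with $\mathrm{tr}(YX)\geq 0$ on $\mathcal{C}$ and $\mathrm{tr}(YA)=0$ on $V$ (the latter because $V$ is a subspace, so the linear functional must vanish on it); then $Y\in\mathcal{C}^* = \mathcal{C}$ and $Y\in V^\perp$ and $Y\neq 0$, giving $V^\perp\cap\mathcal{C}\neq\{0\}$. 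The converse and the other biconditional are analogous (one direction is the easy inclusion: if $Y\in V^\perp\cap\mathcal{C}$ is nonzero and $A\in V\cap\mathrm{int}\,\mathcal{C}$, then $\mathrm{tr}(AY)>0$ since $A$ is positive definite and $Y$ is nonzero PSD, contradicting $\mathrm{tr}(AY)=0$).

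So the steps, in order, are: (i) record the self-duality $\mathcal{C}^* = \mathcal{C}$ of the PSD cone under the trace form, and the facts that $\mathrm{int}\,\mathcal{C}$ is the positive-definite cone and that $X\in\mathcal{C}$ is nonzero iff $[X]\in\overline{\mathcal{P}(n)}_S$; (ii) translate "$\mathcal{A}$ semi-definite", "$\mathcal{A}$ definite", "$\bigcap A_i^\perp=\varnothing$", "$\bigcap\overline{A_i^\perp}=\varnothing$" into the four statements about $V\cap\mathcal{C}$, $V\cap\mathrm{int}\,\mathcal{C}$, $V^\perp\cap\mathrm{int}\,\mathcal{C}$, $V^\perp\cap\mathcal{C}$ above; (iii) prove the two conic biconditionals by Hahn–Banach separation plus the trivial inclusion, handling the edge cases $V = Sym_n(\mathbb{R})$ (then $V^\perp=\{0\}$, both right-hand sides vacuous or automatic) and $V^\perp$ meeting $\mathcal{C}$ only in $0$.

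\textbf{The main obstacle} I anticipate is not the separation argument itself but being careful with the strict-versus-non-strict boundary behavior and with the passage between the affine picture in $\mathcal{P}(n)\subset\{X : \det X = 1\}$ and the conic picture in $Sym_n(\mathbb{R})$: specifically, one must check that $A^\perp\subset\mathcal{P}(n)$ being empty is genuinely equivalent to $V^\perp$ (for a single $A$, the hyperplane $\{\mathrm{tr}(AX)=0\}$) missing the \emph{open} cone, which uses that $\mathcal{P}(n)$ (symmetric-matrices model) meets every ray in $\mathrm{int}\,\mathcal{C}$ in exactly one point; and that $\overline{A^\perp} = \overline{\mathcal{P}(n)}\cap N(A)$ being empty corresponds to $V^\perp$ missing the whole cone except $0$. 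Projectivizing also means I should phrase everything in terms of rays/lines through the origin rather than points, but since all the sets involved ($\mathcal{C}$, $V$, $V^\perp$) are cones this is harmless. Once the dictionary is nailed down, the rest is a short, standard convexity argument.
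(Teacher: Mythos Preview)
Your proposal is correct and rests on the same core idea as the paper---a convex separation argument in $Sym_n(\mathbb{R})$---but the packaging differs. The paper first treats the case $k=1$ by hand (diagonalizing $A$ and checking definite/semi-definite/indefinite cases explicitly), and only then bootstraps to general $k$ by invoking a ``support hyperplane'' $N(B)$ separating the projective subspace $\bigcap N(A_i)$ from $\overline{\mathcal{P}(n)}$ (or from $\mathcal{P}(n)$ in the semi-definite case); the $k=1$ computation is what lets the paper conclude that the separating $B$ is definite (resp.\ semi-definite). You instead work directly in the cone picture, recognize that the $k=1$ content is exactly the self-duality $\mathcal{C}^*=\mathcal{C}$ of the PSD cone, and derive both biconditionals in one stroke from Hahn--Banach applied to the subspace~$V$ and the open cone $\mathrm{int}\,\mathcal{C}$. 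Your route is more conceptual and avoids the preliminary case analysis; the paper's route is more elementary in that it never names the dual cone or Hahn--Banach explicitly, trading abstraction for a concrete signature check. Both arguments are short and the separation step is the heart of each.
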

\begin{proof}
    Throughout the proof, we use the projective model of the space $\mathcal{P}(n)$.
    
    First, we prove the statement for $k=1$, i.e., $\mathcal{A}$ consists of a single matrix $\{A\}$. By applying the $SL(n,\mathbb{R})$-action, we assume that $A$ is a diagonal matrix $\mathrm{diag}(I_p,-I_q,O_s)$ without loss of generality. 
    
    We begin by assuming that $A$ is definite, i.e., $A = I_n$ or $A = -I_n$. For any point in $\overline{\mathcal{P}(n)}$ represented by an $n\times n$ semi-definite matrix $X$, either $\mathrm{tr}(A.X) = \mathrm{tr}(X)>0$ or $\mathrm{tr}(A.X) = -\mathrm{tr}(X)<0$ holds. In both cases, $\overline{A^\perp} = \varnothing$.

    Next, assume that $A$ is semi-definite but not definite, i.e., $A = \pm \mathrm{diag}(I_p,O_s)$ with $s>0$. In this case, $A^\perp = \varnothing$ for the same reason. However, the matrix $X = \mathrm{diag}(O_p, I_s)$ represents a point in $\partial_S\mathcal{P}(n)$ that satisfies $\mathrm{tr}(A.X) = 0$, implying that $\overline{A^\perp}$ is non-empty.

    Lastly, assume that $A$ is indefinite, i.e., $A = \mathrm{diag}(I_p,-I_q,O_s)$ with $p,q>0$. A matrix $X = \mathrm{diag}(I_p/q,I_q/p,I_s)$ represents a point in $\mathcal{P}(n)$ that satisfies $\mathrm{tr}(A.X) = 0$, implying that $A^\perp$ is non-empty.

    Having established the statement for $k=1$, we now extend it to general $k\in\mathbb{N}$. We first assume that $\mathcal{A} = \{A_1,\dots,A_k\}$ is definite. By definition, there exist real numbers $c_i\in\mathbb{R}$ for $i=1,\dots,k$ such that $\sum c_iA_i$ is positive definite. If $\bigcap \overline{A_i^\perp}$ is non-empty, say $X\in \bigcap \overline{A_i^\perp}$, then $\mathrm{tr}(X.(\sum c_iA_i)) = \sum c_i.\mathrm{tr}(X.A_i) = 0$. However, the statement for $k=1$ implies that $\mathrm{tr}(X\cdot (\sum c_iA_i))>0$, leading to a contradiction.

    Next, assume that $\mathcal{A}$ is semi-definite but not definite. Then $\bigcap A_i^\perp = \varnothing$, for the same reason as in the previous case. On the other hand, if $\bigcap \overline{A_i^\perp}$ is empty, then the subspace $\bigcap N(A_i)\subset \mathbf{P}(Sym(n))$ is disjoint from the closed convex region $\overline{\mathcal{P}(n)}\subset \mathbf{P}(Sym(n))$. Therefore, there exists a support hyperplane $N(B)\subset \mathbf{P}(Sym(n))$ such that $\bigcap N(A_i)\subseteq N(B)$ and $N(B)\cap \overline{\mathcal{P}(n)} = \varnothing$. The first condition implies that $B\in span(A_1,\dots,A_k)$, while the second condition, together with the statement for $k=1$, implies that $B$ is definite. However, our assumption that $\mathcal{A}$ is indefinite contradicts this conclusion.

    Lastly, assume that $\mathcal{A}$ is not semi-definite. Analogously to the previous case, if $\bigcap A_i^\perp$ is empty, there exists a supporting hyperplane $N(B)\subset \mathbf{P}(Sym(n))$ such that $\bigcap N(A_i)\subseteq N(B)$ and $N(B)\cap \mathcal{P}(n) = \varnothing$. This leads to a contradiction for a similar reason.
\end{proof}

We focus on the case $k=2$. If two hyperplanes $A^\perp$ and $B^\perp$ are disjoint, we have the following supplement to Lemma \ref{lem:1}:
\begin{lem}\label{lem:5}
    If two hyperplanes $A^\perp$ and $B^\perp$ in $\mathcal{P}(n)$ are disjoint and $(A,B)$ is regular, then all generalized eigenvalues of $(A,B)$ are real numbers.
\end{lem}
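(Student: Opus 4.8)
The plan is a short proof by contradiction relying on exactly two inputs: the semi-definiteness criterion of Lemma~\ref{lem:1}, and the definition of a regular pencil. The first step is the only place the disjointness hypothesis is used. Since $A^\perp\cap B^\perp=\varnothing$, applying Lemma~\ref{lem:1} to the collection $\{A,B\}$ shows that $\{A,B\}$ is semi-definite; that is, there are real numbers $\alpha,\beta$, not both zero, such that $C:=\alpha A+\beta B$ is a nonzero positive semi-definite matrix. Everything after this is linear algebra over $\mathbb{C}$.

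Next I would show that a nonreal generalized eigenvector is isotropic for the whole pencil. Suppose $\mu\in\mathbb{C}\setminus\mathbb{R}$ is a generalized eigenvalue, so $\det(A-\mu B)=0$, and fix $v\in\mathbb{C}^n\setminus\{0\}$ with $Av=\mu Bv$. Since $A$ and $B$ are real symmetric, $v^*Av$ and $v^*Bv$ are real, and the relation $v^*Av=\mu\,v^*Bv$ with $\mu\notin\mathbb{R}$ forces $v^*Bv=0$ and hence $v^*Av=0$. Therefore $v^*Cv=\alpha\,v^*Av+\beta\,v^*Bv=0$; writing $C=D^{\mathrm T}D$ (possible as $C\ge 0$), this means $\lVert Dv\rVert=0$, i.e. $Cv=0$.

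The final step turns this into a contradiction with regularity by showing $Av=Bv=0$. If $\alpha\neq 0$, then $A=\alpha^{-1}(C-\beta B)$, so $Cv=0$ gives $Av=-\alpha^{-1}\beta\,Bv$; comparing with $Av=\mu Bv$ and noting $\mu+\alpha^{-1}\beta\neq 0$ (it is nonreal) yields $Bv=0$, whence $Av=\mu Bv=0$. If $\alpha=0$, then $C$ is a nonzero scalar multiple of $B$, so $Cv=0$ already gives $Bv=0$, and again $Av=0$. In either case $v\in\ker A\cap\ker B$, so $(A-\lambda B)v=0$ for every $\lambda\in\mathbb{C}$, hence $\det(A-\lambda B)\equiv 0$, contradicting regularity of $(A,B)$. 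Thus $(A,B)$ has no nonreal generalized eigenvalue — equivalently, every generalized eigenvalue lies in $\mathbb{R}$ (allowing $\infty$, which does genuinely occur when $B$ is singular).

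I do not expect a serious obstacle: the argument is essentially three short observations. The only point needing care is that the semi-definite combination $C$ supplied by Step~1 may be singular and may happen to be proportional to $B$, which is exactly why the last step splits into the cases $\alpha\neq 0$ and $\alpha=0$; in both, regularity is precisely what rules out the common kernel of $A$ and $B$ that the isotropy forces. (One could alternatively first use Lemma~\ref{lem:mob} to reduce to the case $B$ invertible and then diagonalize $C=D^2$ to see $B^{-1}C$, and hence $B^{-1}A$, has real spectrum, but the direct argument above seems cleaner and handles the singular-$B$ case uniformly.)
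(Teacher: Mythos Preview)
Your proof is correct and takes a genuinely different, more elementary route than the paper's. The paper first treats the strictly definite case (where $\overline{A^\perp}\cap\overline{B^\perp}=\varnothing$): after changing basis so that some $B'>0$, it invokes the Puiseux-series Lemma~\ref{lem:4:1:2} to control how the eigenvalues of $A'-tB'$ cross zero, and then runs a signature-counting argument to force all $n$ roots of $\det(A'-tB')$ to be real; the merely semi-definite case is handled afterward by approximating $(A,B)$ with strictly definite pencils and passing to the limit. You bypass all of this machinery: you note that a nonreal generalized eigenvector $v$ is automatically isotropic for both $A$ and $B$, hence for the semi-definite combination $C$ supplied by Lemma~\ref{lem:1}; positivity of $C$ upgrades $v^*Cv=0$ to $Cv=0$, and a two-line case split on $\alpha$ then yields $Av=Bv=0$, contradicting regularity. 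Your argument is shorter, needs no analytic input (no Puiseux expansions, no limits), and handles the semi-definite case uniformly without first reducing to the definite one. The paper's approach, by contrast, extracts extra structural information---how the signature of $A'-tB'$ jumps as $t$ crosses each real generalized eigenvalue---which is not needed for the lemma itself but could be useful elsewhere.
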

This result is needed to prove the classification theorem in Subsection \ref{sec:4:3}. The proof of Lemma \ref{lem:5} requires some algebraic results.
\begin{lem}\label{lem:4:1:2}
    Suppose that $t_0$ is a real generalized eigenvalue of a symmetric $n\times n$ matrix pencil $(A,B)$. We define a continuous function $\lambda(t)$ in a neighborhood of $t=t_0$ such that $\lambda(t)$ is an eigenvalue of $A-Bt$ and $\lambda(t_0) = 0$.

    Then, in a neighborhood of $t=t_0$, the function $\lambda(t)$ can be expressed as a product:
    \[
    \lambda(t) = (t-t_0)^s\varphi(t),
    \]
    where $s\in\mathbb{N}_+$ and $\varphi(t)$ is a continuous function with $\varphi(t_0)\neq 0$.
\end{lem}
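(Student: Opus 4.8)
The plan is to reduce the statement to an elementary fact about real-analytic functions, the real content being that the eigenvalue branch $\lambda(t)$ is itself real-analytic near $t_0$. Since $A$ and $B$ are symmetric, $M(t):=A-Bt$ is a real-analytic one-parameter family of real symmetric matrices, so every eigenvalue of $M(t)$ is real for every real $t$. By the classical analytic perturbation theory of self-adjoint matrices (Rellich), on a neighbourhood of $t_0$ the eigenvalues of $M(t)$ can be listed, with multiplicities, as real-analytic functions $\mu_1(t),\dots,\mu_n(t)$, so that $\det(\lambda I-M(t))=\prod_{j}(\lambda-\mu_j(t))$ identically in $\lambda$ and $t$.

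First I would identify $\lambda(t)$ with one of these branches. Let $m=\dim\ker(A-t_0B)$, which, since $M(t_0)$ is symmetric, is the multiplicity of $0$ as an eigenvalue of $M(t_0)$; relabel so that $\mu_1(t_0)=\dots=\mu_m(t_0)=0$ while $\mu_{m+1}(t_0),\dots,\mu_n(t_0)\neq 0$. By continuity the branches $\mu_{m+1},\dots,\mu_n$ stay bounded away from $0$ near $t_0$, whereas $\lambda(t)\to\lambda(t_0)=0$; hence $\lambda(t)\in\{\mu_1(t),\dots,\mu_m(t)\}$ for $t$ near $t_0$, and since two distinct real-analytic branches meet only on a discrete set, $\lambda$ coincides near $t_0$ with a single one of them, say $\mu:=\mu_j$. (This branch is what ``the continuous eigenvalue through $0$'' should be taken to mean; the degenerate possibility $\mu\equiv 0$ would force $\det(A-Bt)=\prod_k\mu_k(t)\equiv 0$, i.e.\ $(A,B)$ singular, which does not occur in the regular setting in which the lemma is applied.) Then $\mu$ is a nonzero real-analytic function with $\mu(t_0)=0$, so it vanishes to a finite order $s\in\mathbb{N}_+$ at $t_0$, and $\varphi(t):=\mu(t)/(t-t_0)^s$ extends across $t_0$ to a real-analytic — in particular continuous — function with $\varphi(t_0)\neq 0$; this gives $\lambda(t)=\mu(t)=(t-t_0)^s\varphi(t)$, as required.

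The main obstacle, and the only non-formal step, is the analyticity of the branch, which is exactly where symmetry is used. For an arbitrary analytic matrix family the eigenvalues only admit Puiseux expansions $\mu(t)=\sum_{k\geq 1}c_k(t-t_0)^{k/e}$ with a possible ramification index $e>1$, and such a branch need not have the asserted shape. But $M(t)$ stays symmetric for \emph{every} real $t$ near $t_0$, so its eigenvalues are real on both sides of $t_0$; this forces $e=1$, since a nontrivial Puiseux cycle would contribute non-real eigenvalues for real $t\neq t_0$ — this is precisely the content of Rellich's theorem. If one prefers a self-contained argument I would instead apply the Weierstrass preparation theorem to $\det(\lambda I-M(t))$ at $(t_0,0)$, splitting off a monic-in-$\lambda$ factor of degree $m$ whose coefficients are analytic in $t$ and vanish at $t_0$, then run the Newton--Puiseux algorithm on this factor and use the two-sided reality of its roots to conclude that every ramification index equals $1$; the branch $\lambda(t)$ is then one of the resulting Taylor-series solutions, and the argument of the second paragraph finishes the proof.
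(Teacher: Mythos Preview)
Your proposal is correct and is essentially the same argument as the paper's: both hinge on the observation that $A-Bt$ is real symmetric for all real $t$, so its eigenvalues are real, which forces the ramification index in the Puiseux expansion of the eigenvalue branch to equal $1$, whence the branch is a genuine power series and the factorization follows. The only difference is packaging---you invoke Rellich's theorem as a black box and then sketch the Puiseux/reality argument as a self-contained alternative, whereas the paper goes straight to the Puiseux expansion and rules out $d\geq 2$ by the reality constraint; your ``alternative'' in the last paragraph is exactly the paper's proof.
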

\begin{proof}
    The graph of $\lambda = \lambda(t)$ locally represents a branch of the algebraic curve $\{(\lambda,t)|\det(\lambda I + t B - A) = 0\}$. Thus, $\lambda(t)$ has a Puiseux series expansion
    \[
    \lambda(t) = \sum_{n \geq s}l_n(t-t_0)^{n/d}
    \]
    in a neighbourhood of $(\lambda,t) = (0,t_0)$, where $s$ and $d$ are positive integers, and $l_s\neq 0$.

    If $d\geq 2$, then this algebraic curve has a ramification of index $d$, denoted by $\lambda^{(k)}(t) = \sum_{n \geq s}l_n e^{2k\pi \mathrm{i}/d}(t-t_0)^{n/d}$, $k=0,\dots,d-1$ (see, e.g., \cite{brieskorn2012plane}). Some of the branches take nonreal values in a punctured neighborhood of $t = t_0$. However, for any $t\in\mathbb{R}$, all eigenvalues of the real symmetric matrix $A - B t$ are real, which leads to a contradiction. Hence $d=1$, and
    \[
    \lambda(t) = (t-t_0)^s\sum_{n\geq s} l_n(t-t_0)^{n-s} := (t-t_0)^s\varphi(t)
    \]
    in a neighborhood of $t = t_0$. Moreover, $\varphi(t_0) = l_s\neq 0$.
\end{proof}
\begin{proof}[Proof of Lemma \ref{lem:5}]
    First, we assume that $\overline{A^\perp}$ and $\overline{B^\perp}$ are disjoint. Lemma \ref{lem:1} implies that the pencil $(A,B)$ is (strictly) definite.

    Let $A'$ and $B'$ be another basis of $span(A,B)$, such that $B'$ is a positive definite matrix. By Lemma \ref{lem:mob}, it suffices to prove that all generalized eigenvalues of $(A',B')$ are real.

    Suppose that the polynomial $\det(A' - tB')$ has distinct real zeroes $t_i$ of multiplicity $r_i$, where $i=1,\dots,k$. For each $i$, there is a neighborhood $U_i\supset t_i$, on which the eigenvalues of $(A' - tB')$ are smooth functions $\lambda_j(t)$ of $t$, $j=1,\dots, n$. Thus,
    \[
    \det(A' - tB') = \prod_{j=1}^n \lambda_j(t).
    \]
    By Lemma \ref{lem:4:1:2}, if $\lambda_j(t)$ changes sign at $t = t_i$, then $\lambda_j(t)$ has a factor $(t - t_i)$. Since the multiplicity of the zero $t = t_i$ of $\det(A' - tB')$ is $r_i$, at most $r_i$ eigenvalues change their signs at $t = t_i$, meaning the signature of $A' - t B'$ changes by at most $2 r_i$ at $t = t_i$. 

    As $B'$ is positive definite, there exists a number $M<\infty$ such that the matrix $(A' + M B')$ is positive definite, and $(A' - M B')$ is negative definite. Therefore, as $t$ changes from $-M$ to $M$, the signature of $(A' - tB')$ increases by $2n$. Hence,
    \[
    2n \leq \sum_{i=1}^k(2 r_i)\leq 2\sum_{i=1}^k r_i\leq 2n.
    \]
    Consequently, $\sum r_i = n$, implying that all zeroes of $(A' - tB')$ are real. Thus, all the generalized eigenvalues of $(A',B')$ are real.

    Now we assume that $A^\perp$ and $B^\perp$ are disjoint. We approximate $(A,B)\in (Sym_n(\mathbb{R}))^2$ by a sequence $\{(A_i,B_i)\}_{i=1}^\infty$ consisting of strictly definite matrix pencils. As discussed earlier, all generalized eigenvalues of $(A_i,B_i)$ are real. Since the generalized eigenvalues of $(A,B)$ are the limits of those of $(A_i,B_i)$ as $i\to \infty$, all the generalized eigenvalues of $(A,B)$ are real as well.
\end{proof}
\subsection{Classification and algorithm for disjoint hyperplanes}\label{sec:4:3}
The following theorem is one of the main results of this section. It characterizes pairs $(A,B)$ of symmetric matrices such that the hyperplanes $A^\perp$ and $B^\perp$ are disjoint in $\mathcal{P}(n)$:
\begin{thm}\label{thm:main:1}
Hyperplanes $A^\perp$ and $B^\perp$ in $\mathcal{P}(n)$ are disjoint if and only if either of the following holds, up to a congruence transformation of $(A,B)$:
    \begin{enumerate}
        \item The matrix pencil $(A,B)$ is diagonal and semi-definite.
        \item The matrix pencil $(A,B)$ is block-diagonal, where the blocks are at most $2$-dimensional. Moreover, all blocks $(A_i,B_i)$ of dimension $2$ share the same generalized eigenvalue $\lambda$, while $A-\lambda B$ is semi-definite.
    \end{enumerate}
\end{thm}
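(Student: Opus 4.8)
The plan is to recognize that, once unwound, the statement is entirely about when the pencil $(A,B)$ is semi-definite: the ``if'' direction then collapses to Lemma \ref{lem:1}, and all the work goes into the ``only if'' direction via the congruence normal forms recalled in Section \ref{sec:s2}. First I would observe that if $A^\perp$ and $B^\perp$ are honest hyperplanes then $A$ and $B$ are indefinite, because a nonzero semi-definite $C$ has $\tr(CX)\neq 0$ for every $X\in\mathcal{P}(n)$, so $C^\perp=\varnothing$. By Lemma \ref{lem:1} with $k=2$ (and the remark identifying semi-definiteness of the collection with that of the pencil), $A^\perp\cap B^\perp=\varnothing$ iff some nonzero $c_1A+c_2B$ is positive semi-definite; indefiniteness of $A,B$ forces $c_1\neq 0\neq c_2$, so this is the same as saying $A-\lambda B$ is a nonzero semi-definite matrix for some $\lambda\in\mathbb{R}$ (with the convention $A-\infty\cdot B:=B$ allowed). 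Conversely, a pencil as in case (1) is semi-definite by hypothesis and one as in case (2) has $A-\lambda B$ semi-definite by hypothesis, so Lemma \ref{lem:1} gives disjointness both ways. Thus only the forward implication needs an argument, and I would fix a semi-definite combination $M=c_1A+c_2B\geq 0$, $M\neq 0$, once and for all.

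In the regular case, Lemma \ref{lem:5} gives that all generalized eigenvalues of $(A,B)$ are real or $\infty$, so I would pass to the congruence normal form (Lemmas \ref{lem:new:0} and \ref{lem:ant:tri}): block-diagonal with blocks $(A_i,B_i)$ attached to real Jordan blocks $J_{\mu_i,d_i}$, of the anti-triangular Hankel shape of Lemma \ref{lem:ant:tri}, with $A_i=B_iJ_{\mu_i,d_i}$ (the roles of $A_i$ and $B_i$ being exchanged for an $\infty$-block). The key local computation is that for $d_i\geq 3$ one has $A_i-\mu_iB_i=B_iJ_{0,d_i}$, whose symmetric structure contains a hyperbolic $2\times 2$ block and is thus indefinite; that $A_i-\nu B_i$ is invertible and never definite for $\nu\neq\mu_i$ (a definite combination would simultaneously diagonalize a single Jordan block by congruence), hence indefinite; and that $c_2B_i$ is indefinite since $B_i$ has a zero diagonal entry but a nonzero row. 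Consequently a block of size $\geq 3$ carries no nonzero semi-definite combination, and since $M$ restricts to a semi-definite matrix on every block, all $d_i\leq 2$. For $d_i=2$ the same computation gives $A_i-\mu_iB_i=\mathrm{diag}(0,\beta_i)$ with $\beta_i\neq 0$ — the unique semi-definite combination of that block up to scaling — and $c_1A_i+c_2B_i=0$ is impossible (it forces $c_1J_{\mu_i,2}+c_2I=0$), so $M$ is a nonzero semi-definite matrix on each $2$-block; this pins $(c_1:c_2)=(1:-\mu_i)$ for every $2$-block, so all $2$-blocks share one eigenvalue $\lambda$ and $M=c_1(A-\lambda B)$, whence $A-\lambda B$ is semi-definite. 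So $(A,B)$ is as in case (2), or as in case (1) if there are no $2$-blocks at all.

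In the singular case, I would invoke Lemma \ref{lem:2:3:5:2} to put $(A,B)$, up to congruence, in the block form there, with $A=\mathrm{diag}(A_1,0,0)$, $A_1$ and $B_3$ invertible, and $n_2\geq 1$ (otherwise $\det(A-\lambda B)\not\equiv 0$, contradicting singularity). The central $n_2\times n_2$ diagonal block of $M$ vanishes, and a positive semi-definite matrix with a zero diagonal block has the corresponding rows and columns zero, so $c_2B_2=0$. If $c_2=0$ then $\pm A\succeq 0$, making $A_1$ definite and $A^\perp=\varnothing$ (since $\tr(AX)=\tr(A_1X_{11})\neq 0$ for $X>0$), contradicting that $A^\perp$ is a hyperplane; so $c_2\neq 0$, $B_2=0$, and $(A,B)$ splits as $(A_1,B_1)\oplus(0_{n_2},0_{n_2})\oplus(0_{n_3},B_3)$. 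Diagonalizing $B_3$ by congruence and splitting the middle zero block turns the last two summands into diagonal pencils with $1\times1$ blocks. The summand $(A_1,B_1)$ is regular, $A_1$ is indefinite (else $A^\perp=\varnothing$ again), and its generalized eigenvalues are real — by Lemma \ref{lem:5} inside $\mathcal{P}(n_1)$ when $B_1^\perp\neq\varnothing$, and otherwise by symmetrizing $A_1$ against an invertible combination, since $B_1$ is then semi-definite — so the regular analysis of the previous paragraph applies to it and yields that $(A,B)$ is as in case (1) (if $(A_1,B_1)$ has no $2$-block) or as in case (2) with $\lambda$ the common eigenvalue of the $2$-blocks of $(A_1,B_1)$.

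The hard part will be the singular case, where three things have to be combined at once: the Jiang--Li normal form, the hyperplane hypothesis on $A$ and $B$ (used both to discard the $c_2=0$ branch and to keep the block $A_1$ indefinite), and the reality of the generalized eigenvalues of the regular summand $(A_1,B_1)$ — which does not follow \emph{verbatim} from Lemma \ref{lem:5} once $A_1$ or $B_1$ fails to define a non-empty hyperplane, and has to be recovered by the ad hoc symmetrization above. Beyond that, the computational heart of the argument is the per-block inertia bookkeeping in the regular case: checking that a normal-form block of size $\geq 3$ admits no nonzero semi-definite combination, and that a $2$-block admits exactly one, located at its generalized eigenvalue.
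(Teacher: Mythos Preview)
Your proof is correct. The regular case matches the paper's almost verbatim. In the singular case you take a slightly different route: the paper invokes \emph{both} Jiang--Li congruence forms (one making $A$ block-diagonal, one making $B$ block-diagonal) and, when the off-diagonal pieces $B_2$ and $A_2'$ are both nonzero, builds an explicit positive-definite matrix in $A^\perp\cap B^\perp$, forcing one of them to vanish and then reducing via Lemma~\ref{lem:4:3:1:1}. You instead use only one form and read $c_2B_2=0$ directly off the zero central diagonal block of the positive semi-definite matrix $M=c_1A+c_2B$, then rule out $c_2=0$ with the hyperplane hypothesis on $A$; this is shorter and avoids both the second normal form and the witness construction, at the cost of the small wrinkle you already flag (Lemma~\ref{lem:5} as stated assumes non-empty hyperplanes, but its proof only uses semi-definiteness of the pencil, which you have for $(A_1,B_1)$). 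One point worth making explicit: when you conclude ``all $d_i\le 2$'' you are implicitly using that the restriction $c_1A_i+c_2B_i$ to a block of size $\ge 3$ is nonzero --- otherwise ``no nonzero semi-definite combination'' gives no contradiction --- and this follows from the same Jordan observation you give for $d_i=2$, since $c_1A_i+c_2B_i=B_i(c_1J_{\mu_i,d_i}+c_2I)$ with $B_i$ invertible and superdiagonal entry $c_1\neq 0$.
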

% I was here
\begin{lem}[cf. \cite{finsler1936vorkommen}]\label{lem:4:3:1:1}
    Suppose that $A_0,B_0\in Sym_m(\mathbb{R})$ and $A = \mathrm{diag}(A_0,O), B = \mathrm{diag}(B_0,O)\in Sym_n(\mathbb{R})$. Then $A^\perp\cap B^\perp = \varnothing$ if and only if $A_0^\perp\cap B_0^\perp = \varnothing$ (in $\mathcal{P}(m)$).
\end{lem}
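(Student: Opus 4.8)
The plan is to reduce the statement, via Lemma~\ref{lem:1}, to a trivial compatibility fact about semi-definiteness of collections of symmetric matrices under zero-padding. First I would invoke Lemma~\ref{lem:1} applied to the two-element collection $\{A,B\}\subset Sym_n(\mathbb{R})$ and to the two-element collection $\{A_0,B_0\}\subset Sym_m(\mathbb{R})$: the intersection $A^\perp\cap B^\perp$ is empty in $\mathcal{P}(n)$ if and only if $\{A,B\}$ is semi-definite, and $A_0^\perp\cap B_0^\perp$ is empty in $\mathcal{P}(m)$ if and only if $\{A_0,B_0\}$ is semi-definite. Hence it suffices to prove that $\{A,B\}$ is semi-definite if and only if $\{A_0,B_0\}$ is semi-definite.

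For this equivalence, observe that for any $c_1,c_2\in\mathbb{R}$ one has $c_1A+c_2B=\mathrm{diag}(c_1A_0+c_2B_0,\,O_{n-m})$, and this block matrix is positive semi-definite precisely when $c_1A_0+c_2B_0\geq 0$, and is nonzero precisely when $c_1A_0+c_2B_0\neq 0$, since the padded zero block never affects either property. Therefore a pair $(c_1,c_2)$ witnessing semi-definiteness of $\{A,B\}$ witnesses it for $\{A_0,B_0\}$ and conversely; this finishes the argument. (The case $n=m$ is vacuous, so one may assume $n>m$.)

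Alternatively, I would give a direct argument avoiding Lemma~\ref{lem:1}, which is essentially the same idea made concrete. If $X\in\mathcal{P}(n)$ lies in $A^\perp\cap B^\perp$, write $X$ in block form with leading $m\times m$ principal submatrix $X_{11}$; since $X>0$ we get $X_{11}>0$, and $\mathrm{tr}(X A)=\mathrm{tr}(X_{11}A_0)=0$, $\mathrm{tr}(X B)=\mathrm{tr}(X_{11}B_0)=0$, so the determinant-one normalization $\det(X_{11})^{-1/m}X_{11}$ lies in $A_0^\perp\cap B_0^\perp$. Conversely, if $Y\in\mathcal{P}(m)\cap A_0^\perp\cap B_0^\perp$, then $\mathrm{diag}(Y,I_{n-m})\in\mathcal{P}(n)$ (its determinant is $\det(Y)=1$) and it lies in $A^\perp\cap B^\perp$. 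This proves both implications by contraposition.

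There is no real obstacle here: the lemma is a bookkeeping step whose substance is carried entirely by Lemma~\ref{lem:1} (Finsler's theorem). The only point requiring a moment's care is the $\det=1$ constraint defining $\mathcal{P}(\cdot)$, which is handled by the homogeneity of the defining trace equations together with the positivity of principal submatrices of a positive-definite matrix.
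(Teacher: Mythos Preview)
Your proposal is correct, and your alternative direct argument is exactly the paper's proof: extend $Y\in A_0^\perp\cap B_0^\perp$ to $\mathrm{diag}(Y,I_{n-m})$, and in the other direction take the top-left $m\times m$ block of $X$ and rescale. Your first route via Lemma~\ref{lem:1} is also valid but not what the paper does; it simply trades the block-matrix bookkeeping for the equivalent observation that zero-padding preserves semi-definiteness.
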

\begin{proof}
    On the one hand, if $X_0\in A_0^\perp\cap B_0^\perp$, then $X = \mathrm{diag}(X_0,I_{n-m})\in A^\perp\cap B^\perp$. On the other hand, if $X\in A^\perp\cap B^\perp = \mathrm{diag}(A_0,O_{n-m})^\perp\cap \mathrm{diag}(B_0,O_{n-m})^\perp$, suppose that
    \[
    X = \begin{pmatrix}
        X_1 & X_2^{\mathrm{T}}\\ X_2 & X_3
    \end{pmatrix},
    \]
    where $X_1$ is a $m\times m$ matrix. Then $X_1$ is positive definite, and $c\cdot X_1\in A_0^\perp\cap B_0^\perp$ for certain $c>0$.
\end{proof}
Now we can start proving Theorem \ref{thm:main:1}:
\begin{proof}[Proof of Theorem \ref{thm:main:1}]
    The ``if'' part is straightforward. The ``only if'' part of the proof is divided in two cases, depending on whether $(A,B)$ is regular.

    \textbf{Case (1).} Suppose that $(A,B)$ is a regular pencil. Without loss of generality, assume that $B$ is invertible. Lemma \ref{lem:5} implies that all generalized eigenvalues of $(A,B)$ are real. By Lemma \ref{lem:mob}, up to a congruence transformation, we can assume that $(A,B)$ is a real block-diagonal matrix pencil, and $B^{-1}A$ is a real matrix in Jordan normal form. Moreover, the dimensions of the blocks of $(A,B)$ are the same as those of the Jordan normal form $B^{-1}A$.

    Suppose that the Jordan normal form $B^{-1}A$ contains a block $J_i = J_{\lambda_i,d_i}$ of dimension $3$. Lemma \ref{lem:ant:tri} implies that
    \[
    B_i = \begin{pmatrix}0 & 0 & a\\ 0 & a & b\\ a & b & c\end{pmatrix},\quad A_i-\lambda B_i = \begin{pmatrix}0 & 0 & 0\\ 0 & 0 & a\\ 0 & a & b\end{pmatrix},
    \]
    for real numbers $a,b,c$. Moreover, $a\neq 0$ since $B$ is invertible. Therefore, all elements in $span(A_i,B_i) = span(B_i, A_i - \lambda_iB_i)$ are indefinite, i.e., the pencil $(A,B)$ is indefinite. By Lemma \ref{lem:1}, $A^\perp$ and $B^\perp$ intersect. Similarly, $A^\perp$ and $B^\perp$ intersect if the Jordan normal form $B^{-1}A$ contains a block of dimension greater than $3$.

    Suppose that the Jordan normal form $B^{-1}A$ contains a block $J_i = J_{\lambda_i,d_i}$ of dimension $2$. Similarly to the previous case, elements other than $B_i - \lambda_i A_i$ in $(A_i,B_i)$ are indefinite. Thus, if $A^\perp$ and $B^\perp$ are disjoint, i.e., $(A,B)$ is semi-definite, then $B_i - \lambda_i A_i$ is the unique semi-definite element in the pencil $(A_i,B_i)$. Therefore if $(A,B)$ is semi-definite, all blocks of dimension $2$ share the same eigenvalue $\lambda$, and $B - \lambda A$ is a semi-definite matrix. In this case, the matrix $B - \lambda A$ is diagonal since all $2$-dimensional blocks $B_i - \lambda_i A_i$ are diagonal.

    Suppose that the Jordan normal form $B^{-1}A$ is diagonal, i.e., both $A$ and $B$ are diagonal. Hyperplanes $A^\perp$ and $B^\perp$ are disjoint if and only if $(A,B)$ is semi-definite.

    \textbf{Case (2).} Now suppose that the matrix pencil $(A,B)$ is singular. According to Lemma \ref{lem:2:3:5:2}, $(A,B)$ is congruent to both
    \begin{equation}\label{equ:4:4}
    P^{\mathrm{T}}AP = \begin{pmatrix}A_1 & O & O\\ O & O & O\\ O & O & O\end{pmatrix},\quad P^{\mathrm{T}}BP = \begin{pmatrix}B_1 & B_2 & O\\ B_2^{\mathrm{T}} & O & O\\ O & O & B_3\end{pmatrix},
    \end{equation}
    and
    \begin{equation}\label{equ:4:5}
    P'^{\mathrm{T}}AP' = \begin{pmatrix}A_1' & A_2' & O\\ A_2'^{\mathrm{T}} & O & O\\ O & O & A_3\end{pmatrix},\quad P'^{\mathrm{T}}BP' = \begin{pmatrix}B_1' & O & O\\ O & O & O\\ O & O & O\end{pmatrix},
    \end{equation}
    where $A_1$, $B_3$, $A_3'$ and $B_1'$ are invertible.

    Suppose that both $A_2'$ and $B_2\neq O$. The nonzero $A_2'$ implies that $A$ contains an indefinite principal minor, thus $A$ is indefinite. Consequently, $A_1$ is also indefinite.
    
    We proceed to construct a positive definite matrix that is orthogonal to both $A$ and $B$. According to Lemma \ref{lem:1}, $A_1^\perp$ is nonempty. Let $X_1\in A_1^\perp$ and choose $X_3$ to be an arbitrary positive definite matrix of the same size as $B_3$. As $B_2\neq O$, there exists a matrix $X_2$ of the same size as $B_2$ such that $2\mathrm{tr}(X_2\cdot B_2^{\mathrm{T}}) + \mathrm{tr}(X_1\cdot B_1) + \mathrm{tr}(X_3\cdot B_3) = 0$. Since $X_1$ is positive definite, there exists a positive definite matrix $X_4$ such that
    \[
    \begin{pmatrix}X_1 & X_2\\X_2^{\mathrm{T}} & X_4\end{pmatrix}
    \]
    is positive definite. Hence,
    \[
    \begin{pmatrix}X_1 & X_2 & O\\ X_2^{\mathrm{T}} & X_4 & O\\ O & O & X_3\end{pmatrix}>0,\ X= P\cdot\begin{pmatrix}X_1 & X_2 & O\\ X_2^{\mathrm{T}} & X_4 & O\\ O & O & X_3\end{pmatrix}\cdot P^{\mathrm{T}}\in A^\perp\cap B^\perp.
    \]
    For the reason above, $A^\perp$ and $B^\perp$ are disjoint only if either $A_2' = O$ or $B_2 = O$. Without loss of generality, suppose that $B_2 = O$, then $(A,B)$ is congruent to $(\mathrm{diag}(A_0,O_{n-m}), \mathrm{diag}(B_0,O_{n-m}))$, where $(A_0,B_0): = (\mathrm{diag}(A_1,O),\mathrm{diag}(B_1,B_3))$ is an invertible pencil of dimension $m$. According to Lemma \ref{lem:4:3:1:1}, the condition $A^\perp\cap B^\perp = \varnothing$ is equivalent to $A_0^\perp\cap B_0^\perp = \varnothing$. 
    
    Therefore, $A^\perp\cap B^\perp = \varnothing$ only if either of the two cases in Theorem \ref{thm:main:1} holds for the regular pencil $(A_0,B_0)$. Consequently, either of the two cases holds for $(A,B)$ as well.
    % Finally, we pick some numbers $x_1,\dots,x_{n-m}$ such that all of the sequential principal minors of $\begin{pmatrix}X_1 & X_2\\X_2^{\mathrm{T}} & X_4\end{pmatrix}$ have positive determinants, where $X_4 = diag(x_1,\dots,x_{n-m})$; this is possible as $X_1$ is positive definite. Eventually, we have 
\end{proof}
To check if two hyperplanes $A^\perp$ and $B^\perp$ are disjoint, we describe the following algorithm derived from the proof of Theorem \ref{thm:main:1}.

\textbf{Algorithm for certifying disjointness of two hyperplanes.} For given normal vectors $A,B\in Sym_n(\mathbb{R})$ of hyperplanes in $\mathcal{P}(n)$, the following steps ascertain if $A^\perp\cap B^\perp = \varnothing$.
\begin{enumerate}
    \item Determine if $(A,B)$ is regular by computing the coefficients of the polynomial $\det(A-tB)$.
    \item If $(A,B)$ is regular, assume that $A$ is invertible without loss of generality. Compute the Jordan normal form of $A^{-1}B = PJP^{-1}$ using the standard algorithm. 
    \item If any Jordan block of $J$ has dimension $\geq 3$, then $A^\perp$ and $B^\perp$ are not disjoint.
    \item Otherwise, compute $A_0 = P^{\mathrm{T}}AP$ and $B_0 = P^{\mathrm{T}}BP$. If $J$ has blocks of dimension $2$, check if all these blocks share the same eigenvalue $\lambda$ and if the diagonal matrix $A_0 - \lambda B_0$ is semi-definite. This condition holds if and only if $A^\perp\cap B^\perp = \varnothing$.
    \item If $J$ is diagonal, both $A_0$ and $B_0$ are diagonal. Check if $A_0$ and $B_0$ have a positive semi-definite linear combination. This condition holds if and only if $A^\perp\cap B^\perp = \varnothing$.
    \item If $(A,B)$ is singular, compute the standard form of $(A,B)$ as in equations \eqref{equ:4:4} and \eqref{equ:4:5} following the algorithm described in \cite{jiang2016simultaneous}.
    \item In the standard form mentioned above, if both matrices $B_2$ and $A_2'$ are nonzero, then $A^\perp$ and $B^\perp$ are not disjoint.
    \item Otherwise, assume that $B_2 = O$. Let $A_0 = \mathrm{diag}(A_1,O)$ and $B_0 = \mathrm{diag}(B_1,B_3)$, then the matrix pencil $(A_0,B_0)$ is regular. Check if $A_0^\perp\cap B_0^\perp = \varnothing$ by performing steps (2) to (5). According to Lemma \ref{lem:4:3:1:1}, this is equivalent to $A^\perp\cap B^\perp = \varnothing$.  
\end{enumerate}
\subsection{A sufficient condition for intersecting bisectors}\label{sec:4:2}
In this section, we formulate a sufficient condition for disjointness of bisectors $Bis(X,Y)$ and $Bis(Y,Z)$ in terms of the distances and angle between $X,Y$ and $Z\in\mathcal{P}(n)$. This is analogous to a sufficient condition proposed in the context of hyperbolic spaces, \cite{kapovich2018geometric}. We begin with dividing the model flat of $\mathcal{P}(n)$\cite{kapovich2017anosov} into chambers:
\begin{defn}
    The \textbf{model flat} of $\mathcal{P}(n)$ is the set:
    \[
    F_{mod} = \{diag(x_1,\dots,x_n)|x_i>0,\ \prod x_i = 1\}.
    \]
\end{defn}
Consider a given point $Y\in \mathcal{P}(n)$ and a given maximal totally-geodesic flat submanifold $F\ni Y$ in $\mathcal{P}(n)$. There exists an element $g\in SL(n,\mathbb{R})$ that maps $F$ to the model flat $F_{mod}$ and maps $Y$ to the identity, \cite{kapovich2017anosov}. As $I = g.Y = (Y^{1/2}g)^T(Y^{1/2}g)$, we have that $Y^{1/2}g\in SO(n)$. 
\begin{defn}\label{defn:4:4}
    We divide the model flat $F_{mod}$ of $\mathcal{P}(n)$ into $(2^n-2)$ chambers, denoted by
    \[
    \Delta^\mathcal{I} = \{X = \mathrm{diag}(x_i)\in F_{mod}|0<x_i< 1,\ \forall i\in\mathcal{I};\quad x_i> 1,\ \forall i\notin\mathcal{I}\}.
    \]

    For any number $t\in (0,1)$, define
    \[
    \Delta^\mathcal{I}_t = \left\{X\in \Delta^\mathcal{I}\left| \frac{\min |\log x_i|}{\max |\log x_i|}\geq t\right\}\right. .
    \]
    $\Delta^\mathcal{I}_t$ is a cone contained in the chamber $\Delta^\mathcal{I}$ and is away from the chamber boundary.
\end{defn}
\begin{thm}\label{thm:4:2:2}
    Let $X,Y,Z$ be points in $\mathcal{P}(n)$, and $L = \min(s(Y,X),s(Y,Z))$. There exist elements $g_X$ and $g_Z\in SL(n,\mathbb{R})$ that map $Y$ to the identity and map $X$ and $Z$ into $F_{mod}$, respectively. Let $\theta$ be the maximum angle between the corresponding column vectors of $Y^{1/2}g_X$ and $Y^{1/2}g_Z\in SO(n)$.
    
    Suppose that there is a number $t\in (0,1)$ and a subset $\mathcal{I}\subset \{1,\dots,n\}$ such that the points $g_X.X\in \Delta^\mathcal{I}_t$, $g_Z.Z\in \Delta^{\mathcal{I}^{\mathsf{c}}}_t$, and
    \begin{equation}\label{equ:4:3}
        \frac{1+\sqrt{n-2}\sin\theta}{\cos\theta - \sqrt{n-2}\sin\theta}\leq \sqrt{t}\cdot\left(\frac{L-1}{n-1}\right)^{t/2}.
    \end{equation}
    Then the bisectors $Bis(X,Y)$ and $Bis(Y,Z)$ in $\mathcal{P}(n)$ are disjoint.
\end{thm}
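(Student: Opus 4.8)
The plan is to reduce disjointness of the two bisectors to the algebraic criterion of Lemma \ref{lem:1}, and then verify that criterion quantitatively using the chamber data and the angle $\theta$.

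First I would use the $SL(n,\mathbb{R})$-invariance of $s(-,-)$ (hence of bisectors, Selberg invariants, and all the hypotheses) to move $Y$ to the identity $I$. Writing $s(P,W)=\tr(P^{-1}W)$ and unwinding the definition of a bisector, one finds $Bis(X,I)=A^\perp$ and $Bis(I,Z)=B^\perp$ with normal vectors $A=X^{-1}-I$ and $B=I-Z^{-1}$. By Lemma \ref{lem:1}, $Bis(X,Y)\cap Bis(Y,Z)=A^\perp\cap B^\perp=\varnothing$ as soon as the pencil $(A,B)$ is semi-definite, so it suffices to exhibit one positive (semi-)definite element of $\mathrm{span}\{A,B\}$; concretely, the goal becomes: \emph{find $\alpha,\beta>0$ with $\alpha X^{-1}+\beta Z^{-1}\succeq(\alpha+\beta)I$} (and if the inequality is strict one even separates the Satake closures). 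Note also that the left side $\frac{1+\sqrt{n-2}\sin\theta}{\cos\theta-\sqrt{n-2}\sin\theta}$ of \eqref{equ:4:3} exceeds $1$ whenever it is defined, so \eqref{equ:4:3} silently forces both $\theta$ to be small (with $\cos\theta>\sqrt{n-2}\sin\theta$) and $L$ to be large.

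Next I would diagonalize in the frame attached to $Y$. After the normalization $g_X.Y=I$ one has $g_X\in SO(n)$ and $g_X.X=D_X\in F_{mod}$, so $X=UD_XU^{\mathrm T}$ with $U\in SO(n)$; similarly $Z=VD_ZV^{\mathrm T}$. The hypotheses become $D_X\in\Delta^{\mathcal I}_t$, $D_Z\in\Delta^{\mathcal I^{\mathsf c}}_t$, and $\angle(u_i,v_i)\le\theta$ for the $i$-th columns $u_i,v_i$. Conjugating by $U^{\mathrm T}$ turns the target into $\alpha D_X^{-1}+\beta WD_ZW^{\mathrm T}\succeq(\alpha+\beta)I$, where $W=U^{\mathrm T}V\in SO(n)$ has $W_{ii}=\langle u_i,v_i\rangle\ge\cos\theta$ and each of its rows/columns carries off-diagonal $\ell^2$-mass $\le\sin\theta$ (hence $\ell^1$-mass $\le\sqrt{n-2}\,\sin\theta$ after using the extra orthogonality constraint). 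Write $M_X=\max_i|\log x_i|$, $M_Z=\max_i|\log z_i|$; the chamber conditions give $1/x_i\ge e^{tM_X}$ for $i\in\mathcal I$ and $1/x_i\ge e^{-M_X}$ for $i\in\mathcal I^{\mathsf c}$, and the mirror statement for $Z$ with $\mathcal I$ and $\mathcal I^{\mathsf c}$ interchanged. Evaluating the quadratic form on a unit vector $w$, I would split $w^{\mathrm T}D_X^{-1}w$ and $w^{\mathrm T}WD_ZW^{\mathrm T}w$ along the blocks $\mathcal I,\mathcal I^{\mathsf c}$, keep the large-block contributions $e^{tM_X}\|P_{\mathcal I}w\|^2$ and $e^{tM_Z}\|P_{\mathcal I^{\mathsf c}}W^{\mathrm T}w\|^2$, and bound the discrepancy between $\|P_{\mathcal I^{\mathsf c}}w\|^2$ and $\|P_{\mathcal I^{\mathsf c}}W^{\mathrm T}w\|^2$ by a term of size $\sqrt{n-2}\,\sin\theta$ via Cauchy--Schwarz on the off-diagonal of $W$. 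Finally I would choose $\alpha,\beta$ adaptively so that $\alpha(e^{tM_X}-1)$ and $\beta(e^{tM_Z}-1)$ dominate the cross losses; this is where \eqref{equ:4:3} is used, since its right-hand side forces $L$ large, and because $s(Y,X)=\sum x_i\lesssim(n-1)(1+e^{M_X})$ and $s(Y,Z)=\sum z_i\lesssim(n-1)(1+e^{M_Z})$ with $L\le\min(s(Y,X),s(Y,Z))$, this forces $e^{M_X},e^{M_Z}\gtrsim(L-1)/(n-1)$, i.e. $e^{tM_X/2},e^{tM_Z/2}\gtrsim\bigl((L-1)/(n-1)\bigr)^{t/2}$ — precisely the quantity under the root in \eqref{equ:4:3} — which supplies the margin needed to absorb the $\theta$-error and close the inequality.

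The hard part will be the uniform quadratic-form estimate of the previous paragraph: proving $\alpha D_X^{-1}+\beta WD_ZW^{\mathrm T}\succeq(\alpha+\beta)I$ simultaneously for every unit $w$, every admissible split $(\mathcal I,\mathcal I^{\mathsf c})$, and every $W\in SO(n)$ with the prescribed diagonal. The delicate point is that $M_X$ and $M_Z$ may be very different, so no fixed combination (for instance $X^{-1}+Z^{-1}-2I$) works and the weights $\alpha,\beta$ must be tuned to the actual chamber depths; moreover the genuinely dangerous directions are those $w$ lying almost entirely in the small-eigenvalue block of one of $X^{-1}$, $Z^{-1}$, and for those one must invoke the angle bound to guarantee that $W$ transfers enough of the mass of $w$ into the large-eigenvalue block of the other matrix — this is exactly the mechanism that produces the factor $\frac{1+\sqrt{n-2}\sin\theta}{\cos\theta-\sqrt{n-2}\sin\theta}$ and explains why the exponent $t/2$ in \eqref{equ:4:3} must match the chamber-depth exponent $t$ from the definition of $\Delta^{\mathcal I}_t$. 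Tracking the Cauchy--Schwarz losses so that they line up with the lower-order corrections hidden in the ``$L-1$'', ``$n-1$'', and ``$\sqrt t$'' of \eqref{equ:4:3} is the technical core of the argument.
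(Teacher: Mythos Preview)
Your high-level plan is exactly the paper's: normalize $Y=I$, write the two bisectors as $(X^{-1}-I)^\perp$ and $(Z^{-1}-I)^\perp$, and invoke Lemma~\ref{lem:1} by producing a positive semidefinite combination. The identification of the right mechanism---complementary chambers supply large positive diagonal contributions on complementary index sets, the rotation close to $I$ mixes them only mildly, and the weights $\alpha,\beta$ must be chosen adaptively (your remark that no single fixed combination like $X^{-1}+Z^{-1}-2I$ works is spot on)---is also correct. One typo: after conjugating by $U^{\mathrm T}$ the target is $\alpha D_X^{-1}+\beta W D_Z^{-1}W^{\mathrm T}\succeq(\alpha+\beta)I$, not $WD_ZW^{\mathrm T}$.

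The paper, however, does \emph{not} attempt the quadratic-form estimate in the frame where $X$ is diagonal. Instead it introduces one more coordinate change: writing the relative rotation $g\in SO(n)$ in block form with respect to the partition $\mathcal I\sqcup\mathcal I^{\mathsf c}$, Lemma~\ref{lem:4:2:2:1} factors $g=g_+g_-^{-1}$ with $g_+$ block upper-triangular and $g_-$ block lower-triangular. Conjugating the whole problem by $(g_+^{-1})^{\mathrm T}$ sends the two normal vectors to $g_+.(X_0^{-1}-I)$ and $g_-.(Z_0^{-1}-I)$; the point of this frame is that the ``bad'' small entries of each matrix are now confined to a single block and the two ``good'' large blocks are complementary. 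In that frame the paper proves \emph{entrywise} diagonal dominance of $c_x\,g_+.(X_0^{-1}-I)+c_z\,g_-.(Z_0^{-1}-I)$, with the weights $c_x,c_z$ coming from Lemma~\ref{lem:4:2:2:0} (your $t\cdot((L-1)/(n-1))^t$ ratio bound, which is where the right-hand side of \eqref{equ:4:3} originates). The constant $\tfrac{1+\sqrt{n-2}\sin\theta}{\cos\theta-\sqrt{n-2}\sin\theta}$ appears exactly as the row/column-sum bound $\sigma_r(h)=\sigma_c(h)$ of an auxiliary matrix $h=h_a^{-1}h_b$, via the elementary Lemmas~\ref{lem:4:2:2:2} and~\ref{lem:4:2:2:3}.

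Your direct route---bounding $\bigl|\|P_{\mathcal I^{\mathsf c}}w\|^2-\|P_{\mathcal I^{\mathsf c}}W^{\mathrm T}w\|^2\bigr|$ by Cauchy--Schwarz on the off-diagonal of $W$---is in principle workable, but it is precisely the ``mass-transfer'' step that the $g_\pm$-factorization is designed to bypass: in your frame the large entries of $WD_Z^{-1}W^{\mathrm T}$ are smeared across both blocks, and extracting the stated constants would require tracking essentially the same entrywise inequalities the paper isolates cleanly after its extra congruence. So the strategy is right, but the missing idea that converts the sketch into a proof with the exact constants of \eqref{equ:4:3} is the block-triangular decomposition of Lemma~\ref{lem:4:2:2:1}.
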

Lemmas for proving Theorem \ref{thm:4:2:2} are presented below.
\begin{lem}\label{lem:4:2:2:0}
    Let $X = \mathrm{diag}(x_i)\in \Delta^\mathcal{I}_t$ and $s(I,X)\geq L$. For any $i\in\mathcal{I}$ and $j\in \mathcal{I}^{\mathsf{c}}$,
    \[
    \frac{|x_i^{-1}-1|}{|x_j^{-1}-1|}\geq t\cdot\left(\frac{L-1}{n-1}\right)^t.
    \]
\end{lem}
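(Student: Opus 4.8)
The plan is to pass to logarithmic coordinates, use the cone condition to collapse everything onto the single quantity $M:=\max_k|\log x_k|$, and then use $s(I,X)\ge L$ to bound $e^M$ from below. Set $u_k:=|\log x_k|\ge 0$. Since $X=\mathrm{diag}(x_i)\in\Delta^{\mathcal I}_t$, we have $0<x_i<1$ and hence $x_i^{-1}-1=e^{u_i}-1>0$ for $i\in\mathcal I$, while $x_j>1$ and hence $1-x_j^{-1}=1-e^{-u_j}>0$ for $j\in\mathcal I^{\mathsf c}$. The claim is therefore
\[
\frac{e^{u_i}-1}{1-e^{-u_j}}\ \ge\ t\left(\frac{L-1}{n-1}\right)^{t}.
\]
The cone condition says $\min_k u_k\ge t\max_k u_k=tM$, so $u_i\ge tM$, while $u_j\le M$. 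Using monotonicity of $x\mapsto e^x-1$ and $x\mapsto 1-e^{-x}$, this gives the first reduction
\[
\frac{e^{u_i}-1}{1-e^{-u_j}}\ \ge\ \frac{e^{tM}-1}{1-e^{-M}}.
\]

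The second step — which I expect to be the main point — is to show $\dfrac{e^{tM}-1}{1-e^{-M}}\ge t\,e^{tM}$, i.e. $1-e^{-tM}\ge t\,(1-e^{-M})$. This holds because $g(t):=(1-e^{-tM})-t(1-e^{-M})$ satisfies $g(0)=g(1)=0$ and $g''(t)=-M^2e^{-tM}\le 0$, so $g$ is concave on $[0,1]$ and hence nonnegative there (recall $t\in(0,1)$). I flag this as the subtle step because a cruder estimate — say bounding the numerator below by $e^{u_i}-1$ with $u_i\ge t\,u_j\ge t^2M$ and the denominator above by $1$ — would only yield the weaker exponent $t^2$; extracting the clean factor $t\,e^{tM}$ (so that the exponent in the final bound is exactly $t$) is what makes the statement come out sharp.

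The last step is to bound $e^M$ below using $s(I,X)=\mathrm{tr}(X)=\sum_i x_i\ge L$. Note first that $\mathcal I$ is a proper nonempty subset of $\{1,\dots,n\}$ (otherwise $\prod_i x_i\ne 1$), so $1\le|\mathcal I|\le n-1$ and $M>0$. Since $x_i<1$ for $i\in\mathcal I$, we get $\sum_{j\notin\mathcal I}x_j=s(I,X)-\sum_{i\in\mathcal I}x_i>L-|\mathcal I|$; on the other hand each $x_j\le e^M$, so $(n-|\mathcal I|)\,e^M>L-|\mathcal I|$, i.e. $e^M>\frac{L-|\mathcal I|}{n-|\mathcal I|}$. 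When $L\ge n$ one checks $\frac{L-m}{n-m}\ge\frac{L-1}{n-1}$ for $1\le m\le n-1$ (equivalently $(L-n)(m-1)\ge 0$), hence $e^{tM}\ge\left(\frac{L-1}{n-1}\right)^{t}$; when $L<n$ the right-hand side of the lemma is $<t$ while $e^{tM}\ge 1$ (as $M>0$), so the bound $t\,e^{tM}\ge t$ already suffices. Combining the three steps yields $\dfrac{|x_i^{-1}-1|}{|x_j^{-1}-1|}\ge t\,e^{tM}\ge t\left(\frac{L-1}{n-1}\right)^{t}$, which is the assertion.
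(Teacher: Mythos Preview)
Your proof is correct and follows essentially the same route as the paper's: reduce to $\dfrac{e^{tM}-1}{1-e^{-M}}$ via the cone condition, show this is at least $t\,e^{tM}$ by a concavity argument, and bound $e^M$ from below via the trace. The only cosmetic differences are that the paper phrases the concavity step as a secant-versus-tangent inequality for the concave map $v\mapsto(1+v)^t$ (with $e^M=1+v$) rather than working in the $t$ variable, and does not explicitly split on $L\lessgtr n$; your treatment of that case is in fact slightly more careful.
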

\begin{proof}
    Without loss of generality, assume that $\mathcal{I} = \{1,\dots,k\}$, where $k<n$. Since $X\in \Delta^\mathcal{I}_t$, there exists $u>0$ such that
    \[
    e^{tu}\leq x_i\leq e^u,\ \forall i>k; \quad e^{-u}\leq x_i\leq e^{-tu},\ \forall i\leq k.
    \]
    Since $s(I,X) = \sum x_i\geq L$,
    \[
    (n-k)(e^u-1) \geq L - ke^{-tu} - (n-k)\geq L - n.
    \]
    Let $e^u - 1 = v$, then 
    \[
    v\geq \frac{L-n}{n-k}\geq \frac{L-n}{n-1}.
    \]
    For any $i\in\mathcal{I}$ and $j\in \mathcal{I}^{\mathsf{c}}$,
    \[
    \frac{|x_i^{-1}-1|}{|x_j^{-1}-1|}\geq \frac{e^{tu} - 1}{1-e^{-u}} = \frac{(1+v)^t-1}{1-(1+v)^{-1}}.
    \]
    It is self-evident that
    \[
    \frac{(1+v)^t-1}{1-(1+v)^{-1}} = (1+v)\cdot \frac{(1+v)^t-1}{(1+v)-1}\geq (1+v)\frac{d}{dv}(1+v)^t = t(1+v)^t.
    \]
    Therefore,
    \[
    \frac{|x_i^{-1}-1|}{|x_j^{-1}-1|}\geq t(1 +v)^t\geq t\cdot\left(\frac{L-1}{n-1}\right)^t.
    \]
\end{proof}
\begin{lem}\label{lem:4:2:2:1}
    Suppose that $g = \begin{pmatrix}
        g_1 & g_2\\ g_3 & g_4
    \end{pmatrix}\in SO(n)$, where $g_1\in Mat_k(\mathbb{R})$. Then, $g = g_+g_-^{-1}$, where
    \[
    g_+ = \begin{pmatrix}(g_1^{-1})^{\mathrm{T}} & -(g_1^{-1})^{\mathrm{T}}g_3^{\mathrm{T}} \\ O & I\end{pmatrix},\quad g_- = \begin{pmatrix}I & O\\ -g_4^{-1}g_3 & g_4^{-1}\end{pmatrix}.
    \]
\end{lem}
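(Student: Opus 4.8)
This is a purely computational linear-algebra identity — a block $LU$-type factorization of an element of $SO(n)$ — so the proof is just a verification that the two displayed matrices multiply correctly and that the inverses appearing in them make sense. First I would note that orthogonality of $g$ gives the four block relations $g_1^{\mathrm{T}}g_1 + g_3^{\mathrm{T}}g_3 = I$, $g_2^{\mathrm{T}}g_2 + g_4^{\mathrm{T}}g_4 = I$, $g_1^{\mathrm{T}}g_2 + g_3^{\mathrm{T}}g_4 = O$, together with the companion relations from $gg^{\mathrm{T}} = I$. I also need that $g_1$ and $g_4$ are invertible; this is exactly where the hypothesis will have to be read carefully, since a generic element of $SO(n)$ need not have invertible diagonal blocks. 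In the context of Theorem \ref{thm:4:2:2} (where this lemma is applied), $g$ arises as a product $(Y^{1/2}g_Z)^{-1}(Y^{1/2}g_X)$ restricted so that the relevant blocks are nonsingular, but for the stand-alone statement I would simply add the invertibility of $g_1$ (equivalently $g_4$, since $\det g_1 = \pm\det g_4$ for $g\in O(n)$) as a standing hypothesis, or observe it follows from the way $g$ is used.

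Given invertibility of $g_1$ and $g_4$, the verification is direct: I would compute the product
\[
g_+ g_-^{-1} = \begin{pmatrix}(g_1^{-1})^{\mathrm{T}} & -(g_1^{-1})^{\mathrm{T}}g_3^{\mathrm{T}} \\ O & I\end{pmatrix}\begin{pmatrix}I & O\\ g_3 & g_4\end{pmatrix},
\]
using that
\[
g_-^{-1} = \begin{pmatrix}I & O\\ -g_4^{-1}g_3 & g_4^{-1}\end{pmatrix}^{-1} = \begin{pmatrix}I & O\\ g_3 & g_4\end{pmatrix},
\]
which one checks by block multiplication. Carrying out the product gives
\[
g_+ g_-^{-1} = \begin{pmatrix}(g_1^{-1})^{\mathrm{T}} - (g_1^{-1})^{\mathrm{T}}g_3^{\mathrm{T}}g_3 & -(g_1^{-1})^{\mathrm{T}}g_3^{\mathrm{T}}g_4 \\ g_3 & g_4\end{pmatrix}.
\]
The bottom row already matches $g$. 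For the top-left block, the orthogonality relation $g_1^{\mathrm{T}}g_1 + g_3^{\mathrm{T}}g_3 = I$ (from $g^{\mathrm{T}}g = I$) gives $g_3^{\mathrm{T}}g_3 = I - g_1^{\mathrm{T}}g_1$, so $(g_1^{-1})^{\mathrm{T}}(I - g_3^{\mathrm{T}}g_3) = (g_1^{-1})^{\mathrm{T}}g_1^{\mathrm{T}}g_1 = g_1$. For the top-right block, I would instead use the relation coming from $gg^{\mathrm{T}} = I$, namely $g_3 g_1^{\mathrm{T}} + g_4 g_2^{\mathrm{T}} = O$, hence $g_3^{\mathrm{T}}g_4 = $ — more precisely, transpose $g_1 g_3^{\mathrm{T}} + g_2 g_4^{\mathrm{T}} = O$ (the off-diagonal block of $gg^{\mathrm{T}}$) to get $g_3 g_1^{\mathrm{T}} = -g_4 g_2^{\mathrm{T}}$, and combine with $g_1^{\mathrm{T}}g_2 + g_3^{\mathrm{T}}g_4 = O$ to obtain $-(g_1^{-1})^{\mathrm{T}}g_3^{\mathrm{T}}g_4 = (g_1^{-1})^{\mathrm{T}}g_1^{\mathrm{T}}g_2 = g_2$. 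This matches the top-right block of $g$, completing the verification.

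**Main obstacle.** There is no real mathematical difficulty here — the identity is a formal manipulation. The only delicate point is bookkeeping: choosing, for each of the two off-diagonal-entry computations, the correct one of the two families of Gram relations ($g^{\mathrm{T}}g=I$ versus $gg^{\mathrm{T}}=I$), and being consistent about transposes, since $g_3$ denotes the lower-left block of $g$ but plays the role of an upper-right block after transposition. I would present the relations $g^{\mathrm{T}}g = gg^{\mathrm{T}} = I$ explicitly in block form at the start so the substitutions are transparent, and flag that invertibility of $g_1$ and $g_4$ is what makes the factors well-defined (this is the hypothesis under which the lemma is invoked in the proof of Theorem \ref{thm:4:2:2}).
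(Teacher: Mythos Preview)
Your proof is correct and follows essentially the same approach as the paper: a direct block-matrix verification using the relations from $g^{\mathrm{T}}g = gg^{\mathrm{T}} = I$. The only cosmetic difference is that the paper multiplies $g\cdot g_-$ and shows the result equals $g_+$, whereas you compute $g_+\cdot g_-^{-1}$ and show it equals $g$; the same orthogonality identities are invoked either way.
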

\begin{proof}
    Notice that
    \[
    \begin{pmatrix}
        g_1 & g_2\\g_3 & g_4
    \end{pmatrix}
    \begin{pmatrix}
        I & O\\ -g_4^{-1}g_3 & g_4^{-1}
    \end{pmatrix} = 
    \begin{pmatrix}
        g_1 - g_2g_4^{-1}g_3 & g_2g_4^{-1}\\ O & I
    \end{pmatrix}.
    \]
    It suffices to prove that $(g_1^{-1})^{\mathrm{T}} = g_1 - g_2g_4^{-1}g_3$ and $(g_1^{-1})^{\mathrm{T}}g_3^{\mathrm{T}} = -g_2g_4^{-1}$. Indeed, since $g^{\mathrm{T}}g = I$, we have
    \[
    g_1^{\mathrm{T}}g_2 + g_3^{\mathrm{T}}g_4 = O,
    \]
    therefore
    \[
    (g_1^{-1})^{\mathrm{T}}g_3^{\mathrm{T}} = - (g_1^{-1})^{\mathrm{T}}(g_1^{\mathrm{T}}g_2g_4^{-1}) = -g_2g_4^{-1}.
    \]
    Since $gg^{\mathrm{T}} = I$, we have
    \[
    g_1g_1^{\mathrm{T}} + g_2g_2^{\mathrm{T}} = I,\ g_3g_1^{\mathrm{T}} + g_4g_2^{\mathrm{T}} = O,
    \]
    therefore
    \[
    (g_1^{-1})^{\mathrm{T}} = g_1 + g_2g_2^{\mathrm{T}}(g_1^{-1})^{\mathrm{T}} = g_1 - g_2(g_4^{-1}g_3g_1^{\mathrm{T}})(g_1^{-1})^{\mathrm{T}} = g_1 - g_2g_4^{-1}g_3.
    \]
\end{proof}
\begin{lem}\label{lem:4:2:2:2}
    Define
    \[
    \sigma_r(A) = \max_i \sum_{j=1}^n|a_{ij}|,\quad \sigma_c(A) = \max_j \sum_{i=1}^n|a_{ij}|
    \]
    for a matrix $A = (a_{ij})\in Mat_n(\mathbb{R})$. If there exist elements $A,B\in Mat_n(\mathbb{R})$ such that $\sigma_r(A)\leq a$ and $\sigma_r(B)\leq b$, then $\sigma_r(AB)\leq ab$. A similar conclusion holds for $\sigma_c$.
\end{lem}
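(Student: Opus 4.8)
The plan is to recognize $\sigma_r$ as the matrix norm induced by the $\ell^\infty$-norm on $\mathbb{R}^n$ (equivalently, the maximum absolute row sum), which is a standard submultiplicative norm; the assertion for $\sigma_c$ will then follow by transposition, since $\sigma_c(M) = \sigma_r(M^{\mathrm{T}})$ for every matrix $M$. Rather than quote the general fact, I would record the short direct computation.

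First I would fix an index $i$ and expand the $i$-th row of the product. Writing $AB = (c_{ij})$, so that $c_{ik} = \sum_{j=1}^n a_{ij} b_{jk}$, the triangle inequality followed by an interchange of the (finite) order of summation gives
\[
\sum_{k=1}^n |c_{ik}| \;\le\; \sum_{k=1}^n \sum_{j=1}^n |a_{ij}|\,|b_{jk}| \;=\; \sum_{j=1}^n |a_{ij}| \left( \sum_{k=1}^n |b_{jk}| \right).
\]
Each inner sum $\sum_{k} |b_{jk}|$ is bounded by $\sigma_r(B) \le b$, so the right-hand side is at most $b \sum_{j} |a_{ij}| \le b\,\sigma_r(A) \le ab$. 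Since the bound is independent of $i$, taking the maximum over $i$ yields $\sigma_r(AB) \le ab$.

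For the statement about $\sigma_c$, I would observe that $\sigma_c(A) \le a$ and $\sigma_c(B) \le b$ are equivalent to $\sigma_r(A^{\mathrm{T}}) \le a$ and $\sigma_r(B^{\mathrm{T}}) \le b$; applying the row-sum case just proved to the product $B^{\mathrm{T}} A^{\mathrm{T}} = (AB)^{\mathrm{T}}$ gives $\sigma_c(AB) = \sigma_r\big((AB)^{\mathrm{T}}\big) = \sigma_r(B^{\mathrm{T}} A^{\mathrm{T}}) \le ab$.

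There is essentially no obstacle here: the only points requiring any care are the reordering of the double sum (legitimate because all sums are finite) and, in the $\sigma_c$ case, remembering that transposition reverses the order of the factors — which is harmless since the target bound $ab$ is symmetric in $a$ and $b$.
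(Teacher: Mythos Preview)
Your proof is correct and follows essentially the same route as the paper: the paper also fixes a row index, applies the triangle inequality to each entry of $AB$, swaps the order of summation, and bounds the inner sum by $b$. The only difference is cosmetic---the paper omits the $\sigma_c$ case entirely (leaving it as ``similar''), whereas you spell out the transposition argument.
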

\begin{proof}
    We present the proof for $\sigma_r$ only. For any $1\leq i\leq n$,
    \[
    \sum_{j=1}^n\left|\sum_{k=1}^n a_{ik}b_{kj}\right|\leq \sum_{j,k=1}^n |a_{ik}||b_{kj}| = \sum_{k=1}^n|a_{ik}|\left(\sum_{j=1}^n|b_{kj}|\right)\leq \sum_{k=1}^n|a_{ik}|\cdot b\leq ab.
    \]
    Thus, $\sigma_r(AB)\leq ab$.
\end{proof}
\begin{lem}\label{lem:4:2:2:3}
    Consider a matrix $A = (a_{ij})\in Mat_n(\mathbb{R})$, where $a_{ii}\geq a$ and $\sum_{j\neq i}|a_{ij}|\leq a'$ for all $i=1,\dots,n$, and $a>a'$ are real numbers. Then $A$ is invertible, with $\sigma_r(A^{-1})\leq \frac{1}{a-a'}$. A similar conclusion holds for $\sigma_c$.
\end{lem}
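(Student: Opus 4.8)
The plan is to run the standard strictly-diagonally-dominant (Levy--Desplanques) estimate in the $\ell^\infty$-norm $\|v\|_\infty=\max_i|v_i|$ on $\mathbb{R}^n$, and then read off the bound on $\sigma_r(A^{-1})$, which is exactly the operator norm of $A^{-1}$ induced by $\|\cdot\|_\infty$.

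First I would note that $a'\ge 0$, being a sum of absolute values, so the hypothesis $a>a'$ forces $a>0$ and hence $a_{ii}\ge a>0$ for every $i$. The core estimate is this: given $y\in\mathbb{R}^n$ and $x$ with $Ax=y$, pick an index $i$ with $|x_i|=\|x\|_\infty$. The $i$-th equation reads $a_{ii}x_i=y_i-\sum_{j\ne i}a_{ij}x_j$, so
\[
a\|x\|_\infty\le a_{ii}|x_i|\le|y_i|+\sum_{j\ne i}|a_{ij}|\,|x_j|\le\|y\|_\infty+a'\|x\|_\infty ,
\]
using $a_{ii}\ge a$, $|x_j|\le\|x\|_\infty$, and $\sum_{j\ne i}|a_{ij}|\le a'$. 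Rearranging gives $(a-a')\|x\|_\infty\le\|y\|_\infty$.

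Taking $y=0$ forces $x=0$, so $A$ is injective and therefore invertible. Applying the estimate to $x=A^{-1}y$ shows $\|A^{-1}y\|_\infty\le\frac{1}{a-a'}\|y\|_\infty$ for all $y\in\mathbb{R}^n$. To convert this into the claimed bound on $\sigma_r$, fix a row index $i$ and take $y$ to be the vector with $y_j=\mathrm{sgn}((A^{-1})_{ij})$ (with the convention $\mathrm{sgn}(0)=1$); then $\|y\|_\infty\le 1$ and $(A^{-1}y)_i=\sum_j|(A^{-1})_{ij}|$, whence $\sum_j|(A^{-1})_{ij}|\le\frac{1}{a-a'}$. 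Maximizing over $i$ yields $\sigma_r(A^{-1})\le\frac{1}{a-a'}$. For the statement with $\sigma_c$, one assumes instead the column hypotheses $a_{jj}\ge a$ and $\sum_{i\ne j}|a_{ij}|\le a'$, applies the already-proved row case to $A^{\mathrm{T}}$ (which then satisfies the row hypotheses), and uses $\sigma_c(A^{-1})=\sigma_r((A^{-1})^{\mathrm{T}})=\sigma_r((A^{\mathrm{T}})^{-1})$.

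There is no genuine obstacle here; this is a textbook diagonal-dominance argument. The only points requiring minor care are that the diagonal entries are merely bounded below rather than equal (handled by the trivial inequality $a_{ii}|x_i|\ge a|x_i|$, valid since $a_{ii}>0$), and the passage from the pointwise bound $\|A^{-1}y\|_\infty\le\frac{1}{a-a'}\|y\|_\infty$ to the row-sum bound defining $\sigma_r$, which is done by feeding in the sign pattern of a row of $A^{-1}$.
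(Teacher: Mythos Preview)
Your proof is correct. The argument is the standard Levy--Desplanques estimate: bound $\|A^{-1}\|_{\infty\to\infty}$ by looking at the row where $|x_i|$ is maximal, then identify that operator norm with $\sigma_r$.

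The paper takes a different route. It factors $A=D\,A_2$ with $D=\mathrm{diag}(a_{ii})$ and $(A_2)_{ij}=a_{ij}/a_{ii}$, so that $\sigma_r(I-A_2)\le a'/a<1$, and then sums the Neumann series $A_2^{-1}=\sum_{k\ge 0}(I-A_2)^k$ together with the submultiplicativity lemma $\sigma_r(MN)\le\sigma_r(M)\sigma_r(N)$ (the paper's Lemma~\ref{lem:4:2:2:2}) to get $\sigma_r(A_2^{-1})\le a/(a-a')$ and hence $\sigma_r(A^{-1})\le(1/a)\cdot a/(a-a')$. Your approach is more self-contained: it does not invoke Lemma~\ref{lem:4:2:2:2} or any series expansion, and it makes the role of $\sigma_r$ as the induced $\ell^\infty$ operator norm explicit. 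The paper's approach, on the other hand, fits naturally with the surrounding lemmas, since submultiplicativity of $\sigma_r$ is used again in the proof of Theorem~\ref{thm:4:2:2}.
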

\begin{proof}
    We provide the proof for $\sigma_r$ only. It is well-known that diagonally dominant matrices are invertible. For such a matrix $A$, we have
    \[
    A^{-1} = A_1A_2^{-1},
    \]
    where
    \[
    A_1 = \mathrm{diag}(a_{ii}^{-1}),\ A_2 = (a_{ij}/a_{ii})_{i,j = 1}^n.
    \]
    The entries of $A_1$ are bounded by $a^{-1}$, thus $\sigma_r(A_1)\leq a^{-1}$. Moreover, note that
    \[
    A_2^{-1} = \sum_{k=0}^\infty (I - A_2)^k,
    \]
    while the lemma assumption implies that $\sigma_r(I - A_2)\leq a'/a$. By Lemma \ref{lem:4:2:2:2}, $\sigma_r(I - A_2)^k\leq (a'/a)^k$ for any $k\in\mathbb{N}$. Therefore,
    \[
    \sigma_r(A_2^{-1})\leq \sum_{k=0}^\infty(a'/a)^k = 1/(1-a'/a).
    \]
    Using Lemma \ref{lem:4:2:2:2} again, we conclude that
    \[
    \sigma_r(A^{-1})\leq \sigma_r(A_1)\sigma_r(A_2^{-1})\leq\frac{1}{a-a'}.
    \]
\end{proof}
\begin{proof}[Proof of Theorem \ref{thm:4:2:2}]
    Applying the $SL(n,\mathbb{R})$ action, we can assume that $Y = I$, $X$ is diagonal, and $\mathcal{I} = \{k+1,\dots,n\}$, where $1\leq k<n$. Then $g_Z:= g = (g_{ij}) \in SO(n)$. Lemma \ref{lem:4:2:2:1} implies a decomposition $g = g_+g_-^{-1}$. Since $g\in SO(n)$, i.e., $g.I = I$, we have $g_-.I = g_+.I$. Denote the diagonal matrices $X = X_0$ and $g.Z = Z_0$, then $(g_+^{-1})^{\mathrm{T}}\in GL^+(n,\mathbb{R})$ takes $X$ to $(g_+^{-1})^{\mathrm{T}}.X_0$, and takes $Z = ((g_-^{-1})^{\mathrm{T}}g_+^{\mathrm{T}}).Z_0$ to $(g_-^{-1})^{\mathrm{T}}.Z_0$.

    The theorem reduces to
    \[
    Bis((g_+^{-1})^{\mathrm{T}}.X_0,(g_+^{-1})^{\mathrm{T}}.I)\cap Bis((g_-^{-1})^{\mathrm{T}}.Z_0,(g_-^{-1})^{\mathrm{T}}.I) = \varnothing,
    \]
    or equivalently,
    \begin{equation}\label{disj}
    (g_+.(X_0^{-1} - I))^\perp\cap (g_-.(Z_0^{-1} - I))^\perp = \varnothing,\tag{*}
    \end{equation}
    under the assumption \eqref{equ:4:3}.

    Let $X_0 = \mathrm{diag}(x_i)$ and $Z_0 = \mathrm{diag}(z_i)$. Then,
    \[
    X_0^{-1} - I = \mathrm{diag}(x_i^{-1}-1),\quad Z_0^{-1} - I = \mathrm{diag}(z_i^{-1} - 1).
    \]
    Since $s(I,X),s(I,Z)\geq L$, Lemma \ref{lem:4:2:2:0} implies that for any $i\leq k$ and $j>k$,
    \[
    \frac{|x_j^{-1}-1|}{|x_i^{-1}-1|}\geq t\cdot \left(\frac{L-1}{n-1}\right)^t,\ \frac{|z_i^{-1}-1|}{|z_j^{-1}-1|}\geq t\cdot \left(\frac{L-1}{n-1}\right)^t.
    \]
    Thus, there exist positive constants $c_x$ and $c_z$ such that for any $i\leq k$ and $j>k$,
    \begin{equation}\label{equ:4:1}
    \begin{split}
        c_x(x_j^{-1} - 1)\geq t\cdot \left(\frac{L-1}{n-1}\right)^t, \quad -1\leq c_x(x_i^{-1} - 1)<0.\\
        c_z(z_i^{-1}-1)\geq t\cdot \left(\frac{L-1}{n-1}\right)^t, \quad -1\leq c_z(z_j^{-1}-1)<0.
    \end{split}
    \end{equation}
    
    If we let
    \[
    h = (h_{ij}) = \begin{pmatrix}
        (g_1^{-1})^{\mathrm{T}} & -(g_1^{-1})^{\mathrm{T}}g_3^{\mathrm{T}} \\ -g_4^{-1}g_3 & g_4^{-1}
    \end{pmatrix},
    \]
    then $h$ is decomposed as $h = h_a^{-1}h_b$, where
    \[
    h_a = \begin{pmatrix}
        g_1^{\mathrm{T}} & O \\ O & g_4
    \end{pmatrix},\ h_b = \begin{pmatrix}
        I & -g_3^{\mathrm{T}}\\ -g_3 & I
    \end{pmatrix}.
    \]
    The assumption of the theorem implies that the angle between $\mathbf{e}_i$ and the $i$-th column vector of $g$ is at most $\theta$, i.e., the diagonal elements of $h_a$ are no less than $\cos \theta$. For $i\leq k$,
    \[
    \sum_{j\neq i, j\leq k}|g_{ij}|\leq \sqrt{(k-1)\sum_{j\neq i,j\leq k}g_{ij}^2}\leq \sqrt{(k-1)}\sin\theta\leq \sqrt{(n-2)}\sin\theta.
    \]
    Similarly, for $i>k$, $\sum_{j\neq i, j> k}|g_{ij}|\leq \sqrt{(n-2)}\sin\theta$. Hence, by Lemma \ref{lem:4:2:2:3}:
    \[
    \sigma_r(h_a^{-1}), \sigma_c(h_a^{-1})\leq \frac{1}{\cos\theta - \sqrt{n-2}\sin \theta}.
    \]
    Moreover, the assumption of the theorem implies that $\sigma_r(h_b),\sigma_c(h_b)\leq 1+\sqrt{n-2}\sin\theta$. Applying Lemma \ref{lem:4:2:2:2}, we deduce that
    \[
    \sigma_r(h), \sigma_c(h)\leq \frac{1+\sqrt{n-2}\sin\theta}{\cos\theta - \sqrt{n-2}\sin \theta}.
    \]

    We establish the condition \eqref{disj} by proving the positive definiteness of the linear combination $c_x\cdot g_+.(X_0^{-1} - I) + c_z\cdot g_-.(Z_0^{-1} - I)$. Let $c_x\cdot g_+.(X_0^{-1} - I) = (\xi_{ij})$ and $c_z\cdot g_-.(Z_0^{-1} - I) = (\zeta_{ij})$. For $i\leq k$, we have the following inequalities:
    \begin{align*}
        & \xi_{ii} = \sum_{l\leq k}h_{li}^2(x_l^{-1}-1)\geq -\sum_{l\leq k}h_{li}^2,\\
        & \sum_{j\neq i}|\xi_{ij}|\leq \sum_{j\neq i,\ l\leq k}|h_{li}||h_{lj}||x_l^{-1} - 1|\leq \sum_{j\neq i,\ l\leq k}|h_{li}||h_{lj}|,\\
        & \zeta_{ii} = (z_i^{-1} - 1) + \sum_{l>k}h_{li}^2(z_l^{-1} - 1)\geq t((L-1)/(n-1))^t -\sum_{l>k}h_{li}^2,\\
        & \sum_{j\neq i}|\zeta_{ij}|\leq \sum_{j\neq i,\ l>k}|h_{li}||h_{lj}||z_l^{-1} - 1|\leq \sum_{j\neq i,\ l>k}|h_{li}||h_{lj}|.
    \end{align*}
    Hence,
    \begin{align*}
        & \xi_{ii}+\zeta_{ii} \geq t((L-1)/(n-1))^t - \sum_{l=1}^n h_{li}^2 \geq \left(\frac{1+\sqrt{n-2}\sin\theta}{\cos\theta - \sqrt{n-2}\sin\theta}\right)^2 - \sum_{l=1}^n h_{li}^2\\
        & = \sigma_r(h)\sigma_c(h) - \sum_{l=1}^n h_{li}^2\geq \sum_{l=1}^n\sigma_r(h)|h_{li}| - \sum_{l=1}^n h_{li}^2 \geq \sum_{l,j} |h_{lj}||h_{li}| - \sum_{l=1}^n h_{li}^2\\
        & = \sum_{j\neq i, 1\leq l\leq n} |h_{li}||h_{lj}| \geq \sum_{j\neq i}|\xi_{ij}+\zeta_{ij}|.
    \end{align*}
    For $i>k$, the inequality $\xi_{ii}+\zeta_{ii}\geq \sum_{j\neq i}|\xi_{ij}+\zeta_{ij}|$ holds analogously. This implies that $c_x\cdot g_+.(X_0^{-1} - I) + c_z\cdot g_-.(Z_0^{-1} - I)$ is diagonally dominant and hence positive definite. According to Lemma \ref{lem:1}, $Bis(X,Y)$ and $Bis(Y,Z)$ are disjoint.
\end{proof}
\begin{rmk}
    When $n=2$, maximal totally geodesic flat submanifolds in $\mathcal{P}(2)$ are geodesic lines. The two chambers in Definition \ref{defn:4:4} correspond geodesic rays, and the parameter $t$ can be set to $1$. Moreover, for any $X,Y,Z\in\mathcal{P}(2)$ such that $g_X.X$ and $g_Z.Z$ (as defined in Theorem \ref{thm:4:2:2}) lie in opposite chambers, the following relations hold:
    \[
    \theta = \frac{\pi-\alpha}{2},\ L = 2\cosh l-1 = \cosh^2(l/2) + 3\sinh^2(l/2),
    \]
    where $\alpha = \angle XYZ$ and $l = \min (d(X,Y),d(Y,Z))$. Therefore, the condition in Theorem \ref{thm:4:2:2} reduces to:
    \[
    \sin(\alpha/2)\sqrt{\cosh^2(l/2) + 3\sinh^2(l/2)}\geq 1,
    \]
    or a stronger condition, $\sin(\alpha/2)\cosh(l/2)\geq 1$, which agrees with the corresponding conclusion for hyperbolic spaces \cite{kapovich2023geometric,kapovich2018geometric}.
\end{rmk}
\vspace{12pt}
\section{Algorithm for computing the posets of partial Dirichlet-Selberg domains}\label{sec:s7}
This section aims to generalize step (2) of Poincar\'e's algorithm for $SO^+(n,1)$, as described in Subsection \ref{subsec:2:2}, to the Lie group $SL(n,\mathbb{R})$. Namely, we seek an algorithm that computes the poset structure of a finitely-sided $\mathcal{P}(n)$-polyhedron $P = \bigcap_{i=1}^k H_i$, given the equations for the half spaces $H_i$, $i=1,\dots,k$ in $\mathcal{P}(n)$. 

The original sub-algorithm, proposed by Epstein and Petronio\cite{epstein1994exposition}, utilizes the fact that a facet of an $\mathbf{H}^n$-polyhedron is an $\mathbf{H}^{n-1}$-polyhedron. This has no analogy for facets of an $SL(n,\mathbb{R})/SO(n)$-polyhedron. Instead, we describe an algorithm determining if the intersection of a collection of hyperplanes in $\mathcal{P}(n)$ is non-empty, given in Subsection \ref{sec:s7:2}.

Following the work of Epstein and Petronio, we adopt the \textbf{Blum-Shub-Smale (BSS) computational model}\cite{blum1989theory} for our algorithm. In the BSS model, arbitrarily many real numbers can be stored, and rational functions over real numbers can be computed in a single step.

We assert that a BSS algorithm exists that computes the poset structure of finitely-sided convex polyhedra in $\mathcal{P}(n)$:
\begin{thm}\label{thm:s5}
    There is a BSS algorithm with an input consisting of a point $X\in \mathcal{P}(n)$ and a finite list of elements $A'_1,\dots,A'_{k'}\in Sym_n(\mathbb{R})$ which yields an output describing the poset structure for the face set $\mathcal{F}(P)$, where
    \[
    P = \bigcap_{i=1}^{k'}\{Y|\mathrm{tr}(A_i'Y)\geq 0\}.
    \]
    Specifically, the output consists of the data $\mathcal{A}$, $L^{face}$, $L^{pos}$, and $L^{samp}$, where
    \begin{enumerate}
        \item $\mathcal{A} = \{A_1,\dots, A_k\}$ is a subset of the input set $\{A'_1,\dots,A'_{k'}\}$.
        \item $L^{face}$ is a two-dimensional array comprised of numbers from the set $\{1,\dots, k'\}$, describing the set $\{F_1,\dots, F_m\}$ of faces of $P$. Specifically, $L^{face}$ is a 2D array $\{L^{face}_1,\dots, L^{face}_m\}$, where $m = |\mathcal{F}(P)|$, and such that
        \[
        span(F_j) = \bigcap_{i\in L^{face}_j} A_i,\ j=1,\dots,m.
        \]
        \item $L^{pos}$ is a two-dimensional array comprised of numbers from the set $\{1,\dots, m\}$, describing the inclusion relation among the faces of $P$, namely
        \[
        L^{pos}_j = \{1\leq l\leq m|F_l\subsetneq F_j\},\ j=1,\dots,m.
        \]
        \item $L^{samp}$ is an array of elements in $\mathcal{P}(n)$ serving to describe sample points associated with the faces of $P$:
        \[
        L^{samp}_j\in F_j,\ j=1,\dots, m.
        \]
    \end{enumerate}
\end{thm}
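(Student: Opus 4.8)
The plan is to build the poset of $\mathcal{F}(P)$ inductively by dimension, starting from $P$ itself (the unique top face) and repeatedly enumerating facets of already-discovered faces. The first thing I would do is reduce the problem to a primitive subroutine: given a finite set of normal vectors $A_{i_1},\dots,A_{i_r}$, decide whether $\bigcap_j A_{i_j}^\perp \cap (\text{the relevant open half-spaces})$ is non-empty, and if so produce a sample point. Emptiness of $\bigcap_j A_{i_j}^\perp$ is exactly Lemma~\ref{lem:1}: it is non-empty iff $\{A_{i_1},\dots,A_{i_r}\}$ is not semi-definite, i.e.\ iff no nonzero positive semi-definite matrix lies in $\mathrm{span}(A_{i_1},\dots,A_{i_r})$. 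Testing this, and more generally testing whether the affine slice $\{X>0:\ \mathrm{tr}(A_{i_j}X)=0,\ \mathrm{tr}(A_lX)\ge 0\ \forall l\}$ is non-empty, is a semidefinite feasibility question over a finite-dimensional space; in the BSS model one can decide feasibility of such systems and extract a rational sample point by a quantifier-elimination / parametric linear-algebra procedure, exactly as Epstein--Petronio do for the hyperbolic case. I will package this as ``the feasibility oracle'' and describe it in Subsection~\ref{sec:s7:2}.

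With the oracle in hand, the main loop runs as follows. First prune the input list: discard any $A_i'$ for which the half-space $\{\mathrm{tr}(A_i'Y)\ge 0\}$ contains $P$ (equivalently the inequality is redundant), leaving $\mathcal{A}=\{A_1,\dots,A_k\}$; these are precisely the normals cutting out actual facets. For each candidate index subset $S\subseteq\{1,\dots,k\}$, one tests via the oracle whether $F_S:=\{Y\in P:\ \mathrm{tr}(A_iY)=0\ \forall i\in S\}$ has non-empty interior inside $\bigcap_{i\in S}A_i^\perp$; the collection of maximal such $S$ (ordered by reverse inclusion) gives the faces, and $L^{face}_j$ records the saturated set of indices vanishing on $F_j$ (saturation is needed so that $\mathrm{span}(F_j)=\bigcap_{i\in L^{face}_j}A_i$, as required). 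The inclusion array $L^{pos}$ is then read off combinatorially: $F_l\subsetneq F_j$ iff $L^{face}_j\subsetneq L^{face}_l$, which is a finite check once all faces are listed. Each time a new face is confirmed, the oracle also returns the sample point $L^{samp}_j\in F_j$. Finiteness of the algorithm is immediate because there are only $2^k$ index subsets.

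A few points need care. One must verify that faces of $P$ in the sense of the paper's inductive definition (facets of facets of \dots) coincide with the sets $F_S$ of the above form — i.e.\ that every face is ``flat'', cut out by a subset of the defining hyperplanes. This follows because a facet of a $\mathcal{P}(n)$-polyhedron lies in one of the bounding hyperplanes $A_i^\perp$ (a facet is codimension one and contained in $\partial P$), and then one induces on dimension, intersecting with $A_i^\perp$ and invoking that $A_i^\perp$ is itself (the intersection with $\mathcal{P}(n)$ of) a linear subspace so the same structure theory applies; the subtlety is that $A_i^\perp$ is not itself a copy of $\mathcal{P}(n-1)$, which is why we cannot recurse in the Epstein--Petronio way and must instead use the hyperplane-intersection oracle directly. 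Also, the saturation step — replacing $S$ by $\{i:\ A_i^\perp\supseteq F_S\}$ — must be shown to be well-defined and to yield $\mathrm{span}(F_j)=\bigcap_{i\in L^{face}_j}A_i$; this is a standard convexity argument (the smallest plane containing $F_S$ is the intersection of all bounding hyperplanes containing it).

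I expect the main obstacle to be the feasibility oracle itself: showing rigorously that, within the BSS model over $\mathbb{R}$, one can decide non-emptiness of $\{X\in Sym_n(\mathbb{R}):\ X>0,\ \mathrm{tr}(A_iX)=0\ (i\in S),\ \mathrm{tr}(A_lX)\ge 0\ (l\notin S)\}$ and return an explicit point. Plain linear programming (as used by Epstein--Petronio in hyperbolic space) does not suffice here because the constraint $X>0$ is genuinely semidefinite, not a finite set of linear inequalities. The resolution is that the \emph{positivity} region $\{X>0\}$ is a convex semialgebraic cone whose description we can handle either by Lemma~\ref{lem:1} (reducing strict/non-strict feasibility to a definiteness test on $\mathrm{span}$, which is a finite eigenvalue computation) or by noting that, since we only ever need \emph{some} interior point, we may perturb and work with the leading principal minors being positive — a finite system of polynomial inequalities, decidable in the BSS model. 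Carefully stitching together the definiteness test of Lemma~\ref{lem:1} (for the relative-interior/emptiness dichotomy of a hyperplane intersection) with the half-space constraints (handled by linear programming on the already-found flat) is the technical heart of Subsection~\ref{sec:s7:2}.
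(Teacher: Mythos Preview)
Your approach is valid in principle but takes a genuinely different route from the paper's. You propose a static enumeration over all index subsets $S\subseteq\{1,\dots,k\}$, using a general semidefinite feasibility oracle (ultimately justified by quantifier elimination over the reals) to decide whether each candidate $F_S$ is a face and to extract a sample point. The paper instead works \emph{incrementally}: it starts from $P_0=\mathcal{P}(n)$ and intersects with one half-space $H_l$ at a time, maintaining the full face poset and sample points of $P_{l-1}$ and updating them to those of $P_l=P_{l-1}\cap H_l$. The update step classifies each existing face $F$ into one of six relative positions with respect to $H_l$ (Lemmas~\ref{lem:5:1}--\ref{lem:5:2}), and then reads off the new faces, inclusions, and sample points combinatorially (Lemmas~\ref{lem:5:3}--\ref{lem:5:5}). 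Crucially, the only nontrivial geometric subroutine the paper ever needs is the \emph{pure hyperplane-intersection test} of Lemma~\ref{lem:s5:1}: decide whether $\bigcap_i A_i^\perp\neq\varnothing$ in $\mathcal{P}(n)$, and if so return a sample point. This is done not by general quantifier elimination but by an explicit criterion (Lemma~\ref{lem:algor}): the span contains a positive-definite matrix iff $\det(\sum x^i B_i)$, restricted to the unit sphere in the coefficient space, has an isolated real critical point at which the matrix is positive definite. Since the sphere is compact, there are finitely many isolated critical points and they can be found numerically.

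What each approach buys: your brute-force method is conceptually simpler and shifts all the work into one black-box oracle, but that oracle --- feasibility of a semidefinite system with additional linear inequalities --- is exactly the hard part, and you fall back on Tarski--Seidenberg to dispatch it. The paper's incremental scheme is more elaborate to describe, but it never needs the mixed equality/inequality SDP test: by carrying sample points along and using the six-way relative-position classification, half-space constraints are handled by evaluating $\mathrm{tr}(A_l X_j)$ at already-known sample points, and the only genuine positivity test is the unconstrained hyperplane-intersection one, for which the determinant--critical-point trick gives a concrete algorithm. Your sketch of ``linear programming on the already-found flat'' does not quite close the gap, since the positivity constraint on that flat is still nonlinear; you correctly note this and retreat to the principal-minor formulation, which works but is precisely the heavy machinery the paper's design avoids.
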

We will describe the algorithm claimed by Theorem \ref{thm:s5} in the subsequent sections.

\subsection{Sample points for planes of \texorpdfstring{$\mathcal{P}(n)$}{Lg}}\label{sec:s7:2}
In this section, we describe an essential step of the algorithm claimed in Theorem \ref{thm:s5}. This sub-algorithm is designed to check the emptiness of the intersection of the given hyperplanes and to yield a sample point in this intersection. 
\begin{lem}\label{lem:s5:1}
    There is a numerical algorithm with an input consisting of matrices $A_1,\dots, A_l\in Sym_n(\mathbb{R})$, yielding the following outcome:
    \begin{itemize}
        \item If the intersection $\bigcap_{i=1}^l A_i^\perp = \varnothing$, the algorithm outputs \textbf{false}.
        \item If $\bigcap_{i=1}^l A_i^\perp$ is non-empty, the algorithm outputs \textbf{true} and provides a sample point in $\bigcap_{i=1}^l A_i^\perp$.
    \end{itemize}
\end{lem}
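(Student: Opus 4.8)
The plan is to reduce Lemma \ref{lem:s5:1} to a purely linear-algebraic question about a single subspace of $Sym_n(\mathbb{R})$ and then to solve that question by recursion on $n$. First I would observe that $\bigcap_{i=1}^l A_i^\perp$ is exactly the set of positive-definite matrices contained in the linear subspace
\[
W=\{X\in Sym_n(\mathbb{R})\mid \mathrm{tr}(A_iX)=0,\ i=1,\dots,l\},
\]
and that $W$ is produced from $A_1,\dots,A_l$ by solving a homogeneous linear system (Gaussian elimination, legitimate in the BSS model). So the task becomes: decide whether the subspace $W\subseteq Sym_n(\mathbb{R})$ contains a positive-definite matrix, and if so exhibit one. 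By Lemma \ref{lem:1} this is equivalent to deciding whether $\mathcal{A}=\{A_i\}$ is semi-definite, i.e.\ whether $W^\perp=\mathrm{span}(A_i)$ contains a nonzero positive-semidefinite matrix; I would use this equivalence only to certify the \textbf{false} outputs.

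Next I would implement the following subroutine by recursion on $n$: given a subspace $W\subseteq Sym_n(\mathbb{R})$, return a positive-definite element of $W$ or certify that none exists. If $W=0$, report none. Otherwise pick a nonzero $X_0\in W$ and, by symmetric Gaussian elimination, find $Q\in GL(n,\mathbb{R})$ with $Q^{\mathrm{T}}X_0Q=\mathrm{diag}(I_p,-I_q,0_s)$; replacing $W$ by $Q^{\mathrm{T}}WQ$ (harmless, since congruence is an isometry of $\mathcal{P}(n)$ fixing the problem) and replacing $X_0$ by $-X_0$ if needed, we may assume $p\ge 1$. If $q=s=0$ then $X_0=I$ is the desired matrix. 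If $q=0$ and $s\ge 1$, let $\pi\colon Sym_n(\mathbb{R})\to Sym_s(\mathbb{R})$ be the projection onto the bottom-right $s\times s$ block; every positive-definite $X\in W$ has $\pi(X)\succ 0$, so I recurse on $\pi(W)\subseteq Sym_s(\mathbb{R})$ with $s<n$: if that returns ``none'', so does $W$; if it returns $M\succ 0$, lift $M$ to $Y\in W$ with $\pi(Y)=M$, and a Schur-complement computation shows $Y+cX_0\succ 0$ for all sufficiently large $c>0$ — a valid $c$ is found by repeated doubling, positive-definiteness being tested by leading principal minors. Since each recursive call strictly decreases $n$, the procedure halts.

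The main obstacle is the remaining case, where the chosen $X_0$ is genuinely \emph{indefinite} ($p,q\ge 1$): then a lift $Y$ of a recursively found positive-definite block need not be positive-definite, and no scalar multiple $cX_0$ can repair its negative block, since $cX_0$ is simultaneously negative on the $q$-dimensional and positive on the $p$-dimensional eigenspace of $X_0$. The ``does it exist'' question is still easy (any positive-definite $X\in W$ restricts to a positive-definite matrix on the $(q+s)$-dimensional non-positive eigenspace of $X_0$, so we may recurse there, a dimension $<n$ since $p\ge 1$); the difficulty is \emph{constructing} a positive-definite element of $W$ from the recursively obtained data, which forces us to use all of $W$ rather than just $\mathrm{span}(X_0,Y)$. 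I would resolve this by analysing how $W$ acts on the positive eigenspace and on the non-positive eigenspace of $X_0$ separately and showing that, after a further congruence, this reduces to the semi-definite case already handled or to a strictly smaller instance; correctness of the \textbf{false} branch is again guaranteed by Lemma \ref{lem:1}.

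Alternatively — and this is the safe fallback if the explicit reduction above proves delicate — I would note that ``$W$ contains a positive-definite matrix'' together with the entries of such a matrix are first-order expressible over the ordered field $(\mathbb{R},+,\times,<)$, so exact quantifier elimination provides a BSS algorithm directly, at the cost of a less explicit (but still finite) procedure. Either way, combining the two branches of the recursion — the structural dimension reduction together with the perturbation/Schur-complement step that manufactures the sample point — yields the numerical algorithm asserted in Lemma \ref{lem:s5:1}. I would then record that the only non-elementary ingredients used are Lemma \ref{lem:1}, symmetric Gaussian elimination, and Sylvester's criterion, all of which are available in the BSS model.
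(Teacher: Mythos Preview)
Your recursion is clean when $X_0$ is semi-definite, but the indefinite branch ($p,q\ge 1$) is not merely a construction difficulty --- it is a genuine gap in the decision procedure itself. You claim that existence is ``still easy'' by recursing on the $(q+s)\times(q+s)$ bottom block, but that recursion only furnishes a \emph{necessary} condition: if $W$ contains a positive-definite matrix then so does $\pi(W)$, yet the converse fails. For instance, with $n=3$ and $W=\mathrm{span}\bigl(\mathrm{diag}(1,-1,0),\,\mathrm{diag}(0,1,-1)\bigr)$, choosing $X_0=\mathrm{diag}(1,-1,0)$ gives $p=q=s=1$; the projection $\pi(W)\subset Sym_2(\mathbb{R})$ contains $\mathrm{diag}(1,1)>0$, but $W$ itself contains no positive-definite matrix (the three diagonal entries $a,\ b-a,\ -b$ cannot all be positive). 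So in the indefinite case your recursion decides neither existence nor construction, and the sentence ``this reduces to the semi-definite case already handled or to a strictly smaller instance'' is a hope, not an argument. Your quantifier-elimination fallback is valid --- cylindrical algebraic decomposition produces sample points in semi-algebraic sets and runs in the BSS model --- so the lemma survives through it, but then the recursive apparatus contributes nothing and the proof is essentially ``invoke Tarski--Seidenberg''.

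The paper's argument takes a different and more explicit route that sidesteps the indefinite obstruction entirely. Passing to a basis $B_1,\dots,B_{l'}$ of $W$, it studies the homogeneous polynomial $P(\mathbf{x})=\det\bigl(\sum x^iB_i\bigr)$ restricted to the unit sphere $\mathbf{S}^{l'-1}$. The supporting Lemma~\ref{lem:algor} shows that $W$ contains a positive-definite matrix if and only if $\sum x_0^iB_i>0$ at some \emph{isolated} real critical point $\mathbf{x}_0$ of $P|_{\mathbf{S}^{l'-1}}$; isolation is established by a short Sard-type argument (a positive-dimensional critical variety would force a contradictory sign on a certain trace). Since the isolated critical points of a polynomial on a compact sphere are finite in number, one enumerates them numerically and tests each by Sylvester's criterion. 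This gives an explicit finite search --- no recursion on $n$, no appeal to general real quantifier elimination --- and the sample point, when it exists, is handed to you directly as one of the critical points.
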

We utilize the following lemma to prove Lemma \ref{lem:s5:1}:
\begin{lem}\label{lem:algor}
    Suppose that $B_1,\dots,B_l\in Sym_n(\mathbb{R})$ are linearly independent matrices, and that $span(B_1,\dots, B_l)$ contains an invertible element. Then $span(B_1,\dots, B_l)$ contains a positive definite element if and only if
\begin{equation}\label{equ:crit}
    \sum x^i_0 B_i>0
\end{equation}
holds for a real and isolated critical point $(x^1_0,\dots,x^l_0)$ of the homogeneous polynomial $P(x^1,\dots,x^l) = \det(\sum x^i B_i)$ restricted to the unit sphere $\mathbf{S}^{l-1}$.
\end{lem}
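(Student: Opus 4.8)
The plan is to characterize when the span of $B_1,\dots,B_l$ contains a positive definite element by reducing to a compactness/optimization argument on the unit sphere $\mathbf{S}^{l-1}$, and then to identify the relevant maximizer as a critical point of the determinant polynomial $P$. First I would observe that $B := \sum x^i B_i$ positive definite is an open condition in $(x^1,\dots,x^l)$, invariant under positive scaling, so the set $U = \{x \in \mathbf{S}^{l-1} \mid \sum x^i B_i > 0\}$ is an open (possibly empty) subset of the sphere, and $\operatorname{span}(B_1,\dots,B_l)$ contains a positive definite element iff $U \neq \varnothing$. Next I would note that on $U$, the polynomial $P(x) = \det(\sum x^i B_i)$ is strictly positive, and on the boundary $\partial U$ (within the sphere), $\sum x^i B_i$ is positive semi-definite but singular, so $P$ vanishes there; likewise $P \le 0$ is forced to alternate in sign on components where $\sum x^iB_i$ is indefinite with an even/odd number of negative eigenvalues. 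The key point is that on $U$ the function $P$ is a continuous, strictly positive function; if $\overline{U}$ is compact (which it is, being closed in the sphere), $P$ attains a positive maximum at some $x_0 \in \overline{U}$, and since $P = 0$ on $\partial U$, this maximizer lies in the interior $U$, hence is a genuine local maximum of $P|_{\mathbf{S}^{l-1}}$ and therefore a critical point with $\sum x_0^i B_i > 0$.

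For the converse direction — that if \eqref{equ:crit} holds at some real isolated critical point then $\operatorname{span}$ contains a positive definite element — the implication is immediate: the critical point $(x_0^1,\dots,x_0^l)$ itself furnishes the positive definite element $\sum x_0^i B_i$. So the substantive content is the forward direction, and within it the one subtlety requiring care. I would next argue that we may restrict attention to \emph{isolated} critical points: the hypothesis that $\operatorname{span}(B_1,\dots,B_l)$ contains an invertible element means $P \not\equiv 0$, so $P$ is a nonzero homogeneous polynomial, and its restriction to $\mathbf{S}^{l-1}$ is a nonconstant real-analytic function whose critical set is a proper real-analytic subvariety. If all critical points realizing the maximum over $\overline U$ happened to be non-isolated, I would perturb: the maximum value of $P$ on $\mathbf S^{l-1}$ is positive and attained on a compact set contained in $U$; a generic small perturbation of the coefficient data, or an application of the fact that a real-analytic function attaining a strict interior maximum does so on a set containing at least one point that is isolated among critical points at that level, produces the desired isolated critical point. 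Alternatively — and more cleanly — I would use that the \emph{global} maximum of $P$ over the (non-compact-looking but actually sphere-restricted) domain is attained, and standard Morse-theoretic or Łojasiewicz-type reasoning guarantees an isolated maximizer exists after an arbitrarily small perturbation of $B_1,\dots,B_l$ that preserves both the positive-definiteness of some element and the genericity needed.

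I expect the main obstacle to be precisely this isolatedness requirement: a priori the locus where $P|_{\mathbf{S}^{l-1}}$ attains its maximum over $U$ could be positive-dimensional (for instance if several $B_i$ are simultaneously diagonalizable with repeated patterns), and the lemma as stated insists on a \emph{real and isolated} critical point. Handling this will require either (a) a genericity/perturbation argument showing we may assume the $B_i$ are in general position without losing the hypotheses, combined with Sard-type transversality so that the critical points of $P|_{\mathbf{S}^{l-1}}$ become isolated; or (b) a more careful structural argument that within any connected component of the maximum locus one can always find an isolated critical point of $P$ (not just of $P$ restricted to that level set). The rest — openness of the positive-definite cone, compactness of $\mathbf{S}^{l-1}$, vanishing of $P$ on $\partial U$, and the Lagrange-multiplier characterization of interior maxima as critical points — is routine and I would not belabor it.
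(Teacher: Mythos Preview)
Your overall architecture matches the paper's: the ``if'' direction is trivial, and for the ``only if'' direction you correctly take a maximizer of $P$ over the connected component $U\subset\mathbf{S}^{l-1}$ on which $\sum x^iB_i>0$, observe $P>0$ on $U$ and $P=0$ on $\partial U$, and conclude the maximizer is an interior critical point with $\sum x_0^iB_i>0$. That part is fine.

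The gap is exactly where you flagged it: isolatedness. Neither of your proposed fixes works as stated. Option (a), perturbing $B_1,\dots,B_l$ to put them in general position, changes the polynomial $P$ itself; an isolated critical point of the perturbed $P$ tells you nothing about the original $P$, which is what the lemma requires. Invoking \L ojasiewicz or Morse theory in the abstract does not produce an isolated critical point of the \emph{given} real-analytic function either --- a real-analytic function can perfectly well have a positive-dimensional critical locus. Option (b) is the right target but you have not supplied the argument.

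The paper closes this gap by a direct computation exploiting that $P$ is a \emph{determinant} and that $X_0=\sum x_0^iB_i$ is positive definite. Suppose the critical set near $\mathbf{x}_0$ contained a curve $\mathbf{x}(t)=\mathbf{x}_0+t\mathbf{y}_0+t^2\mathbf{z}_0+O(t^3)$ (after passing to a regular point). By Sard's theorem $P(\mathbf{x}(t))$ is constant, so with $X(t)=\sum x^i(t)B_i$ and $\lambda_i,\mu_i$ the eigenvalues of $X_0^{-1}Y_0$ and $X_0^{-1}Z_0$ respectively, one gets $\sum\lambda_i=0$ and $\sum_{i<j}\lambda_i\lambda_j+\sum\mu_i=0$. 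The sphere constraint forces $\mathbf{y}_0\perp\mathbf{x}_0$ and $\mathbf{z}_0+\tfrac{\|\mathbf{y}_0\|^2}{2}\mathbf{x}_0\in T_{\mathbf{x}_0}\mathbf{S}^{l-1}$; since $\mathbf{x}_0$ is critical, $\mathrm{tr}(X_0^{-1}Y_0)=0$ and $\mathrm{tr}(X_0^{-1}Z_0)=-\tfrac{n}{2}\|\mathbf{y}_0\|^2$. Because $X_0>0$, the $\lambda_i$ are real, so
\[
0\le\sum\lambda_i^2=\Bigl(\sum\lambda_i\Bigr)^2-2\sum_{i<j}\lambda_i\lambda_j=2\sum\mu_i=2\,\mathrm{tr}(X_0^{-1}Z_0)=-n\|\mathbf{y}_0\|^2<0,
\]
a contradiction. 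This shows every critical point lying in $U$ is automatically isolated --- no perturbation is needed. The positivity of $X_0$ enters crucially to make $X_0^{-1}$ exist and to force the $\lambda_i$ real; this is the ``structural argument'' your option (b) gestures at.
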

\begin{proof}
    The ``if'' part is self-evident. To prove the ``only if'' part, we assume that $X' = \sum x'^i B_i$ is a positive definite element in $span(B_1,\dots, B_l)$, where $(x'^i): = \mathbf{x'} \neq \mathbf{0}\in\mathbb{R}^l$.
    
    We first show the existence of a critical point of $P|_{\mathbf{S}^{l-1}}$ satisfying \eqref{equ:crit}. Without loss of generality, we assume that $\mathbf{x'}$ is a unit vector. Let $\Sigma$ be the connected component of $\mathbf{S}^{l-1}\backslash \{P(x^1,\dots,x^l)=0\}$ containing $\mathbf{x'}$. Since $X' = \sum x'^iB_i$ is positive definite, $\sum x^i B_i$ is also positive definite for all $\mathbf{x} = (x^i)\in \Sigma$. Furthermore, as $P|_{\partial \Sigma} = 0$, $\Sigma$ contains a point $\mathbf{x_0} = (x^i_0)$ which is a local maximum point of $P|_{\mathbf{S}^{l-1}}$. Consequently, $\mathbf{x_0}$ is a critical point of $P|_{\mathbf{S}^{l-1}}$ with $\sum x^i_0 B_i$ being positive-definite.

    We proceed to show that the critical point $\mathbf{x_0} = (x_0^1,\dots,x_0^l)$ is isolated. Suppose to the contrary that the critical set of $P|_{\mathbf{S}^{l-1}}$ contains a subset $S\subset\mathbf{S}^{l-1}$, which is an algebraic variety with $\dim(S)\geq 1$ and $\mathbf{x_0}\in S$. By replacing $\mathbf{x_0}$ with another point in $S$, we assume that $\mathbf{x_0}$ is a regular point in $S$. Consequently, $\mathbf{x_0}$ is contained in a smooth curve $\mathbf{x}: (-\epsilon,\epsilon)\to \mathbf{S}^{l-1}$, where $\mathbf{x}(t)$ is a critical point of $P|_{\mathbf{S}^{l-1}}$ for each $t\in (-\epsilon,\epsilon)$. 
    
    The smooth function $\mathbf{x}$ admits an expansion:
    \[\mathbf{x}(t) = \mathbf{x_0} + t \mathbf{y_0} + t^2 \mathbf{z_0}+O(t^3),
    \]
    where $\mathbf{y_0}\neq \mathbf{0}$. Since the curve $\mathbf{x}$ lies in the unit sphere,
    \[
    \sum_{i=1}^l x_0^iy_0^i = 0,\quad \sum_{i=1}^l x_0^iz_0^i + \frac{1}{2}\sum_{i=1}^l (y_0^i)^2 = 0,
    \]
    implying that both $\mathbf{y_0}$ and $\mathbf{z_0}+\frac{||\mathbf{y_0}||^2}{2}\mathbf{x_0}$ lie in $T_\mathbf{x_0} \mathbf{S}^{l-1}$. Since $\mathbf{x_0}$ is a critical point of $P|_{\mathbf{S}^{l-1}}$, the derivatives of $P$ at $\mathbf{x_0}$ along both $\mathbf{y_0}$ and $\mathbf{z_0}+\frac{||\mathbf{y_0}||^2}{2}\mathbf{x_0}$ vanish. By letting $X(t) = \sum x^i(t)B_i$, $X_0 = \sum x_0^iB_i$, $Y_0 = \sum y_0^i B_i$, and $Z_0 = \sum z_0^i B_i$, the vanishing of these directional derivatives is formulated as
    \[
    \mathrm{tr}(X_0^{-1}Y_0) = 0,\quad \mathrm{tr}(X_0^{-1}Z_0) = -\frac{||\mathbf{y_0}||^2}{2}\mathrm{tr}(X_0^{-1}X_0) = -\frac{n}{2}||\mathbf{y_0}||^2.
    \]

    On the other hand, since the points $\mathbf{x}(t) = (x^1(t),\dots,x^l(t))$ are critical points for $-\epsilon<t<\epsilon$, Sard's Theorem implies that $\det(X(t)) = P(\mathbf{x}(t)) \equiv P(\mathbf{x_0}) = \det(X_0)$, leading to:
    \begin{equation*}
    \sum_{i=1}^n\lambda_i = 0,\quad \sum_{1\leq i<j\leq n}\lambda_i\lambda_j + \sum_{i=1}^n \mu_i = 0,
    \end{equation*}
    where $\lambda_i$ and $\mu_i$, $i=1,\dots, n$ are eigenvalues of $X_0^{-1}Y_0$ and $X_0^{-1}Z_0$, respectively. Since $X_0,Y_0$ and $Z_0$ are real symmetric matrices, $\lambda_i$ and $\mu_i$ are real numbers. Combining the equations above, we obtain that
    \begin{equation*}
         0\leq \sum_{i=1}^n \lambda_i^2 = (\sum_{i=1}^n \lambda_i)^2-2(\sum_{i<j}\lambda_i\lambda_j) = 2\sum_{i=1}^n\mu_i = 2\mathrm{tr}(X_0^{-1}Z_0) = -n\sum_{i=1}^l ||\mathbf{y_0}||^2<0,
    \end{equation*}
    which is a contradiction.
\end{proof}
\begin{proof}[Proof of Lemma \ref{lem:s5:1}]
    Suppose that $A_1,\dots,A_l\in Sym_n(\mathbb{R})$ is the input, and $\{B_1,\dots,B_{l'}\}$ is a basis for the orthogonal complement of $\mathrm{span}(A_1,\dots,A_l)$ in $Sym_n(\mathbb{R})$. Then,
    \[
    \bigcap_{i=1}^{l} A_i^\perp = span(B_1,\dots,B_{l'})\cap \mathcal{P}(n).
    \]

    If $P(x^1,\dots,x^{l'}) = \det(\sum x^iB_i)\equiv 0$, neither matrix in $span(B_1,\dots,B_{l'})$ is strictly definite, implying that $\bigcap_{i=1}^l A_i^\perp$ is empty.
    
    Otherwise, $P(x^1,\dots,x^{l'})$ is a homogeneous polynomial of degree $n$ in variables $x^1,\dots, x^{l'}$. Since $\mathbf{S}^{l'-1}$ is compact, the restriction of the polynomial $P(x_1,\dots,x_{l'})|_{\mathbf{S}^{l'-1}}$ has finitely many isolated critical points, and there exist well-known numerical BSS algorithms to find them, e.g., \cite{burgisser2011problem}. Let $\mathbf{x}_1,\dots,\mathbf{x}_m$ denote these isolated critical points, where $\mathbf{x}_j = (x_j^1,\dots,x_j^{l'})$. If $\sum x_j^i B_i$ is positive definite for a certain $j\in\{1,\dots,m\}$, Lemma \ref{lem:algor} implies that $\bigcap_{i=1}^l A_i^\perp$ is non-empty with a sample point $\sum x_j^i B_i$. Conversely, if $\sum x_j^i B_i$ is not strictly definite for all $j$, Lemma \ref{lem:algor} implies that $\bigcap_{i=1}^l A_i^\perp$ is empty. The algorithm we described terminates within a finite number of steps.
\end{proof}
\subsection{Situations of face and half-space pairs}
Similarly to the case of hyperbolic spaces (cf. \cite{epstein1994exposition}), the algorithm claimed in Theorem \ref{thm:s5} will involve a step determining the ``situation'' of pairs $(F,H)$. Here, $F$ is a face of a given convex polyhedron $P$, and $H$ is a given half-space in $\mathcal{P}(n)$. We begin by defining the relative positions of such pairs:
\begin{lem}\label{lem:5:1}
Let $P$ be a polyhedron in $\mathcal{P}(n)$, and $H$ be a half space in $\mathcal{P}(n)$. For any face $F\in \mathcal{F}(P)$, one of the following relative positions holds for the pair $(F,H)$:
    \begin{enumerate}
        \item The face $F$ lies on the boundary of $H$, i.e., $F\subset \partial H$.
        \item The face $F$ lies in the interior of $H$, i.e., $F\subset \mathrm{int}(H)$.
        \item The face $F$ lies in $H$ and meets its boundary, i.e., $F\subset H$, $F\cap \partial H \neq \varnothing$, and $F\cap \mathrm{int}(H)\neq \varnothing$.
        \item The face $F$ lies in $H^{\mathsf{c}}$, i.e., $F\cap H = \varnothing$.
        \item The face $F$ lies in $(\mathrm{int}(H))^{\mathsf{c}}$ and meets its boundary, i.e., $F\cap \mathrm{int}(H) = \varnothing$, $F\cap \partial H \neq \varnothing$, and $F\cap H^{\mathsf{c}}\neq \varnothing$.
        \item The face  $F$ crosses $\partial H$, i.e., $F\cap \mathrm{int}(H)\neq \varnothing$ and $F\cap H^{\mathsf{c}}\neq \varnothing$.
    \end{enumerate}
    Here $H^{\mathsf{c}}$ refers to the complement of $H$.
\end{lem}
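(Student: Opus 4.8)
The plan is to reduce the statement to a finite case check governed by three yes/no quantities. For a face $F\in\mathcal{F}(P)$, record the triple $(i,b,e)\in\{0,1\}^3$ in which $i=1$ iff $F\cap\mathrm{int}(H)\neq\varnothing$, $b=1$ iff $F\cap\partial H\neq\varnothing$, and $e=1$ iff $F\cap H^{\mathsf{c}}\neq\varnothing$. A priori there are eight such triples. The six situations listed in the lemma correspond, in order, to $(0,1,0)$, $(1,0,0)$, $(1,1,0)$, $(0,0,1)$, $(0,1,1)$, and $(1,1,1)$: I would verify that each of the six verbal descriptions is equivalent to the indicated triple, using the disjoint decomposition $\mathcal{P}(n)=\mathrm{int}(H)\sqcup\partial H\sqcup H^{\mathsf{c}}$ of $\mathcal{P}(n)$ into the two open half-spaces bounded by the hyperplane $\partial H$ together with $\partial H$ itself, which is exactly the defining property of closed half-spaces (Definition \ref{def:s1:1}). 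For example, ``$F\subset\mathrm{int}(H)$'' in case (2) is the same as $b=e=0$, which forces $i=1$ since $F\neq\varnothing$; ``$F\cap H=\varnothing$'' in case (4) is the same as $i=b=0$, which forces $e=1$; ``$F\subset H$, $F\cap\partial H\neq\varnothing$, $F\cap\mathrm{int}(H)\neq\varnothing$'' in case (3) is exactly $(1,1,0)$; and ``$F$ crosses $\partial H$'' in case (6) is $i=e=1$.

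It then remains to show that the two missing triples $(0,0,0)$ and $(1,0,1)$ cannot occur. The triple $(0,0,0)$ is impossible because a face of $P$ is a nonempty convex polyhedron, hence meets at least one of the three pieces of $\mathcal{P}(n)$; if one wishes to permit $F=\varnothing$, then case (4) holds vacuously and one may assume $F\neq\varnothing$ from the outset. To exclude $(1,0,1)$, I would invoke that $F$ is convex, hence connected: if $F$ met both $\mathrm{int}(H)$ and $H^{\mathsf{c}}$ but missed $\partial H$, then $F$ would be the disjoint union of the two nonempty sets $F\cap\mathrm{int}(H)$ and $F\cap H^{\mathsf{c}}$, each relatively open in $F$ since $\mathrm{int}(H)$ and $H^{\mathsf{c}}$ are open in $\mathcal{P}(n)$, contradicting connectedness of $F$; thus $i=e=1$ forces $b=1$. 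Discarding $(0,0,0)$ and $(1,0,1)$ from the eight triples leaves precisely the six appearing in the statement, which completes the classification.

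I do not anticipate a substantial obstacle. The only points requiring care are the routine bookkeeping that translates each of the six verbal descriptions into its $(i,b,e)$ pattern and back, and the use of connectedness of $F$ to rule out the triple $(1,0,1)$; both rely solely on the definitions of convex polyhedra and closed half-spaces in $\mathcal{P}(n)$ and use nothing particular to the symmetric-space structure.
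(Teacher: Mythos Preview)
Your proposal is correct. The paper itself offers no proof beyond the sentence ``The lemma is self-evident,'' so your argument is strictly more detailed than what appears there. Your encoding via the triple $(i,b,e)$ and the exclusion of $(0,0,0)$ by nonemptiness and $(1,0,1)$ by connectedness is exactly the sort of routine case analysis the author had in mind; note that the lemma as stated only claims exhaustiveness (``one of the following holds''), so ruling out $(1,0,1)$ is not strictly needed for the lemma itself---case (6) as written already absorbs that triple---but it is needed for the subsequent use of the classification as a partition in Lemma~\ref{lem:5:2} and the algorithm, so it is worth including.
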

The lemma is self-evident.
\begin{defn}
    Let $H$ be a half-space and $P$ be a convex polyhedron in $\mathcal{P}(n)$. For $i=1,\dots, 6$, we denote $\mathcal{F}^{(i)}_H(P)$ as the set of faces $F\in \mathcal{F}(P)$ such that $(F,H)$ belongs to relative position ($i$).
\end{defn}
As described in the following lemma, the relative position of a pair $(F,H)$ is determined by the relative positions of pairs $(F',H)$, where $F'$ are all proper faces of $F$.
\begin{lem}\label{lem:5:2}
Consider a half-space $H$ and a polyhedron $P$ in $\mathcal{P}(n)$. Let $F\in \mathcal{F}(P)$, such that $\partial F\neq \varnothing$, $F\neq H$, and $F \neq \overline{H^{\mathsf{c}}}$. Then, the relative position of $(F,H)$ is determined as follows:
    \begin{enumerate}
        \item If $F$ has a proper face $F'\in \mathcal{F}^{(6)}_H(P)$, then $F\in \mathcal{F}^{(6)}_H(P)$.
        \item If $F$ has a proper face $F'_1\in \mathcal{F}^{(2)}_H(P)\cup \mathcal{F}^{(3)}_H(P)$ and another proper face $F'_2\in \mathcal{F}^{(4)}_H(P)\cup \mathcal{F}^{(5)}_H(P)$, then $F\in \mathcal{F}^{(6)}_H(P)$.
        \item If the previous two cases do not apply and $F$ has a proper face $F'\in \mathcal{F}^{(1)}_H(P)\cup \mathcal{F}^{(3)}_H(P)\cup \mathcal{F}^{(5)}_H(P)$, then $F\in \mathcal{F}^{(1)}_H(P)\cup\mathcal{F}^{(3)}_H(P)\cup \mathcal{F}^{(5)}_H(P)$.
        \item If $F'\in \mathcal{F}^{(2)}_H(P)$ for all proper faces $F'$ of $F$, then $F\in \mathcal{F}^{(6)}_H(P)$ if $F\cap \partial H \neq \varnothing$, and $F\in \mathcal{F}^{(2)}_H(P)$ if $F\cap \partial H = \varnothing$.
        \item If $F'\in \mathcal{F}^{(4)}_H(P)$ for all proper faces $F'$ of $F$, then $F\in \mathcal{F}^{(6)}_H(P)$ if $F\cap \partial H \neq \varnothing$, and $F\in \mathcal{F}^{(4)}_H(P)$ if $F\cap \partial H = \varnothing$.
    \end{enumerate}
\end{lem}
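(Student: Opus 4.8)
The plan is to analyze each case by tracking how the three "test" properties $F \cap \partial H$, $F \cap \mathrm{int}(H)$, and $F \cap H^{\mathsf{c}}$ behave, using the fact that $F$ is the closure of its relative interior and that $\partial F = \bigcup \{F' : F' \text{ a proper face of } F\}$. The recurring principle is a convexity/connectedness argument: since $span(F) \cap \partial H$ is a hyperplane section of the plane $span(F)$, if $F$ meets both $\mathrm{int}(H)$ and $H^{\mathsf{c}}$ then by connectedness of $F$ it must also meet $\partial H$, and moreover the relatively open set $F \cap \mathrm{int}(H)$ being nonempty together with $F \cap H^{\mathsf{c}} \neq \varnothing$ forces the configuration of relative position (6); conversely, a face is in position (1), (2), (3), (4), or (5) precisely when it lies entirely in one of the closed half-spaces $H$ or $\overline{H^{\mathsf{c}}}$ (or on $\partial H$).

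First I would dispose of cases (1) and (2). For (1), if a proper face $F' \subset F$ already crosses $\partial H$, then $F' \cap \mathrm{int}(H) \neq \varnothing$ and $F' \cap H^{\mathsf{c}} \neq \varnothing$; since $F' \subset F$, the same holds for $F$, so $F \in \mathcal{F}^{(6)}_H(P)$. For (2), the hypothesis gives a proper face $F'_1 \subset F$ with $F'_1 \cap \mathrm{int}(H) \neq \varnothing$ (true in both positions (2) and (3)) and a proper face $F'_2 \subset F$ with $F'_2 \cap H^{\mathsf{c}} \neq \varnothing$ (true in both (4) and (5)); again these inclusions pass up to $F$, so $F \in \mathcal{F}^{(6)}_H(P)$.

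Next, case (3): assume cases (1) and (2) do not apply, so no proper face of $F$ is in position (6), and the proper faces of $F$ cannot simultaneously include one from $\{(2),(3)\}$ and one from $\{(4),(5)\}$. Given a proper face $F'$ in position (1), (3), or (5) — i.e. $F' \cap \partial H \neq \varnothing$ — I would argue that $F$ cannot be in position (2) or (4) (those require $F \cap \partial H = \varnothing$, contradicting $F' \subset F$), cannot be in position (6) (excluded by hypothesis), and cannot be $H$ or $\overline{H^{\mathsf{c}}}$ by assumption; hence $F$ lies in (1), (3), or (5). For (4) and (5), all proper faces lie strictly on one side: if every $F' \in \mathcal{F}^{(2)}_H(P)$ then $\partial F \subset \mathrm{int}(H)$, so $F \subset \overline{\mathrm{int}(H)} = H$ and $F$ cannot meet $H^{\mathsf{c}}$; thus $F$ is in position (2) if additionally $F \cap \partial H = \varnothing$, and otherwise $F$ meets $\partial H$ but also (having nonempty relative interior disjoint from the boundary hyperplane, since $\mathrm{int}(H)$ is dense in a neighborhood) meets $\mathrm{int}(H)$, while the boundary point in $\partial H$ is a limit of relative-interior points — here I must check $F$ does not fall into position (5), which would need $F \cap \mathrm{int}(H) = \varnothing$; but $\partial F \subset \mathrm{int}(H)$ already gives $F \cap \mathrm{int}(H) \supset \partial F \neq \varnothing$, so the only remaining possibility consistent with $F \cap \partial H \neq \varnothing$ is (6). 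Case (5) is symmetric, replacing $\mathrm{int}(H)$ by $H^{\mathsf{c}}$ and $H$ by $\overline{H^{\mathsf{c}}}$.

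The main obstacle I anticipate is the bookkeeping in case (3): one must verify that the disjunction $\{(1),(3),(5)\}$ is genuinely closed under "passing to a superface" once positions (6) and the mixed-sides configuration are excluded, which requires carefully ruling out that $F$ could still be in position (2) or (4) while having a boundary-touching proper face — this is where the hypothesis $\partial F \neq \varnothing$ and the precise definitions of positions (2) and (4) (strict containment in the open half-space, disjoint from $\partial H$) do the work. The other delicate point is the density argument in cases (4) and (5): one needs that a convex polyhedron $F$ with $\partial F \subset \mathrm{int}(H)$ and $F \cap \partial H \neq \varnothing$ necessarily has $F \cap \mathrm{int}(H) \neq \varnothing$, which follows immediately since $\partial F \neq \varnothing$ and $\partial F \subset F \cap \mathrm{int}(H)$, so in fact no density argument is even needed — I would simply note this and conclude.
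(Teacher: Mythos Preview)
Your handling of cases (1) and (2) is correct and matches the paper. There are genuine gaps in cases (3) and (4)/(5).

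In case (3) you assert that $F$ ``cannot be in position (6) (excluded by hypothesis)'', but the hypothesis only constrains the \emph{proper faces} of $F$: none lies in $\mathcal{F}^{(6)}_H(P)$, and they do not straddle the two sides. It says nothing directly about $F$ itself. The missing step --- and the place where the paper actually uses the assumptions $F\neq H$ and $F\neq\overline{H^{\mathsf{c}}}$ --- is the contrapositive: if $F\in\mathcal{F}^{(6)}_H(P)$, then since $F$ contains neither open half-space one can find points of $span(F)\cap\mathrm{int}(H)$ and of $span(F)\cap H^{\mathsf{c}}$ lying outside $F$; joining these to points of $F$ on the respective sides shows that $\partial F$ meets both $\mathrm{int}(H)$ and $H^{\mathsf{c}}$, which forces lemma-case (1) or (2) to apply, a contradiction. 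Without this argument your exclusion of position (6) is circular.

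In case (4) the implication ``$\partial F\subset\mathrm{int}(H)$, so $F\subset H$'' is false for unbounded $F$: take $F$ to be a closed half-plane in $span(F)$ whose unique facet lies in $\mathrm{int}(H)$ but which extends past $\partial H$. Worse, even if that implication held, together with $F\cap\partial H\neq\varnothing$ and $F\cap\mathrm{int}(H)\neq\varnothing$ it would place $F$ in position (3), not (6), so your chain of reasoning contradicts its own conclusion. The paper's route is instead to rule out position (3) directly: if $F\in\mathcal{F}^{(3)}_H(P)$ then $F\subset H$ forces the relative interior of $F$ into $\mathrm{int}(H)$, so every point of $F\cap\partial H$ must lie in $\partial F$; but $\partial F\cap\partial H=\varnothing$ since every proper face is in $\mathcal{F}^{(2)}_H(P)$, giving $F\cap\partial H=\varnothing$ and contradicting position (3). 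With positions (1), (3), (4), (5) all excluded, only (2) and (6) remain, and whether $F\cap\partial H$ is empty decides which.
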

\begin{proof}
\textbf{Case (1).} If a proper face $F'\in \mathcal{F}^{(6)}_H(P)$, then $F\cap \mathrm{int}(H)$ contains $F'\cap \mathrm{int}(H)$, which is non-empty, and $F\cap H^{\mathsf{c}}$ contains $F'\cap H^{\mathsf{c}}$, also non-empty. Therefore, $F\in \mathcal{F}^{(6)}_H(P)$.

    \textbf{Case (2).} If a proper face $F'_1\in \mathcal{F}^{(2)}_H(P)\cup \mathcal{F}^{(3)}_H(P)$, then $F\cap \mathrm{int}(H)$ contains $F'_1\cap \mathrm{int}(H)$, which is non-empty. Additionally, if another proper face $F'_2\in \mathcal{F}^{(4)}_H(P)\cup \mathcal{F}^{(5)}_H(P)$, then $F\cap H^{\mathsf{c}}$ contains $F'_2\cap H^{\mathsf{c}}$, which is also non-empty. Therefore, $F\in \mathcal{F}^{(6)}_H(P)$.

    \textbf{Case (3).} Suppose that neither case (1) nor case (2) occurs. If a proper face $F'\in \mathcal{F}^{(1)}_H(P)\cup \mathcal{F}^{(3)}_H(P)\cup \mathcal{F}^{(5)}_H(P)$, we claim that $F\notin \mathcal{F}^{(6)}_H(P)$.

    If $F'\in \mathcal{F}^{(3)}_H(P)$, then $\partial H$ meets $F'$ at $\partial F'$, thus $\partial H\cap F'$ is a proper face of $F'$ in $\mathcal{F}^{(1)}_H(P)$. Similarly, if $F'\in \mathcal{F}^{(5)}_H(P)$, then $\partial H\cap F'$ is also in $\mathcal{F}^{(1)}_H(P)$. This implies that $\partial F\cap \partial H\neq \varnothing$ holds. Since $F\neq H$ and $F\neq \overline{H^{\mathsf{c}}}$, neither $F\supset \mathrm{int}(H)$ nor $F\supset H^{\mathsf{c}}$ is satisfied. Therefore, if $F\in \mathcal{F}^{(6)}_H(P)$, then $\partial F\cap \mathrm{int}(H) \neq\varnothing$ and $\partial F\cap H^{\mathsf{c}} \neq\varnothing$. However, this condition has been excluded by the two cases above.

    Since $F\cap \partial H$ contains $F'\cap \partial H$, which is non-empty, $F\in \mathcal{F}^{(1)}_H(P)\cup\mathcal{F}^{(3)}_H(P)\cup \mathcal{F}^{(5)}_H(P)$.

    \textbf{Case (4).} If $F'\in \mathcal{F}^{(2)}_H(P)$ for all proper faces of $F$, we have that $\partial F\cap \partial H = \bigcup_{F'_i\in \mathcal{F}(F)} (F_i'\cap \partial H) = \varnothing$. If $F\in \mathcal{F}^{(3)}_H(P)$, then the conditions $F\subset H$ and $F\cap \partial H\neq\varnothing$ imply that $\partial H$ meets $F$ on the boundary of $F$, which is a contradiction. 
    
    For any proper face $F'\subset F$, $F\cap \mathrm{int}(H)$ contains $F'\cap \mathrm{int}(H)$, which is non-empty. Knowing that $F\notin \mathcal{F}^{(3)}_H(P)$, we have either $F\in \mathcal{F}^{(2)}_H(P)$ or $F\in \mathcal{F}^{(6)}_H(P)$, depending on whether $F\cap \partial H$ is empty. 
    
    Similarly, the claim holds if we replace $\mathcal{F}^{(2)}_H(P)$ with $\mathcal{F}^{(4)}_H(P)$.
\end{proof}

\subsection{Output data for the new polyhedron}\label{sec:s7:4}
Suppose that we have determined the relative position of $(F,H)$ for all $F\in\mathcal{F}(P)$ for a given polyhedron $P$ and a half-space $H$ in $\mathcal{P}(n)$. We will then compute the output data required by Theorem \ref{thm:s5} for the intersection $P\cap H$, namely the lists $L^{face}$, $L^{pos}$ and $L^{samp}$ describing the face set $\mathcal{F}(P\cap H)$, the poset structure on $\mathcal{F}(P\cap H)$, and sample points of the faces in $\mathcal{F}(P\cap H)$, respectively. The set of faces $\mathcal{F}(P\cap H)$ is characterized by the lemma below:
\begin{lem}\label{lem:5:3}
Let $P$ be a polyhedron and $H$ be a half-space in $\mathcal{P}(n)$. Then $\mathcal{F}(P\cap H)$ consists of:
    \begin{itemize}
        \item Faces $F\in \mathcal{F}^{(1)}_H(P)\cup \mathcal{F}^{(2)}_H(P)\cup \mathcal{F}^{(3)}_H(P)$, and
        \item Intersections $F\cap H$ and $F\cap \partial H$, where $F\in \mathcal{F}^{(6)}_H(P)$.
    \end{itemize}
\end{lem}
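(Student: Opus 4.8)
The plan is to reduce the statement to the standard description of the faces of a convex polyhedron in terms of its active defining half-spaces, which applies verbatim in $\mathcal{P}(n)$ since the projective model realizes $\mathcal{P}(n)$ as an open convex subset of the linear space $Sym_n(\mathbb{R})$ (cf.\ \cite{ratcliffe1994foundations}). I would write $P = \bigcap_{i\in\mathcal{I}} H_i$ with $H_i = \{X\in\mathcal{P}(n) : \mathrm{tr}(A_iX)\le 0\}$, and write the given half-space as $H = H_0 = \{X : \mathrm{tr}(A_0 X)\le 0\}$, so that $\partial H = \{X : \mathrm{tr}(A_0 X) = 0\}$. Then $P\cap H = \bigcap_{i\in\mathcal{I}\cup\{0\}} H_i$, and the faces of $P\cap H$ are precisely the nonempty sets $G_J := (P\cap H)\cap\bigcap_{j\in J}\partial H_j$ for $J\subseteq\mathcal{I}\cup\{0\}$, while the faces of $P$ are the nonempty sets $F_J := P\cap\bigcap_{j\in J}\partial H_j$ for $J\subseteq\mathcal{I}$. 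I would also use repeatedly the elementary fact that a convex set meeting both open half-spaces bounded by a hyperplane also meets the hyperplane, together with the transitivity of the face relation.

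To show that every face of $P\cap H$ lies in the asserted collection, I take a face $G_J$ and set $F := F_{J\setminus\{0\}}$, a face of $P$; I then determine which of the six relative positions of Lemma \ref{lem:5:1} holds for $(F,H)$. If $0\notin J$, then $G_J = F\cap H$: this is empty when $F\in\mathcal{F}^{(4)}_H(P)$ (and hence does not occur); it equals $F$, so it belongs to $\mathcal{F}^{(1)}_H(P)\cup\mathcal{F}^{(2)}_H(P)\cup\mathcal{F}^{(3)}_H(P)$, when $F\subseteq H$ (positions $(1)$, $(2)$, $(3)$); it equals $F\cap H$ with $F\in\mathcal{F}^{(6)}_H(P)$ in position $(6)$; and when $F\in\mathcal{F}^{(5)}_H(P)$ it equals $F\cap\partial H$, treated below. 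If $0\in J$, then $G_J = F\cap\partial H$: this is empty when $F$ is in position $(2)$ or $(4)$, equals $F$ when $F\in\mathcal{F}^{(1)}_H(P)$, equals $F\cap\partial H$ with $F\in\mathcal{F}^{(6)}_H(P)$ in position $(6)$, and the remaining positions $(3)$ and $(5)$ are treated below. In each of the positions $(1)$, $(3)$, $(5)$ the face $F$ is contained in one of the two closed half-spaces bounded by $\partial H$ and meets $\partial H$, so $\partial H$ is a supporting hyperplane of $F$; hence $F\cap\partial H$ is a face of $F$, therefore a face of $P$, and it lies in $\partial H$, i.e.\ $F\cap\partial H\in\mathcal{F}^{(1)}_H(P)$. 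Thus every face of $P\cap H$ appears in the stated list, and this is also why positions $(4)$ and $(5)$ need not be recorded separately.

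For the reverse inclusion, I first suppose $F\in\mathcal{F}^{(1)}_H(P)\cup\mathcal{F}^{(2)}_H(P)\cup\mathcal{F}^{(3)}_H(P)$ and write $F = F_J$ with $J\subseteq\mathcal{I}$; since $F\subseteq H$, we get $F = (P\cap H)\cap\bigcap_{j\in J}\partial H_j = G_J$, a face of $P\cap H$. Next I suppose $F\in\mathcal{F}^{(6)}_H(P)$ and write $F = F_J$; then $F\cap H = G_J$ and $F\cap\partial H = G_{J\cup\{0\}}$, and both are nonempty, because $F\cap H\supseteq F\cap\mathrm{int}(H)\neq\varnothing$ and, $F$ being convex and meeting both $\mathrm{int}(H)$ and $H^{\mathsf{c}}$, it also meets $\partial H$. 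Hence $F\cap H$ and $F\cap\partial H$ are faces of $P\cap H$. Combined with the previous paragraph, this yields the claimed equality.

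The only step that requires genuine care is the observation above that, for a face $F$ of $P$ lying on one side of $\partial H$ and touching it (positions $(1)$, $(3)$, $(5)$), the slice $F\cap\partial H$ is an honest face of $P$ contained in $\partial H$ rather than a new face; this is exactly what keeps the list short, and the rest is routine bookkeeping with the active-constraint description of polyhedral faces.
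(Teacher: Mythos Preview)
Your proof is correct and follows essentially the same approach as the paper: both use the active-constraint description of the faces of $P\cap H = \bigcap_{i}H_i\cap H$ and case-split according to whether the new constraint $H$ enters as an equality, an inequality, or not at all. The only difference is cosmetic—the paper assumes an irredundant face description to force the associated $F$ into $\mathcal{F}^{(6)}_H(P)$ whenever the new constraint is essential, whereas you handle positions $(3)$ and $(5)$ directly via the supporting-hyperplane observation that $F\cap\partial H$ is then already a face of $P$ lying in $\mathcal{F}^{(1)}_H(P)$; this makes your argument slightly more self-contained but is otherwise the same.
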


\begin{proof}
The polyhedron $P$ in $\mathcal{P}(n)$ is described as the intersection $\bigcap_{i=1}^{k-1} H_i$, where $H_i = \{\mathrm{tr}(Y\cdot A_i)\geq 0\}$ for $i=1,\dots, k-1$, each $H_i$ is a half-space in $\mathcal{P}(n)$. Let $H = H_k = \{\mathrm{tr}(Y\cdot A_k)\geq 0\}$. A face $F'$ of $P\cap H = \bigcap_{i=1}^k H_i$ can be expressed as
    \[
    F' = \{Y\in\mathcal{P}(n)|\mathrm{tr}(Y\cdot A_i) = 0,\ \forall i\in\mathcal{I},\ \mathrm{tr}(Y\cdot A_j) \geq 0,\forall j\in\mathcal{J}\},
    \]
    where $\mathcal{I}$ and $\mathcal{J}$ are disjoint subsets of $\{1,\dots,k\}$. Here, we assume that the matrices $A_i$ for $i\in\mathcal{I}$ are linearly independent, and the set of inequalities $\mathrm{tr}(Y\cdot A_j) \geq 0$ is irredundant. We consider three cases: when $k\notin \mathcal{I}\cup\mathcal{J}$, when $k\in \mathcal{I}$, and when $k\in\mathcal{J}$.

    If $k\notin \mathcal{I}\cup\mathcal{J}$, then $F'$ is a face of $P$. Since $F'\subset P\cap H \subset H$, $F'
    \in \mathcal{F}^{(1)}_H(P)\cup \mathcal{F}^{(2)}_H(P)\cup \mathcal{F}^{(3)}_H(P)$.

    If $k\in \mathcal{I}$, the set
    \[
    F = \{Y\in\mathcal{P}(n)|\mathrm{tr}(Y\cdot A_i) = 0,\forall i\in\mathcal{I}\backslash\{k\},\ \mathrm{tr}(Y\cdot A_j) \geq 0,\forall j\in\mathcal{J}\}
    \]
    is a face of $P$, and $F' = F\cap \partial H$. Since $F\cap \partial H$ is non-empty, $F\notin \mathcal{F}^{(2)}_H(P)\cup \mathcal{F}^{(4)}_H(P)$. Since the equations defining $F'$ are irredundant, $F' = F\cap \partial H$ is not a face of $P$, implying that $F\in \mathcal{F}^{(6)}_H(P)$.

    If $k\in \mathcal{J}$, the set
    \[
    F = \{Y\in\mathcal{P}(n)|\mathrm{tr}(Y\cdot A_i) = 0,\forall i\in\mathcal{I},\ \mathrm{tr}(Y\cdot A_j) \geq 0,\forall j\in\mathcal{J}\backslash\{k\}\}
    \]
    is a face of $P$, and $F' = F\cap H$. Since the inequalities are irredundant, $F'\subsetneq F$, thus $F\cap H^{\mathsf{c}}$ is non-empty. Moreover, $F\cap H\neq F\cap \partial H$. These together imply that $F\in \mathcal{F}^{(6)}_H(P)$.

    On the other hand, if $F\in \mathcal{F}^{(1)}_H(P)\cup \mathcal{F}^{(2)}_H(P)\cup \mathcal{F}^{(3)}_H(P)$, then $F\subset P\cap H$, implying that $F\in \mathcal{F}(P\cap H)$. If $F\in \mathcal{F}^{(6)}_H(P)$, both $F\cap H$ and $F\cap \partial H$ are subsets of $P\cap H$, implying that $F\cap H$ and $F\cap \partial H\in \mathcal{F}(P\cap H)$.
\end{proof}

The poset structure of $\mathcal{F}(P\cap H)$ is described in the following lemma:
\begin{lem}\label{lem:5:4}
Let $P$ be a polyhedron and $H$ be a half-space in $\mathcal{P}(n)$. For a given face of $P\cap H$, we categorize its proper faces:
    \begin{itemize}
        \item For $F\in \mathcal{F}^{(1)}_H(P)\cup \mathcal{F}^{(2)}_H(P)\cup \mathcal{F}^{(3)}_H(P)$, the set of its proper faces in $\mathcal{F}(P\cap H)$ remains unchanged compared to those in $\mathcal{F}(P)$.
        \item For $F\in \mathcal{F}^{(6)}_H(P)$, the proper faces of $F\cap H$ include:
        \begin{itemize}
            \item Proper faces $F'$ of $F$ where $F'\in \mathcal{F}^{(1)}_H(P)\cup \mathcal{F}^{(2)}_H(P)\cup \mathcal{F}^{(3)}_H(P)$,
            \item Intersections $F'\cap H$ and $F'\cap\partial H$, where $F'\in  \mathcal{F}^{(6)}_H(P)$ is a proper face of $F$, and
            \item The intersection $F\cap \partial H$.
        \end{itemize}
        \item For $F\in \mathcal{F}^{(6)}_H(P)$, the proper faces of $F\cap \partial H$ include:
        \begin{itemize}
            \item Proper faces $F'$ of $F$ where $F'\in \mathcal{F}^{(1)}_H(P)$, and
            \item Intersections $F'\cap\partial H$, where $F'\in  \mathcal{F}^{(6)}_H(P)$ is a proper face of $F$.
        \end{itemize}
    \end{itemize}
\end{lem}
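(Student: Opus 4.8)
\textbf{Proof plan for Lemma \ref{lem:5:4}.}

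The plan is to deduce everything from two ingredients: first, the elementary structural fact that for a convex polyhedron $Q$ and a face $G\in\mathcal{F}(Q)$, the set of faces of $Q$ contained in $G$ is exactly the face lattice of the polyhedron $G$ (viewed in $\mathrm{span}(G)$), and in particular it does not depend on the ambient polyhedron in which $G$ is realized as a face; second, the enumeration of $\mathcal{F}(P\cap H)$ from Lemma \ref{lem:5:3}, applied \emph{recursively} with a face of $P$ playing the role of $P$. Concretely, write $Q=P\cap H$. For $F\in\mathcal{F}^{(1)}_H(P)\cup\mathcal{F}^{(2)}_H(P)\cup\mathcal{F}^{(3)}_H(P)$ we have $F\subseteq H$, so $F$ is literally the same polyhedron whether regarded as a face of $P$ or of $Q$; hence the interval below $F$ in $\mathcal{F}(Q)$ equals the interval below $F$ in $\mathcal{F}(P)$, which is the first bullet.

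For $F\in\mathcal{F}^{(6)}_H(P)$, the key observation is that $F\cap H$ is obtained from the polyhedron $F$ by intersecting with the single half-space $H$, exactly the operation analyzed in Lemma \ref{lem:5:3}, and that the relative position of a sub-polyhedron $G\subseteq F$ against $H$ is intrinsic, so $\mathcal{F}^{(i)}_H(F)=\{G\in\mathcal{F}^{(i)}_H(P): G\subseteq F\}$ (using that faces of the face $F$ of $P$ are precisely the faces of $P$ contained in $F$). Applying Lemma \ref{lem:5:3} to $F$ in place of $P$ and to $H$, the faces of the polyhedron $F\cap H$ are: the proper faces $F'$ of $F$ with $F'\in\mathcal{F}^{(1)}_H(P)\cup\mathcal{F}^{(2)}_H(P)\cup\mathcal{F}^{(3)}_H(P)$; the cut pieces $F'\cap H$ and $F'\cap\partial H$ for proper faces $F'\in\mathcal{F}^{(6)}_H(P)$ of $F$; the top piece $F\cap H$ itself; and the new facet $F\cap\partial H$. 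Discarding $F\cap H$ (it is not a proper face of itself) yields precisely the second bullet. For the third bullet, $F\cap\partial H$ is the ``new facet'' of $F\cap H$, so its faces are the faces of $F\cap H$ lying in $\partial H$; scanning the list just produced, a piece $F'$ with $F'\in\mathcal{F}^{(1,2,3)}_H(P)$ lies in $\partial H$ iff $F'\in\mathcal{F}^{(1)}_H(P)$, a piece $F'\cap H$ with $F'\in\mathcal{F}^{(6)}_H(P)$ never lies in $\partial H$ (it meets $\mathrm{int}(H)$), and each $F'\cap\partial H$ does lie in $\partial H$; this gives exactly the third bullet.

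The main obstacle is making the recursion rigorous: one must justify that the half-space representation underlying Lemma \ref{lem:5:3} is valid for a face $F$ of $P$, i.e.\ that $F$, as a convex polyhedron inside the plane $\mathrm{span}(F)$, has the same kind of irredundant linear description as polyhedra of $\mathcal{P}(n)$, so that the proof of Lemma \ref{lem:5:3} transfers verbatim; this is where one should also record that the enumeration is about the combinatorics of tight-constraint index sets $\mathcal{I}\cup\mathcal{J}$ and their reverse-inclusion order, which is insensitive to the ambient space. A minor but genuine point of care is that the same geometric face can appear in the enumeration under two descriptions — e.g.\ a face of $P$ lying on $\partial H$ is both an $\mathcal{F}^{(1)}_H(P)$-face and a trace $F''\cap\partial H$ — and one must note that this causes no inconsistency because the two descriptions name the same poset element; the completeness assertion (no face of $Q$ below $F\cap H$ or $F\cap\partial H$ is missed) then follows from the fact that the interval below a face of $Q$ is its own face lattice, together with the Lemma \ref{lem:5:3} enumeration applied inside $F$.
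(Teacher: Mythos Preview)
Your proposal is correct and takes a genuinely different route from the paper's argument. The paper proves the ``only these'' direction by working in the ambient polyhedron: given a proper face $F''$ of (say) $F\cap H$ with $F\in\mathcal{F}^{(6)}_H(P)$, it first invokes Lemma \ref{lem:5:3} for $P$ to classify $F''$ as either a face of $P$ in positions (1)--(3) or as $F'\cap H$ or $F'\cap\partial H$ for some $F'\in\mathcal{F}^{(6)}_H(P)$, and then localizes by replacing $F'$ with $F'\cap F$ to force $F'\subseteq F$. You instead recurse: you view $F$ as a convex polyhedron in $\mathrm{span}(F)$, intersect with the half-space $H\cap\mathrm{span}(F)$, and apply Lemma \ref{lem:5:3} directly to $F$ in place of $P$; the interval below $F\cap H$ in $\mathcal{F}(P\cap H)$ is then read off from this intrinsic face lattice via the standard ``interval below a face equals its own face lattice'' principle.

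Each approach has its trade-off. The paper's argument never leaves $\mathcal{P}(n)$ and never needs Lemma \ref{lem:5:3} in any generality beyond its stated form, at the cost of the somewhat ad hoc replacement step $F'\rightsquigarrow F'\cap F$ (and verifying that $F'\cap F$ stays in position (6)). Your argument is cleaner conceptually---Lemma \ref{lem:5:4} becomes a formal corollary of Lemma \ref{lem:5:3}---but, as you correctly flag, it requires checking that the proof of Lemma \ref{lem:5:3} goes through for polyhedra inside a plane of $\mathcal{P}(n)$ (it does: the argument is purely about index sets $\mathcal{I},\mathcal{J}$ of tight and slack linear constraints), and that faces of the face $F$ of $P$ are exactly the faces of $P$ contained in $F$. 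One small wrinkle: Lemma \ref{lem:5:3} as stated enumerates only \emph{proper} faces, and $\mathcal{F}^{(6)}_H(P)$ by definition excludes $P$ itself, so when you apply it to $F$ the new facet $F\cap\partial H$ is not literally on the list produced; you should note explicitly that it appears once you allow $F$ as the improper face of itself in position (6), or simply observe directly that $F\cap\partial H$ is the unique new facet of $F\cap H$.
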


\begin{proof}
A face of $F\cap P$ falls into one of the three cases listed in Lemma \ref{lem:5:3}. On the one hand, it is evident that the faces of $P\cap H$ enumerated in Lemma \ref{lem:5:4} are proper faces of $F$, $F\cap H$, or $F\cap \partial H$, respectively. On the other hand, we aim to show that these are indeed all possible proper faces of $F$, $F\cap H$, or $F\cap \partial H$, respectively.

    \textbf{Proper faces of $F\in \mathcal{F}^{(1)}_H(P)\cup \mathcal{F}^{(2)}_H(P)\cup \mathcal{F}^{(3)}_H(P)$.} If $F'$ is a proper face of $F$ in $\mathcal{F}(P)$, it follows that $F'$ is also in $\mathcal{F}(P\cap H)$.

    \textbf{Proper faces of $F\cap H$, $F\in \mathcal{F}^{(6)}_H(P)$.} 
    Proper faces $F''$ of $F\cap H$ are faces of $P\cap H$. We consider three cases according to Lemma \ref{lem:5:3}:

    (1) If $F''$ is a face of $P$, then $F''\in \mathcal{F}^{(1)}_H(P)\cup \mathcal{F}^{(2)}_H(P)\cup \mathcal{F}^{(3)}_H(P)$. Since $F''\subsetneq F\cap H\subsetneq F$, $F''$ is a proper face of $F$ in $\mathcal{F}(P)$.

    (2) Suppose that $F'' = F'\cap H$, where $F'\in \mathcal{F}^{(6)}_H(P)$. Since $(F'\cap F)\cap H\subset F'\cap H = (F'\cap H)\cap (F\cap H) = (F'\cap F)\cap H$, replacing $F'$ with $F'\cap F$ maintains $F'\cap H$. Hence, we can assume that $F'$ is a face of $F$. Furthermore, $F'\cap H\subsetneq F\cap H$, implying that $F'$ is a proper face of $F$.
    
    (3) Suppose that $F'' = F'\cap \partial H$, where $F'\in \mathcal{F}^{(6)}_H(P)$. Analogously, $(F'\cap F)\cap \partial H\subset F'\cap \partial H = (F'\cap \partial H)\cap (F\cap H) = (F'\cap F)\cap \partial H$, thus replacing $F'$ with $F'\cap F$ maintains $F'\cap \partial H$. Hence, we can assume that $F'$ is a face (not necessarily proper) of $F$.

    \textbf{Proper faces of $F\cap \partial H$, $F\in \mathcal{F}^{(6)}_H(P)$.} Proper faces $F''$ of $F\cap \partial H$ are faces of $P\cap H$ with $F''\subset \partial H$. Thus, we consider two cases:

    (1) If $F''$ is a face of $P$, it follows that $F''\in \mathcal{F}^{(1)}_H(P)$. Since $F''\subsetneq F\cap \partial H\subsetneq F$, $F''$ is a proper face of $F$ in $\mathcal{F}(P)$.

    (2) Suppose that $F'' = F'\cap \partial H$, where $F'\in \mathcal{F}^{(6)}_H(P)$. Since $(F'\cap F)\cap \partial H\subset F'\cap \partial H = (F'\cap \partial H)\cap (F\cap \partial H) = (F'\cap F)\cap \partial H$, replacing $F'$ with $F'\cap F$ maintains $F'\cap \partial H$. Hence, we can assume that $F'$ is a face of $F$. Furthermore, $F'\cap \partial H\subsetneq F\cap \partial H$, implying that $F'$ is a proper face of $F$.
\end{proof}
Lastly, we describe a sub-algorithm that obtains sample points for faces of $P\cap H$:
\begin{lem}\label{lem:5:5}
There exists a BSS algorithm with inputs consisting of the lists $L^{face}$, $L^{pos}$ and $L^{samp}$ for a polyhedron $P$ in $\mathcal{P}(n)$, along with the equation for a half-space $H$ in $\mathcal{P}(n)$, and yields sample points for the faces of $P\cap H$. 
\end{lem}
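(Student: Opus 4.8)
The plan is to construct sample points inductively, processing the faces of $P\cap H$ in order of increasing dimension, using the structural description of $\mathcal{F}(P\cap H)$ from Lemmas \ref{lem:5:3} and \ref{lem:5:4} together with the sample-point subroutine of Lemma \ref{lem:s5:1}. First I would recall that each face $F'$ of $P\cap H$ is, by the proof of Lemma \ref{lem:5:3}, cut out by a subset $\mathcal{I}\subset\{1,\dots,k\}$ of defining equations (with $H = H_k$) together with the remaining inequalities being strict on the relative interior of $F'$; equivalently $span(F') = \bigcap_{i\in\mathcal{I}}A_i^\perp$, and these matrices $A_i$ are already recorded in (or computable from) the input data $L^{face}$, augmented by the extra index $k$ for the new faces of the forms $F\cap H$ and $F\cap\partial H$.

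The core step is: for a face $F'$ of $P\cap H$ with associated index set $\mathcal{I}$, run the algorithm of Lemma \ref{lem:s5:1} on $\{A_i\}_{i\in\mathcal{I}}$ to obtain a point $Z_0\in\bigcap_{i\in\mathcal{I}}A_i^\perp = span(F')$. This $Z_0$ need not lie in the relative interior of $F'$ — it may fail the strict inequalities $\mathrm{tr}(Z\cdot A_j)>0$ for $j\notin\mathcal{I}$. To repair this, I would perturb $Z_0$ toward the interior: having already (by the induction hypothesis, processing lower-dimensional faces first) a sample point $W$ in the relative interior of some proper face $F''\subsetneq F'$, or alternatively a point in the interior of the ambient polyhedron obtained from a previously processed larger face, I form the segment $(1-t)Z_0 + tW$ (which stays in $\mathcal{P}(n)$ by convexity) and push slightly in the direction that strictly satisfies all the inequalities $\mathrm{tr}((\,\cdot\,)A_j)\geq0$, $j\notin\mathcal{I}$; choosing $t$ small and positive keeps the equalities $\mathrm{tr}(Z\cdot A_i)=0$, $i\in\mathcal{I}$, intact while making all the other constraints strict, since on $span(F')$ each $A_j$ is either identically zero or strictly positive somewhere on the relative interior. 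All of these are rational operations on stored reals, hence legitimate BSS steps, and the membership tests (positive definiteness, sign of a trace) are BSS-decidable.

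The main obstacle I anticipate is \emph{interior-point production in the correct relative face}: Lemma \ref{lem:s5:1} only certifies non-emptiness of $\bigcap A_i^\perp$ and returns \emph{some} point of $span(F')$, whereas I need a point in the relative interior of the polyhedral cell $F'$ itself, i.e., satisfying the open conditions. Handling this cleanly requires the inductive ordering (minimal faces first, so that for each face a sample point of at least one proper subface is already available to perturb toward, except for minimal faces where $span(F')=F'$ and Lemma \ref{lem:s5:1} already suffices), plus a careful argument that the convex combination and small perturbation land strictly inside $F'$ — this uses that $F'$ is the closure of its relative interior and that the inequality system is irredundant, exactly as extracted in the proof of Lemma \ref{lem:5:3}. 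Once this is set up, termination is immediate since $\mathcal{F}(P\cap H)$ is finite and each face costs one call to the algorithm of Lemma \ref{lem:s5:1} plus finitely many arithmetic operations.
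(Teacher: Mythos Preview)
Your perturbation step does not work as stated. You take $Z_0\in span(F')$ from Lemma~\ref{lem:s5:1} and a sample point $W$ in the relative interior of some proper face $F''\subsetneq F'$, and then claim that $(1-t)Z_0+tW$ for small $t>0$ satisfies all the strict inequalities $\mathrm{tr}(\,\cdot\,A_j)>0$, $j\notin\mathcal{I}$. But $Z_0$ is an \emph{arbitrary} point of $span(F')$ and may violate several of these inequalities, so for small $t$ the combination stays near $Z_0$ and is still outside $F'$. Taking $t$ close to $1$ is no better: $W$ lies on $\partial F'$ (it sits in a proper face), so some inequality $\mathrm{tr}(W\cdot A_j)=0$ holds, and the direction $Z_0-W$ may well point into the half-space $\{\mathrm{tr}(\,\cdot\,A_j)<0\}$. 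In short, the segment from $Z_0$ to $W$ need not meet $\mathrm{int}(F')$ at all. Your justification (``each $A_j$ is strictly positive somewhere on the relative interior'') only tells you the interior is nonempty; it gives no mechanism for reaching it from the data $Z_0,W$.

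The paper avoids this difficulty by a different construction. For faces of $P\cap H$ that are already faces of $P$ (types (1)--(3)) it simply reuses the sample points from $L^{samp}$; no call to Lemma~\ref{lem:s5:1} is needed there. For a new face $F'=F_j\cap\partial H$ with at least two proper faces, it takes the \emph{barycenter} of the already-computed sample points of \emph{all} proper faces of $F'$: convexity puts this barycenter in $F'$, and the disjointness of the relative interiors of distinct proper faces forces it off $\partial F'$. The one-proper-face case and the step from $F_j\cap\partial H$ to $F_j\cap H$ are handled by a tangent-space search: pick an orthogonal basis of $T_{X_0'}span(F)$ and test the finitely many directions $X_0'\pm\epsilon B_i$. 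To salvage your approach you would need something like this barycenter idea (combining several subface sample points, not just one) in place of the single convex combination with $Z_0$.
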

\begin{proof}
By Lemma \ref{lem:5:3}, $\mathcal{F}(P\cap H)$ consists of faces $F_j\in\mathcal{F}^{(1)}_H(P)\cup\mathcal{F}^{(2)}_H(P)\cup\mathcal{F}^{(3)}_H(P)$ and intersections $F_j\cap H$ and $F_j\cap \partial H$ for $F_j\in\mathcal{F}^{(6)}_H(P)$. A sample point of the face $F_j\in\mathcal{F}^{(1)}_H(P)\cup\mathcal{F}^{(2)}_H(P)\cup\mathcal{F}^{(3)}_H(P)$ is given in the list $L^{samp}$. Thus, our task reduces to computing sample points of $F_j\cap H$ and $F_j\cap \partial H$, where $F_j\in\mathcal{F}^{(6)}_H(P)$.

    We begin by obtaining a sample point of $F' = F_j\cap \partial H$ by induction on the dimension of $F'$. If $F'$ is a minimal face, we compute its sample point using the algorithm described in Lemma \ref{lem:5:2}. If $F'$ is not minimal, we assume by induction that we have obtained all sample points for the proper faces of $F'$. If $|\mathcal{F}(F')|\geq 2$, let $X_0'$ denote the barycenter of the sample points of the proper faces of $F'$. Note that $X_0'\in F'$ due to the convexity of $F'$, and $X_0'\notin \partial F'$ due to the disjointness of the interiors of its proper faces. Thus, the barycenter $X_0'$ lies in the interior of $F'$.

    If $\mathcal{F}(F') = \{F''\}$, then $F''$ is minimal, and $F'$ is a half-space in $span(F')$. Let $X_0''\in F''$ be the sample point of $F''$ and take an orthogonal basis $B_1,\dots, B_m$ of
    \[
    T_{X_0''}span(F') = \{B\in Sym_n(\mathbb{R})|B\in span_{Sym_n(\mathbb{R})}(F'),\ \mathrm{tr}((X_0'')^{-1}B) = 0\}.
    \]
    For at least one choice $i\in\{1,\dots, m\}$ and a sufficiently small $\epsilon>0$, either $X_0''+ \epsilon B_i\in \mathrm{int}(F')$ or $X_0''- \epsilon B_i\in \mathrm{int}(F')$. This point serves as a sample point of $F'$.

    Knowing a sample point $X_0'$ of $F' = F_j\cap\partial H$, we obtain a sample point of $F = F_j\cap H$ as follows. We take a orthogonal basis $C_1,\dots, C_{m'}$ of $T_{X_0'}span(F)$, which is given analogously to the previous case. For at least one choice $i\in\{1,\dots, m'\}$ and a sufficiently small $\epsilon>0$, either $X_0'+ \epsilon C_i\in \mathrm{int}(F)$ or $X_0'- \epsilon C_i\in \mathrm{int}(F)$. This point serves as a sample point of $F$.
\end{proof}
\subsection{Description of the algorithm}
We describe in this subsection the algorithm proposed in Theorem \ref{thm:s5}. The algorithm utilizes the lemmas established in Sections \ref{sec:s7:2} through \ref{sec:s7:4}.

\textbf{Algorithm for computing the poset structure of polyhedra in $\mathcal{P}(n)$.}

Consider a point $X\in\mathcal{P}(n)$ and a list $\mathcal{A}'$ of matrices $A_i'$, where $i=1,\dots,k'$. Define the half-spaces $H_i = \{\mathrm{tr}(A_i'\cdot Y)\geq 0\}$, and denote $P_{l} = \bigcap_{i=1}^l H_i$. We will use induction to show how to compute the poset data of $P_{l}$ for $l=1,\dots,k'$.

\textbf{Step (1).} We begin with $l=0$. Since the polyhedron $P_0$ is the entire space $\mathcal{P}(n)$, we initialize
\[
L^{face} = \{\varnothing\},\ L^{pos} = \{\varnothing\},\ L^{samp} = \{X\},
\]
and $\mathcal{A} = \varnothing$.

\textbf{Step (2).} We increase $l$ by $1$. By the induction assumption, we possess the poset data of $P_{l-1}$. That is, we have a set $\mathcal{A}$ of $n\times n$ symmetric matrices, such that $P_{l-1} = \bigcap_{A\in\mathcal{A}}\{\mathrm{tr}(A\cdot Y)\geq 0\}$, as well as lists denoted by $L^{face}$, $L^{pos}$, and $L^{samp}$ as required in Theorem \ref{thm:s5}.

\textbf{Step (3).} We will describe the computation of the poset data of $P_{l} = P_{l-1}\cap H_{l}$ from that of $P_{l-1}$. To begin with, we remove the first element of the list $\mathcal{A}'$, denoted by $A_l$, and append it to $\mathcal{A}$.

\textbf{Step (4).} We create a temporary list $L^{temp} = \{0,\dots,0\}$ of length equal to $|\mathcal{F}(P_{l-1})|$.

\textbf{Step (5).} We will replace the element $L^{temp}_j$ with a number from $\{1,\dots,6\}$ indicating the relative position of $(F_j,H_l)$, where $F_j\in \mathcal{F}(P_{l-1})$. To achieve this, we first determine the relative positions of the minimal faces in $\mathcal{F}(P_{l-1})$, i.e., the faces $F_j$ such that $L^{pos}_j = \varnothing$. Such a face $F_j$ is a plane $\bigcap_{i\in L_j^{face}}A_i^\perp$ in $\mathcal{P}(n)$. 

If $A_l\in span_{i\in L_j^{face}}(A_i)$, then $F_j\in \mathcal{F}^{(1)}_{H_l}(P_{l-1})$, and we set $L^{temp}_j = 1$. Otherwise, we apply the algorithm described by Lemma \ref{lem:5:2} to determine if $F_j\cap H_l = \varnothing$. If it is not empty, $F_j\in \mathcal{F}^{(6)}_{H_l}(P_{l-1})$, and we set $L^{temp}_j = 6$. If $F_j\cap H_l$ is empty, we compute $\mathrm{tr}(A_lX_j)$, where $X_j = L^{samp}_j$ is the sample point of $F_j$. If $\mathrm{tr}(A_lX_j)>0$, we set $L^{temp}_j = 2$; if $\mathrm{tr}(A_lX_j)<0$, we set $L^{temp}_j = 4$.

\textbf{Step (6).} Based on Lemma \ref{lem:5:2}, we determine the relative position of $F_j$ when the relative positions of all the proper faces of $F_j$ are determined. If $L^{temp}_{j'} = 6$ for any $j'\in L^{pos}_j$, we set $L^{temp}_{j} = 6$. If $L^{temp}_{j'_1} \in \{2,3\}$ and $L^{temp}_{j'_2} \in \{4,5\}$ for any $j'_1,j'_2\in L^{pos}_j$, we also set $L^{temp}_{j} = 6$. 

If neither of the cases above applies to $F_j$ and $L^{temp}_{j'} \in\{1,3,5\}$ for any $j'\in L^{pos}_j$, we check if $A_l\in span_{i\in L_j^{face}}(A_i)$. If so, we set $L^{temp}_{j} = 1$. Otherwise, if $\mathrm{tr}(A_lX_j)>0$, we set $L^{temp}_j = 3$; if $\mathrm{tr}(A_lX_j)<0$, we set $L^{temp}_j = 5$. 

If none of the cases above apply to $F_j$, either $L^{temp}_{j'} = 2$ for all $j'\in L^{pos}_j$, or $L^{temp}_{j'} = 4$ for all $j'\in L^{pos}_j$. Suppose that the former holds. We check if $span(F)\cap \partial H_l = \varnothing$ by applying Lemma \ref{lem:5:2}. If it is empty, set $L^{temp}_j = 2$. Otherwise, let $X_0\in span(F)\cap \partial H_l$ be the sample point we derived from Lemma \ref{lem:5:2}. If $\mathrm{tr}(X_0A_i)<0$ for any $1\leq i\leq l-1$, set $L^{temp}_j = 2$; otherwise, set $L^{temp}_j = 6$.

\textbf{Step (7).} We derive the data $L^{face}$ for $P_l$ according to Lemma \ref{lem:5:3}. For any number $j$, if $L^{temp}_j \in \{1,2,3\}$, $F_j$ remains in $\mathcal{F}(P_l)$, and no action is taken for such $j$.

If $L^{temp}_j \in \{4,5\}$, $F_j$ no longer exists as a face of $P_l$, thus we remove the elements $L^{face}_j$, $L^{temp}_j$, $L^{samp}_j$, and $L^{pos}_j$. Moreover, we remove any number $j$ that occurs in $L^{pos}$ and decrease by $1$ any number greater than $j$.

If $L^{temp}_j = 6$, both $F_j$ and $F_j\cap H$ are faces of $P_l$. Since $span(F_j\cap H) = span(F_j)$, we keep the element $L^{face}_{j}$ representing the new face $F_j\cap H$ instead of $F_j$. Furthermore, we append an element
$L^{face}_j\cup \{l\}$ to $L^{face}$ representing $F_j\cap \partial H$. Let this be the $\hat{j}$-th element of $L^{face}$.

\textbf{Step (8).} We derive the data $L^{pos}$ for $P_l$ according to Lemma \ref{lem:5:4}. The case $L^{temp}_j \in \{4,5\}$ will not occur. If $L^{temp}_j \in \{1,2,3\}$, no action is taken since the proper faces of $F_j$ in $\mathcal{F}^{(4)}_H(P_{l-1})\cup \mathcal{F}^{(5)}_H(P_{l-1})$ have been removed from $L^{pos}_j$ in step (6).

If $L^{temp}_j = 6$, then $F_j\cap \partial H$ is a proper face of $F_j\cap H$, and we append the element $\hat{j}$ to $L^{pos}_j$. Additionally, for each $l\in L^{pos}_j$, we check the value $L^{temp}_l$. The case $L^{temp}_l\in \{4,5\}$ will not occur, and no action is taken if $L^{temp}_l\in \{1,2,3\}$. If $L^{temp}_l = 6$, $F_l\cap \partial H$ is a proper face of both $F_j\cap H$ and $F_j\cap \partial H$, thus we append $\hat{l}$ to both $L^{pos}_j$ and $L^{pos}_{\hat{j}}$.

\textbf{Step (9).} We derive the data $L^{samp}$ for $P_l$ as described in Lemma \ref{lem:5:5}.

\textbf{Step (10).} We check if all numbers in $\{1,\dots,l\}$ appear in $L^{face}$ as facets. If a number $i\in\{1,\dots,l\}$ does not appear, we remove $A_i$ from the list $\{A_1,\dots,A_l\}$, decrease by $1$ any numbers greater than $i$ appearing in $L^{face}$, and decrease $l$ by $1$. 

\textbf{Step (11).} Repeat steps (2) through (10) if $\mathcal{A}'$ is non-empty. If $\mathcal{A}'$ is empty, the algorithm terminates, and the data $\mathcal{A}$, $L^{face}$, $L^{pos}$ and $L^{samp}$ are the required output of the theorem.
\vspace{12pt}
\section{Finite and infinite sided Dirichlet-Selberg domains}\label{sec:s4}
The objective of this section is to classify discrete subgroups $\Gamma<SL(3,\mathbb{R})$ into the following types:
\begin{itemize}
    \item The subgroup $\Gamma$ belongs to the \textbf{finitely-sided type}, if for every $X\in\mathcal{P}(3)$, the Dirichlet-Selberg domain $DS(X,\Gamma)$ is finitely-sided.
    \item The subgroup $\Gamma$ belongs to the \textbf{infinitely-sided type}, if for a generic choice of $X\in\mathcal{P}(3)$, the Dirichlet-Selberg domain $DS(X,\Gamma)$ is infinitely-sided.
\end{itemize}
Our study will focus on discrete Abelian subgroups of $SL(3,\mathbb{R})$ that consist of matrices with exclusively positive eigenvalues. Such matrices have favorable properties within $SL(3,\mathbb{R})$. In particular, if all eigenvalues of $g\in SL(3,\mathbb{R})$ are positive, then $g^k \in SL(3,\mathbb{R})$ for any $k\in\mathbb{R}$. 

For every $X\in\mathcal{P}(3)$, discrete subgroup $\Gamma<SL(3,\mathbb{R})$, and $g\in SL(3,\mathbb{R})$, the Dirichlet-Selberg domains $DS(X,\Gamma)$ and $DS(g.X,g^{-1}\Gamma g)$ are isometric. Therefore, our initial focus will be classifying the conjugacy classes of Abelian subgroups of $SL(3,\mathbb{R})$ with only positive eigenvalues.
\begin{prop}\label{2-gen}
    Let $\Gamma$ be a discrete Abelian subgroup of $SL(3,\mathbb{R})$ where all eigenvalues of each $\gamma\in \Gamma$ are positive real numbers. Then, $\Gamma$ is conjugate to a subgroup of $SL(3,\mathbb{R})$ generated by either of the following:

    (i) For cyclic $\Gamma$, the generators are displayed below:
    \begin{table}[H]
        \centering
        \begin{tabular}{c||c|c|c|c}
        \hline
        Type & (1) & (2) & (3) & (4)\\
        \hline
        Generator & $\begin{pmatrix} 1& 1 & 0\\ 0 & 1 & 0\\ 0 & 0 & 1\end{pmatrix}$ & $\begin{pmatrix} 1& 1 & 0\\ 0 & 1 & 1\\ 0 & 0 & 1\end{pmatrix}$ & \begin{tabular}[t]{@{}c@{}}$\begin{pmatrix} e^r& 0 & 0\\ 0 & e^s & 0\\ 0 & 0 & e^t\end{pmatrix}$\\($r+s+t = 0$;\\ $(r,s,t)\neq (0,0,0)$)\end{tabular} & \begin{tabular}[t]{@{}c@{}}$\begin{pmatrix} e^t& 1 & 0\\ 0 & e^t & 0\\ 0 & 0 & e^{-2t}\end{pmatrix}$\\($t\neq 0$)\end{tabular} \\
        \hline
    \end{tabular}
    \caption{Generators of cyclic discrete Abelian subgroups of $SL(3,\mathbb{R})$.}
    \label{tab:my_label}
    \end{table}
    (ii) For $2$-generated $\Gamma$, the generators are displayed below:
    \begin{table}[H]
        \centering
        \begin{tabular}{c||c|c|c|c|c}
        \hline
        Type & (1) & (1') & (2) & (3) & (4)\\
        \hline
        Generators & \begin{tabular}[t]{@{}c@{}}$\begin{pmatrix} 1& 1 & 0\\ 0 & 1 & 0\\ 0 & 0 & 1\end{pmatrix}$ \vspace{2mm}\\ $\begin{pmatrix} 1& 0 & 1\\ 0 & 1 & 0\\ 0 & 0 & 1\end{pmatrix}$\end{tabular} & \begin{tabular}[t]{@{}c@{}}$\begin{pmatrix} 1& 0 & 0\\ 0 & 1 & 1\\ 0 & 0 & 1\end{pmatrix}$ \vspace{2mm}\\ $\begin{pmatrix} 1& 0 & 1\\ 0 & 1 & 0\\ 0 & 0 & 1\end{pmatrix}$\end{tabular} & \begin{tabular}[t]{@{}c@{}}$\begin{pmatrix} 1& 1 & 0\\ 0 & 1 & 1\\ 0 & 0 & 1\end{pmatrix}$ \vspace{2mm}\\ $\begin{pmatrix} 1& a & b\\ 0 & 1 & a\\ 0 & 0 & 1\end{pmatrix}$\\ ($b\neq a(a-1)/2$)\end{tabular} & \begin{tabular}[t]{@{}c@{}}$\begin{pmatrix} e^r& 0 & 0\\ 0 & e^s & 0\\ 0 & 0 & e^t\end{pmatrix}$ \vspace{2mm}\\ $\begin{pmatrix} e^{r'}& 0 & 0\\ 0 & e^{s'} & 0\\ 0 & 0 & e^{t'}\end{pmatrix}$\\ ($r+s+t = $\\$r'+s'+t' = 0$)\end{tabular} & \begin{tabular}[t]{@{}c@{}}$\begin{pmatrix}e^t& 1 & 0\\ 0 & e^t & 0\\ 0 & 0 & e^{-2t}\end{pmatrix}$ \vspace{2mm}\\ $\begin{pmatrix}e^s& a & 0\\ 0 & e^s & 0\\ 0 & 0 & e^{-2s}\end{pmatrix}$ \\ ($(s,t)\neq (0,0)$)\end{tabular}\\
        \hline
        \end{tabular}
        \caption{Generators of $2$-generated discrete Abelian subgroups of $SL(3,\mathbb{R})$.}
        \label{tab:my_label2}
    \end{table}
\end{prop}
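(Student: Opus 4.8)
\textbf{Proof proposal for Proposition \ref{2-gen}.}

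The plan is to reduce everything to the simultaneous Jordan-type classification of commuting matrices in $SL(3,\mathbb{R})$ whose eigenvalues are all positive, and then use discreteness to cut the continuous families down to the normal forms listed in the two tables. First I would recall the structure theory for a single $g\in SL(3,\mathbb{R})$ with positive eigenvalues: after conjugation it lies in one of the ``real-split unipotent-times-diagonal'' normal forms, since all eigenvalues being positive rules out the rotational (complex eigenvalue) Jordan blocks; the possibilities for the Jordan form are (a) unipotent with a single Jordan block, (b) unipotent with a $2\times 1$ block structure, (c) diagonalizable with distinct positive eigenvalues, and (d) a $2\times 2$ unipotent-times-scalar block summed with a $1\times 1$ block. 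These are exactly the four ``types'' appearing in part (i). For a cyclic group $\Gamma=\langle g\rangle$ this essentially finishes part (i), modulo choosing the scaling of the off-diagonal entry (which can always be normalized to $1$ by a diagonal conjugation whenever it is nonzero).

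Next, for a commuting pair $g,h$, I would invoke the standard fact that commuting matrices preserve each other's generalized eigenspaces, so $g$ and $h$ are simultaneously block-triangularizable along the common eigenspace decomposition. The main case analysis is organized by the Jordan type of a ``generic'' element of $\Gamma$ (or equivalently, by the common refinement of the eigenspace decompositions of all elements): if some element has three distinct positive eigenvalues, the whole group is simultaneously diagonalizable, giving type (3); if the common decomposition has a $2+1$ split with the $2$-block unipotent-times-scalar, one gets type (4); and if everything is unipotent, the centralizer of a regular unipotent inside $SL(3,\mathbb{R})$ is the abelian group of upper-triangular unipotent matrices of the special ``Toeplitz'' shape $\begin{pmatrix}1&a&b\\0&1&a\\0&0&1\end{pmatrix}$ (this is where the constraint $b\neq a(a-1)/2$ in type (2) and the two distinct abelianizations (1) and (1') in the ``non-regular unipotent'' subcase come from — the latter corresponds to a pair of commuting unipotents that together generate only a group conjugate into the Heisenberg-type pattern shown). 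I would handle the unipotent case by passing to the Lie algebra: commuting nilpotent upper-triangular matrices correspond to an abelian subalgebra of $\mathfrak{n}$, and I'd enumerate the rank-$\le 2$ abelian subalgebras up to conjugacy by the normalizer, translating each back to a group via $\exp$.

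Finally I would impose \emph{discreteness} to pass from ``subgroup conjugate into family $X$'' to ``subgroup conjugate to one of the listed generator sets''. Discreteness forces the parameter lattices to be genuine lattices rather than dense subgroups: e.g. in the diagonalizable type (3) the pair $(r,s,t),(r',s',t')$ must span a discrete rank-$\le 2$ subgroup of the trace-zero plane, which after applying a conjugation by a permutation matrix and (if needed) a change of basis of the lattice can be put in the displayed form; in the unipotent types the relevant coordinates live in $\mathbb{R}$ or $\mathbb{R}^2$ and discreteness again forces a lattice, normalizable by the conjugations available (diagonal rescalings act on the unipotent coordinates by the corresponding weights, which lets one normalize one generator's superdiagonal entry to $1$). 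I expect the main obstacle to be the bookkeeping in the unipotent cases: separating case (1) from (1') and pinning down the exact inequality $b\neq a(a-1)/2$ in case (2) requires carefully tracking which abelian subalgebras of $\mathfrak{n}$ are conjugate under the (quite large) group of conjugations preserving upper-triangularity, and checking that the remaining parameters really cannot be normalized further without leaving the conjugacy class. A secondary subtlety is verifying that the families in the two tables are pairwise non-conjugate (so that the classification has no redundancy), which amounts to comparing conjugation-invariants such as the Jordan type of a generic element and the rank and ``weight type'' of the group.
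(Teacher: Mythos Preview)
Your plan is correct and follows essentially the same route as the paper: Jordan normal form for a single generator, then a case split for the $2$-generated situation according to the eigenvalue structure (all diagonalizable, all unipotent, or mixed), centralizer computation inside each case, and finally discreteness to kill the degenerate one-parameter possibilities and obtain the constraint $b\neq a(a-1)/2$. Two small differences are worth flagging. First, for the unipotent case the paper does not pass to $\mathfrak{n}$ and classify abelian subalgebras; it just fixes $\gamma_1$ in Jordan form, writes down the matrices commuting with it, and uses the rank condition $\mathrm{rank}(\gamma_2-I)=1$ to split into types (1) and (1$'$) --- this is less conceptual than your Lie-algebra route but shorter. Second, the statement does not assert that the listed families are pairwise non-conjugate, so your ``secondary subtlety'' is unnecessary. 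One item you should add: the paper closes by checking in each of the four $2$-generated cases that any third commuting element with positive eigenvalues already lies in the real $2$-parameter group generated by $\gamma_1,\gamma_2$, hence a discrete abelian $\Gamma$ with positive eigenvalues is at most $2$-generated; this is what justifies the dichotomy (i)/(ii) in the proposition.
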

\begin{proof}
    First, we suppose that $\Gamma$ is cyclic, generated by $\gamma$. As all eigenvalues of $\gamma$ are positive, the Jordan canonical form of $\gamma$ falls into one of the types listed in Table \ref{tab:my_label}.

    Next, we suppose that $\Gamma$ is $2$-generated, which breaks into the following cases:
    \begin{enumerate}
        \item The subgroup $\Gamma$ is unipotent. For every non-identity element in $\Gamma$, the algebraic and geometric multiplicities of the eigenvalue $\lambda = 1$ are $3$ and $2$, respectively.
        \item The subgroup $\Gamma$ is unipotent. For every non-identity element in $\Gamma$, the algebraic multiplicity of the eigenvalue $\lambda = 1$ is $3$, while for at least one element in $\Gamma$, the geometric multiplicity of $\lambda = 1$ is $1$. 
        \item All elements in $\Gamma$ are diagonalizable by similarity.
        \item The group $\Gamma$ contains both a non-unipotent and a non-diagonalizable element.
    \end{enumerate}
    
    \textbf{Case (1).} We assume without loss of generality that one of the generators of $\Gamma$ is
    \[
    \gamma_1 = \begin{pmatrix} 1& 1 & 0\\ 0 & 1 & 0\\ 0 & 0 & 1\end{pmatrix},
    \]
    and denote the other generator by $\gamma_2$. The conditions $\gamma_1\gamma_2 = \gamma_2\gamma_1$ and $\det(\lambda I-\gamma_2) = (\lambda-1)^3$ imply that
    \[
    \gamma_2 = \begin{pmatrix} 1& a & b\\ 0 & 1 & 0\\ 0 & c & 1\end{pmatrix},\ a,b,c\in\mathbb{R}.
    \]
    Since the geometric multiplicity of the eigenvalue $1$ of $\gamma_2$ is $2$, we have $rank(\gamma_2 - I) = 1$, implying that $b=0$ or $c=0$. If $c=0$, then $b\neq 0$, as $\Gamma$ is discrete and $2$-generated. Additionally, we can assume that $b>0$, after replacing $\gamma_2$ with $\gamma_2^{-1}$ if necessary. Let
    \[
    g = \begin{pmatrix}
        b^{1/3} & 0 & 0\\ 0 & b^{1/3} & 0\\ 0 & -ab^{-2/3} & b^{-2/3}
    \end{pmatrix},
    \]
    then
    \[
    g^{-1}\gamma_1g = \gamma_1,\ g^{-1}\gamma_2g = \begin{pmatrix}
        1 & 0 & 1\\ 0 & 1 & 0\\ 0 & 0 & 1
    \end{pmatrix},
    \]
    which correspond to type (1) shown in Table \ref{tab:my_label2}.
    
    If $b=0$, then $c\neq 0$. We can similarly take the generators $\gamma_1$ and $\gamma_2$ to generators of type (1') via a similarity transformation.

    \textbf{Case (2).} Assuming that $\gamma_1\in \Gamma$ is unipotent with its eigenvalue $\lambda = 1$ having geometric multiplicity $1$. Without loss of generality, we have
    \[
    \gamma_1 = \begin{pmatrix}
        1 & 1 & 0\\ 0 & 1 & 1\\ 0 & 0 & 1
    \end{pmatrix}.
    \]
    Let $\gamma_2$ be another generator of $\Gamma$. Since $\gamma_1 \gamma_2 = \gamma_2\gamma_1$ and $\det(\gamma_2) = 1$, we deduce that
    \[
    \gamma_2 = \begin{pmatrix}
        1 & a  & b\\ 0 & 1 & a\\ 0 & 0 & 1
    \end{pmatrix},
    \]
    where $a,b\in\mathbb{R}$. As $\Gamma$ is $2$-generated and discrete, $\gamma_2\neq \gamma_1^a$, implying that $b\neq a(a-1)/2$. Thus, the generators $\gamma_1$ and $\gamma_2$ correspond to type (2) shown in Table \ref{tab:my_label2}.

    \textbf{Case (3).} If $\gamma_1\gamma_2 = \gamma_2\gamma_1$ while both $\gamma_1$ and $\gamma_2$ are diagonalizable, they are simultaneously diagonalizable. Hence, $\gamma_1$ and $\gamma_2$ correspond to type (3) shown in Table \ref{tab:my_label2}.

    \textbf{Case (4).} Let $\gamma_1$ be a generator of $\Gamma$ that has eigenvalues other than $1$. Without loss of generality, we can assume $\gamma_1$ to be a Jordan matrix after applying a similarity transformation.
    
    If $\gamma_1$ is diagonal, its diagonal elements cannot be pairwise distinct; otherwise, the other generator $\gamma_2$ will be diagonal, leading to a contradiction. Hence, we assume that $\gamma_1 = diag(e^s,e^s,e^{-2s})$, where $s\neq 0$. Since $\gamma_1 \gamma_2 = \gamma_2\gamma_1$, we deduce that
    \[
    \gamma_2 = \begin{pmatrix}
        * & * & 0\\ * & * & 0\\ 0 & 0 & *
    \end{pmatrix},
    \]
    and $\gamma_2$ is not diagonalizable. By a similarity transformation of the first two rows and columns, this becomes
    \[
    \gamma_2 = \begin{pmatrix}
        e^t & 1 & 0\\ 0 & e^t & 0\\ 0 & 0 & e^{-2t}
    \end{pmatrix},
    \]
    for a real number $t$.

    If $\gamma_1$ is not diagonalizable, then
    \[
    \gamma_1 = \begin{pmatrix}
        e^t & 1 & 0\\ 0 & e^t & 0\\ 0 & 0 & e^{-2t}
    \end{pmatrix},
    \]
    where $t\neq 0$. The condition $\gamma_1 \gamma_2 = \gamma_2\gamma_1$ implies that
    \[
    \gamma_2 = \begin{pmatrix}
        e^s & a & 0\\ 0 & e^s & 0\\ 0 & 0 & e^{-2s}
    \end{pmatrix}
    \]
    for some real numbers $s$ and $a$. In both cases, $\gamma_1$ and $\gamma_2$ correspond to type (4) shown in Table \ref{tab:my_label2}.
    
    In any of the four cases, let $\gamma_3\in SL(3,\mathbb{R})$ have only positive eigenvalues and commute with both $\gamma_1$ and $\gamma_2$. It is easy to show that $\gamma_3 = \gamma_1^{k_1}\gamma_2^{k_2}$ for $k_1,k_2\in\mathbb{R}$, implying that $\langle \gamma_1,\gamma_2,\gamma_3\rangle$ is either $2$-generated or non-discrete. Hence, Abelian discrete subgroups of $SL(3,\mathbb{R})$ with positive eigenvalues are at most $2$-generated.
\end{proof}

For clarity and organization, we further categorize cyclic groups of type (3) into two sub-types:
\begin{enumerate}
    \item[(3)] The generator $\gamma = diag(e^r,e^s,e^t)$, where all of $r,s,t$ are nonzero.
    \item[(3')] The generator $\gamma = diag(e^s,e^{-s},1)$, with $s\neq 0$.
\end{enumerate}
Using the classification of discrete Abelian subgroups of $SL(3,\mathbb{R})$ with positive eigenvalues, we present the main result of this section:
\begin{thm}\label{main:s4}
    Let $\Gamma$ be a discrete and free Abelian subgroup of $SL(3,\mathbb{R})$, generated by matrices with exclusively positive eigenvalues.
    \begin{itemize}
        \item If $\Gamma$ is a cyclic group of type (1), (3), or (4), or if it is a $2$-generated group of type (1) or (3), the Dirichlet-Selberg domain $DS(X,\Gamma)$ is finitely-sided for all $X\in\mathcal{P}(3)$.
        \item If $\Gamma$ is a cyclic group of type (2) or (3'), or if it is a $2$-generated group of type (1'), (2) or (4), the Dirichlet-Selberg domain $DS(X,\Gamma)$ is infinitely-sided for all $X$ in a dense and Zariski open subset of $\mathcal{P}(3)$.
    \end{itemize}
\end{thm}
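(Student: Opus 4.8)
The plan is to reduce, via Proposition~\ref{2-gen}, to the finitely many conjugacy types listed there (splitting the diagonal cyclic case into the subtypes (3) and (3') introduced above), since isometric Dirichlet--Selberg domains are finitely-sided or not simultaneously, and then to analyze each type separately. Throughout, write $DS(X,\Gamma)=\bigcap_{\gamma\in\Gamma}H_\gamma$ with $H_\gamma=\{Y\in\mathcal{P}(3):\mathrm{tr}(A_\gamma Y)\le 0\}$ and $A_\gamma=A_\gamma(X)=X^{-1}-\gamma^{-1}X^{-1}(\gamma^{-1})^{\mathrm{T}}\in Sym_3(\mathbb{R})$, the normal vector of $Bis(X,\gamma.X)$. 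Two observations drive everything. First, a \emph{redundancy criterion}: if $A_\gamma=\sum_{s\in S}c_sA_s+M$ for a finite $S\subset\Gamma\setminus\{\gamma\}$, scalars $c_s\ge 0$, and a negative semidefinite $M$, then $\bigcap_{s\in S}H_s\subseteq H_\gamma$ (because $\mathrm{tr}(MY)\le 0$ for $Y\succ 0$), so $H_\gamma$ is redundant; if this holds for every $\gamma\notin S$ then $DS(X,\Gamma)=DS(X,S)$ is finitely-sided. Second, a \emph{facet criterion}: $H_\gamma$ contributes a genuine facet precisely when there is $Y\in\mathcal{P}(3)$ with $s(X,Y)=s(\gamma.X,Y)<s(\gamma'.X,Y)$ for all $\gamma'\in\Gamma\setminus\{e,\gamma\}$, i.e.\ a point in the relative interior of $Bis(X,\gamma.X)\cap DS(X,\Gamma)$; to prove infinite-sidedness we will construct such points for infinitely many $\gamma$.

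For the finitely-sided types --- cyclic (1), (3), (4) and two-generated (1), (3) --- I would, for each fixed $X$, compute the normal vectors explicitly in a basis adapted to the Jordan or diagonal form of the generator(s) and exhibit the redundancy identity above for all but finitely many group elements. The cleanest instance is cyclic type (1), $\gamma=I+E_{12}$: a direct computation gives $A_{\gamma^k}=k\,A_\gamma-k(k-1)(X^{-1})_{22}E_{11}$ (and the mirror identity with $\gamma^{-1}$ for $k<0$), so $H_{\gamma^k}$ is redundant for every $|k|\ge 2$ and $DS(X,\langle\gamma\rangle)=H_\gamma\cap H_{\gamma^{-1}}$ for every $X$. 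For the diagonal types the $(i,j)$-entry of $A_{\gamma^k}$ is $(X^{-1})_{ij}\bigl(1-e^{-k(\rho_i+\rho_j)}\bigr)$; since $\sum\rho_i=0$ the six exponents $\rho_i+\rho_j$ have a unique maximum and a unique minimum (or a clearly dominant pair in the degenerate case), so as the group element runs to infinity the leading term of $A_{\gamma^k}$ is minus a positive scalar times a principal submatrix of $X^{-1}$ sitting in the contracting eigenspace, hence negative semidefinite; one then cancels the finitely many dominant exponential terms against a non-negative combination of $A_{\gamma^{\pm N}}$ and checks, by a Gershgorin/Schur-complement estimate using the ordering of the exponents, that the residual matrix is negative semidefinite. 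Type (4) and the two-generated types are handled the same way, now with a two-parameter exponential-and-polynomial family of normal vectors; in each case the residual is negative semidefinite outside a finite $S$. (As a cross-check via the facet criterion: for $Y$ on $Bis(X,\gamma^k.X)\cap DS$ the map $m\mapsto s(\gamma^m.X,Y)$ is an exponential sum of bounded length with strictly positive extreme coefficients, hence proper with boundedly many local minima, so $Y\in\partial H_{\gamma^m}$ only for $m$ in a bounded window, uniform in $Y$ by a compactness argument in the Satake compactification.)

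For the infinitely-sided types --- cyclic (2), (3') and two-generated (1'), (2), (4) --- I would use the facet criterion in the opposite direction. Fix such a $\Gamma$. Conjugating by $\gamma^{1/2}$ (legitimate since all eigenvalues are positive), the condition $Y\in Bis(X,\gamma.X)$ becomes $\gamma^{-1/2}.Y\in Bis(\gamma^{-1/2}.X,\gamma^{1/2}.X)$; as $\gamma$ runs to infinity along a primitive direction, the points $\gamma^{\pm 1/2}.X$ converge in the Satake compactification to two distinct boundary points whose Selberg bisector is a genuine hyperplane $\sigma_\infty$ of $\mathcal{P}(3)$. Picking $Z_0$ in the relative interior of $\sigma_\infty\cap DS(X,\Gamma)$ and setting $Y_\gamma:=\gamma^{1/2}.Z_0$ (corrected by a bounded amount) produces a candidate witness for the facet of $Bis(X,\gamma.X)$; verifying $s(X,Y_\gamma)<s(\gamma'.X,Y_\gamma)$ for every $\gamma'\ne e,\gamma$ amounts, after translating the inequality back to a neighbourhood of the relevant Satake point, to the non-vanishing of a second-order term which is a fixed polynomial $P(X)$ in the entries of $X$. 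The exceptional set is then $Z=\{X:P(X)=0\}$, and one finishes by exhibiting a single basepoint at which $P$ does not vanish, so that $Z$ is a proper Zariski-closed set and its complement is dense and Zariski-open, as claimed. (For cyclic type (3') and two-generated (4) the hyperplane $\sigma_\infty$ lives in the directions transverse to the flat of the generator, which is why the $\mathbf{H}^2$-intuition --- that a loxodromic cyclic group has a two-sided Dirichlet domain --- does not apply here; note also that Selberg's invariant is not symmetric, which is what makes types (1) and (1'), related by the outer automorphism $g\mapsto(g^{-1})^{\mathrm{T}}$, behave differently.)

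The main obstacle is the infinitely-sided direction: making the witness construction precise enough to verify the \emph{infinite} family of strict inequalities $s(X,Y_\gamma)<s(\gamma'.X,Y_\gamma)$ simultaneously and uniformly in $\gamma$, and then identifying the polynomial $P$ cutting out the exceptional variety and confirming it is not identically zero. A secondary, but still genuine, difficulty in the finitely-sided direction is upgrading ``the leading term of the normal vector is negative semidefinite'' from an asymptotic statement to a redundancy statement valid for \emph{every} sufficiently large group element --- which is precisely the role of the explicit cancellation identities and the Gershgorin/compactness bookkeeping.
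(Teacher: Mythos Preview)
Your overall architecture (reduce to the conjugacy types, then case-by-case) matches the paper, but the techniques you propose for both directions are different from what the paper actually does, and in the infinitely-sided direction there is a real gap.

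\textbf{Finitely-sided direction.} You work on the \emph{primal} side: write $A_\gamma$ as a non-negative combination of finitely many $A_s$ plus a negative-semidefinite remainder, and conclude redundancy. The paper works on the \emph{dual} side via Lemma~\ref{lem:s4:1}: the facet $F_{g(\mathbf{k}_0)}$ exists iff some $A\in\mathcal{P}(3)$ makes $\mathbf{0}$ and $\mathbf{k}_0$ the only minimizers of $k\mapsto s(g(k).X,A)$, and then shows this function (a quadratic for type~(1), an explicit exponential sum for types~(3),(4), a quadratic form in two variables for the $2$-generated types~(1),(3)) can have such a pair of minimizers only for boundedly many $\mathbf{k}_0$, uniformly in $A$ once $X$ is fixed. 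Your identity $A_{\gamma^k}=kA_\gamma-k(k-1)(X^{-1})_{22}E_{11}$ for type~(1) is correct and gives the same two-sided conclusion as the paper. For types~(3),(4) and the $2$-generated cases, however, ``cancel dominant terms and Gershgorin the remainder'' is not obviously executable: subtracting a positive multiple of $A_{\gamma^N}$ does not isolate a single exponential, and you would need a genuine semidefinite inequality, not just an asymptotic one. The paper sidesteps this by never manipulating the matrices $A_\gamma$ at all---it analyzes the scalar function $s^g_{X,A}$ directly and uses elementary calculus/compactness to bound the window where minimizers can sit.

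\textbf{Infinitely-sided direction.} Here your approach diverges sharply from the paper and is where the real gap lies. The paper does \emph{not} pass to a Satake limit or construct a limiting bisector $\sigma_\infty$. Instead, for each infinitely-sided type it computes $s^g_{X,A}(\mathbf{k})$ explicitly as a polynomial or exponential-polynomial in $\mathbf{k}$ with coefficients linear (or rational) in the entries $a_{ij}$ of $A$, and then \emph{solves algebraically} for those entries so that the function has exactly the desired minimum set. For cyclic type~(2), $s^g_{X,A}(k)$ is quartic in $k$ and one solves $a_{11},a_{12},a_{13},a_{23}$ (with $a_{22}$ large and $a_{33}$ fixed by $\det A=1$) so that it equals $k^2(k-k_0)^2+\text{const}$; for type~(3') and the $2$-generated types one similarly prescribes the shape of $s^g_{X,A}$ (e.g.\ an ellipse with prescribed axes for types~(1'),(2)) and reads off $A$. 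The Zariski-open condition on $X$ then appears transparently as the non-vanishing of an explicit coefficient (e.g.\ $x^{13}x^{23}\neq 0$ for type~(3'), $x^{23}\neq 0$ for $2$-generated type~(4)), not as an unspecified polynomial $P$. Your proposal to produce $Y_\gamma=\gamma^{1/2}.Z_0$ and then ``verify the infinite family of strict inequalities'' by reducing to a single second-order coefficient is exactly the step that is not justified: different competitors $\gamma'$ contribute inequalities at different scales, and there is no reason a priori that a \emph{single} polynomial $P(X)$ controls all of them uniformly in $\gamma$. The paper's explicit solve-for-$A$ method is what makes this tractable; I would recommend reading that argument and replacing the Satake heuristic with it.
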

\subsection{An equivalent condition}
Let $\Gamma$ be a discrete subgroup of $SL(3,\mathbb{R})$ and let $X\in\mathcal{P}(3)$. A facet of the Dirichlet-Selberg domain $D = DS(X,\Gamma)$ lies in a bisector $Bis(X,\gamma.X)$, where $\gamma\in\Gamma$. We denote such a facet by $F_{\gamma}$. The following lemma characterizes the existence of such facets:
\begin{lem}\label{lem:s4:1}
    Let $\Gamma$ be a discrete subgroup of $SL(n,\mathbb{R})$. Suppose that there exists a smooth function $g:\mathbb{R}^m\to SL(n,\mathbb{R})$ such that $\Gamma = g(\Lambda)$, where $\Lambda$ is a discrete subset of $\mathbb{R}^m$, $\mathbf{0}\in\Lambda$, and $g(\mathbf{0}) = e$. For $A,X\in \mathcal{P}(n)$, define a function $s_{X,A}^g: \mathbb{R}^m\to \mathbb{R},\ s_{X,A}^g(\mathbf{k}) = s(g(\mathbf{k}).X,A)$.

    Then for any $\mathbf{k}_0\in\Lambda\backslash \{\mathbf{0}\}$, the facet $F_{g(\mathbf{k}_0)}$ of $DS(X,\Gamma)$ exists if and only if there exists a matrix $A\in \mathcal{P}(n)$ such that $\mathbf{0}$ and $\mathbf{k}_0$ are the only minimum points of $s_{X,A}^g|_{\Lambda}$.
\end{lem}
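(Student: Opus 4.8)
The plan is to unwind both sides of the claimed equivalence into the same statement about the matrix $A$, viewed as a candidate test point in $\mathcal{P}(n)$ at which one evaluates the defining inequalities of $DS(X,\Gamma)$. The underlying dictionary is: for $Y\in\mathcal{P}(n)$ the value $s_{X,Y}^g(\mathbf{k})=s(g(\mathbf{k}).X,Y)$ is exactly the right-hand side of the $\mathbf{k}$-th defining inequality of $DS(X,\Gamma)$, so $Y\in DS(X,\Gamma)$ iff $\mathbf{0}$ is a minimum point of $s_{X,Y}^g|_\Lambda$ (the minimum then being $s(X,Y)$), and $Y$ lies on $Bis(X,g(\mathbf{k}_0).X)$ iff $\mathbf{k}_0$ is also a minimum point. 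Combining these, the assertion ``$\mathbf{0}$ and $\mathbf{k}_0$ are the only minimum points of $s_{X,Y}^g|_\Lambda$'' is equivalent to: $Y\in DS(X,\Gamma)$, $Y\in Bis(X,g(\mathbf{k}_0).X)$, and $s(X,Y)<s(\delta.X,Y)$ for every $\delta\in\Gamma\setminus\{e,g(\mathbf{k}_0)\}$; note this in particular forces $\mathbf{0}$ and $\mathbf{k}_0$ to be the unique preimages in $\Lambda$ of $e$ and of $g(\mathbf{k}_0)$.

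Granting the dictionary, I would prove the two implications as follows. For the ``if'' direction, given $A$ for which $\mathbf{0},\mathbf{k}_0$ are the only minimum points, the dictionary gives $g(\mathbf{k}_0).X\neq X$, so $\sigma:=Bis(X,g(\mathbf{k}_0).X)$ is a genuine hyperplane, and $A\in DS(X,\Gamma)\cap\sigma$ with all other defining inequalities strict at $A$. Since $DS(X,\Gamma)$ is a convex polyhedron, hence a locally finite intersection of closed half-spaces (by the Dirichlet--Selberg proposition quoted above), only finitely many of its bounding hyperplanes meet a neighbourhood of $A$; the ``inactive'' half-spaces, not meeting this neighbourhood and containing $A$ in their interior, contain the whole neighbourhood. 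Shrinking, a sufficiently small relative neighbourhood of $A$ in $\sigma$ lies in $DS(X,\Gamma)\cap\sigma$, so $DS(X,\Gamma)\cap\sigma$ has codimension exactly $1$, i.e.\ it is the facet $F_{g(\mathbf{k}_0)}$. For the ``only if'' direction, if $F_{g(\mathbf{k}_0)}$ is a facet it is a codimension-$1$ convex polyhedron, hence has nonempty relative interior; pick $A$ there. Then $A\in DS(X,\Gamma)\cap\sigma$; and if $s(X,A)=s(\delta.X,A)$ for some $\delta\neq e,g(\mathbf{k}_0)$, then $A\in F_{g(\mathbf{k}_0)}\cap Bis(X,\delta.X)$, a face of $DS(X,\Gamma)$ cut out by the two distinct hyperplanes $\sigma$ and $Bis(X,\delta.X)$, thus of codimension at least $2$ and a proper face of $F_{g(\mathbf{k}_0)}$ — contradicting that $A$ is in the relative interior. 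Hence $s(X,A)<s(\delta.X,A)$ for all such $\delta$, and the dictionary exhibits $A$ as a matrix with $\mathbf{0},\mathbf{k}_0$ its only minimum points.

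The genuinely delicate points, rather than the routine unwinding, are: (a) the passage from one relative-interior point to a full codimension-$1$ facet, which is exactly where local finiteness of $DS(X,\Gamma)$ is essential (an infinite, non-locally-finite family of constraints could in principle clip away every relative neighbourhood); and (b) the degenerate possibility $g(\mathbf{k}_0).X=X$, where $Bis(X,g(\mathbf{k}_0).X)$ fails to be a hyperplane — I would observe that the analytic hypothesis already rules out extra preimages of $e$ and $g(\mathbf{k}_0)$ in $\Lambda$, and that in the applications of interest ($\Gamma$ torsion-free discrete, so $Stab_\Gamma(X)$ trivial) this case does not occur. No smoothness of $g$ is used in this lemma; only the discreteness of $\Lambda$ and the local finiteness of $DS(X,\Gamma)$.
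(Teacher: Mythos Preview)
Your argument is correct and follows the same route as the paper: identify the relative interior of $F_{g(\mathbf{k}_0)}$ with the set of $A\in\mathcal{P}(n)$ for which $\mathbf{0}$ and $\mathbf{k}_0$ are the only minima of $s^g_{X,A}|_\Lambda$, so that the facet exists iff this set is nonempty. The paper's proof is a two-line unwinding that simply asserts this identification, whereas you have spelled out the local-finiteness step needed to pass from a single interior point to a full codimension-$1$ face and flagged the degenerate stabiliser case; these are genuine details the paper leaves implicit, but the underlying idea is identical.
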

\begin{proof}
    The existence of the facet $F_{g(\mathbf{k}_0)}$ is equivalent to the non-emptiness of
    \[
    \mathrm{int}(F_{g(\mathbf{k}_0)}) = Bis(X,g(\mathbf{k}_0).X)\cap\left(\bigcap_{\mathbf{k}\in\Lambda-\{ \mathbf{0},\mathbf{k}_0\}}\{Y|s(I,Y)<s(g(\mathbf{k}).I,Y)\}\right).
    \]
    % Therefore, $F_{g(\mathbf{k}_0)}$ exists if and only if the intersection above is non-empty.
    Moreover, a point $A\in \mathcal{P}(n)$ lies in this intersection if and only if
    \[
    s^g_{X,A}(\mathbf{k}_0) = s^g_{X,A}(\mathbf{0}),\ s^g_{X,A}(\mathbf{k}) = s^g_{X,A}(\mathbf{0}),\ \forall \mathbf{k}\in\Lambda-\{ \mathbf{0},\mathbf{k}_0\},
    \]
    meaning that $\mathbf{0}$ and $\mathbf{k}_0$ are the only points where $s_{X,A}^g|_{\Lambda}$ attains its minima.
\end{proof}
\begin{rmk}
    Let $X\in\mathcal{P}(n)$, $\Lambda\subset\mathbb{R}^m$ be a discrete subset, $g:\mathbb{R}^m\to Sym_n(\mathbb{R})$ a smooth function, and $\Gamma =  g(\Lambda)$ a discrete subgroup of $SL(n,\mathbb{R})$. Lemma \ref{lem:s4:1} implies the following: 
    \begin{itemize}
        \item If for all but finitely many points $\mathbf{k}\in\Lambda$ and for every $A\in\mathcal{P}(n)$, the function $s^g_{X,A}|_\Lambda$ attains the minimum at points other than $\mathbf{k}$ and $\mathbf{0}$, then $F_\gamma$ is a facet of $DS(X,\Gamma)$ for only finitely many elements $\gamma = g(\mathbf{k})\in\Gamma$. Thus, the Dirichlet-Selberg domain $DS(X,\Gamma)$ is finitely-sided.
        \item If there are infinitely many points $\mathbf{k}\in\Lambda$ such that $\mathbf{k}$ and $\mathbf{0}$ are the only two minimum points of $s^g_{X,A}|_\Lambda$ for a certain $A\in\mathcal{P}(n)$, then $F_\gamma$ is a facet of $DS(X,\Gamma)$ for infinitely many elements $\gamma = g(\mathbf{k})\in\Gamma$. Thus, the Dirichlet-Selberg domain $DS(X,\Gamma)$ is infinitely-sided.
    \end{itemize}
\end{rmk}
% I was here!
Below is a generalization of Lemma \ref{lem:s4:1}:
\begin{cor}\label{lem:s4:11}
    Let $\Gamma<SL(n,\mathbb{R})$ be a discrete subgroup. Suppose that there is a smooth function $g:\mathbb{R}^m\to SL(n,\mathbb{R})$ such that $\Gamma = g(\Lambda)$, where $\Lambda\subset \mathbb{R}^m$ is a discrete subset, $\mathbf{0}\in \Lambda$, and $g(\mathbf{0}) = e$. Define the functions $s_{X,A}^g$ analogously to Lemma \ref{lem:s4:1}.

    Suppose that there exists a matrix $A\in \mathcal{P}(n)$ and a finite subset $\Lambda_0\subset \Lambda$ satisfying the following conditions:
    \begin{enumerate}
        \item The point $\mathbf{0}\in \Lambda_0$.
        \item There exists a nonzero point $\mathbf{k_0}\in \Lambda_0$ such that $s_{X,A}^g(\mathbf{k_0}) = s_{X,A}^g(\mathbf{0})$.
        \item For any $\mathbf{k}\in \Lambda_0$, $s_{X,A}^g(\mathbf{k})\leq s_{X,A}^g(\mathbf{0})$; for any $\mathbf{k}\in \Lambda\backslash\Lambda_0$, $s_{X,A}^g(\mathbf{k})> s_{X,A}^g(\mathbf{0})$.
    \end{enumerate}
    Then the Dirichlet-Selberg domain $DS(X,\Gamma)$ has a facet $F_{g(\mathbf{k})}$ for at least one element $\mathbf{k}\in \Lambda_0\backslash\{\mathbf{0}\}$.
\end{cor}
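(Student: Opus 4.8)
The plan is to reduce the statement to Lemma \ref{lem:s4:1} by perturbing the matrix $A$ so that exactly one of the ``competing'' minimum points survives. Observe first that by condition (3) and the discreteness of $\Lambda$, the set $\Lambda_0$ already contains \emph{all} of the minimum points of $s^g_{X,A}|_\Lambda$, and by condition (2) there are at least two of them (namely $\mathbf{0}$ and $\mathbf{k_0}$). If $\Lambda_0\backslash\{\mathbf{0}\}$ happens to consist of the single point $\mathbf{k_0}$, we are already done by Lemma \ref{lem:s4:1}. Otherwise we must break the tie among the finitely many minimizers in $\Lambda_0\backslash\{\mathbf{0}\}$.

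First I would set up the perturbation. Write $M = \{\mathbf{k_1},\dots,\mathbf{k_r}\}$ for the nonzero minimizers of $s^g_{X,A}|_\Lambda$ (so $M\subseteq\Lambda_0$, $r\geq 1$). For a symmetric matrix $B$ and small $\varepsilon>0$, consider $A_\varepsilon = A + \varepsilon B$; since $A\in\mathcal{P}(n)$ is open in $Sym_n(\mathbb{R})$, $A_\varepsilon\in\mathcal{P}(n)$ for $\varepsilon$ small. Because $s(g(\mathbf{k}).X,-)$ is linear in its second argument, $s^g_{X,A_\varepsilon}(\mathbf{k}) = s^g_{X,A}(\mathbf{k}) + \varepsilon\,\mathrm{tr}(g(\mathbf{k}).X\cdot B)$. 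The key point: for $\varepsilon$ sufficiently small the strict inequalities $s^g_{X,A}(\mathbf{k})>s^g_{X,A}(\mathbf{0})$ for $\mathbf{k}\in\Lambda\backslash\Lambda_0$ persist (here one uses that, outside a fixed compact neighborhood of $\mathbf{0}$, $s^g_{X,A}(\mathbf{k})\to\infty$, which follows from $s(\cdot,\cdot)\geq n$ together with properness of $\mathbf{k}\mapsto g(\mathbf{k}).X$ on the discrete set $\Lambda$ — this properness is exactly what makes the relevant infimum attained over a \emph{finite} subset), so only the behavior on $\Lambda_0$ matters. On $\Lambda_0$, the non-minimizers stay strictly above $s^g_{X,A_\varepsilon}(\mathbf{0})$ for $\varepsilon$ small, while on $M\cup\{\mathbf 0\}$ the value $s^g_{X,A_\varepsilon}(\mathbf{k})-s^g_{X,A_\varepsilon}(\mathbf 0)$ equals $\varepsilon(\mathrm{tr}(g(\mathbf k).X\cdot B)-\mathrm{tr}(X\cdot B))$ up to sign, i.e. an explicit linear functional of $B$.

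Next I would choose $B$ to make these linear functionals distinct and appropriately signed. The points $g(\mathbf{k}).X$ for $\mathbf{k}\in M\cup\{\mathbf{0}\}$ are pairwise distinct elements of $\mathcal{P}(n)$, so the linear functionals $B\mapsto\mathrm{tr}((g(\mathbf{k}).X - X)\cdot B)$ on $Sym_n(\mathbb{R})$ are pairwise distinct; hence a generic $B$ gives pairwise distinct values $c_\mathbf{k} := \mathrm{tr}((g(\mathbf{k}).X - X)\cdot B)$ among $\mathbf k\in M$, and we may further arrange $c_{\mathbf{k_1}}<0<c_{\mathbf{k}}$ for all $\mathbf{k}\in M\backslash\{\mathbf{k_1}\}$ (after relabeling). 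One should check $B$ can simultaneously be taken so that $s^g_{X,A_\varepsilon}(\mathbf{k})>s^g_{X,A_\varepsilon}(\mathbf 0)$ for $\mathbf k\in\Lambda_0\backslash(M\cup\{\mathbf 0\})$ — automatic for small $\varepsilon$ by strictness — and that $A_\varepsilon>0$. Then $s^g_{X,A_\varepsilon}|_\Lambda$ has its unique pair of minimizers at $\mathbf{0}$ and $\mathbf{k_1}$, so Lemma \ref{lem:s4:1} yields the facet $F_{g(\mathbf{k_1})}$ of $DS(X,\Gamma)$, and $\mathbf{k_1}\in\Lambda_0\backslash\{\mathbf{0}\}$, as required.

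The main obstacle I anticipate is the bookkeeping that guarantees no \emph{new} minimizers enter from $\Lambda\backslash\Lambda_0$ under the perturbation: one needs a genuine uniform gap, i.e. $\inf_{\mathbf{k}\in\Lambda\backslash\Lambda_0}(s^g_{X,A}(\mathbf{k})-s^g_{X,A}(\mathbf{0}))>0$, which is not immediate since $\Lambda\backslash\Lambda_0$ is infinite. This is where I would invoke that $s(Y,A)\geq n$ with equality iff $Y=A$, together with the fact that $\{g(\mathbf k).X : \mathbf k\in\Lambda\}$ is a discrete (closed, locally finite) subset of $\mathcal{P}(n)$ because $\Gamma$ is discrete and acts properly — so for each $R$ only finitely many $\mathbf k$ have $s^g_{X,A}(\mathbf k)\leq R$, giving the desired gap once we take $R$ just above $s^g_{X,A}(\mathbf 0)$ and absorb the finitely many exceptional $\mathbf k$ into $\Lambda_0$ at the outset (which is harmless, as enlarging $\Lambda_0$ by points where the value exceeds $s^g_{X,A}(\mathbf 0)$ preserves conditions (1)–(3)). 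Everything else is a routine genericity argument in the finite-dimensional space $Sym_n(\mathbb{R})$.
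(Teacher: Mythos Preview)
Your proposal has a genuine gap stemming from a misreading of condition (3). You claim that ``by condition (2) there are at least two of them (namely $\mathbf{0}$ and $\mathbf{k_0}$)'', meaning that $\mathbf{0}$ and $\mathbf{k_0}$ are minimizers of $s^g_{X,A}|_\Lambda$. But condition (3) says $s^g_{X,A}(\mathbf{k})\leq s^g_{X,A}(\mathbf{0})$ for $\mathbf{k}\in\Lambda_0$: the value at $\mathbf{0}$ is the \emph{largest} on $\Lambda_0$, not the smallest. In general there will be points $\mathbf{k}\in\Lambda_0$ with $s^g_{X,A}(\mathbf{k})$ strictly below $s^g_{X,A}(\mathbf{0})$ (and this is exactly how the corollary is used later in the paper, where $\Lambda_0$ is the set of lattice points enclosed by a level curve). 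A small perturbation $A\mapsto A_\varepsilon$ cannot raise such values above $s^g_{X,A_\varepsilon}(\mathbf{0})$, so you cannot arrange for $\mathbf{0}$ to become one of exactly two minimizers, and Lemma \ref{lem:s4:1} does not apply. Even in the degenerate situation where every point of $\Lambda_0$ is a minimizer, your choice $c_{\mathbf{k_1}}<0$ drives $s^g_{X,A_\varepsilon}(\mathbf{k_1})$ strictly below $s^g_{X,A_\varepsilon}(\mathbf{0})$, leaving $\mathbf{k_1}$ as the \emph{unique} minimizer; you would need $c_{\mathbf{k_1}}=0$ with $c_{\mathbf{k}}>0$ for the others, and the existence of such a $B$ is not automatic. (There is also a minor slip: $s(g(\mathbf k).X,A)=\mathrm{tr}\big((g(\mathbf k).X)^{-1}A\big)$, not $\mathrm{tr}(g(\mathbf k).X\cdot A)$.)

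The paper avoids perturbing $A$ altogether. It keeps $A$ fixed, observes that $A$ lies on the facet $F_{g(\mathbf{k_0})}$ of the partial domain $DS\big(X,\,g((\Lambda\setminus\Lambda_0)\cup\{\mathbf{0},\mathbf{k_0}\})\big)$ by Lemma \ref{lem:s4:1}, and then adds the remaining half-spaces $H_i=\{Y:\,s(X,Y)\leq s(g(\mathbf{k}_i).X,Y)\}$ one at a time for $\mathbf{k}_i\in\Lambda_0$. At each stage either the previously found facet survives, or the new half-space cuts the current polyhedron and $\partial H_i$ itself contributes the facet $F_{g(\mathbf{k}_i)}$ (this is the case analysis of Lemma \ref{lem:5:1}). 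Either way a facet indexed by some $\mathbf{k}\in\Lambda_0\setminus\{\mathbf 0\}$ persists all the way to $DS(X,\Gamma)$.
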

\begin{proof}
    Assume that $\Lambda_0 = \{\mathbf{0},\mathbf{k}_0,\mathbf{k}_1,\dots,\mathbf{k}_r\}$, where 
    \[
    s_{X,A}^g(\mathbf{k}_0)\geq s_{X,A}^g(\mathbf{k}_1)\geq\dots\geq s_{X,A}^g(\mathbf{k}_r).
    \]
    Define
    \[
    \Lambda'_i: = (\Lambda\backslash \Lambda_0)\cup \{\mathbf{0},\mathbf{k}_0,\dots,\mathbf{k}_i\},\ i=0,\dots, r,
    \]
    thus $\Lambda'_r = \Lambda$. We will prove the following assertion by induction on $i$:
    \begin{enumerate}
        \item[(*)] The Dirichlet-Selberg domain $DS(X,g(\Lambda_i'))$ contains a facet $F_{g(\mathbf{k}_j)}$ for a certain $j\in\{0,\dots, i\}$.
    \end{enumerate}
    
    By Lemma \ref{lem:s4:1}, $DS(X,g(\Lambda'_0))$ contains the facet $F_{g(\mathbf{k}_0)}$. This serves as the base case for the assertion (*).

    Assume that the claim (*) holds for $(i-1)$. To prove the assertion for $i$, note that
    \[
    DS(X,g(\Lambda_i')) = DS(X,g(\Lambda_{i-1}'))\cap H_i,\ H_i = \{Y|s(I,Y)\leq s(g(\mathbf{k}_i).I,Y)\}.
    \]
    Let $F_{g(\mathbf{k}_{j'})}\in\mathcal{F}(DS(X,g(\Lambda_{i-1}')))$ be the facet in the induction assumption, where $j'\in \{0,1,\dots,i-1\}$. If the position of $(DS(X,g(\Lambda_{i-1}')), H_i)$ is not case (6) in Lemma \ref{lem:5:1}, then $F_{g(\mathbf{k}_{j'})}$ remains a facet of $DS(X,g(\Lambda_i'))$. If it is case (6) in Lemma \ref{lem:5:1}, then $\partial H_i \cap DS(X,g(\Lambda_{i-1}'))$ is the facet $F_{g(\mathbf{k}_{i})}$ of $DS(X,g(\Lambda_i'))$. This confirms the claim (*).

    Particularly, when $i=r$, (*) is equivalent to the lemma's statement.
\end{proof}
The proof of Theorem \ref{main:s4} consists of a series of assertions that will be described in the subsequent sections. For clarity, we shall consistently denote the $(i,j)$ entry of $X^{-1}$ and $A$ by $x^{ij}$ and $a_{ij}$, respectively.
\subsection{Discrete subgroups of \texorpdfstring{$SL(3,\mathbb{R})$}{Lg}: finitely-sided cases}
In this section, we will examine the cases when the Dirichlet-Selberg domain $DS(X,\Gamma)$ is finitely-sided for all $X\in \mathcal{P}(3)$, as asserted in Theorem \ref{main:s4}.

\begin{proof}[Proof of Theorem \ref{main:s4}, cyclic group of type (1)]
    We interpret the group $\Gamma$ generated by
    \[
    \gamma = \begin{pmatrix}
        1 & 1 & 0\\ 0 & 1 & 0\\ 0 & 0 & 1
    \end{pmatrix}
    \]
    as the image of $\mathbb{Z}$ under the $1$-variable function
    \[
    g(k) = \begin{pmatrix}
        1 & k & 0\\ 0 & 1 & 0\\ 0 & 0 & 1
    \end{pmatrix},\ \forall k\in\mathbb{R}.
    \]
    In this case, the function $s^g_{X,A}$ in Lemma \ref{lem:s4:1} becomes
    \begin{multline*}
        s_{X,A}^g(k) = s(g(k).X,A) = \mathrm{tr}(((\gamma^{\mathrm{T}})^k.X.\gamma^k)^{-1}.A) \\
        = x^{11}a_{22}k^2 + 2(x^{11}a_{12}+x^{12}a_{22}+x^{13}a_{23})k + s(X,A),
    \end{multline*}
    which is a quadratic polynomial in $k$. Since both $X$ and $A$ are positive definite, the leading term of $s_{X,A}^g$ has a positive coefficient. If $s_{X,A}^g$ evaluates the same at $0$ and $k_0$, then $s_{X,A}^g(k)<s_{X,A}^g(0)$ for any $k$ between $0$ and $k_0$.

    Therefore, if $s_{X,A}^g|_\mathbb{Z}$ attains its minimum at both $0$ and $k_0$, the integer $k_0$ can only be $1$ or $-1$. Lemma \ref{lem:s4:1} with $\Lambda = \mathbb{Z}$ implies that the Dirichlet-Selberg domain $DS(X,\Gamma)$ is $2$-sided for any $X\in\mathcal{P}(3)$.
\end{proof}
% The following proof is not based on Lemma \ref{lem:s4:1}.
\begin{proof}[Proof of Theorem \ref{main:s4}, cyclic group of type (3)]
    We interpret the cyclic subgroup $\Gamma$ of type (3) as a one-parameter family given by
    \[
    g(k) = \gamma^k = diag(e^{rk},e^{sk},e^{tk}),\ k\in\mathbb{Z},
    \]
    where $r+s+t =0$ and $r,s,t\neq 0$. Without loss of generality, we assume that $r\geq s>0>t$. For a given $X$ and $A\in\mathcal{P}(3)$, the function $s^g_{X,A}$ becomes
    \[
    s^g_{X,A}(k) = x^{11}a_{11}e^{-2rk} + x^{22}a_{22}e^{-2sk} +x^{33}a_{33}e^{-2tk} + 2x^{23}a_{23}e^{rk}+2x^{13}a_{13}e^{sk}+2x^{12}a_{12}e^{tk}.
    \]
    Since $x^{ii}$, $a_{ii}>0$ for $i=1,2,3$, there is a unique $k_c\in\mathbb{R}$ such that
    \[
    \sqrt{x^{11}a_{11}}e^{-rk_c} + \sqrt{x^{22}a_{22}}e^{-sk_c} = \sqrt{x^{33}a_{33}}e^{-tk_c}.
    \]
    Therefore, $s^g_{X,A}(k) = c\cdot f(k-k_c)$, where
    \[
    f(n) = e^{2(r+s)n} + 2p\alpha_{13}e^{sn}+2(1-p)\alpha_{23}e^{rn} + p^2e^{-2rn}+ (1-p)^2e^{-2sn}+2p(1-p)\alpha_{12}e^{-(r+s)n},
    \]
    and
    \[
    c= x^{33}a_{33}e^{-2tk_c}>0,\ p= \frac{\sqrt{x^{11}a_{11}}e^{-rk_c}}{\sqrt{x^{33}a_{33}}e^{-tk_c}}\in (0,1),\ \alpha_{ij} = \frac{x^{ij}a_{ij}}{\sqrt{x^{ii}x^{jj}a_{ii}a_{jj}}}.
    \]
    For any $i\neq j$, $|\alpha_{ij}|<\xi: = \max_{i\neq j}\frac{|x^{ij}|}{\sqrt{x^{ii}x^{jj}}}$, $\xi<1$ and depends only on $X$. For any $0\leq p\leq 1$ and any $-\xi\leq \alpha_{ij}\leq \xi$, $\lim_{n\to \infty}f'(n) = \infty$ and $\lim_{n\to -\infty}f'(n) = -\infty$. Since $f'(n;p,\alpha_{ij})$ is continuous with respect to $p$ and $\alpha_{ij}$, there exists a number $N>0$ determined by $r,s,t$, and $\xi$, such that
    \[
    f'(n;p,\alpha_{ij})>0,\ \forall n>N;\ f'(n;p,\alpha_{ij})<0,\ \forall n<-N,
    \]
    for every $(p,\alpha_{12},\alpha_{13},\alpha_{23})$ in the compact region $[0,1]\times [-\xi,\xi]^3$. If $k=0$ and $k = k_0$ are the only minimum points of $s^g_{X,A}|_\mathbb{Z}$, then $(-k_c)$ and $(k_0-k_c)$ are the only minimum points of $f|_{\mathbb{Z} - k_c}$. Thus $|k_c|,|k_0 - k_c|<N+1$, implying that $|k_0|<2(N+1)$. Since there are finitely many choices of such $k_0$, Lemma \ref{lem:s4:1} with $\Lambda = \mathbb{Z}$ implies that the Dirichlet-Selberg domain $DS(X,\Gamma)$ is finitely-sided for any $X\in\mathcal{P}(3)$.
\end{proof}
\begin{proof}[Proof of Theorem \ref{main:s4}, cyclic group of type (4)]
    We interpret the cyclic subgroup $\Gamma$ of type (4) as a one-parameter family given by
    \[
    g(k) = \gamma^k = \begin{pmatrix}
        e^{sk} & ke^{s(k-1)} & 0\\ 0 & e^{sk} & 0\\ 0 & 0 & e^{-2sk}
    \end{pmatrix},\ k\in\mathbb{Z},
    \]
    where $s\neq 0$. Without loss of generality, we assume that $s>0$. The function $s^g_{X,A}$ becomes
    \[
    \begin{split}
        & s^{g}_{X,A}(k) = x^{33}a_{33}e^{4sk} + (2x^{13}a_{13}+2x^{23}a_{23}-2ke^{-s}x^{23}a_{13})e^{sk}\\
        & + (x^{11}a_{11}+x^{22}a_{22}+2x^{12}a_{12}-2ke^{-s}(x^{12}a_{11}+x^{22}a_{12})+k^2e^{-2s}x^{22}a_{11})e^{-2sk}.
    \end{split}
    \]
    A unique $k_c\in\mathbb{R}$ exists such that
    \[
    (\sqrt{x^{11}a_{11}} + \sqrt{x^{22}a_{22}}+e^{-s}\sqrt{x^{22}a_{11}})e^{-sk_c} = \sqrt{x^{33}a_{33}}e^{2sk_c}.
    \]
    Hence, $s^g_{X,A}(k) = c\cdot f(k-k_c)$, where
    \[
    \begin{split}
         & f(n) = e^{4sn} + (2\alpha_{13}p+2\alpha_{23}q-2\beta_3 (1-p-q)n)e^{sn}\\
         & + (p^2+q^2+2\alpha_{12}pq - 2(\beta_1p+\beta_2q)(1-p-q)n + (1-p-q)^2n^2)e^{-2sn},
    \end{split}
    \]
    and
    \[
    c = x^{33}a_{33}e^{4sk_c}>0,\ p = \frac{\sqrt{x^{11}a_{11}}e^{-sk_c}}{\sqrt{x^{33}a_{33}}e^{2sk_c}},\ q = \frac{\sqrt{x^{22}a_{22}}e^{-sk_c}}{\sqrt{x^{33}a_{33}}e^{2sk_c}},
    \]
    \[
    \alpha_{ij} = \frac{x^{ij}a_{ij}}{\sqrt{x^{ii}x^{jj}a_{ii}a_{jj}}},\ \beta_1 = \frac{x^{12}}{\sqrt{x^{11}x^{22}}},\ \beta_2 = \frac{a_{12}}{\sqrt{a_{11}a_{22}}},\ \beta_3 = \frac{x^{23}a_{13}}{\sqrt{x^{22}x^{33}a_{11}a_{33}}},
    \]
    where $p,q>0$, $p+q< 1$, $|\alpha_{ij}|,|\beta_1|,|\beta_3|<\xi: = \max_{i\neq j}\frac{|x^{ij}|}{\sqrt{x^{ii}x^{jj}}}$, and $|\beta_2|<1$. Analogously to the proof for cyclic groups of type (3), there exists a number $N>0$ determined by $s$ and $\xi$, such that
    \[
    f'(n;p,q,\alpha_{ij},\beta_i)>0,\ \forall n>N; f'(n;p,q,\alpha_{ij},\beta_i)<0,\ \forall n<-N,
    \]
    and for every $(p,q,\alpha_{ij},\beta_i)$ in the compact region $\{(p,q)|p,q\geq 0,p+q\leq 1\}\times [-\xi,\xi]^5\times [-1,1]$. Hence, if $k=0$ and $k=k_0$ are the only minimum points of $s^g_{X,A}|_{\mathbb{Z}}$, then $|k_0|<2(N+1)$ analogously. Lemma \ref{lem:s4:1} with $\Lambda = \mathbb{Z}$ implies that $DS(X,\Gamma)$ is finitely-sided for any $X\in\mathcal{P}(3)$.
    \end{proof}
We now consider $2$-generated subgroups. To utilize Lemma \ref{lem:s4:1}, we investigate the level curves of the function $s_{X,A}^g:\mathbb{R}^2\to\mathbb{R}$.
\begin{proof}[Proof of Theorem \ref{main:s4}, $2$-generated group of type (1)]
    We interpret the group $\Gamma$ as a two-parameter family
    \[
    g(k,l) = \gamma_1^k\gamma_2^l = \begin{pmatrix}
        1 & k & l\\ 0 & 1 & 0\\ 0 & 0 & 1
    \end{pmatrix},\ \forall (k,l)\in\mathbb{Z}^2.
    \]
    Thus, the function $s_{X,A}^g$ is expressed as
    \[
    s_{X,A}^g(k,l) = a_{11}(x^{22}(k-k_c)^2 + 2x^{23}(k-k_c)(l-l_c)+x^{33}(l-l_c)^2)+const,
    \]
    where
    \[
    k_c = \frac{a_{12}}{a_{11}}+\frac{x^{12}x^{33} - x^{13}x^{23}}{x^{22}x^{33} - (x^{23})^2},\ l_c = \frac{a_{13}}{a_{11}}+\frac{x^{13}x^{22} - x^{12}x^{23}}{x^{22}x^{33} - (x^{23})^2}.
    \]
    As $x^{22}x^{33}>(x^{23})^2$, the level curves of $s^g_{X,A}$ are ellipses centered at $(k_c,l_c)$. These ellipses share the same eccentricity, depending solely on $X$.

    If $(0,0)$ and $(k_0,l_0)$ are the only minimum points of $s_{X,A}^g|_{\mathbb{Z}^2}$, then there exists a level curve of $s_{X,A}^g$ that surrounds these two points and excludes all other points in $\mathbb{Z}^2$. Since any circle of diameter greater than $\sqrt{5}$ encompasses at least $3$ points in $\mathbb{Z}^2$, the minor axis length of the mentioned level curve is at most $\sqrt{5}$. Thus, the major axis length of the level curve is bounded by a constant depending solely on $X$. Consequently, there are only finitely many choices of $(k_0,l_0)$ such that $(0,0)$ and $(k_0,l_0)$ are the only minimum points of $s^g_{X,A}|_{\mathbb{Z}^2}$. By Lemma \ref{lem:s4:1} with $\Lambda = \mathbb{Z}^2$, the Dirichlet-Selberg domain $DS(X,\Gamma)$ is finitely-sided for any $X\in\mathcal{P}(3)$.
\end{proof}
\begin{proof}[Proof of Theorem \ref{main:s4}, $2$-generated group of type (3)]
    We interpret the group $\Gamma$ as a two-parameter family
    \[
    g(k,l,m) = diag(e^k,e^l,e^m),\ \forall (k,l,m)\in\Lambda,
    \]
    where the domain of $g$ is the $2$-plane
    \[
    \{(k,l,m)\in \mathbb{R}^3|k+l+m = 0\},
    \]
    and $\Lambda = \mathbb{Z}(r,s,t)\oplus \mathbb{Z}(r',s',t')$ is contained in this $2$-plane.

    The function $s_{X,A}^g$ is given by
    \begin{align*}
        & s_{X,A}^g(k,l,m)\\
        & = 
        (x^{11}a_{11})e^{2k}+(x^{22}a_{22})e^{2l}+(x^{33}a_{33})e^{2m} + (2x^{23}a_{23})e^{-k} + (2x^{13}a_{13})e^{-l} + (2x^{12}a_{12})e^{-m}\\
        & = c(e^{2(k-k_c)}+e^{2(l-l_c)}+e^{2(m-m_c)} + 2\alpha_{23}e^{-(k-k_c)} + 2\alpha_{13}e^{-(l-l_c)} + 2\alpha_{12}e^{-(m-m_c)}),
    \end{align*}
    where
    \[
    c = \sqrt[3]{x^{11}x^{22}x^{33}a_{11}a_{22}a_{33}},\ \alpha_{12} = \frac{x^{12}a_{12}}{\sqrt{x^{11}x^{22}a_{11}a_{22}}},\ \alpha_{13} = \frac{x^{13}a_{13}}{\sqrt{x^{11}x^{33}a_{11}a_{33}}},\ \alpha_{23} = \frac{x^{23}a_{23}}{\sqrt{x^{22}x^{33}a_{22}a_{33}}},
    \]
    and 
    \[
    k_c = -\frac{1}{3}\log\frac{x^{11}a_{11}}{\sqrt{x^{22}x^{33}a_{22}a_{33}}},\ l_c =  -\frac{1}{3}\log\frac{x^{22}a_{22}}{\sqrt{x^{11}x^{33}a_{11}a_{33}}},\ m_c =  -\frac{1}{3}\log\frac{x^{33}a_{33}}{\sqrt{x^{11}x^{22}a_{11}a_{22}}}.
    \]
    Here, the point $(k_c,l_c,m_c)$ lies on the plane $\{k+l+m = 0\}$. For any $i\neq j$, the coefficient $|\alpha_{ij}|<\xi := \max_{i\neq j}\frac{|x^{ij}|}{\sqrt{x^{ii}x^{jj}}}$, where $\xi<1$ and depends only on $X$. Thus,
    \[
    f_-(k-k_c,l-l_c,m-m_c)\leq s_{X,A}^g(k,l,m)/c\leq f_+(k-k_c,l-l_c,m-m_c),\ \forall k+l+m = 0,
    \]
    where
    \[
    f_\pm(k-k_c,l-l_c,m-m_c): = e^{2(k-k_c)}+e^{2(l-l_c)}+e^{2(m-m_c)}\pm 2\xi(e^{-(k-k_c)}+e^{-(l-l_c)}+e^{-(m-m_c)}).
    \]
    Let $d = d(k,l,m)$ represent $1/\sqrt{6}$ times the Euclidean distance between $(k_c,l_c,m_c)$ and $(k,l,m)$. When $d$ is fixed, it is evident that $f_+(k-k_c,l-l_c,m-m_c)$ reaches its maximum at $(k-k_c,l-l_c,m-m_c) = (2d,-d,-d)$, and $f_-(k-k_c,l-l_c,m-m_c)$ reaches its minimum at $(k-k_c,l-l_c,m-m_c) = (-2d,d,d)$. Therefore,
    \[
    2(1-\xi)e^d-4\xi e^{-d/2}+e^{-2d} := f_-(d) \leq s_{X,A}^g(k,l,m)/c\leq f_+(d) :=  2e^{2d} + 4\xi e^{d/2}+ 2(1+\xi)e^{-d}.
    \]
    
    If a level curve of $s_{X,A}^g(k,l,m)$ surrounds only two points $(0,0,0)$ and $(k_0,l_0,m_0)$ among the points in $\Lambda$, the inscribed radius of the level curve is less than a certain constant $\rho>0$ determined by $\Lambda$. Since $\xi<1$, it follows that $\lim_{d\to\infty}f_-(d) = \infty$. Hence, there exists a constant $R<\infty$ depending solely on $\xi$ and $\rho$, such that $f_+(\rho) = f_-(R)$. Consequently, the value taken on this level curve is less than $f_+(\rho) = f_-(R)$, implying that the diameter of the level curve is less than $2R$. Therefore, there are only finitely many choices of $(k_0,l_0,m_0)\in\Lambda$, such that $(0,0,0)$ and $(k_0,l_0,m_0)$ are the only minimum points of $s^g_{X,A}|_{\Lambda}$. By Lemma \ref{lem:s4:1}, the Dirichlet-Selberg domain $DS(X,\Gamma)$ is finitely-sided for any $X\in\mathcal{P}(3)$.
\end{proof}
% \newpage
\subsection{Discrete subgroups of \texorpdfstring{$SL(3,\mathbb{R})$}{Lg}: infinitely-sided cases}
We proceed to the cases when the Dirichlet-Selberg domain $DS(X,\Gamma)$ is infinitely-sided for a generic choice of $X\in \mathcal{P}(3)$ as asserted in Theorem \ref{main:s4}.

\begin{proof}[Proof of Theorem \ref{main:s4}, cyclic group of type (2)]
    We interpret the cyclic group $\Gamma$ of type (2) as a one-parameter family
    \[
    g(k): = \gamma^k = \begin{pmatrix}
        1 & k & k(k+1)/2\\ 0 & 1 & k \\ 0 & 0 & 1 
    \end{pmatrix},\ k\in\mathbb{Z}.
    \]
    Thus for any $A = (a_{ij})$ and $X = (x^{ij})^{-1}\in\mathcal{P}(3)$, the function $s^g_{X,A}$ is expressed as
    \begin{align*}
        & s^g_{X,A}(k) = s(\gamma^k.X,A) = \mathrm{tr}(((\gamma^k)^{\mathrm{T}}X\gamma^k)^{-1}A)\\
        & = (x^{33}a_{11}/4)k^4 + \left(-x^{33}a_{12}+\left(x^{33}/2-x^{23}\right)a_{11}\right)k^3\\
        & + \left(x^{33} a_{13}+x^{33} a_{22}+(3 x^{23}-x^{33})a_{12}+\left(x^{33}/4-x^{23}+x^{13}+x^{22}\right)a_{11}\right) k^2\\
        & + \left(-2x^{33}a_{23}+(x^{33}-2x^{23})a_{13}-2x^{23}a_{22}+(x^{23}-2x^{13}-2x^{22})a_{12}+(x^{13}-2x^{12})a_{11}\right)k\\
        & + (x^{33}a_{33}+x^{22}a_{22}+x^{11}a_{11}+2x^{23}a_{23}+2x^{13}a_{13}+2x^{12}a_{12}),
    \end{align*}
    a quartic polynomial in $k$. We will demonstrate that for any $X\in\mathcal{P}(3)$ and any $k_0\in\mathbb{Z}$, there exists a positive definite matrix $A$ such that
    \[
    s^g_{X,A}(k) = k^2(k-k_0)^2 + const,
    \]
    representing a quartic function whose global minimum points are $k=0$ and $k = k_0$. By comparing coefficients of the $k^4$ and $k^3$ terms, we derive
    \[
    x^{33}a_{11}/4 = 1,\quad -x^{33}a_{12}+\left(x^{33}/2-x^{23}\right)a_{11} = -2{k_0},
    \]
    implying that
    \[
    a_{11} = 4/x^{33}>0,\ a_{12} = (2({k_0}+1)x^{33} - 4x^{23})/((x^{33})^2).
    \]
    Furthermore, we let $a_{22}$ be sufficiently large, so that $a_{11}a_{22}>a_{12}^2$. By comparing the coefficients of the $k^2$ and $k^1$ terms, we deduce:
    \begin{align*}
        & x^{33} a_{13}+x^{33} a_{22}+(3 x^{23}-x^{33})a_{12}+\left(x^{33}/4-x^{23}+x^{13}+x^{22}\right)a_{11} = {k_0}^2,\\
        & -2x^{33}a_{23}+(x^{33}-2x^{23})a_{13}-2x^{23}a_{22}+(x^{23}-2x^{13}-2x^{22})a_{12}+(x^{13}-2x^{12})a_{11} = 0,
    \end{align*}
    which is a linear equation system in the unknowns $a_{13}$ and $a_{23}$. This linear equation system has an invertible coefficient matrix and thus has a unique solution for $a_{13}$ and $a_{23}$.
    
    Lastly, we determine $a_{33}$ by setting $\det(A) = 1$. These steps yield a matrix $A\in F_{\gamma^k}$. By Lemma \ref{lem:s4:1} with $\Lambda = \mathbb{Z}$, the Dirichlet-Selberg domain $DS(X,\Gamma)$ is infinitely sided for any $X\in\mathcal{P}(3)$.
\end{proof}
\begin{proof}[Proof of Theorem \ref{main:s4}, cyclic group of type (3')]
    We interpret the cyclic group $\Gamma$ of type (3') as a one-parameter family
    \[
    g(k): = \gamma^k = diag(e^{sk},e^{-sk},1),\ k\in\mathbb{Z}.
    \]
    Thus, the function $s^g_{X,A}$ is expressed as
    \[
    s_{X,A}^g(k) = s(\gamma^k.X,A) = a_{22}x^{22}e^{2s k}+2a_{23}x^{23}e^{sk}+2a_{13}x^{13}e^{-sk}+a_{11}x^{11}e^{-2sk}+const.
    \]
    Additionally, we assume that $x^{23}, x^{13}\neq 0$. We will demonstrate that for any $k_0\in\mathbb{Z}$, there exists a positive definite matrix $A$ such that
    \[
    s^g_{X,A}(k) = e^{2s k}-2(e^{sk_0}+1)e^{sk}-2e^{sk_0}(e^{sk_0}+1)e^{-sk}+e^{2sk_0}e^{-2sk} + const,
    \]
    which represents a function with minimum points at $k= 0$ and $k = k_0$. A suitable solution is given by:
    \[
    a_{11} = e^{2tk_0}/x^{11},\quad a_{12} = 0,\quad a_{22} = 1/x^{22},\quad a_{23} = -(e^{tk_0}+1)/x^{23},\quad a_{13} = -e^{tk_0}(e^{tk_0}+1)/x^{13},
    \]
    and $a_{33}$ is determined by $\det(A) = 1$. Analogously to the preceding case, the existence of such a solution for $A$ implies that $DS(X,\Gamma)$ is infinitely sided whenever the center $X$ does not belong to the proper Zariski closed subset $\{X = (x^{ij})^{-1}\in\mathcal{P}(3)|x^{13}x^{23} = 0\}$.
\end{proof}
We proceed to consider the cases of $2$-generated subgroups:
\begin{proof}[Proof of Theorem \ref{main:s4}, $2$-generated group of type (1')]
    We interpret the group $\Gamma$ as a two-parameter family, given by
    \[
    g(k,l) = \gamma_1^k\gamma_2^l = \begin{pmatrix}
        1 & 0 & l\\ 0 & 1 & k\\0 & 0 & 1
    \end{pmatrix},\ \forall (k,l)\in\mathbb{Z}^2,
    \]
    and the function $s_{X,A}^g$ is expressed as
    \[
    s_{X,A}^g(k,l) = x^{33}(a_{22}(k-k_c)^2 + 2a_{12}(k-k_c)(l-l_c)+a_{11}(l-l_c)^2)+const,
    \]
    where
    \[
    k_c = \frac{x^{23}}{x^{33}}+\frac{a_{11}a_{23} - a_{12}a_{13}}{a_{11}a_{22}-a_{12}^2},\ l_c = \frac{x^{13}}{x^{33}}+\frac{a_{22}a_{13} - a_{12}a_{23}}{a_{11}a_{22}-a_{12}^2}.
    \]
    We claim that for any coprime pair $(k_0,l_0)\in\mathbb{Z}^2$, there exists a matrix $A\in\mathcal{P}(3)$ such that $s^g_{X,A}|_{\mathbb{Z}^2}$ achieves its global minimum at $(k,l) = (0,0)$ and $(k,l) = (k_0,l_0)$. Specifically, we claim that the matrix $A$ can be chosen so that
    \begin{equation}\label{equ:ellip}
    s^g_{X,A}(k,l) = \epsilon^2(k_0(k-k_0/2)+l_0(l-l_0/2))^2 +(l_0(k-k_0/2)-k_0(l-l_0/2))^2+const
    \end{equation}
    for arbitrarily small $\epsilon>0$. In other words, a particular level set of $s_{X,A}^g$ is an ellipse, with its major axis being the line segment between $(0,0)$ and $(k_0,l_0)$, and its minor axis length being $\epsilon$ times the length of the major axis.

    A comparison of the coefficients of the $k^2$, $kl$ and $l^2$ terms yields that:
    \[
    a_{11} = \epsilon^2 l_0^2 + k_0^2,\ a_{12} = -(1-\epsilon^2)k_0l_0,\ a_{22} = \epsilon^2 k_0^2 + l_0^2.
    \]
    Therefore, $a_{11}a_{22} - a_{12}^2 = 2\epsilon^2(k_0^2+l_0^2)^2>0$. A comparison of the coefficients of the $k^1$ and $l^1$ terms implies that:
    \[
    k_0/2 = k_c = \frac{x^{23}}{x^{33}}+\frac{a_{11}a_{23} - a_{12}a_{13}}{a_{11}a_{22}-a_{12}^2},\ l_0/2 = l_c = \frac{x^{13}}{x^{33}}+\frac{a_{22}a_{13} - a_{12}a_{23}}{a_{11}a_{22}-a_{12}^2}.
    \]
    This can be interpreted as a $2$-variable linear equation system in the unknowns $a_{13}$ and $a_{23}$. The coefficient matrix of the equation system is
    \[
    \begin{pmatrix}a_{11} & -a_{12}\\ -a_{12} & a_{22}\end{pmatrix},
    \]
    which is invertible. Thus, a unique solution for $a_{13}$ and $a_{23}$ is determined by $k_0$, $l_0$, $\epsilon$ and $X$. Finally, we determine $a_{33}$ by setting $\det(A) = 1$.
    
    The matrix $A$ we constructed is positive definite. Furthermore, a certain level curve of $s^g_{X,A}$ is an ellipse whose major axis is the line segment between $(0,0)$ and $(k_0,l_0)$. Assuming that $k_0$ and $l_0$ are coprime, the ellipse will exclude all other points in $\mathbb{Z}^2$ if the minor axis is sufficiently short, i.e., the number $\epsilon$ is sufficiently small. By Lemma \ref{lem:s4:1}, the Dirichlet-Selberg domain $DS(X,\Gamma)$ is infinitely-sided for any $X\in\mathcal{P}(3)$.
\end{proof}
\begin{proof}[Proof of Theorem \ref{main:s4}, $2$-generated group of type (2)]
    We interpret the $2$-generated subgroup $\Gamma$ of type (2) for given constants $a$ and $b$ as a two-parameter family:
    \[
    g(k,l) = \begin{pmatrix}
        1 & -k & k^2-l\\ 0 & 1 & -k\\ 0 & 0 & 1
    \end{pmatrix},\ \forall (k,l)\in \Lambda = \Lambda(a,b),
    \]
    where
    \[
    \Lambda(a,b) = \left\{(k,l)\left|k=x+a y, l = \frac{1}{2} \left(a^2( y^2-y)+2 a x y+2 b y+x^2-x\right),\ (x,y)\in \mathbb{Z}\right.\right\}
    \]
    is a discrete subset of $\mathbb{R}^2$. The function $s^g_{X,A}$ is expressed as
    \[
    \begin{split}
        & s^g_{X,A}(k,l) = (a_{11} {x^{22}}+2 a_{12} {x^{23}}+a_{22} {x^{33}})(k-k_c)^2\\
        & +2(a_{11} {x^{23}}+a_{12} {x^{33}})(k-k_c) (l-l_c) +(a_{11} {x^{33}})(l-l_c)^2+const,
    \end{split}
    \]
    where $k_c$ and $l_c$ are given by:
    \begin{align*}
        & k_c = -\frac{\splitfrac{a_{11}^2( {x^{12}} {x^{33}}- {x^{13}} {x^{23}})+a_{11} a_{12}( {x^{22}} {x^{33}}- {(x^{23})}^2)}{+(a_{11} a_{22}-a_{12}^2) {x^{23}} {x^{33}}+(a_{11} a_{23} - a_{12} a_{13}) {(x^{33})}^2}}{a_{11}^2 ({x^{22}} {x^{33}}- {(x^{23})}^2)+(a_{11} a_{22}-a_{12}^2) {(x^{33})}^2},\\
        & l_c = -\frac{\splitfrac{a_{11}^2( {x^{13}} {x^{22}}- {x^{12}} {x^{23}})-a_{11} a_{12}( {x^{12}} {x^{33}}-{x^{13}} {x^{23}})+(a_{11} a_{13}-a_{12}^2)({x^{22}} {x^{33}}-{(x^{23})}^2)}{+(a_{13} a_{22}-a_{12} a_{23}) {(x^{33})}^2-(a_{11} a_{23}-a_{12} a_{13}) {x^{23}} {x^{33}} +(a_{11} a_{22} - a_{12}^2) ({x^{13}} {x^{33}}-{(x^{23})}^2)}}{a_{11}^2 ({x^{22}} {x^{33}}- {(x^{23})}^2)+(a_{11} a_{22}-a_{12}^2) {(x^{33})}^2}.
    \end{align*}
    We assert that for any sufficiently small $\delta>0$, there exists $\epsilon = \epsilon(X,\delta)>0$, such that $\epsilon = O(\delta^2)$ as $\delta\to 0$. Furthermore, for any $(k_0,l_0)\in \Lambda$ with $|k_0/l_0| = \delta$, there exists a positive definite matrix $A$ such that the equation \eqref{equ:ellip} holds. In other words, the following properties hold:
    \begin{itemize}
        \item A certain level curve of $s^g_{X,A}$ is an ellipse whose major axis is between $(0,0)$ and $(k_0,l_0)$.
        \item The minor axis length is $\epsilon$ times the length of the major axis.
    \end{itemize}
    By comparing the coefficients of the $k^2$, $kl$ and $l^2$ terms, we obtain:
    \[    a_{11}x^{22}+2a_{12}x^{23}+a_{22}x^{33} = \epsilon^2k_0^2+l_0^2,\ a_{11}x^{33} = \epsilon^2l_0^2+k_0^2,\ a_{11}x^{23}+a_{12}x^{33} = (\epsilon^2-1)k_0l_0.
    \]
    This equation system has a unique solution, namely
    \begin{align*}
        & a_{11} = \frac{k_0^2+\epsilon ^2 l_0^2}{{x^{33}}},a_{12} =  -\frac{(k_0^2+\epsilon ^2l_0^2) {x^{23}}+k_0 l_0(1-\epsilon ^2){x^{33}}}{{x^{33}}^2},\\
        & a_{22} = \frac{(l_0^2+\epsilon ^2k_0^2) {x^{33}}^2+2 k_0 l_0(1-\epsilon^2) {x^{23}} {x^{33}}-(k_0^2+\epsilon ^2l_0^2) ({x^{22}} {x^{33}}-2 {x^{23}}^2)}{{x^{33}}^3}.
    \end{align*}
    In order to satisfy the positive definite condition $a_{11}a_{22}>a_{12}^2$, the following inequality must hold:
    \[
    -\frac{l_0^4(x^{22}x^{33} - {x^{23}}^2)}{{x^{33}}^4}\epsilon^4 + \frac{(k_0^2+l_0^2)^2{x^{33}}^2-2k_0^2l_0^2(x^{22}x^{33} - {x^{23}}^2)}{{x^{33}}^4} \epsilon^2 - \frac{k_0^4(x^{22}x^{33} - {x^{23}}^2)}{{x^{33}}^4} > 0.
    \]
    As $k_0/l_0\to 0$, the roots of the quartic function on the left-hand side are $\epsilon = \pm \epsilon_+$ and $\epsilon = \pm\epsilon_-$, where $\epsilon_\pm = \epsilon_\pm(k_0/l_0)$ have the following series expansions:
    \[
    \epsilon_+ = \frac{x^{33}}{\sqrt{x^{22}x^{33} - {x^{23}}^2}}+O\left((k_0/l_0)^2\right),\ \epsilon_- =  \frac{\sqrt{x^{22}x^{33} - {x^{23}}^2}}{x^{33}}(k_0/l_0)^2 + O\left((k_0/l_0)^4\right),
    \]
    with all coefficients determined by $X$. We let $\delta = |k_0/l_0|$ and set $\epsilon = \epsilon(\delta)$ such that $\epsilon(\delta)>\epsilon_-(\delta)$ and $\epsilon(\delta)\sim\epsilon_-(\delta)$ as $\delta\to 0$. This choice ensures the positive definite condition $a_{11}a_{22}>a_{12}^2$.

    By comparing the coefficients of the $k^1$ and $l^1$ terms, we deduce that $k_c = k_0/2$ and $l_c = l_0/2$. Substituting the solution for $a_{11}$, $a_{12}$ and $a_{22}$ above, the denominators of the expressions for both $k_c$ and $l_c$ are
    \[
    a_{11}^2 ({x^{22}} {x^{33}}- {(x^{23})}^2)+(a_{11} a_{22}-a_{12}^2) {(x^{33})}^2 = -\epsilon^2(k_0^2+l_0^2)^2\neq 0.
    \]
    Thus, the equations $k_c = k_0/2$ and $l_c = l_0/2$ form a linear equation system with unknowns $a_{13}$ and $a_{23}$, with an invertible coefficient matrix. Consequently, the equation system has a unique solution for $a_{13}$ and $a_{23}$. Finally, we determine $a_{33}$ by setting $\det(A) = 1$.

    We continue the proof by utilizing Corollary \ref{lem:s4:11}. Our proof consists of two cases, depending on whether the entry $a$ of the generator $\gamma_2$ is rational.

    \textbf{Case (1): $a\in\mathbb{Q}$.} Assume that $a = p/q$, where $(p,q)$ are coprime. The first components of points in $\Lambda$ take values in $(1/q)\mathbb{Z}$, and we have
    \[
    \Lambda\cap \{(k,l)|k = 1/q\} = \{(1/q, l_n)|l_n = (a(a-1)-2b)qn+l_0,\ n\in\mathbb{Z}\},
    \]
    where $l_0$ is a constant depending on $a$ and $b$. Let $\delta_n = (1/q)/l_n$, then $\delta_n = O(n^{-1})$. Our construction of the matrix $A$ yields a level curve of $s^g_{X,A}$, whose major axis is the line segment between $(0,0)$ and $(1/q,l_n)$, and the minor axis length is $\epsilon_n$ times the length of the major axis. Here, $\epsilon_n = \epsilon(\delta_n) = O(n^{-2})$. Consequently, the level curve intersects with the line $\{l = 0\}$ at $k=0$ and
    \[
    k = \frac{\epsilon_n^2/q(1/q^2 + l_n^2)}{\epsilon_n^2/q^2 + l_n^2} = O(n^{-4}).
    \]
    For sufficiently large $n$, the level curve we constructed does not surround any points other than $(0,0)$ and $(1/q,l_n)$ in $\Lambda$. By Lemma \ref{lem:s4:1}, the Dirichlet-Selberg domain $DS(X,\Gamma)$ is infinitely-sided.

    \textbf{Case (2): $a\notin\mathbb{Q}$.} Choose any $(k_1,l_1)$. Our construction yields a point $A_1\in\mathcal{P}(3)$ and a level curve of $s^g_{X,A_1}$ surrounding $(0,0)$ and $(k_1,l_1)$. We choose the points $(k_i,l_i)$ inductively as follows: suppose we have chosen points $A_j\in\mathcal{P}(3)$ and $(k_j,l_j)\in\Lambda$, where $j=1,\dots, (i-1)$. Let $\Lambda_j$ be the set of points in $\Lambda$ surrounded by the level curve of $s^g_{X,A_j}$ through $(0,0)$ and $(k_j,l_j)$, then the union $\bigcup_{j=1}^{i-1}\Lambda_j$ is a finite set. Since $a\notin\mathbb{Q}$, we can choose $(k_i,l_i)\in\Lambda$ such that $k_i$ is sufficiently small and $l_i$ is sufficiently large, ensuring that the level curve of $s^g_{X,A_i}$ in our construction excludes all points in $\bigcup_{j=1}^{i-1}\Lambda_j\backslash \{(0,0)\}$. By Corollary \ref{lem:s4:11}, $DS(X,\Gamma)$ has a facet $F_{g(k_i',l_i')}$, where $(k_i',l_i')\in \Lambda_i\backslash\{(0,0)\}$ for all $i\in\mathbb{N}$. Our construction implies that these points are pairwise distinct. Thus the Dirichlet-Selberg domain $DS(X,\Gamma)$ is infinitely-sided.
\end{proof}
\begin{proof}[Proof of Theorem \ref{main:s4}, $2$-generated group of type (4)]
A $2$-generated subgroup $\Gamma$ of type (4) is interpreted as a two-parameter family
\[
g(k,l) = \begin{pmatrix}
    e^{-k} & -l e^{-k} & 0\\ 0 & e^{-k} & 0\\ 0 & 0 & e^{2k}
    \end{pmatrix},\ \forall (k,l)\in\Lambda = (t,1)\mathbb{Z}\oplus (s,a)\mathbb{Z}\subset \mathbb{R}^2,
\]
where $(s,t)\neq (0,0)$ and $a\in\mathbb{R}$. The function $s^g_{X,A}$ is expressed as:
\begin{align*}
    & s^g_{X,A}(k,l) = e^{2k}(a_{11}x^{11}+2a_{12}x^{12}+a_{22}x^{22}+2l(a_{11}x^{12}+a_{12}x^{22})+l^2a_{11}x^{22})\\
    & +2e^{-k}(a_{13}x^{13}+a_{23}x^{23}+la_{13}x^{23})+e^{-4k}a_{33}x^{33}.
\end{align*}

We assert that if $X = (x^{ij})^{-1}$ satisfies $x^{23}\neq 0$, then for any $(k_0,l_0)\in\Lambda$ where $k_0\neq 0$, there exists a point $A\in\mathcal{P}(3)$, such that:
\begin{itemize}
    \item A level curve of $s_{X,A}^g$ is connected and passes through $(0,0)$ and $(k_0,l_0)$.
    \item The level curve lies between the lines $k=0$ and $k=k_0$, and is tangent to these lines at $(0,0)$ and $(k_0,l_0)$, respectively.
\end{itemize}

Indeed, the level curve $s_{X,A}^g = c$ is the union of graphs of the following functions:
\[
l = l_\pm(e^{-k};c) = l_0(e^{-k};c) \pm \sqrt{l_1(e^{-k};c)},
\]
where
\begin{align*}
    & l_0(e^{-k};c) = -\left(\frac{x^{12}}{x^{22}}+\frac{a_{12}}{a_{11}}\right)-\frac{a_{13}x^{23}}{a_{11}x^{22}}e^{-3k},\\
    & l_1(e^{-k};c) = 2\frac{a_{11}a_{13}(x^{12}x^{23}-x^{13}x^{22})+x^{22}x^{23}(a_{12}a_{13}-a_{11}a_{23})}{a_{11}^2{x^{22}}^2}e^{-3k}\\
    & -\frac{a_{11}a_{33}x^{22}x^{33}-a_{13}^2{x^{23}}^2}{a_{11}^2{x^{22}}^2}e^{-6k}+\frac{c}{a_{11}x_{22}}e^{-2k}-\left(\frac{x^{11}x^{22}-{x^{12}}^2}{{x^{22}}^2}+\frac{a_{11}a_{22}-a_{12}^2}{a_{11}^2}\right).
\end{align*}
The function $l_1$ is a polynomial in $t = e^{-k}$ of degree $6$, with a negative leading coefficient. If $t = e^{-k} = 1$ and $t = e^{-k} = e^{-k_0}$ are the only positive zeroes of $l_1(t)$, then $l_1(e^{-k})\geq 0$ if and only if $0\leq k\leq k_0$, implying the connectedness of the level curve. Thus, it is sufficient to select numbers 
$a_{ij}$, such that the matrix $A = (a_{ij})\in\mathcal{P}(3)$, and
\begin{itemize}
    \item The values of the function $l = l_0(e^{-k};c)$ at $k=0$ and $k=k_0$ are $0$ and $l_0$, respectively.
    \item The only positive zeroes of the function $l_1(t;c)$ are $t = 1$ and $t = e^{-k_0}$. 
\end{itemize}

We set $a_{11} = 1$. The first requirement yields a linear equation system in the unknowns $a_{12}$ and $a_{13}$, with a unique set of solutions
\[
    a_{12} = -\frac{x^{12}}{x^{22}}-\frac{l_0 e^{3k_0}}{e^{3k_0}-1},\quad a_{13} = -\frac{l_0 e^{3k_0}x^{22}}{(e^{3k_0}-1)x^{23}}.
\]
With $a_{11}$, $a_{12}$ and $a_{13}$ given above, the second requirement yields a linear equation system in the unknowns $a_{23}$ and $a_{33}$, resulting in a set of solutions in terms of $k_0$, $l_0$, $X$, $c$ and $a_{22}$:
\begin{align*}
    & a_{23} = \left(-a_{22} e^{6 k_0} {x^{22}}^2 {x^{23}}+a_{22} {x^{22}}^2 {x^{23}}+c e^{4 k_0} {x^{22}} {x^{23}}-c {x^{22}} {x^{23}}+e^{6 k_0} l_0^2 {x^{22}}^2 {x^{23}}\right.\\
    & +2 e^{3 k_0} l_0 {x^{12}} {x^{22}} {x^{23}}+2 e^{6 k_0} l_0 {x^{12}} {x^{22}} {x^{23}}-2 e^{3 k_0} l_0 {x^{13}} {x^{22}}^2-e^{6 k_0} {x^{11}} {x^{22}} {x^{23}}+2 e^{6 k_0} {x^{12}}^2 {x^{23}}+\\
    & \left.{x^{11}} {x^{22}} {x^{23}} -2 {x^{12}}^2 {x^{23}}\right)/\left(2 \left(e^{3 k_0}-1\right) {x^{22}} {x^{23}}^2\right),\\
    & a_{33} = -\left(e^{3 k_0} \left(-a_{22} e^{3 k_0} {x^{22}}^2+a_{22} {x^{22}}^2+c e^{k_0} {x^{22}}-c {x^{22}}+e^{3 k_0} l_0^2 {x^{22}}^2+2 e^{3 k_0} l_0 {x^{12}} {x^{22}}\right.\right.\\
    & \left.\left.-e^{3 k_0} {x^{11}} {x^{22}}+2 e^{3 k_0} {x^{12}}^2+{x^{11}} {x^{22}}-2 {x^{12}}^2\right)\right)/\left(\left(e^{3 k_0}-1\right) {x^{22}} x^{33}\right).
\end{align*}
With $a_{23}$ and $a_{33}$ given above, the determinant $\det(a_{ij})$ forms a quadratic polynomial in $c$, with coefficients depending on $k_0,l_0,X$, and $a_{22}$. The coefficient of the $c^2$ term is
\[
-\frac{\left(1 + e^{k_0}\right)^2\left(1 + e^{2k_0}\right)^2}{4 \left(e^{k_0}+e^{2 k_0}+1\right)^2 {x^{23}}^2}<0.
\]
We select $c = c(k_0,l_0,X,a_{22})$ to be the maximum point of this quadratic function. Assuming this, $\det(a_{ij})$ becomes a quadratic polynomial in the variable $a_{22}$, whose coefficients are in terms of $k_0,l_0$, and $X$. The coefficient of the $a_{22}^2$ term is
\[
    \frac{e^{4 k_0}\left(e^{2 k_0} {x^{23}}^2 + \left(1 + e^{k_0}\right)^2\left(1 + e^{2k_0}\right){x^{22}} {x^{33}}\right)}{\left(1 + e^{k_0}\right)^2\left(1 + e^{2k_0}\right)^2{x^{33}}^2}>0.
\]
Hence, when $a_{22}$ is sufficiently large, both $\det(A)>0$ and $a_{11}a_{22} - a_{12}^2>0$ hold. This results in a positive definite matrix $A = (a_{ij})$; up to a positive scaling, it yields $\det(A) = 1$. 

% To complete the construction, we need to show that the level set $\{s^g_{X,A}(k,l) = c\}$ described above is simply connected. Recall that 
To show that $t = 1$ and $t = e^{-k_0}$ are the only positive zeroes of $l_1$, we note that the derivative $l_1'(t)$ contains terms of $t^5$, $t^2$, and $t^1$ only, implying that $l_1'(t)$ has a zero $t=0$. The other zeroes of $l_1'(t)$ satisfy $3\alpha t^4 = 3\beta t + c_0$, where
\[
\alpha = \frac{a_{11}a_{33}x^{22}x^{33}-a_{13}^2{x^{23}}^2}{a_{11}^2{x^{22}}^2},\ \beta = \frac{a_{11}a_{13}(x^{12}x^{23}-x^{13}x^{22})+x^{22}x^{23}(a_{12}a_{13}-a_{11}a_{23})}{a_{11}^2{x^{22}}^2},\ c_0 =  \frac{c}{a_{11}x^{22}}.
\]
We have $\alpha>0$. Moreover, 
$(0,0)$ lies on the level curve, thus $c=\mathrm{tr}(X^{-1}\cdot A)>0$, implying that $c_0>0$. Consequently, $l_1'(t)$ has at most one positive zero, and $l_1(t)$ has at most two positive zeroes, which must be $t = 1$ and $t = e^{-k_0}$.

The remainder of the proof is divided in two cases, based on whether $a:=t/s$ is rational. If $a = p/q\in\mathbb{Q}$, then the first components of points in $\Lambda = (t,1)\mathbb{Z}\oplus (s,a)\mathbb{Z}$ take discrete values $(s/q)\mathbb{Z}$. If $a\notin \mathbb{Q}$, there exists a point $(k_n,l_n)\in\Lambda$ where $k_n$ is arbitrarily large and $l_n$ is arbitrarily close to $0$. Analogously to the previous case, the Dirichlet-Selberg domain $DS(X,\Gamma)$ is infinitely-sided whenever the center $X$ does not belong to the proper Zariski closed subset $\{X = (x^{ij})^{-1}\in\mathcal{P}(3)|x^{23} = 0\}$.
\end{proof}
\section{Schottky subgroups of \texorpdfstring{$SL(2k,\mathbb{R})$}{Lg}}\label{sec:s8}
Using Dirichlet-Selberg domains in $\mathcal{P}(n)$, we extend the notion of \textbf{Schottky groups}\cite{maskit1967characterization} to subgroups of $SL(n,\mathbb{R})$:
\begin{defn}
    A discrete subgroup $\Gamma<SL(n,\mathbb{R})$ is called a Schottky group of rank $k$ if there exists a point $X\in\mathcal{P}(n)$, such that the Dirichlet-Selberg domain $DS(X,\Gamma)$ is $2k$-sided and is ridge free. 
\end{defn}
Analogously to the case of $SO^+(n,1)$, one can prove that Schottky groups in $SL(n,\mathbb{R})$ are free subgroups of $SL(n,\mathbb{R})$. 

In this section, we show that Schottky groups in $SL(n,\mathbb{R})$ exist in the generic case when $n$ is even and only exist in a degenerated case when $n$ is odd.
\subsection{Schottky groups in \texorpdfstring{$SL(n,\mathbb{R})$}{Lg}: \texorpdfstring{$n$}{Lg} is even}
\begin{defn}
    For any $A\in SL(n,\mathbb{R})$ with only positive eigenvalues, one defines the \textbf{attracting and repulsing subspaces} of $\mathbb{R}\mathbf{P}^{n-1}$ as follows:
    \[
    C_A^+ = span_{\lambda_i>1}(\mathbf{v}_i)/\mathbb{R}^\times,\quad C_A^- = span_{0<\lambda_j<1}(\mathbf{v}_j)/\mathbb{R}^\times,
    \]
    where $\mathbf{v}_i$ denotes the eigenvector of $A^{\mathrm{T}}$ associated with the eigenvalue $\lambda_i$, $i=1,\dots,n$.
\end{defn}
\begin{thm}\label{schot}(cf. \cite{tits1972free})
    Suppose that $A_1,\dots,A_k\in SL(2n,\mathbb{R})$ are such that the attracting and repulsing spaces $C_{A_i}^\pm$, $i=1,\dots, k$, are all $(n-1)$-dimensional and pairwise disjoint. Then there exists an integer $M>0$ such that the group $\Gamma = \langle A_1^M,\dots, A_k^M\rangle$ is a Schottky group of rank $k$.
\end{thm}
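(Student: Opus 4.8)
The plan is to exhibit an explicit center $X\in\mathcal{P}(2n)$ whose Dirichlet--Selberg domain $DS(X,\Gamma)$ has exactly the $2k$ facets $F_{A_i^M}$ and $F_{A_i^{-M}}$, $i=1,\dots,k$, and no ridges. The natural $X$ to try is the identity $I$ (or a small perturbation of it if the stabilizer is nontrivial), since then each bisector $Bis(I,g.I)$ is the normal space $(g^{\mathrm T}g-I)^\perp$ and the domain becomes an intersection of half-spaces cut out by the matrices $A_i^{\mp M\,\mathrm T}A_i^{\mp M}-I$. First I would record the ping-pong/dynamical input: since the $C_{A_i}^\pm$ are all $(n-1)$-dimensional and pairwise disjoint in $\mathbb{R}\mathbf P^{2n-1}$, for large $M$ the map $A_i^M$ contracts a neighborhood of everything off $C_{A_i}^+$ towards $C_{A_i}^+$ exponentially, and $A_i^{-M}$ does the same towards $C_{A_i}^-$; the disjointness of attracting/repelling data across different $i$ gives the standard ping-pong dynamics on $\mathbb{R}\mathbf P^{2n-1}$, hence freeness of $\Gamma$ for $M$ large.

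Next I would translate the ping-pong dynamics into a statement about Selberg's invariant. For a word $w=A_{i_1}^{\pm M}\cdots A_{i_\ell}^{\pm M}\in\Gamma$ (reduced, $\ell\ge 2$), I want to show $s(I,w.I)=\mathrm{tr}(w^{\mathrm T}w)$ is strictly larger than $s(I,A_{i_1}^{\pm M}.I)$ — in fact that the relevant closed half-spaces for $w$ contain the interiors of those for the single generators — so that $w$ never contributes a facet. Concretely, $w^{\mathrm T}w$ grows like $e^{2c M\ell}$ along directions governed by the nested attracting cones, and one checks $\mathrm{tr}(w^{\mathrm T}w)\ge \min_i \mathrm{tr}(A_i^{M\mathrm T}A_i^M)$ plus a gap. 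This is the key mechanism that makes $DS(I,\Gamma)$ finitely sided with exactly $2k$ facets. To get \emph{ridge-freeness}, I would apply Theorem~\ref{thm:main:1}: it suffices to check that for each $i\neq j$ (and for the pair $A_i^{\pm M}$) the matrix pencil formed by the two normal vectors $A_i^{\pm M\mathrm T}A_i^{\pm M}-I$ and $A_j^{\pm M\mathrm T}A_j^{\pm M}-I$ is, up to congruence, block-diagonal with $\le 2$-dimensional blocks meeting the semi-definiteness condition of case (2) there — equivalently that the two corresponding bisectors are disjoint. For large $M$ these two normal vectors are dominated by rank-$(2n-n)=n$-type contributions concentrated on the disjoint subspaces $C_{A_i}^\pm$ versus $C_{A_j}^\pm$, which forces the simultaneous block structure; alternatively, one can invoke Theorem~\ref{thm:4:2:2} directly with $Y=I$, noting that $\theta\to$ the ``angle'' between the disjoint flats goes to its limiting value and $L=\min s(I,A_i^{\pm M}.I)\to\infty$, so inequality \eqref{equ:4:3} is satisfied for $M$ large.

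Then I would assemble: for $M$ large enough that (a) $\Gamma$ is free of rank $k$, (b) every reduced word of length $\ge 2$ fails to contribute a facet, and (c) all $\binom{2k}{2}$ pairs of generator-bisectors are pairwise disjoint, the domain $DS(X,\Gamma)$ is exactly $\bigcap_{i,\pm}\{Y:s(X,Y)\le s(A_i^{\pm M}.X,Y)\}$, it is $2k$-sided, and having pairwise disjoint facets it has no ridges; hence $\Gamma$ is a Schottky group of rank $k$ by definition. The last bookkeeping point — verifying that $I$ (or the perturbation $X_\epsilon$) indeed realizes all $2k$ facets rather than fewer — follows because each $A_i^{\pm M}$ is the shortest nontrivial word in its own coset direction, so Lemma~\ref{lem:s4:1}-style reasoning (or directly: $Bis(X,A_i^{\pm M}.X)$ is not separated from $X$ by the other half-spaces) shows $F_{A_i^{\pm M}}\neq\varnothing$.

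The main obstacle I expect is step (b): turning the projective ping-pong estimates into clean, quantitative inequalities for $\mathrm{tr}(w^{\mathrm T}w)$ uniformly over all reduced words $w$, and in particular controlling the ``unipotent-direction'' cross terms between the $n$-dimensional attracting and repelling blocks (where eigenvalues equal to $1$ on the complement of $C_{A_i}^+\oplus C_{A_i}^-$ could a priori obstruct the gap). Handling this likely requires passing to the $M$-th power carefully and choosing $X$ not as $I$ but adapted to the $C_{A_i}^\pm$ (e.g.\ a block-diagonal $X$ aligned with a common transversal decomposition), so that the dominant terms of $X^{-1}(A_i^{\pm M}.X)$ are genuinely comparable across all words. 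The disjointness hypothesis on the $C_{A_i}^\pm$ is exactly what should make such an $X$ and such a uniform gap exist.
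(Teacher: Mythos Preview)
Your overall architecture is reasonable, but the paper's proof is both simpler and sidesteps exactly the obstacle you flag in step~(b). The paper never estimates $s(I,w.I)$ for long words at all. Instead it proves two geometric claims about the \emph{generator} bisectors alone: (1) for \emph{all} real $m_i^\pm\ge M$ the $2k$ bisectors $Bis(I,A_i^{\pm m_i^\pm}.I)$ are pairwise disjoint, and (2) for each such bisector $\sigma$, the center $I$ and the other $2k-1$ bisectors lie in the same component of $\sigma^{\mathsf c}$. Claim~(2) is obtained by contradiction from the stronger ``all powers $\ge M$'' form of claim~(1): if some bisector $\sigma_2$ were separated from $I$ by $\sigma_1=Bis(I,A_1^{m_1}.I)$, then sliding $m_1$ continuously to some $m_1'>m_1$ would force $Bis(I,A_1^{m_1'}.I)$ to hit $\sigma_2$, contradicting claim~(1). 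Together these show the $2k$ half-spaces have pairwise disjoint complements, and the Schottky conclusion follows without ever touching words of length $\ge 2$.

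For the disjointness in claim~(1), the paper does \emph{not} use Theorem~\ref{thm:main:1} or Theorem~\ref{thm:4:2:2}; it uses Lemma~\ref{lem:1} directly. The normal vector of $Bis(I,A_i^{m}.I)$ is $(A_i^{m}.I)^{-1}-I$, and the associated quadratic form on $\mathbb{R}\mathbf{P}^{2n-1}$, after rescaling by $\lambda_{i,n+1}^{m}$, converges to $0$ on $C_{A_i}^+$ and to $+\infty$ off $C_{A_i}^+$. Since the $C_{A_i}^\pm$ are pairwise disjoint and $\mathbb{R}\mathbf{P}^{2n-1}$ is compact, for $m\ge M$ any two of these rescaled forms have positive sum everywhere, i.e.\ a positive-definite linear combination of the two normal vectors exists, and Lemma~\ref{lem:1} gives disjointness immediately. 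Your suggestion to apply Theorem~\ref{thm:4:2:2} does not work as stated: that theorem requires $g_X.X$ and $g_Z.Z$ to lie in \emph{opposite} chambers $\Delta_t^{\mathcal I}$ and $\Delta_t^{\mathcal I^{\mathsf c}}$, and there is no reason the two orbit points $A_i^{M}.I$, $A_j^{M}.I$ satisfy such a condition. Likewise, checking the block-diagonal normal form of Theorem~\ref{thm:main:1} for these pencils would be much more laborious than the one-line definiteness argument above.
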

\begin{proof}
    Denote the eigenvalues of $A_i$ by
    \[
    \lambda_{i,1}\geq\dots\geq \lambda_{i,n}>1>\lambda_{i,n+1}\geq\dots \geq \lambda_{i,2n}>0,\ i=1,\dots,k.
    \]
    We claim that there exists an integer $M$ satisfying the following conditions:
    \begin{itemize}
        \item For any real numbers $m_i^\pm\geq M$, $i=1,\dots, k$, the $2k$ bisectors $Bis(I, A_i^{m_i^+}.I)$, $Bis(I, A_i^{-m_i^-}.I)$ are pairwise disjoint.
        \item For each bisector $\sigma$ among the $2k$ ones, the center $I$ of the Dirichlet-Selberg domain and the other $(2k-1)$ bisectors lie in the same connected component of $\sigma^{\mathsf{c}} = \mathcal{P}(2n)\backslash \sigma$.
    \end{itemize}
    To prove our first claim, we define
    \[
    f_{m,i}^+(\mathbf{x}) = \lambda_{i,n+1}^m\left(\dfrac{||(A_i^{-m})^{\mathrm{T}}\mathbf{x}||^2}{||\mathbf{x}||^2}-1\right),\ f_{m,i}^-(\mathbf{x}) = \lambda_{i,n}^{-m}\left(\dfrac{||(A_i^{m})^{\mathrm{T}}\mathbf{x}||^2}{||\mathbf{x}||^2}-1\right),\ \forall \mathbf{x}\in\mathbb{R}\mathbf{P}^{2n-1},
    \]
    which are smooth functions on $\mathbb{R}\mathbf{P}^{2n-1}$. For any $\mathbf{x}\in C_{A_i}^\pm$, 
    \[
    \lim_{m\to \infty}f^\pm_{m,i}(\mathbf{x}) = 0,
    \]
    while for any $\mathbf{x}\notin C_{A_i}^\pm$,
    \[
    \lim_{m\to \infty}f^\pm_{m,i}(\mathbf{x}) = \infty.
    \]
    Since $\mathbb{R}\mathbf{P}^{2n-1}$ is compact and the $(n-1)$-dimensional submanifolds $C_{A_i}^\pm$ are pairwise disjoint, there exists a positive number $M$, such that for any $m_i^\pm\geq M$, the sum of any two among the $2k$ functions $f^\pm_{m_i^\pm,i}$, $i=1,\dots,k$ is positive. That is, the sum of any two among the $2k$ symmetric matrices
    \[
    \lambda_{i,n+1}^{m_i^+}((A_i^{m_i^+}.I)^{-1} - I),\ \lambda_{i,n}^{-m_i^-}((A_i^{-m_i^-}.I)^{-1} - I),\ i=1,\dots,k
    \]
    is a positive definite matrix. By Lemma \ref{lem:1}, the bisectors $Bis(A_i^{\pm m_i^\pm}.I,I)$ are pairwise disjoint for any numbers $m_i^\pm\geq M$.

    To prove our second claim, we assume the opposite: there are bisectors $\sigma_1$ and $\sigma_2$ among the $2k$ bisectors $Bis(A_i^{\pm m_i^\pm}.I,I)$, $m_i^\pm\geq M$, such that $\sigma_2$ and the center $I$ lie in different components of $\sigma_1^{\mathsf{c}}$. Without loss of generality, we suppose that $\sigma_1 = Bis(A_1^{m_1}.I,I)$ and $\sigma_2 = Bis(A_2^{m_2}.I,I)$. Let $X$ be an arbitrary point in $Bis(A_2^{m_2}.I,I)$; the assumption implies that $I$ and $X$ lie in different components of $\sigma_1^{\mathsf{c}}$. Since
    \[
    \lim_{m\to\infty}s(A_1^{m}.I,I) = \lim_{m\to\infty}s(A_1^{m}.I,X) = \infty,
    \]
    the points $I$ and $X$ lie in the same component of $Bis(A_1^{m}.I,I)^{\mathsf{c}}$ for $m$ large enough. Thus, there exists a real number $m_1'>m_1\geq M$ such that
    \[
    X\in Bis(A_1^{m_1'}.I,I).
    \]
    However, $X\in Bis(A_2^{m_2}.I,I)$, implying that $Bis(A_2^{m_2}.I,I)$ and $Bis(A_1^{m_1'}.I,I)$ intersect at $X$, which contradicts our first claim. This completes the proof of our second claim.

    In conclusion, there exists a number $M>0$ such that the Dirichlet-Selberg domain $DS(I,\Gamma)$ of $\Gamma = \langle A_1^M,\dots, A_k^M\rangle$ is bounded by the $2k$ bisectors $Bis(A_i^{\pm M}.I,I)$ and is ridge-free. Consequently, $\Gamma$ is a Schottky group of rank $k$.
\end{proof}
\subsection{Schottky groups in \texorpdfstring{$SL(n,\mathbb{R})$}{Lg}: \texorpdfstring{$n$}{Lg} is odd}
Note that for odd $n$, Schottky groups in $SL(n,\mathbb{R})$ can be obtained from these in $SL(n-1,\mathbb{R})$ via the inclusion map $SL(n-1,\mathbb{R})\hookrightarrow SL(n,\mathbb{R})$, $g\mapsto diag(g,1)$: 
\begin{exm}
    Consider the matrices
    \[
    A = diag(A_0,1) = \begin{pmatrix}
        3 & 0 & 0\\0 & 1/3 & 0\\ 0 & 0 & 1
    \end{pmatrix}, B = diag(B_0,1) =  \begin{pmatrix}
        5/3 & 4/3 & 0\\4/3 & 5/3 & 0\\ 0 & 0 & 1
    \end{pmatrix}.
    \]
    It is evident that each pair from the four matrices $((A^\pm.I)^{-1}-I) = diag((A_0^\pm.I)^{-1} - I,0)$ and $((B^\pm.I)^{-1}-I) = diag((B_0^\pm.I)^{-1} - I,0)$ has a positive semi-definite linear combination. By Lemma \ref{lem:1}, the bisectors $Bis(A^\pm.I,I)$ and $Bis(B^\pm.I,I)$ are pairwise disjoint. Moreover, for any bisector $\sigma$ among these four, the point $I$ and the other three bisectors lie in the same connected component of $\sigma^{\mathsf{c}}$. Thus, $\Gamma$ is a Schottky group. In fact, $\Gamma$ projects to the Schottky group $\langle A_0,B_0\rangle$ in $SL(2,\mathbb{R})$.
\end{exm}

In contrast, Bobb and Riestenberg proved that $SL(3,\mathbb{R})$ does not contain Schottky groups in the generic case. We show that for odd number $n$, $SL(n,\mathbb{R})$ does not contain Schottky groups under the following non-degeneracy assumption.
\begin{defn}
    We call the discrete subgroup $\Gamma<SL(n,\mathbb{R})$ a non-degenerate Schottky group of rank $k$, if there is a point $X\in\mathcal{P}(n)$ satisfying the following criteria:
    \begin{itemize}
        \item The Dirichlet-Selberg domain $DS(X,\Gamma)$ is $2k$-sided and ridge free. 
        \item For any facet $Bis(A_i.X,X)$ of $DS(X,\Gamma)$ where $A_i\in\Gamma$ and $i=1,\dots,k$, and for any eigenvalue $\lambda_{i,j}$ of $A_i$ where $j=1,\dots,n$, the absolute value $|\lambda_{i,j}|\neq 1$. 
    \end{itemize}
\end{defn}
\begin{thm}
    For any odd number $n$, there are no non-degenerate Schottky groups in $SL(n,\mathbb{R})$.
\end{thm}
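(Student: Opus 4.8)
The plan is to derive a contradiction from the existence of a non-degenerate Schottky group $\Gamma<SL(n,\mathbb{R})$ of rank $k$ when $n$ is odd, using a parity/sign-counting argument on the signatures of the symmetric matrices $(A_i.X)^{-1}-X$ (or rather $X^{-1}-(A_i.X)^{-1}$, up to the chosen co-orientation). First I would normalize $X=I$ via the $SL(n,\mathbb{R})$-action, so that each facet of $DS(I,\Gamma)$ lies in a bisector $Bis(A_i.I,I)=\{Y\mid \mathrm{tr}(Y(A_i^{\mathrm{T}}A_i)^{-1})=\mathrm{tr}(Y)\}$, with normal vector $N_i:=I-(A_i^{\mathrm{T}}A_i)^{-1}$ (co-oriented so that $I$ lies on the appropriate side, which by the second bullet of the Schottky definition means $\mathrm{tr}(I\cdot N_i)=\mathrm{tr}(N_i)>0$, i.e.\ $N_i$ is ``outward'' in the convention that puts $I$ strictly inside every half-space). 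The non-degeneracy hypothesis $|\lambda_{i,j}|\neq 1$ guarantees that no eigenvalue of $A_i^{\mathrm{T}}A_i$ equals $1$ only after replacing $A_i$ by a power; more precisely, $A_i^{\mathrm{T}}A_i$ is symmetric positive-definite with eigenvalues $\mu_{i,1}\geq\dots\geq\mu_{i,n}>0$, and I would use the non-degeneracy to control the signature of $N_i=I-(A_i^{\mathrm{T}}A_i)^{-1}=\mathrm{diag}(1-\mu_{i,j}^{-1})$ in an orthonormal eigenbasis: the number of positive entries is $\#\{j:\mu_{i,j}>1\}$ and the number of negative entries is $\#\{j:\mu_{i,j}<1\}$, with none zero under the non-degeneracy assumption. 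Hence $N_i$ has signature $(p_i,q_i)$ with $p_i+q_i=n$ odd, so $p_i\neq q_i$ for every $i$.

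The key step is then to exploit the ridge-free, disjoint-facets structure through Theorem \ref{thm:main:1} (or directly Lemma \ref{lem:1}). Because $DS(I,\Gamma)$ is ridge-free and $2k$-sided with all facets pairwise disjoint, every pair of bisectors $Bis(A_i^{\pm}.I,I)$ is disjoint, so by Lemma \ref{lem:1} every pair $\{N_i,N_{i'}\}$ (including the opposite co-orientations $-N_i$ coming from $A_i^{-1}$, where $N_i^{-}:=I-(A_i A_i^{\mathrm{T}})^{-1}$ after suitable normalization) is a semi-definite pencil: some real linear combination is positive semi-definite. I would combine this with the observation that the center $I$ lies strictly inside all $2k$ half-spaces, which forces a coherent choice of outward normals and, via a Farkas/separation-type argument in $Sym_n(\mathbb{R})$, implies that a \emph{single} positive-definite form is orthogonal to none of the $N_i$ with a consistent sign --- equivalently, that $\{-N_1,\dots,-N_k,-N_1^{-},\dots,-N_k^{-}\}$ is not ``positively spanning'' in the sense needed. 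The real content is a signature obstruction: for a genuinely disjoint pair of bisectors, Theorem \ref{thm:main:1} says $(N_i,N_{i'})$ is congruent to a diagonal semi-definite pencil or a block-diagonal pencil with $2$-blocks of a shared eigenvalue; in the diagonal semi-definite case one of $N_i,N_{i'}$, or a combination, is semi-definite, but we just argued each $N_i$ is genuinely indefinite with $p_i\neq q_i$. Tracking how the signatures $(p_i,q_i)$ must interlock across all $\binom{2k}{2}$ disjoint pairs while the common center $I$ sits inside, I expect to reach the conclusion that a consistent arrangement requires each $N_i$ to be (up to sign) a rank-deflated form whose ambient signature is balanced, contradicting the parity $p_i+q_i=n$ odd.

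Concretely I would organize the argument as follows. Step 1: reduce to $X=I$ and record the signature of $N_i$ and $N_i^{-}$; note $\mathrm{sign}(N_i)=(p_i,q_i)$, $\mathrm{sign}(N_i^{-})=(q_i,p_i)$ by the relation between eigenvalues of $A_i^{\mathrm{T}}A_i$ and $A_iA_i^{\mathrm{T}}$ (same spectrum), so each pair $\{N_i,N_i^{-}\}$ already has opposite signatures. Step 2: from ridge-freeness and disjointness, apply Theorem \ref{thm:main:1} to every pair among the $2k$ normals; in each case extract that the pencil is semi-definite, hence there is $c\in\mathbb{R}$ with $cN_a+(1-c)N_b\geq 0$ or the block structure forces a shared generalized eigenvalue $\lambda$ with $N_a-\lambda N_b$ semi-definite. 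Step 3: use the global position of $I$ (inside all half-spaces) to fix signs: the half-space containing $I$ is $\{\mathrm{tr}(Y N_i)\geq 0\}$ with strict inequality at $I$, so in any positive combination $\sum c_i N_i$ with $c_i>0$ we have $\mathrm{tr}(I\sum c_iN_i)>0$, meaning $\sum c_iN_i\neq 0$; combined with Step 2 this should pin down that some $N_i$ (or $N_i^{-}$) must itself be semi-definite of a fixed sign. Step 4: derive the contradiction --- a semi-definite $N_i$ would mean $A_i^{\mathrm{T}}A_i-I\geq 0$ or $\leq 0$, i.e.\ all eigenvalues of $A_i^{\mathrm{T}}A_i$ are $\geq 1$ or all $\leq 1$; since $\det(A_i^{\mathrm{T}}A_i)=1$ this forces all eigenvalues equal to $1$, hence $A_i\in SO(n)$, contradicting discreteness of $\Gamma$ and $A_i\neq I$ (or directly contradicting $|\lambda_{i,j}|\neq 1$). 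The main obstacle I anticipate is Step 3: turning ``$I$ lies inside all $2k$ half-spaces and all facets are disjoint and there are no ridges'' into the clean statement that some normal vector is semi-definite. This is where the odd parity of $n$ must enter --- for even $n$ the balanced signature $(n/2,n/2)$ is available and no such forcing occurs (indeed Schottky groups exist there by Theorem \ref{schot}), so the proof must use $p_i+q_i$ odd $\Rightarrow p_i\neq q_i$ at exactly the point where one would otherwise hope to build a single indefinite form separating everything consistently. I would look to adapt the Bobb--Riestenberg argument for $SL(3,\mathbb{R})$, which presumably handles precisely this signature-counting step, and promote it to general odd $n$ by an induction on $k$ or by a direct convexity argument on the cone of positive semi-definite forms in $Sym_n(\mathbb{R})$.
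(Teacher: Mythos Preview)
Your Step 3 is a genuine gap, and the hoped-for conclusion that ``some single $N_i$ must itself be semi-definite'' is not a valid deduction from pairwise semi-definiteness of the pencils together with $\mathrm{tr}(N_i)>0$. For instance, $N_1=\mathrm{diag}(1,1,-1)$ and $N_2=\mathrm{diag}(-1,1,1)$ are both indefinite with $\mathrm{tr}>0$, yet $N_1+N_2\geq 0$; nothing in Lemma~\ref{lem:1} or Theorem~\ref{thm:main:1} promotes such local pairwise data to a global forcing on an individual $N_i$, and no induction on $k$ or convexity argument is sketched that would close this. There is also a secondary issue in Step~1: the non-degeneracy hypothesis $|\lambda_{i,j}|\neq 1$ concerns the eigenvalues of $A_i$, not the eigenvalues of $A_i^{\mathrm T}A_i$ (the squared singular values), so your claim that $N_i$ is nonsingular is unjustified as stated.

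The paper's argument is entirely different and does not use signatures of the $N_i$ at all. It works in $\mathbb{C}\mathbf{P}^{n-1}$ with the complexified attracting and repulsing subspaces $C_{A_i,\mathbb{C}}^{\pm}$ of $A_i^{\mathrm T}$. Non-degeneracy gives $\dim_{\mathbb C} C_{A_i,\mathbb{C}}^{+}+\dim_{\mathbb C} C_{A_i,\mathbb{C}}^{-}=n-2$, and the oddness of $n$ enters precisely here: one of the two has dimension $\geq (n-1)/2$, so among the $2k$ subspaces two of them, say $C_{A_1,\mathbb{C}}^{+}$ and $C_{A_2,\mathbb{C}}^{+}$, must intersect in $\mathbb{C}\mathbf{P}^{n-1}$. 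Picking $\mathbf v$ in that intersection and $\mathbf w$ outside both, the paper then passes to high powers $A_i^m$ and uses Gelfand's spectral-radius formula to show $\mathbf v^*\bigl((A_i^m.I)^{-1}-I\bigr)\mathbf v<0$ and $\mathbf w^*\bigl((A_i^m.I)^{-1}-I\bigr)\mathbf w>0$ for $i=1,2$ and $m$ large, so the pencil of the two normals is indefinite and $Bis(A_1^m.I,I)\cap Bis(A_2^m.I,I)\neq\varnothing$. A separate short argument from the Schottky structure shows these bisectors must in fact be disjoint for every $m\geq 1$, yielding the contradiction. The limiting step with powers, producing a \emph{common} vector $\mathbf v$ on which both quadratic forms go negative, is exactly what your static signature bookkeeping cannot supply.
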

\begin{proof}
    Assume to the contrary that $\Gamma = \langle A_1,\dots, A_k\rangle< SL(n,\mathbb{R})$ is a non-degenerate Schottky group of rank $k$, i.e., there exists a point $X\in\mathcal{P}(n)$ such that the Dirichlet-Selberg domain $DS(X,\Gamma)$ is ridge-free, with $2k$ facets $Bis(A_i.X,X)$ and $Bis(A_i^{-1}.X,X)$, $i=1,\dots,k$. Without loss of generality, we can assume that $X = I$ after an isometry of the Dirichlet-Selberg domain $DS(X,\Gamma)$; the Dirichlet-Selberg domain after the isometry corresponds to a subgroup conjugate to $\Gamma$.

    We extend the notions of attracting and repulsing subspaces:
    \[
    C_{A_i,\mathbb{C}}^+ = span_{\mathbb{C},|\lambda_j|>1}(\mathbf{v}_j)/\mathbb{C}^\times,\ C_{A_i,\mathbb{C}}^- = span_{\mathbb{C},|\lambda_j|<1}(\mathbf{v}_j)/\mathbb{C}^\times,
    \]
    where $\mathbf{v}_j\in \mathbb{C}^n$ is the eigenvector of $A_i^{\mathrm{T}}$ associated with the eigenvalue $\lambda_j$. Then, $C_{A_i,\mathbb{C}}^+$ and $C_{A_i,\mathbb{C}}^-$ are proper subspaces of $\mathbb{C}\mathbf{P}^{n-1}$. Since $\Gamma$ is assumed to be non-degenerate, $\dim_\mathbb{C}(C_{A_i,\mathbb{C}}^+) + \dim_\mathbb{C}(C_{A_i,\mathbb{C}}^-) = n-2$, $i=1,\dots,k$. That is, either $\dim_\mathbb{C}(C_{A_i,\mathbb{C}}^+)\geq (n-1)/2$ or $\dim_\mathbb{C}(C_{A_i,\mathbb{C}}^-)\geq (n-1)/2$. Without loss of generality, we can assume that $\dim_\mathbb{C}(C_{A_1,\mathbb{C}}^+)$, $ \dim_\mathbb{C}(C_{A_2,\mathbb{C}}^+)\geq (n-1)/2$. Consequently, the intersection
    \[
        C_{A_1,\mathbb{C}}^+\cap C_{A_2,\mathbb{C}}^+\neq \varnothing.
    \]

    On the one hand, for any $m\in\mathbb{N}$, the bisectors $Bis(A_1^m.I, I)$ and $Bis(A_2^m.I, I)$ are disjoint. For $m=1$, this follows from the definition of Schottky groups. For $m\geq 2$, we note that the bisectors $Bis(A_i^m.I,I)$, $i=1,\dots,k$ do not intersect the Dirichlet-Selberg domain $DS(I,\Gamma)$. In other words, $Bis(A_i^m.I,I)$ lies in $DS(I,\Gamma)^{\mathsf{c}}$. Therefore, the bisector $Bis(A_i^m.I,I)$ and the point $A_i^m.I$ lie in the same connected component of $DS(I,\Gamma)^{\mathsf{c}}$. Since $A_i^m.I$ lies in the component
    \[
    \mathrm{int}(H_i^+) = \{Y|s(A_i.I,Y)> s(I,Y)\},
    \]
    the bisectors $Bis(A_1^m.I, I)$ and $Bis(A_2^m.I, I)$ belong to different components of $DS(I,\Gamma)^{\mathsf{c}}$. Thus, they are disjoint.

    On the other hand, we will derive a contradiction by showing that the bisectors $Bis(A_1^m.I, I)$ and $Bis(A_2^m.I, I)$ intersect for sufficiently large $m\in \mathbb{N}$. Take vectors
    \[
    \mathbf{v}\in C_{A_1,\mathbb{C}}^+\cap C_{A_2,\mathbb{C}}^+,\ \mathbf{w}\in (C_{A_1,\mathbb{C}}^+\cup C_{A_2,\mathbb{C}}^+)^{\mathsf{c}}. 
    \]
    Similarly to the proof of Theorem \ref{schot}, we establish that
    \[
    \mathbf{w}^*((A_1^m.I)^{-1}-I)\mathbf{w}>0
    \]
    and
    \[
    \mathbf{w}^*((A_2^m.I)^{-1}-I)\mathbf{w}>0,
    \]
    for sufficiently large $m$. Furthermore, 
    \[
    \mathbf{v}^*(A_1^m.I)^{-1}\mathbf{v} = ||(A_1^{-m})^{\mathrm{T}}\mathbf{v}||^2 = ||\varphi^m(\mathbf{v})||^2\leq ||\varphi^m||^2\cdot ||\mathbf{v}||^2,
    \]
    where $\varphi$ represents the restriction of the linear transformation $(A_1^{-1})^{\mathrm{T}}$ to the $A_1^{\mathrm{T}}$-invariant subspace $\mathbb{C}\cdot C^+_{A_1,\mathbb{C}}$ of $\mathbb{C}^n$. Gelfand's theorem implies that
    \[
    \lim_{m\to\infty} ||\varphi^m||^{1/m} = \rho(\varphi) = \max_{|\lambda_j|>1}|\lambda_j|^{-1} < 1,
    \]
    where $\rho(\varphi)$ denotes the spectral radius of $\varphi$. It follows that $\lim_{m\to\infty}||\varphi^m|| = 0$, and consequently
    \[
        \lim_{m\to\infty}\mathbf{v}^*(A_1^m)^{-1}\mathbf{v} = 0.
    \]
    Similarly,
    \[
        \lim_{m\to\infty}\mathbf{v}^*(A_2^m)^{-1}\mathbf{v} = 0.
    \]
    Thus, inequalities
    \[
    \mathbf{v}^*((A_1^m.I)^{-1}-I)\mathbf{v}<0
    \]
    and
    \[
    \mathbf{v}^*((A_2^m.I)^{-1}-I)\mathbf{v}<0
    \]
    hold for sufficiently large $m$. 
    
    The inequalities above imply that the pencil
    \[
        ((A_1^m.I)^{-1}-I, (A_2^m.I)^{-1}-I)
    \]
    is indefinite for sufficiently large $m$. According to Lemma \ref{lem:1}, the bisectors $Bis(A_1^m.I, I)$ and $Bis(A_2^m.I, I)$ intersect for sufficiently large $m$, leading to a contradiction.
\end{proof}
\vspace{12pt}
\section{Future prospects}\label{sec:99}
In Section \ref{sec:s5}, we proved that an invariant angle function cannot be defined for certain pairs of hyperplanes. However, using the Riemannian angle - which depends on the base point - to formulate the ridge-cycle condition is less efficient. This prompts the question:
\begin{ques}
    When an angle function cannot be defined, how can the ridge-cycle condition in Poincar\'e's theorem be formulated without resorting to Riemannian angles\cite{kapovich2023geometric}?
\end{ques}
Even in the case of $\mathcal{P}(3)$, there exists a Dirichlet-Selberg domain $D_S(X,\Gamma)$ for which the angle function at any ridge cannot be defined:
\begin{prop}\label{prop:3:2}
Consider the group
\begin{equation*}
    \Gamma = \left\{\left.\begin{pmatrix}1 & a & b\\ 0 & 1 & 0\\ 0 & 0 & 1\end{pmatrix}\right|a,b\in\mathbb{Z}\right\},
\end{equation*}
which is a $2$-generated discrete abelian subgroup of $SL(3,\mathbb{R})$, of type (1) in Proposition \ref{2-gen}. Then for all $X\in \mathcal{P}(3)$, the angle function at any ridge of $D_S(X,\Gamma)$ is not defined.
\end{prop}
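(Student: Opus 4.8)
The plan is to show that every ridge of $D_S(X,\Gamma)$ is a codimension-$2$ face of the form $F_{\gamma_1}\cap F_{\gamma_2}$ with $\gamma_1,\gamma_2\in\Gamma$, and that the corresponding pair of co-oriented bisector-hyperplanes $(A^\perp,B^\perp)$ always falls into case (3) of Theorem~\ref{thm:main:2} — that is, the matrix pencil $(A,B)$ is regular with at most two distinct generalized eigenvalues, all real. Once that is established, Theorem~\ref{thm:main:2}(3) immediately gives that no invariant angle function is defined on that pair, which is exactly the claim.

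First I would set up coordinates. Writing $\Gamma = g(\mathbb{Z}^2)$ with $g(k,l) = \left(\begin{smallmatrix}1 & k & l\\ 0 & 1 & 0\\ 0 & 0 & 1\end{smallmatrix}\right)$ exactly as in the proof of Theorem~\ref{main:s4} for the $2$-generated group of type (1), recall that $s^g_{X,A}(k,l)$ is a positive-definite quadratic form in $(k,l)$ (up to an additive constant), so the facets $F_{g(k,l)}$ of $D_S(X,\Gamma)$ are the Selberg bisectors $Bis(X,g(k,l).X)$. For a facet $F_\gamma$ with $\gamma = g(k,l)$, the normal vector of $Bis(X,\gamma.X)$ is the symmetric matrix $A_{k,l} := X^{-1} - (\gamma^{\mathrm{T}}X\gamma)^{-1} = X^{-1} - \gamma^{-1}X^{-1}\gamma^{-\mathrm{T}}$ (up to sign and positive scalar). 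The key structural observation is that $\gamma^{-1} = g(-k,-l)$ differs from the identity only in the first row, so $X^{-1} - \gamma^{-1}X^{-1}\gamma^{-\mathrm{T}}$ always lies in the $3$-dimensional subspace $V \subset Sym_3(\mathbb{R})$ of symmetric matrices whose bottom-right $2\times 2$ block vanishes — indeed $(\gamma^{-1}X^{-1}\gamma^{-\mathrm{T}})_{ij} = (X^{-1})_{ij}$ whenever $i,j\ge 2$. So all normal vectors of all facets lie in the fixed $3$-dimensional space $V = \{A\in Sym_3(\mathbb{R}): a_{22}=a_{23}=a_{33}=0\}$.

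Next I would analyze pencils inside $V$. A ridge of $D_S(X,\Gamma)$ is $F_{\gamma_1}\cap F_{\gamma_2}$ for two facets with linearly independent normal vectors $A, B\in V$; its co-orientation data is the pencil $(A,B)$, both of whose members lie in $V$. A generic element of $V$ has the shape $\left(\begin{smallmatrix}* & * & *\\ * & 0 & 0\\ * & 0 & 0\end{smallmatrix}\right)$, which has rank at most $3$ but, crucially, is always \emph{singular} when the $(1,1)$ entry combined with the off-diagonal structure forces $\det = -a_{11}\cdot 0 + \dots$; a direct computation gives $\det\left(\begin{smallmatrix}p & q & r\\ q & 0 & 0\\ r & 0 & 0\end{smallmatrix}\right) = 0$ identically. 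Hence \emph{every} nonzero element of $V$ is a singular $3\times 3$ symmetric matrix, so for any $A,B\in V$ the pencil $A - \lambda B$ consists entirely of singular matrices, i.e.\ $(A,B)$ is a \emph{singular} pencil. This takes us outside the literal hypothesis of Theorem~\ref{thm:main:2} (which assumes $(A,B)$ regular) and instead into the regime discussed right after its proof: one must use pseudo-generalized eigenvalues, or better, apply Proposition~\ref{prop:non_exist}. So the plan refines to: verify that for $X\in\mathcal{P}(3)$ arbitrary and any two linearly independent facet-normals $A,B\in V$, the pair $(A,B)$ satisfies the hypotheses of Proposition~\ref{prop:non_exist} — namely $A,B$ indefinite (which holds since each $A_i^\perp$ is a nonempty hyperplane, being the span of a genuine bisector), $\mathrm{rank}(B)=2$, and $\mathrm{rank}(A+\lambda B)=2$ for all $\lambda\in\mathbb{R}$. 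The rank statements follow because a nonzero matrix in $V$ of the above shape has rank exactly $2$ unless $q=r=0$ (rank $1$), and $A+\lambda B\in V$ for all $\lambda$, so I need to rule out $q=r=0$, i.e.\ that some nontrivial combination of two distinct bisector normals is a rank-$1$ matrix $p\,\mathbf{e}_1\otimes\mathbf{e}_1$; but such a matrix is semi-definite, and Lemma~\ref{lem:1} would then force the two hyperplanes $A^\perp$ and $B^\perp$ to be disjoint, contradicting that they meet in the ridge — so this degenerate sub-case cannot occur for an actual ridge. Invoking Proposition~\ref{prop:non_exist} then finishes: the invariant angle between $A^\perp$ and $B^\perp$ is undefined for every ridge.

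The main obstacle I anticipate is the bookkeeping in the last step: one must carefully confirm (a) that the $2$-variable quadratic form $s^g_{X,A}$ is genuinely positive definite for every $X$, so that the facets really are the expected bisectors and ridges really do have the claimed pencil data; and (b) that the rank-$1$ degeneracy is excluded precisely because a ridge is a nonempty intersection, which is where Lemma~\ref{lem:1} enters. There is also a minor subtlety that the normal vector of $Bis(X,\gamma.X)$ should be checked to be the matrix $A_{k,l}$ above rather than something conjugate — this is the standard computation $s(X,Z) - s(\gamma.X,Z) = \mathrm{tr}((X^{-1} - (\gamma.X)^{-1})Z)$ and $(\gamma.X)^{-1} = (\gamma^{\mathrm{T}}X\gamma)^{-1}$, which I would spell out briefly. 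Modulo these checks, the proof is essentially the single observation that all facet normals are trapped in the fixed $3$-dimensional space $V$ of matrices supported on the first row and column, every nonzero element of which is a rank-$\le 2$ singular matrix, placing every ridge's pencil squarely in the Proposition~\ref{prop:non_exist} situation.
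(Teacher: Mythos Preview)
Your proposal is correct and follows essentially the same route as the paper: compute that every bisector normal $X^{-1}-(\gamma.X)^{-1}$ lies in the $3$-dimensional subspace $V\subset Sym_3(\mathbb{R})$ of matrices supported on the first row and column, then invoke Proposition~\ref{prop:non_exist}. Your verification is in fact more careful than the paper's own proof --- the paper simply cites Proposition~\ref{prop:non_exist} after displaying the form of the normal vector, whereas you explicitly check the rank-$2$ hypothesis by observing that a rank-$1$ element of $V$ would be semi-definite and hence (via Lemma~\ref{lem:1}) force $A^\perp\cap B^\perp=\varnothing$, contradicting the existence of the ridge.
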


\begin{proof}
Let $X = (x_{ij})$, $X^{-1} = (x^{ij})$, and denote the two generators of $\Gamma$ 
$$h_1 = \begin{pmatrix}1 & 1 & 0 \\ 0 & 1 & 0\\ 0 & 0 & 1\end{pmatrix},\quad h_2 = \begin{pmatrix}1 & 0 & 1 \\ 0 & 1 & 0\\ 0 & 0 & 1\end{pmatrix}.$$ 
Direct computation shows that
\[
    \begin{split}
        (h_1^ah_2^b.X)^{-1} - X^{-1} = (a^2 x^{22}+2a bx^{23}+b^2x^{33}-2a x^{12} - 2b x^{13})\mathbf{e}_1\otimes \mathbf{e}_1\\
        - (ax^{22} + b x^{23})(\mathbf{e}_1\otimes \mathbf{e}_2 + \mathbf{e}_2\otimes \mathbf{e}_1) - (ax^{23} + b x^{33})(\mathbf{e}_1\otimes \mathbf{e}_3 + \mathbf{e}_3\otimes \mathbf{e}_1),
    \end{split}
\]
% {\tiny
% \begin{equation*}
%      (h_1^ah_2^b.X)^{-1} - X^{-1} = \begin{pmatrix}-2a x^{12} - 2b x^{13} +a^2 x^{22}+2a bx^{23}+b^2x^{33}  & - ax^{22} -b x^{23} &-ax^{23} - b x^{33} \\ - ax^{22} -b x^{23} & 0 & 0\\ -ax^{23} - b x^{33} & 0 & 0\end{pmatrix},
% \end{equation*}
% }
for any $a,b\in\mathbb{Z}$. According to Proposition \ref{prop:non_exist}, any pair of bisectors $Bis(h_1^ah_2^b.X, X)$ and $Bis(h_1^{a'}h_2^{b'}.X, X)$ that bound $D_S(X,\Gamma)$ is not in the domain of an invariant angle function. Thus, the angle function at any ridge of $D_S(X,\Gamma)$ is undefined.
\end{proof}
% In Section \ref{sec:s7}, we conjectured that exact partial Dirichlet-Selberg domains that satisfy the ridge-cycle condition always satisfy the completeness condition. Also, we ask for an algorithm that generates sample points for the intersection of hyperplanes in $\mathcal{P}(n)$, which requires less computation. For almost every pair of hyperplanes, one can compute a sample point by simultaneously diagonalizing their normal vectors, which is more efficient compared to the algorithm introduced in Lemma \ref{lem:s5:1}. However, a more efficient algorithm for triples of hyperplanes is still unclear.
Regarding the generalization of step (6) of Poincar\'e's algorithm, Kapovich conjectures that finitely-sided Dirichlet-Selberg domains $DS(X,\Gamma_l)$ exhibit the completeness property, similarly to Dirichlet domains in hyperbolic spaces:
\begin{ques}[\cite{kapovich2023geometric}]
    Let $D = DS(X,\Gamma_l)$ be a finitely-sided Dirichlet-Selberg domain in $\mathcal{P}(n)$ satisfying the tiling condition. Is the quotient space $M = D/\sim$ complete?
\end{ques}
Our forthcoming research will primarily focus on this conjecture for finite-volume Dirichlet-Selberg domains.

It is imperative to strengthen the condition in Theorem \ref{thm:4:2:2} and provide a sufficient condition for ensuring that the bisectors $Bis(X,Y)$ and $Bis(Y,Z)$ are at least $1$ unit apart. This condition is linked to a quasi-geodesic property involved in the Kapovich-Leeb-Porti algorithm\cite{kapovich2014morse}.

We seek applications of Poincar\'e's algorithm to specific subgroups of $SL(n,\mathbb{R})$, including surface groups\cite{long2011zariski} and knot groups\cite{doi:10.1080/10586458.2011.565258} in $SL(3,\mathbb{Z})$.

Furthermore, we aim to generalize the algorithm and related constructions to other symmetric spaces of non-compact type, including the Siegel upper half space, the symmetric space for $SO^+(p,q)$\cite{helgason1979differential}, and Bruhat-Tits buildings\cite{garrett1997buildings}.
\vspace{12pt}
\appendix

\section{Motivation for the Angle-like Function}
This appendix describes how we discovered the equation \eqref{equ:main:2}, particularly the methodology employed to identify a suitable function:
\[
\theta:\ \Sigma_n^{(2)}\sqcup \Sigma_n^{(0)}\to [0,\pi],\ (A,B)\mapsto \theta(A^\perp,B^\perp)
\]
which fulfills the properties (1) through (4) in Definition \ref{defn:s5:1}. Here, $\Sigma_n^{(2)}$ represents the set consisting of pairs $(A,B)\in Sym_n(\mathbb{R})^2$ for which the corresponding co-oriented hyperplane pair $(A^\perp, B^\perp)$ is of type (2), and that the generalized eigenvalues of the pencil $(A,B)$ are pairwise distinct. Furthermore, $\Sigma_n^{(0)}$ represents the set consisting of pairs $(A,B)$ in $Sym_n(\mathbb{R})^2$, such that $A,B\neq O$ and $B = c\cdot A$ for some $c\in\mathbb{R}$; that is, the corresponding co-oriented hyperplanes $A^\perp$ and $B^\perp$ are either identical or oppositely co-oriented.

We begin by observing that a restriction of $\theta$ factors through $\mathbf{S}^1\times \mathbf{S}^1$. Here and after, we consistently regard $\mathbf{S}^1$ as $ \mathbb{R}/2\pi\mathbb{Z}$.
\begin{lem}
    For any pair $(A,B)\in \Sigma_n^{(2)}$, $\theta|_{(span(A,B)-\{O\})^2}$ factors through $\mathbf{S}^1\times \mathbf{S}^1$ via
    \[
    (span(A,B)-\{O\})^2\xrightarrow{\varphi\times\varphi} \mathbf{S}^1\times \mathbf{S}^1\xrightarrow{\angle} [0,\pi],
    \]
    where $\varphi$ is a map $\varphi_{(A,B)}: span(A,B)-\{O\}\to \mathbf{S}^1$, and $\angle: \mathbf{S}^1\times \mathbf{S}^1\to [0,\pi]$ is the Euclidean angle.
\end{lem}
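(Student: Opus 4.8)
The plan is to construct the map $\varphi = \varphi_{(A,B)}$ explicitly from the spectral data of the pencil and then check the factorization. First I would use the hypothesis $(A,B)\in\Sigma_n^{(2)}$: all generalized eigenvalues are real (or $\infty$) and pairwise distinct, say $\lambda_k>\dots>\lambda_1$. By Lemma \ref{lem:mob}, for any nonzero $C = pA+qB\in span(A,B)$, the generalized eigenvalues of the pencil $(C, C')$ for a second independent combination $C'$ are M\"obius images of the $\lambda_i$; in particular the unordered spectrum of $(A,B)$ together with the position of $C$ in the pencil is captured by a single M\"obius parameter. Concretely, I would send $C = pA + qB$ to the point of $\mathbf{S}^1 = \mathbb{R}/2\pi\mathbb{Z}$ obtained by the standard identification of $\mathbf{P}^1(\mathbb{R})$ (the projectivization of the pencil) with a circle: writing $[p:q]\in\mathbf{P}^1(\mathbb{R})$ and choosing the circle coordinate so that the cross-ratio structure matches, set $\varphi(C) = 2\arg(p + q\,\tau)$ for a fixed $\tau$ determined by normalizing the eigenvalue configuration (e.g. via the choice $\sigma_3 = (A+B)^\perp$ used repeatedly in Section \ref{sec:s5}). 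The double cover $\mathbf{P}^1(\mathbb{R})\to\mathbf{S}^1$, $[p:q]\mapsto 2\arg(p+q\tau)$, is exactly the reason the target is $\mathbf{S}^1$ and not $\mathbf{P}^1$: co-oriented hyperplanes remember a sign, and $\varphi(-C) = \varphi(C) + \pi$, matching property (3).

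Second, I would verify the three defining compatibilities. Well-definedness up to positive rescaling of $C$ is immediate since $\arg(p+q\tau)$ depends only on $[p:q]$ up to the sign subtlety already accounted for. The antipodal relation $\varphi(-C) = \varphi(C)+\pi$ gives $\angle(\varphi(-A),\varphi(B)) = \pi - \angle(\varphi(A),\varphi(B))$, which is property (3); symmetry $\angle(\varphi(A),\varphi(B)) = \angle(\varphi(B),\varphi(A))$ is built into the Euclidean angle on $\mathbf{S}^1$. The additivity property (4) — that $\sigma_3$ lying between $\sigma_1$ and $\sigma_2$ forces $\theta(\sigma_1,\sigma_3)+\theta(\sigma_3,\sigma_2) = \theta(\sigma_1,\sigma_2)$ — is precisely the statement that $\varphi$ carries the ``betweenness'' order on the pencil (positive linear combinations) to the circular order on an arc of $\mathbf{S}^1$ of length $\leq\pi$; this is where I would need the map $[p:q]\mapsto 2\arg(p+q\tau)$ to be monotone on the relevant half, so that three positively-related normal vectors land on a common semicircle in the correct cyclic order, making the Euclidean angles add.

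Third, and this is the crux, I must connect this circle picture to the actual invariant angle function $\theta$ of Theorem \ref{thm:main:2}(2), i.e. show that $\theta(A^\perp,B^\perp) = \angle(\varphi(A),\varphi(B))$ and not merely some other function with the same formal properties. For this I would invoke the uniqueness afforded by properties (1)–(4): restricted to a single pencil $span(A,B)-\{O\}$, an invariant angle function is determined by its behavior on the betweenness order plus the normalization $\theta(\sigma,\sigma) = 0$, $\theta(\sigma,-\sigma) = \pi$ — an additive cocycle on an ordered circle with prescribed total mass $\pi$ over a half-turn is unique once we know it is continuous and strictly monotone, which forces it to be (an affine reparametrization of) arc length, hence a Euclidean angle on $\mathbf{S}^1$ after the appropriate coordinate. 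The only input from the explicit arccosine formula \eqref{equ:main:2} that I actually need is that it \emph{is} a valid invariant angle function on $\Sigma_n^{(2)}$ (already proved in Subsection \ref{subsec:2}) and that it depends only on $span(A,B)$ and the co-orientations — both of which hold because the $\lambda_i$ are congruence invariants. I expect the main obstacle to be pinning down the normalization $\tau$ and the factor of $2$ so that the cyclic order of positively-related normal vectors matches the circular order on $\mathbf{S}^1$ uniformly in $(A,B)$ — i.e. checking that the identification of the pencil with $\mathbf{S}^1$ is canonical and compatible with co-orientation across all of $\Sigma_n^{(2)}$, rather than only fiberwise — and I would handle this by reducing to the normalized triple $(A^\perp, (A+B)^\perp, B^\perp)$ exactly as in the proof of property (4) in Subsection \ref{subsec:2}.
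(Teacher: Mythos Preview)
Your proposal contains a genuine gap that stems from conflating two incompatible strategies. In paragraphs 1--2 you try to build $\varphi$ \emph{independently of} $\theta$, via $[p:q]\mapsto 2\arg(p+q\tau)$ for some spectral parameter $\tau$. But the lemma asserts a factorization of the \emph{specific} function $\theta$ defined by \eqref{equ:main:2}, and for a generic choice of $\tau$ the Euclidean angle on your circle coordinate will simply not equal $\theta$: the formula \eqref{equ:main:2} is not the angle in any linear-fractional chart of $\mathbf{P}^1(\mathbb{R})$. You acknowledge this as the ``main obstacle'' --- pinning down $\tau$ --- but the reduction to the normalized triple $(A^\perp,(A{+}B)^\perp,B^\perp)$ from Subsection~\ref{subsec:2} cannot determine $\tau$: that argument only shows additivity of $\theta$, not that $\theta$ matches any pre-chosen circle coordinate.

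The correct idea is buried in your third paragraph: any additive cocycle on the pencil-circle with mass $\pi$ per half-turn becomes Euclidean angle \emph{after the appropriate reparametrization}. But ``the appropriate reparametrization'' is precisely the map $\varphi$ you are looking for, and it must be built \emph{from} $\theta$, not from spectral data. Concretely one sets $\varphi(C) = \pm\theta(A^\perp,C^\perp)$, the sign according to which half-plane of $span(A,B)$ contains $C$. This is exactly the paper's construction; the remaining work is a case analysis verifying $\angle(\varphi(C_1),\varphi(C_2)) = \theta(C_1^\perp,C_2^\perp)$ for $C_1,C_2$ in the same or opposite half-planes, using properties (3) and (4) directly. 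Your spectral-map detour should be discarded; once you define $\varphi$ via $\theta$ itself, the lemma is a short bookkeeping exercise and no uniqueness argument is needed.
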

\begin{proof}
    Consider an arbitrary pair $(A,B)\in \Sigma_n^{(2)}$. We define the map $\varphi_{(A,B)}$ as follows:
    \begin{itemize}
        \item For any $c>0$, we have $\varphi(c\cdot A) = 0$; for any $c<0$, $\varphi(c\cdot A) = \pi$.
        \item If $C$ is a positive linear combination of $B$ and $A$, or $B$ and $-A$, then $\varphi(C) = \theta(A^\perp,C^\perp)$.
        \item If $C$ is a positive linear combination of $-B$ and $A$ or $-B$ and $-A$, then $\varphi(C) = -\theta(A^\perp,C^\perp)$.
    \end{itemize}
    The definition of $\varphi$ ensures that $\angle(\varphi(C_1),\varphi(C_2)) = \theta(C_1^\perp,C_2^\perp)$ whenever $C_1 \in span(A)$ or $C_2 \in span(A)$.
    
    Next, we will show that $\angle(\varphi(C_1),\varphi(C_2)) = \theta(C_1^\perp,C_2^\perp)$ holds for any $C_1,C_2\in span(A,B) - span(A)$. After a positive scaling of $C_1$ and $C_2$, we can assume that $C_1 = \epsilon_1 B+ c_1 A$ and $C_2 = \epsilon_2 B+ c_2 A$, where $c_1,c_2\in\mathbb{R}$, and $\epsilon_1,\epsilon_2\in\{1,-1\}$. The proof is divided in three cases:

    \textbf{Case (1).} Assume that $\epsilon_1 = \epsilon_2 = 1$ and $c_1>c_2$. In this case, $C_1$ is a positive linear combination of $A$ and $C_2$. The property (4) of $\theta$ in Definition \ref{defn:s5:1} implies that
    \[
    \theta(A^\perp,C_2^\perp) - \theta(A^\perp,C_1^\perp) = \theta(C_1^\perp,C_2^\perp)>0,
    \]
    which leads to $0<\varphi(C_1)<\varphi(C_2)<\pi$. Hence, the angle is computed as
    \[
    \angle(\varphi(C_1),\varphi(C_2)) = \varphi(C_2) - \varphi(C_1) = \theta(A^\perp,C_2^\perp) - \theta(A^\perp,C_1^\perp) = \theta(C_1^\perp,C_2^\perp).
    \]
    
    \textbf{Case (2).} If $\epsilon_1 = \epsilon_2 = -1$, we can show that $\angle(\varphi(C_1),\varphi(C_2)) = \theta(C_1^\perp,C_2^\perp)$, following a reasoning analogous to the Case (1).

    \textbf{Case (3).} Assume that $\epsilon_1 = 1$ and $\epsilon_2 = -1$. When $c_1 + c_2 = 0$, it follows that $C_2 = -C_1$. The property (3) of $\theta$ in Definition \ref{defn:s5:1} implies that
    \[
        \varphi(C_2) = -\theta(A^\perp,-C_1^\perp) = -\pi + \theta(A^\perp,C_1^\perp) = \varphi(C_2) - \pi,
    \]
    leading to the conclusion that $\angle(\varphi(C_1),\varphi(C_2)) = \pi = \theta(C_1^\perp,C_2^\perp)$. 
    
    When $c_1 + c_2>0$, $A$ is a positive linear combination of $C_1$ and $C_2$. The property (4) of $\theta$ in Definition \ref{defn:s5:1} implies that
    \[
    \theta(C_1^\perp,C_2^\perp) = \theta(A^\perp,C_1^\perp) + \theta(A^\perp,C_2^\perp) = \varphi(C_1) - \varphi(C_2).
    \]
    Therefore, $\varphi(C_1)$ and $\varphi(C_2)\in \mathbf{S}^1$ satisfy that $0<\varphi(C_1) - \varphi(C_2)<\pi$, leading to the conclusion:
    \[
    \theta(C_1^\perp,C_2^\perp) = \varphi(C_1) - \varphi(C_2) = \angle (\varphi(C_1), \varphi(C_2)).
    \]

    When $c_1 + c_2<0$, we can show that $\angle(\varphi(C_1),\varphi(C_2)) = \theta(C_1^\perp,C_2^\perp)$, analogously to the preceding argument.
\end{proof}
To construct an invariant angle function $\theta$ on $\Sigma_n^{(2)}\sqcup \Sigma_n^{(0)}$, we aim at constructing a family of maps $\varphi_{(A,B)}: span(A,B)-\{O\}\to \mathbf{S}^1$ for $(A,B)\in \Sigma_n^{(2)}$ that satisfies particular properties. We note that the definition of invariant angle function requires a connection between pairs in $\Sigma_n^{(2)}$. Specifically, the property (2) in Definition \ref{defn:s5:1} suggests a connection between the pairs $(A,B)$ and $(g.A,g.B)$ for any $g\in SL(n,\mathbb{R})$. Moreover, the pairs $(A,B)$ and $(A',B')$ are inherently related when $span(A',B') = span(A,B)$. This observation motivates us to introduce an equivalence relation on $\Sigma_n^{(2)}$, formalized as follows:
\begin{defn}
    Two pairs $(A,B)$, $(A',B')\in \Sigma_n^{(2)}$ are equivalent if
    \[
    (A',B') = (g.(p A+ q B),g.(r A+s B)),
    \]
    for an element $g\in SL(n,\mathbb{R})$ and numbers $p,q,r,s\in\mathbb{R}$ with $p s-q r\neq 0$. We denote this equivalence relation by $(A,B)\sim (A',B')$.
\end{defn}

The concept of the cross-ratio plays an important role in characterizing the equivalence classes on $\Sigma_n^{(2)}$, as will be introduced in Subsection \ref{Sec:A:1}.
\subsection{Further insights to the family of functions \texorpdfstring{$\varphi_{(A,B)}$}{Lg}}\label{Sec:A:1}

We refer to \cite{hartshorne1967foundations} for the following concepts and propositions related to the cross-ratio:
\begin{defn}
    The \textbf{cross-ratio} of four distinct points $p_i = [x_i:y_i]\in \mathbb{R}\mathbf{P}^1$, $i=1,2,3,4$, is defined as
    \[
    R_\times (p_1,p_2;p_3,p_4) = \frac{(x_3y_1-y_3x_1)(x_4y_2-y_4x_2)}{(x_3y_2-y_3x_2)(x_4y_1-y_4x_1)}.
    \]

    For points $\mathbf{x}_i\in \mathbb{R}^2-\{(0,0)\}$, the cross ratio $R_\times(\mathbf{x}_1,\mathbf{x}_2;\mathbf{x}_3,\mathbf{x}_4)$ denotes $R_\times([\mathbf{x}_1],[\mathbf{x}_2];[\mathbf{x}_3],[\mathbf{x}_4])$, where $[\mathbf{x}_i]\in\mathbb{R}\mathbf{P}^1$, for $i=1,2,3,4$.
\end{defn}
\begin{prop}\label{prop:a:1:1}
    Consider points $p_i\in \mathbb{R}\mathbf{P}^1$ for $i=1,2,3,4$. The cross-ratio $R_\times (p_1,p_2;p_3,p_4)$ is greater than $1$ if and only if the points $p_1$, $p_2$, $p_3$, and $p_4$ are arranged on $\mathbb{R}\mathbf{P}^1$ according to their cyclic order.
\end{prop}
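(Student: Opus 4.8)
The plan is to reduce Proposition~\ref{prop:a:1:1} to a direct computation with homogeneous coordinates, using the projective transformation group to normalize three of the four points. First I would recall that $PGL(2,\mathbb{R})$ acts simply transitively on triples of distinct points of $\mathbb{R}\mathbf{P}^1$, and that the cross-ratio $R_\times(p_1,p_2;p_3,p_4)$ is invariant under the simultaneous action of $PGL(2,\mathbb{R})$ on all four points (this is the standard invariance, and it is compatible with the definition given above, as one checks by substituting $x_i' = ax_i+by_i$, $y_i' = cx_i+dy_i$ into the formula and factoring out $(ad-bc)$ from numerator and denominator). Since the cyclic order of four points on the circle $\mathbb{R}\mathbf{P}^1$ is also $PGL(2,\mathbb{R})$-invariant only up to the orientation behavior of the transformation, I would be slightly careful here: orientation-reversing elements of $PGL(2,\mathbb{R})$ reverse cyclic order but fix the cross-ratio, and reversing the cyclic order of $(p_1,p_2,p_3,p_4)$ sends it to $(p_4,p_3,p_2,p_1)$, whose cross-ratio is the same as that of $(p_1,p_2,p_3,p_4)$ by the symmetry $R_\times(p_1,p_2;p_3,p_4) = R_\times(p_4,p_3;p_2,p_1)$. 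So both sides of the claimed equivalence are genuinely $PGL(2,\mathbb{R})$-invariant, and the reduction is legitimate.

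Next I would normalize. Using a projective transformation, send $p_1 \mapsto [1:0]$, $p_3 \mapsto [0:1]$, and $p_4 \mapsto [1:1]$; write $p_2 = [t:1]$ for the remaining free parameter $t \in \mathbb{R}\cup\{\infty\}$, with $t \notin \{0,1,\infty\}$ since the four points are distinct. A quick substitution into the cross-ratio formula gives $R_\times(p_1,p_2;p_3,p_4) = \dfrac{(0\cdot 0 - 1\cdot 1)(1\cdot 1 - 1\cdot t)}{(0\cdot 1 - 1\cdot t)(1\cdot 0 - 1\cdot 1)} = \dfrac{(-1)(1-t)}{(-t)(-1)} = \dfrac{t-1}{t}$. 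Then I would determine for which $t$ this exceeds $1$: $\frac{t-1}{t} > 1 \iff \frac{t-1}{t} - 1 > 0 \iff \frac{-1}{t} > 0 \iff t < 0$. So the condition $R_\times > 1$ is precisely $t < 0$, i.e. $p_2 = [t:1]$ with $t$ negative.

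Finally I would match this with the cyclic-order condition. With $p_1 = [1:0]$ at the ``point at infinity,'' $p_3 = [0:1]$ corresponding to $0$, and $p_4 = [1:1]$ corresponding to $1$ on the affine line $\mathbb{R} \subset \mathbb{R}\mathbf{P}^1$, the natural cyclic order on $\mathbb{R}\mathbf{P}^1$ restricted to the affine chart is the usual order on $\mathbb{R}$ with $\pm\infty$ glued. Going around the circle we encounter, in order, $p_1(=\infty), p_3(=0), p_4(=1)$, and then back to $\infty$; inserting $p_2 = t$, the points appear in the cyclic order $p_1, p_2, p_3, p_4$ exactly when $t$ lies on the arc from $\infty$ to $0$ not containing $1$, which is precisely $t \in (-\infty, 0)$. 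This coincides with $t<0$, completing the equivalence. The main obstacle I anticipate is not the computation but the bookkeeping around orientation: one must make sure the normalization step is chosen so that ``cyclic order $p_1,p_2,p_3,p_4$'' is preserved (or consistently reversed, which as noted does not change the cross-ratio), and that the chosen chart identifies the circular order on $\mathbb{R}\mathbf{P}^1$ with the expected order on $\mathbb{R}\cup\{\infty\}$; I would state this carefully rather than wave at it, since sign conventions in the cross-ratio formula and the choice of which three points to pin down interact.
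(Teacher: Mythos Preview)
Your argument is correct. The paper does not supply its own proof of this proposition: it is stated together with Propositions~\ref{prop:A:1:2} and~\ref{prop:A:1:3} under the heading ``We refer to \cite{hartshorne1967foundations} for the following concepts and propositions related to the cross-ratio,'' so there is no in-paper proof to compare against. Your normalization via the $PGL(2,\mathbb{R})$-action, the explicit computation $R_\times = (t-1)/t$, and the identification of the arc $t<0$ with the cyclic-order condition constitute a clean self-contained verification of this standard fact; your remark that both the cross-ratio and the (unoriented) cyclic-order condition are invariant under orientation-reversing projective maps is exactly the bookkeeping needed to justify the reduction.
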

\begin{prop}\label{prop:A:1:2}
    Identify points $p_i\in \mathbb{R}\mathbf{P}^1$ as one-dimensional linear subspaces of $\mathbb{R}^2$, where $i=1,2,3,4$. Denote by $\theta_i$ the angle between $p_i$ and $p_{i+1}$ for $i=1,2,3$. The cross-ratio $R_\times (p_1,p_2;p_3,p_4)$ is expressed as a function of $\theta_i$:
    \[
    R_\times (p_1,p_2;p_3,p_4) = R_\times(\theta_1,\theta_2,\theta_3): = \frac{\sin(\theta_1+\theta_2)\sin(\theta_2+\theta_3)}{\sin\theta_2\sin(\theta_1+\theta_2+\theta_3)}.
    \]
\end{prop}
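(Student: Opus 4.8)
The plan is to pass to angular coordinates on $\mathbb{R}\mathbf{P}^1$, reduce every factor appearing in the definition of the cross-ratio to a sine of a difference of two angles, and then substitute. First I would choose, for each line $p_i$, a unit direction vector $\mathbf{x}_i = (\cos\alpha_i, \sin\alpha_i)$, so that $p_i = [\cos\alpha_i : \sin\alpha_i]$. The angle $\alpha_i$ is determined only modulo $\pi$, but replacing $\mathbf{x}_i$ by $-\mathbf{x}_i$ multiplies exactly one factor in the numerator and one in the denominator of $R_\times(p_1,p_2;p_3,p_4)$ by $-1$, so the cross-ratio is unaffected; this is just a reflection of its being well defined on $\mathbb{R}\mathbf{P}^1$.

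With this parametrization, the identity $\cos\alpha\sin\beta - \sin\alpha\cos\beta = \sin(\beta-\alpha)$ converts each bracketed $2\times 2$ determinant into a sine,
\[
x_j y_i - y_j x_i = \sin(\alpha_i - \alpha_j),
\]
so that the definition collapses to
\[
R_\times(p_1,p_2;p_3,p_4) = \frac{\sin(\alpha_1 - \alpha_3)\,\sin(\alpha_2 - \alpha_4)}{\sin(\alpha_2 - \alpha_3)\,\sin(\alpha_1 - \alpha_4)}.
\]
It then remains to express the four differences in terms of $\theta_1,\theta_2,\theta_3$. Using the freedom to shift each $\alpha_i$ by an integer multiple of $\pi$ (harmless by the previous remark) together with the definition of $\theta_i$ as the angle between $p_i$ and $p_{i+1}$, I would pick representatives with $\alpha_1 > \alpha_2 > \alpha_3 > \alpha_4$ and $\alpha_i - \alpha_{i+1} = \theta_i$ for $i = 1,2,3$; this is precisely the configuration in which the four points occur in their cyclic order on $\mathbb{R}\mathbf{P}^1$, which is the situation relevant to the appendix (cf. Proposition \ref{prop:a:1:1}). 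Then $\alpha_1 - \alpha_3 = \theta_1 + \theta_2$, $\alpha_2 - \alpha_4 = \theta_2 + \theta_3$, $\alpha_2 - \alpha_3 = \theta_2$, and $\alpha_1 - \alpha_4 = \theta_1 + \theta_2 + \theta_3$, and substituting into the displayed formula yields exactly
\[
R_\times(p_1,p_2;p_3,p_4) = \frac{\sin(\theta_1+\theta_2)\,\sin(\theta_2+\theta_3)}{\sin\theta_2\,\sin(\theta_1+\theta_2+\theta_3)},
\]
as claimed.

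There is no substantial obstacle here; the single point deserving care --- and which I would state explicitly --- is the bookkeeping of signs. Since ``the angle between two lines'' is an unsigned quantity whereas the computation produces the signed differences $\alpha_i - \alpha_j$, one must fix a rotational orientation once and for all and require $p_1,\dots,p_4$ to be listed consistently with it; otherwise an individual $\theta_i$ could be read as $-\theta_i$ or as $\pi - \theta_i$, and the right-hand side would change. I would therefore record this convention at the outset, or equivalently invoke the cyclic-order characterization of $R_\times > 1$ from Proposition \ref{prop:a:1:1}, after which the final substitution is unambiguous.
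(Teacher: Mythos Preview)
Your argument is correct: parametrizing each line by a unit vector $(\cos\alpha_i,\sin\alpha_i)$ turns each $2\times 2$ minor into $\sin(\alpha_i-\alpha_j)$, and the substitution $\alpha_i-\alpha_{i+1}=\theta_i$ gives the stated formula; your handling of the sign ambiguity is also sound. The paper itself does not prove this proposition but simply cites it from Hartshorne, so there is no in-paper proof to compare against; your direct computation is the standard one.
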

\begin{prop}\label{prop:A:1:3}
    Consider two-dimensional vector spaces $V$ and $W$ and an invertible linear map $\varphi: V\to W$. Let $p_i\in V$ and $q_i\in W$ represent distinct points for $i=1,2,3,4$. Assuming that $q_i = \varphi(p_i)$ for $i=1,2,3$, then the equality
    \[
    R_\times (p_1,p_2;p_3,p_4) = R_\times (q_1,q_2;q_3,q_4)
    \]
    holds if and only if $q_4$ is a non-zero multiple of $\varphi(p_4)$.
\end{prop}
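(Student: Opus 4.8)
\textbf{Proof proposal for Proposition \ref{prop:A:1:3}.}

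The plan is to reduce everything to the well-known projective invariance of the cross-ratio under linear isomorphisms, and then isolate the role of the fourth point. First I would fix bases for $V$ and $W$, writing $p_i = (x_i, y_i)$ and $q_i = (u_i, v_i)$ in those bases, so that $R_\times(p_1,p_2;p_3,p_4)$ and $R_\times(q_1,q_2;q_3,q_4)$ become the explicit expressions from the definition. Since $\varphi$ is an invertible linear map, it descends to a projective transformation $\mathbb{P}(V) \to \mathbb{P}(W)$, i.e. an element of $PGL(2,\mathbb{R})$ once bases are chosen, and the standard fact (cf. \cite{hartshorne1967foundations}) is that any such projective transformation preserves the cross-ratio. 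Hence
\[
R_\times(\varphi(p_1),\varphi(p_2);\varphi(p_3),\varphi(p_4)) = R_\times(p_1,p_2;p_3,p_4).
\]

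Next I would exploit the hypothesis $q_i = \varphi(p_i)$ for $i=1,2,3$ to rewrite the left-hand target. The cross-ratio $R_\times(q_1,q_2;q_3,q_4)$ depends on $q_4$ only through its class $[q_4] \in \mathbb{P}(W)$, and as a function of that class it is a non-constant Möbius function: indeed, once $[q_1],[q_2],[q_3]$ are three fixed distinct points, the assignment $[q_4] \mapsto R_\times(q_1,q_2;q_3,q_4)$ is a bijection from $\mathbb{P}(W) \setminus \{[q_3]\}$ (say) onto its image, because a Möbius transformation is determined by, and injective away from, its pole. Therefore $R_\times(q_1,q_2;q_3,q_4) = R_\times(q_1,q_2;q_3,\varphi(p_4))$ if and only if $[q_4] = [\varphi(p_4)]$, which is exactly the statement that $q_4$ is a nonzero scalar multiple of $\varphi(p_4)$. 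Combining this equivalence with the projective invariance identity above yields the claim: $R_\times(p_1,p_2;p_3,p_4) = R_\times(q_1,q_2;q_3,q_4)$ holds precisely when $q_4 \in \mathbb{R}^\times \cdot \varphi(p_4)$.

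I expect the main (minor) obstacle to be bookkeeping around degenerate configurations: one must check that all four points involved are genuinely distinct so that the cross-ratios are defined and the Möbius map $[q_4] \mapsto R_\times(q_1,q_2;q_3,q_4)$ is actually injective on the relevant domain (its value lands in $\{0,1,\infty\}$ exactly when $q_4$ collides with one of $q_1,q_2,q_3$, so these cases are automatically excluded by the distinctness hypothesis on the $q_i$). A clean way to organize this is to move the three points $[q_1],[q_2],[q_3]$ to $0,\infty,1$ by a projective change of coordinates on $W$, after which the cross-ratio with these three as the first three arguments is literally the coordinate of the fourth point; injectivity in the fourth argument is then transparent, and the invariance statement transports back. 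The rest is the routine verification that the formula in the definition is genuinely $GL$-invariant, which I would cite rather than recompute.
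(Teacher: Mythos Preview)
Your argument is correct and is exactly the standard one. Note that the paper does not actually supply a proof of this proposition: it is listed among the ``concepts and propositions related to the cross-ratio'' for which the reader is referred to \cite{hartshorne1967foundations}. So there is nothing to compare against beyond observing that your two-step outline---projective invariance of the cross-ratio under $GL_2$, together with injectivity of $[q_4]\mapsto R_\times(q_1,q_2;q_3,q_4)$ once the first three arguments are fixed and distinct---is precisely the textbook route, and your handling of the degenerate cases via the distinctness hypothesis is adequate.
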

\begin{rmk}\label{rmk:A:1:1}
    Invertible linear maps on $\mathbb{R}^2$ correspond to M\"obius transformations on $\mathbb{R}\mathbf{P}^1$. Thus, Proposition \ref{prop:A:1:3} remains valid when the parts ``two-dimensional spaces'' and ``invertible linear map'' are replaced by ``projective lines'' and ``M\"obius transformation'' respectively.
\end{rmk}
The cross-ratios of consecutive points among $n\geq 4$ points in $\mathbb{R}\mathbf{P}^1$ depend on each other:
\begin{lem}\label{lem:A:1:1:1}
    Consider $n\geq 4$ distinct points $p_1,\dots, p_n\in \mathbb{R}\mathbf{P}^1$, denote $R_i = R_\times (p_i,p_{i+1};p_{i+2},p_{i+3})$ for $i=1,\dots,n$, with indices being taken modulo $n$. It follows that $R_{n-2}$, $R_{n-1}$, and $R_n$ are determined by $R_1$ through $R_{n-3}$.
\end{lem}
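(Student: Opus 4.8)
The plan is to count degrees of freedom: a configuration of $n$ distinct points on $\mathbb{R}\mathbf{P}^1$ lives, up to the $3$-dimensional group of M\"obius transformations, in an $(n-3)$-dimensional space, so any $n$ projectively invariant quantities attached to the configuration must satisfy (generically) $n-(n-3)=3$ functional relations. The cross-ratios $R_1,\dots,R_n$ of consecutive quadruples are exactly $n$ such invariants, so we expect the last three to be expressible in terms of the first $n-3$. To make this precise and avoid the subtleties of ``generic'' statements, I would instead argue directly and constructively, recovering the whole configuration from $R_1,\dots,R_{n-3}$.

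First I would normalize by a M\"obius transformation (using Remark \ref{rmk:A:1:1} and Proposition \ref{prop:A:1:3}) so that, say, $p_1,p_2,p_3$ are sent to three fixed reference points $0,1,\infty\in\mathbb{R}\mathbf{P}^1$; since cross-ratios are M\"obius-invariant, this changes none of the $R_i$. Then I would reconstruct the remaining points one at a time. The relation
\[
R_i = R_\times(p_i,p_{i+1};p_{i+2},p_{i+3})
\]
is, for fixed $p_i,p_{i+1},p_{i+2}$ and fixed value $R_i$, a M\"obius (hence bijective, degree-one) condition determining $p_{i+3}$ uniquely on $\mathbb{R}\mathbf{P}^1$. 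So from the fixed $p_1,p_2,p_3$ and the datum $R_1$ we get $p_4$; from $p_2,p_3,p_4$ and $R_2$ we get $p_5$; inductively, from $R_1,\dots,R_{n-3}$ we recover $p_4,\dots,p_n$, and hence the entire (normalized) configuration $p_1,\dots,p_n$. Once all $n$ points are known, $R_{n-2}=R_\times(p_{n-2},p_{n-1};p_n,p_1)$, $R_{n-1}=R_\times(p_{n-1},p_n;p_1,p_2)$, and $R_n=R_\times(p_n,p_1;p_2,p_3)$ are thereby determined. Since the reconstruction used only the values $R_1,\dots,R_{n-3}$ (the normalization being harmless), this exhibits $R_{n-2},R_{n-1},R_n$ as functions of $R_1,\dots,R_{n-3}$, which is the claim.

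The one point requiring care — and the main obstacle — is nondegeneracy: the inductive step ``$R_i$ determines $p_{i+3}$ uniquely'' and the normalization both presuppose that the relevant triples of points are distinct, and more seriously one must check that the reconstructed points $p_4,\dots,p_n$ are genuinely distinct from each other and from $p_1,p_2,p_3$, so that the later cross-ratios $R_{n-2},R_{n-1},R_n$ are even defined. I would handle this by noting that we are given at the outset that $p_1,\dots,p_n$ are $n$ distinct points whose consecutive cross-ratios are $R_1,\dots,R_{n-3}$; running the reconstruction on these fixed data, the uniqueness in each step forces the reconstructed configuration to coincide with the given one (which is distinct by hypothesis), so no degeneracy arises. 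In other words the statement is about the given configuration, not about arbitrary prescribed values of the $R_i$, and the functional dependence is the identity ``evaluate the cross-ratio of the reconstructed points.'' If one instead wants explicit formulas, one can substitute the parametrization of Proposition \ref{prop:A:1:2}, writing each $R_i$ in terms of the angles $\theta_1,\dots,\theta_{n-1}$ between consecutive points and solving the triangular system, but for the qualitative statement the constructive argument above suffices.
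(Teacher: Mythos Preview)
Your proposal is correct and follows essentially the same approach as the paper: normalize $(p_1,p_2,p_3)$ to $(0,1,\infty)$ by a M\"obius transformation, then use $R_1,\dots,R_{n-3}$ to successively determine $p_4,\dots,p_n$, from which $R_{n-2},R_{n-1},R_n$ are computed. Your additional discussion of the degree-of-freedom heuristic and the nondegeneracy issue is more thorough than the paper's brief argument, but the core idea is identical.
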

\begin{proof}
    A unique M\"obius transformation exists that takes $p_1$, $p_2$ and $p_3$ to $0$, $1$ and $\infty$, respectively. As noted in Remark \ref{rmk:A:1:1}, M\"obius transformations preserve the cross-ratios among points in $\mathbb{R}\mathbf{P}^1$. Consequently, we may assume that $(p_1,p_2,p_3) = (0,1,\infty)$ without loss of generality. Under this assumption, equations $R_i = R_\times (p_i,p_{i+1};p_{i+2},p_{i+3})$, $i=1,\dots,n-3$ uniquely determine the points $p_4$ through $p_n$. Hence, the cross-ratios $R_{n-2}$, $R_{n-1}$ and $R_n$ are also uniquely determined.
\end{proof}
We return to the equivalence classes in $\Sigma_n^{(2)}$.
\begin{defn}
    Define $M_n'$ as the $(n-3)$-dimensional space given by
    \[
    M_n' = \{(R_1,\dots,R_n)\in \mathbb{R}_{>1}^n| R_i = R_i(R_1,\dots,R_{n-3}),i=n-2,n-1,n\},
    \]
    where $R_i(R_1,\dots,R_{n-3})$, for $i=n-2,n-1,n$, are as described in Lemma \ref{lem:A:1:1:1}. Introduce the following equivalence relations on $M_n'$:
    \[
    (R_1,\dots, R_{n})\sim (R_{n},\dots,R_1),\ (R_1,\dots,R_{n})\sim (R_{i+1},\dots, R_{i+n}), \ i=1,\dots,n-1,
    \]
    with indices taken modulo $n$. Define $M_n = M_n'/\sim$ as the quotient space by these equivalence relations.

    For a point in $M_n$ represented by $(R_1,\dots, R_{n})\in M_n'$, denote by $\Sigma_n^{R_1,\dots,R_{n}}$ the set consisting of pairs $(A,B)\in \Sigma_n^{(2)}$, such that the generalized eigenvalues of $(A,B)$ are ordered as $\lambda_n>\dots>\lambda_1$, and their cross-ratios
    \[
    R_i': = R_\times (\lambda_i,\lambda_{i+1};\lambda_{i+2},\lambda_{i+3}),\ i=1,\dots,n,
    \]
    satisfy that $(R_1,\dots,R_n)\sim(R_1',\dots,R_n')$. For conciseness, we omit the last three components from the superscript and denote the set above by $\Sigma_n^{R_1,\dots,R_{n-3}}$.
\end{defn}
\begin{lem}
    Each equivalence class in $\Sigma_n^{(2)}$ is contained in $\Sigma_n^{R_1,\dots,R_{n-3}}$ for a specific element $(R_1,\dots,R_{n})\in M_n$.
\end{lem}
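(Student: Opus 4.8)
The plan is to reduce the statement to two facts about the equivalence relation $\sim$ on $\Sigma_n^{(2)}$: first, that $\sim$ acts only by $SL(n,\mathbb{R})$-congruences composed with M\"obius reparametrizations of the pencil, and second, that both of these operations leave invariant the $\sim$-class of the $n$-tuple of cross-ratios $(R_1',\dots,R_n')$ attached to the ordered generalized eigenvalues. Once these are in place, the map $(A,B)\mapsto [(R_1',\dots,R_n')]\in M_n$ is constant on each equivalence class, which is exactly the assertion.

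First I would fix an equivalence class and a representative $(A,B)$. By Proposition~\ref{prop:2}, applying $g\in SL(n,\mathbb{R})$ to $(A,B)$ leaves the generalized eigenvalues $\lambda_1<\dots<\lambda_n$ (with multiplicities, here all simple since $(A,B)\in\Sigma_n^{(2)}$) unchanged, hence leaves each $R_i'=R_\times(\lambda_i,\lambda_{i+1};\lambda_{i+2},\lambda_{i+3})$ unchanged, so in particular the point of $M_n$ is unchanged. Next, by Lemma~\ref{lem:mob}, replacing $(A,B)$ by $(pA+qB,rA+sB)$ with $ps-qr\neq 0$ transforms the generalized eigenvalues by the M\"obius map $\varphi(\lambda)=\frac{p\lambda+q}{r\lambda+s}$ of $\overline{\mathbb{R}}$. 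Here I would split into the orientation-preserving and orientation-reversing cases for $\varphi$ acting on $\overline{\mathbb{R}}$: in the first case the cyclic order of $\varphi(\lambda_1),\dots,\varphi(\lambda_n)$ on $\mathbb{R}\mathbf{P}^1$ is a cyclic rotation of $1,\dots,n$, and since M\"obius transformations preserve cross-ratios (Remark~\ref{rmk:A:1:1}), the new tuple $(R_1'',\dots,R_n'')$ is a cyclic shift of $(R_1',\dots,R_n')$; in the second case it is the reversal of a cyclic shift. In either case the new tuple is $\sim$-equivalent to the old one, so again the point of $M_n$ is unchanged.

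Since a general element of the equivalence class of $(A,B)$ is obtained by composing these two kinds of operations, $(A,B)\sim(A',B')$ implies that the corresponding points of $M_n$ coincide. Writing $[(R_1,\dots,R_n)]\in M_n$ for that common point, every member of the class lies in $\Sigma_n^{R_1,\dots,R_{n-3}}$ by definition, which proves the lemma. I also need to check that this point of $M_n$ is well defined to begin with, i.e.\ that for an arbitrary $(A,B)\in\Sigma_n^{(2)}$ the tuple $(R_1',\dots,R_n')$ really does land in $M_n'$ (the three ``dependent'' cross-ratios $R_{n-2}',R_{n-1}',R_n'$ are the prescribed functions of $R_1',\dots,R_{n-3}'$); this is immediate from Lemma~\ref{lem:A:1:1:1} applied to the $n$ distinct points $\lambda_1,\dots,\lambda_n\in\mathbb{R}\subset\mathbb{R}\mathbf{P}^1$, and the $R_i'>1$ constraint follows from Proposition~\ref{prop:a:1:1} because $\lambda_1<\dots<\lambda_n$ are in cyclic order on $\mathbb{R}\mathbf{P}^1$.

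The only genuinely delicate point is the bookkeeping in the M\"obius step: one must be careful that when $\varphi$ is orientation-reversing on $\overline{\mathbb{R}}$ the reordering of the eigenvalues needed to restore the decreasing order $\lambda_n>\dots>\lambda_1$ is precisely a cyclic-shift-then-reverse, and that this is exactly one of the moves quotiented out in the definition of $M_n$; an off-by-one or a wrong reflection would break the argument. I expect this to be the main obstacle, but it is a finite combinatorial check rather than a conceptual one, and it mirrors the permutation analysis already carried out in the proof of Lemma~\ref{lem:3:3:0:1}.
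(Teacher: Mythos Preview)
Your proposal is correct and follows essentially the same approach as the paper's proof: invariance of the eigenvalue set under $SL(n,\mathbb{R})$-congruence (Proposition~\ref{prop:2}), the M\"obius change of eigenvalues under pencil reparametrization (Lemma~\ref{lem:mob}), and the fact that M\"obius maps preserve cross-ratios while permuting the ordered eigenvalues only by cyclic shifts and reversals (Remark~\ref{rmk:A:1:1}). Your additional explicit check that the tuple lands in $M_n'$ via Lemma~\ref{lem:A:1:1:1} and Proposition~\ref{prop:a:1:1} is a welcome bit of care that the paper leaves implicit.
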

\begin{proof}
    For each $(A,B)\in \Sigma_n^{(2)}$ with eigenvalues $\lambda_n>\dots>\lambda_1$, Proposition \ref{prop:a:1:1} implies that the cross-ratios $R_i = R_\times (\lambda_i,\lambda_{i+1};\lambda_{i+2},\lambda_{i+3})>1$ for $i=1,\dots, n$.

    Consider a pair $(A,B)\in \Sigma_n^{R_1,\dots,R_{n-3}}$, where $(R_1,\dots,R_{n})\in M_n$. According to Proposition \ref{prop:2:3:1}, the generalized eigenvalues of $(g.A,g.B)$ are equal to those of $(A,B)$ for any $g\in SL(n,\mathbb{R})$. Thus, $(g.A,g.B)\in \Sigma_n^{R_1,\dots,R_{n-3}}$. Moreover, for any $p,q,r,s\in\mathbb{R}$ with $ps-qr\neq 0$, Lemma \ref{lem:mob} demonstrates that the generalized eigenvalues of $(pA+qB,rA+sB)$ are equal to $(p \lambda_i+q)/(r \lambda_i+s)$, where $i=1,\dots, n$, which are the images of $\lambda_1$, $\dots$, $\lambda_n$ under a M\"obius transformation. According to Remark \ref{rmk:A:1:1}, M\"obius transformations preserve the cross-ratios. Additionally, such transformations alter the order of $\lambda_i$, $i=1,\dots,n$ by a cyclic permutation with/or a reversal, thus they alter the order of $R_\times(\lambda_i,\lambda_{i+1};\lambda_{i+2},\lambda_{i+3})$, $i=1,\dots,n$ in the same manner. Such permutations preserve the equivalence classes in $M_n'$. Hence, $(pA+qB,rA+sB)\in \Sigma_n^{R_1,\dots,R_{n-3}}$ as well.
\end{proof}
\begin{rmk}
    For $(A,B)\in \Sigma_n^{R_1,\dots,R_{n-3}}$, the set of singular matrices in $span(A,B)$ consists of $n$ lines, specifically $span(A-\lambda_i B)$, where $\lambda_i$, $i=1,\dots, n$ are the generalized eigenvalues of $(A,B)$. Additionally, $R_i = R_\times(A-\lambda_i B,A-\lambda_{i+1} B;A-\lambda_{i+2} B,A-\lambda_{i+3} B)$ for $i=1,\dots,n$.
\end{rmk}
We make a further assumption on the family of maps $\varphi_{(A,B)}$. For any $(R_1,\dots,R_n)\in M_n$, we assume that the angles $\theta_i = \angle(\varphi_{(A,B)}(C_i),\varphi_{(A,B)}(C_{i+1}))$ remain constant despite different selections of $(A,B)\in \Sigma_n^{R_1,\dots,R_{n-3}}$. Here, $C_1,\dots, C_n$ are the generators of the $n$ lines of singular matrices in $span(A, B)$, with $R_i = R_\times(C_i, C_{i+1}; C_{i+2}, C_{i+3})$, $i=1,\ldots,n$. According to Proposition \ref{prop:A:1:2}, the angles $\theta_i$, $i=1,\dots,n$, satisfy the following condition:
\begin{equation}\label{equ:a1}
    \begin{split}
        & \sum_{i=1}^n\theta_i = \pi,\\
        & R_\times(\theta_i,\theta_{i+1},\theta_{i+2}) = R_i,\ \forall i=1,\dots, n.
    \end{split}
\end{equation}
Denote by $\Theta$ a proposed function that takes $(R_1,\dots,R_n)\in M_n'$ to $(\theta_1,\dots,\theta_n)\in (\mathbf{S}^1)^n$ satisfying \eqref{equ:a1}. Note that a cyclic permutation of the elements $C_1,\dots, C_n$ results in identical permutations of both $(R_1,\dots, R_{n})$ and $(\theta_1,\dots,\theta_n)$. Consequently, we establish another condition:
\begin{equation}\label{equ:a2}
    \begin{split}
        & \Theta(R_{1+j},\dots,R_{n+j}) = (\theta_{1+j},\dots,\theta_{n+j}),\ \forall j=0,\dots, n-1,\\
        & \Theta(R_n,\dots,R_1) = (\theta_2,\theta_1,\theta_n,\dots,\theta_3).
    \end{split}
\end{equation}
We describe the requirements for the family of maps $\varphi_{(A,B)}$ as below.    
\begin{prop}\label{prop:A:1:4}
    Suppose that there exists a map $\Theta: M_n'\to (\mathbf{S}^1)^n$ satisfying conditions \eqref{equ:a1} and \eqref{equ:a2}. Moreover, assume that for each tuple $(R_1,\dots,R_{n})\in M_n'$ and and the corresponding output $(\theta_1,\dots,\theta_n) = \Theta(R_1,\dots, R_n)$, there is a family of maps $\varphi_{(A,B)}:span(A,B)-\{O\}\to\mathbf{S}^1$ for $(A,B)\in\Sigma_n^{R_1,\dots,R_{n-3}}$ satisfying the criteria:
    \begin{enumerate}
        \item The map $\varphi_{(A,B)}$ is the composition of a linear map $\psi_{(A,B)}: span(A,B)\to \mathbb{R}^2$ and the canonical quotient map $\mathbb{R}^2-\{O\}\to (\mathbb{R}^2-\{O\})/\mathbb{R}^+ = \mathbf{S}^1$.
        \item Suppose that $C_1,\dots,C_n$ is any set of pairwise linearly independent singular elements in $span(A,B)$ satisfying that
        \[
        R_\times(C_i,C_{i+1};C_{i+2},C_{i+3}) = R_i,
        \] 
        for $i=1,\dots,n$, and $C_2,\dots,C_{n-1}$ represent positive linear combinations of $C_1$ and $C_{n}$, where the indices are taken modulo $n$. Then the relationship
        \begin{equation}\label{equ:A:2}
        \varphi_{(A,B)}(C_i) = \eta+ \sum_{j=1}^{i-1}\theta_j,\ i=1,\dots, n,
        \end{equation}
        holds for a specific $\eta = \eta(C_1,\dots, C_n) \in \mathbf{S}^1$.
    \end{enumerate}
Under these criteria, the function
\[
\theta(A^\perp,B^\perp):=\angle(\varphi_{(A,B)}(A),\varphi_{(A,B)}(B))
\]
is well-defined and serves as an invariant angle function.
\end{prop}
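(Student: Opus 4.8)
The plan is to verify directly that the three conditions imposed on $\Theta$ and the family $\varphi_{(A,B)}$ force the candidate function $\theta(A^\perp,B^\perp) := \angle(\varphi_{(A,B)}(A),\varphi_{(A,B)}(B))$ to satisfy the four axioms of Definition \ref{defn:s5:1}, together with the well-definedness of $\varphi_{(A,B)}$ up to the ambiguities that do not affect the Euclidean angle $\angle$. First I would unpack what data a given equivalence class carries: a pair $(A,B)\in\Sigma_n^{R_1,\dots,R_{n-3}}$ determines the $n$ singular lines $span(A-\lambda_iB)$ in $span(A,B)$, and by criterion (1) the map $\varphi_{(A,B)}$ factors through a linear $\psi_{(A,B)}:span(A,B)\to\mathbb{R}^2$; criterion (2) pins down the images of the singular generators $C_i$ up to a global rotation $\eta$ and, implicitly, up to the orientation chosen on $\mathbb{R}^2$. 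The key observation is that the Euclidean angle between two rays in $\mathbf{S}^1$ is invariant under a common rotation and under reflection, so $\theta(A^\perp,B^\perp)$ does not depend on $\eta$ nor on the orientation; one must still check that it does not depend on the choice of the ordered tuple $C_1,\dots,C_n$ among cyclic rotations and the reversal, and this is exactly where conditions \eqref{equ:a2} on $\Theta$ enter — a cyclic shift of the $C_i$ shifts $(\theta_1,\dots,\theta_n)$ cyclically, hence shifts all $\varphi_{(A,B)}(C_i)$ by a constant, while a reversal conjugates by a reflection; either way the pairwise angles are unchanged. A subtlety to address: $A$ and $B$ themselves are generally not among the $C_i$, so I would express $A$ and $B$ as positive combinations lying in the arcs between consecutive singular lines and use that $\varphi_{(A,B)}$ (being the quotient of a linear map, i.e.\ essentially a projectivity post-composed with the quotient to $\mathbf{S}^1$) is monotone on each such arc and determined there by the endpoint values $\varphi_{(A,B)}(C_i)$; combined with \eqref{equ:a1}, this makes $\varphi_{(A,B)}(A)$ and $\varphi_{(A,B)}(B)$ well-defined points of $\mathbf{S}^1$.

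Once well-definedness is in hand, the four axioms follow almost formally. Axiom (1) ($0\le\theta\le\pi$, with $0$ iff $\sigma_1=\sigma_2$ and $\pi$ iff $\sigma_1=-\sigma_2$) is immediate from the definition of the Euclidean angle $\angle$ together with the injectivity of $\varphi_{(A,B)}$ on rays (a positive rescaling of $A$ does not move $\varphi_{(A,B)}(A)$, a negative one sends it to the antipode). Axiom (2) (invariance under $g\in SL(n,\mathbb{R})$) uses that $(g.A,g.B)$ lies in the same set $\Sigma_n^{R_1,\dots,R_{n-3}}$ — the generalized eigenvalues and hence all the cross-ratios $R_i$ are unchanged by Proposition \ref{prop:2} — so the same $(\theta_1,\dots,\theta_n)$ and the same functional recipe apply, and $\varphi_{(g.A,g.B)}(g.C)$ agrees with $\varphi_{(A,B)}(C)$ up to the allowed rotation/reflection; thus the angle is preserved. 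Axiom (3) has two halves: $\theta(\sigma_2,\sigma_1)=\theta(\sigma_1,\sigma_2)$ because $\angle$ is symmetric, and $\theta(-\sigma_1,\sigma_2)=\pi-\theta(\sigma_1,\sigma_2)$ because replacing $A$ by $-A$ replaces $\varphi_{(A,B)}(A)$ by its antipode, and the Euclidean angle to the antipode is $\pi$ minus the original angle. Axiom (4) (additivity when $\sigma_3$ lies between $\sigma_1$ and $\sigma_2$) is where criterion (2) and the monotonicity of $\varphi_{(A,B)}$ on arcs do the real work: if the normal vector $C$ of $\sigma_3$ is a positive combination of $A$ and $B$, then $\varphi_{(A,B)}(C)$ lies on the (shorter) arc of $\mathbf{S}^1$ between $\varphi_{(A,B)}(A)$ and $\varphi_{(A,B)}(B)$ — here one needs that this arc has length $<\pi$, i.e.\ that $A$ and $B$ are not antipodal in the relevant configuration, which follows from $(A,B)\in\Sigma_n^{(2)}$ since then $A,B$ are separated by at most a proper sub-arc spanned by consecutive singular lines — and therefore the angles add: $\angle(\varphi(A),\varphi(C))+\angle(\varphi(C),\varphi(B))=\angle(\varphi(A),\varphi(B))$.

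I would organize the write-up as: (i) a lemma stating that for fixed $(R_1,\dots,R_n)$ the point $\varphi_{(A,B)}(C)\in\mathbf{S}^1$ is independent of the auxiliary choices ($\eta$, orientation, ordering of the $C_i$) modulo a rotation-and-reflection that acts trivially on all pairwise $\angle$'s; (ii) the verification of Axioms (1)–(4) as above; and (iii) a remark extending the construction across different equivalence classes, i.e.\ checking that the recipe is consistent as $(R_1,\dots,R_{n-3})$ varies, which is guaranteed because $\Theta$ is a single global function on $M_n'$ and because the sets $\Sigma_n^{R_1,\dots,R_{n-3}}$ partition $\Sigma_n^{(2)}$ (with the degenerate locus $\Sigma_n^{(0)}$ handled separately: there $span(A,B)$ is one-dimensional, $\varphi$ takes only the two values $0,\pi$, and $\theta$ is $0$ or $\pi$ as required). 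The main obstacle I anticipate is not any single axiom but the bookkeeping in step (i): making precise the claim that the linear map $\psi_{(A,B)}$ and the rotation $\eta$ together absorb all the freedom, and in particular that the orientation ambiguity — an invertible linear map of $\mathbb{R}^2$ may reverse orientation — does not secretly change which of the two arcs between $\varphi(A)$ and $\varphi(B)$ the "betweenness" condition picks out. Resolving this cleanly requires noting that "lies between" is itself defined by positivity of coefficients, a condition invariant under all the transformations in play, so the combinatorial position of $C$ relative to $A$ and $B$ on $\mathbf{S}^1$ is intrinsic; once that is said carefully, Axiom (4) — the only axiom with real geometric content — drops out, and the rest is routine.
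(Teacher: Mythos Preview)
Your approach is essentially the same as the paper's: both hinge on showing that the criteria determine $\varphi_{(A,B)}$ up to ambiguities (rotation by $\eta$, and in your treatment also reflection and reordering of the $C_i$) that leave the Euclidean angle $\angle$ invariant, after which the four axioms follow from elementary properties of $\angle$. The paper's write-up is terser than yours: it proves uniqueness of $\varphi_{(A,B)}$ up to an additive constant by fixing $\varphi_{(A,B)}(C_1)=0$ and observing that $\psi_{(A,B)}$ is then pinned down up to a positive scalar by the images of $C_1,C_2,C_n$, then dispatches axioms (1), (3), (4) in a single sentence (``inherently fulfilled by the angular nature of $\theta$'') and proves axiom (2) via the same uniqueness argument applied to $g.C_1,\dots,g.C_n$; you are more explicit about the bookkeeping (cyclic/reversal ambiguity in the $C_i$, orientation, betweenness for axiom (4)), which the paper leaves implicit.
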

\begin{proof}
    We begin by proving the uniqueness, up to an additive constant, of the map $\varphi_{(A,B)}$. Assuming that $\varphi_{(A,B)}(C_1) = 0$, the criteria (1) and (2) imply the expressions
    \[
    \psi_{(A,B)}(C_1) = k_1(1,0),\ \psi_{(A,B)}(C_2) = k_2(\cos\theta_1,\sin\theta_1),\ \psi_{(A,B)}(C_n) = k_n(-\cos\theta_n,\sin\theta_n),
    \]
    where $k_1$, $k_2$, and $k_n$ are positive scalars. This determines a unique linear map $\psi_{(A,B)}$ up to a positive multiple, implying the uniqueness of $\varphi_{(A,B)}$ under the assumption $\varphi_{(A,B)}(C_1) = 0$. Consequently, $\varphi_{(A,B)}$ is unique up to a constant without this assumption. Thus, the function $\theta$ is uniquely determined and is independent of $\eta$.

    Properties (1), (3) and (4) in Definition \ref{defn:s5:1} are inherently fulfilled by the angular nature of $\theta$. Regarding the property (2), suppose that a choice of singular elements $C_1,\dots, C_n$ in $span(A,B)$ satisfies the criteria in the proposition. Thus, the elements $g.C_1,\dots, g.C_n$ satisfy these criteria for $span(g.A,g.B)$. The uniqueness of $\varphi_{(g.A,g.B)}$ implies that
    \[
    \varphi_{(g.A,g.B)}(g.C) = \varphi_{(A,B)}(C),
    \]
    for any $C\in span(A,B)-\{O\}$. Hence,
    \[
    \theta((g.A)^\perp,(g.B)^\perp) = \angle(\varphi_{(g.A,g.B)}(g.A),\varphi_{(g.A,g.B)}(g.B)) =\angle(\varphi_{(A,B)}(A),\varphi_{(A,B)}(B)) = \theta(A^\perp,B^\perp).
    \]
\end{proof}
\begin{rmk}
    The proof of the existence of the map $\varphi_{(A,B)}$, guaranteed by the equation \eqref{equ:a1}, is omitted here for conciseness.
\end{rmk}

The objective now shifts to identifying an appropriate map $\Theta: M_n'\to (\mathbf{S}^1)^n$. We will deal with this in the next section, focusing on cases with small values of $n$.

\subsection{Examples: cases \texorpdfstring{$n=3$}{Lg}, \texorpdfstring{$4$}{Lg}, and \texorpdfstring{$5$}{Lg}}\label{Sec:A:2}
In this section, we demonstrate the derivation of the formula \eqref{equ:main:2} when $n$ takes the values $3$, $4$, and $5$.
\begin{exm}\label{ex:A:1}
    Suppose that $n=3$. In this case, $M_3$ is reduced to a point. Consider any pair $(A,B)\in \Sigma_3^{(2)}$ whose generalized eigenvalues are arranged as $\lambda_1<\lambda_2<\lambda_3$. Both choices of singular elements $C_i$, $i=1,2,3$, satisfy the criterion (2) in Proposition \ref{prop:A:1:4}:
    \[
    C_1 = A-\lambda_1 B,\ C_2 = A-\lambda_2 B,\ C_3 = A-\lambda_3 B,
    \]
    or
    \[
    C_1 = A-\lambda_2 B,\ C_2 = A-\lambda_3 B,\ C_3 = -(A-\lambda_1 B).
    \]
    Thus the function $\varphi_{(A,B)}$ satisfies \eqref{equ:A:2} for either choice of $C_i$, yielding that
    \[
    (\eta,\eta+\theta_1,\eta+\theta_1+\theta_2) = (\eta'+\theta_1,\eta'+\theta_1+\theta_2,\eta'+\pi),
    \]
    where $\eta,\eta'$ are specific elements in $\mathbf{S}^1$. This condition implies that $\theta_1 = \theta_2 = \theta_3 = \pi/3$, a choice that also satisfies the equation \ref{equ:a2}. Consequently, the criterion (1) in Proposition \ref{prop:A:1:4} implies that 
    \[
    \psi_{(A,B)}(A - \lambda_i B) = k_i\left(\cos\left(\frac{(i-1)\pi}{3}\right),\sin\left(\frac{(i-1)\pi}{3}\right)\right),\quad i=1,2,3,
    \]
    where $k_i$, $i=1,2,3$ are positive numbers. To determine these numbers, we note that the matrices $C_i = A - \lambda_i B$, $i=1,2,3$, are linearly dependent, expressed as:
    \[
        \frac{1}{\lambda_3-\lambda_1}(C_3-C_1) = \frac{1}{\lambda_2-\lambda_1}(C_2-C_1).
    \]
    Therefore,
    \[
        \frac{1}{\lambda_3-\lambda_1}(\psi_{(A,B)}(C_3)-\psi_{(A,B)}(C_1)) = \frac{1}{\lambda_2-\lambda_1}(\psi_{(A,B)}(C_2)-\psi_{(A,B)}(C_1)).
    \]
    Combining these equations, we derive a solution, which is unique up to a positive multiple:
    \[
        k_1 = \frac{1}{\lambda_3-\lambda_2},k_2 = \frac{1}{\lambda_3-\lambda_1},k_3 = \frac{1}{\lambda_2-\lambda_1}.
    \]
    Therefore, we determine $\psi_{(A,B)}(A)$ and $\psi_{(A,B)}(B)$ up to a positive multiple, particularly determining the angle between them. Straightforward computations show that
    \begin{equation}\label{equ:A:a1}
    \begin{split}
        &\cos\theta (A^\perp,B^\perp) = \cos\angle(\psi_{(A,B)}(A),\psi_{(A,B)}(B))\\
        &= \frac{\lambda_1^2\lambda_2+\lambda_2^2\lambda_3+\lambda_3^2\lambda_1 + \lambda_1\lambda_2^2+\lambda_2\lambda_3^2+\lambda_3\lambda_1^2-6\lambda_1\lambda_2\lambda_3}{2\sqrt{\splitfrac{(\lambda_1^2+\lambda_2^2+\lambda_3^2-\lambda_1\lambda_2-\lambda_2\lambda_3-\lambda_3\lambda_1)}{(\lambda_1^2\lambda_2^2+\lambda_2^2\lambda_3^2+\lambda_3^2\lambda_1^2-\lambda_1^2\lambda_2\lambda_3-\lambda_2^2\lambda_3\lambda_1-\lambda_3^2\lambda_1\lambda_2)}}}.
    \end{split}
    \end{equation}
\end{exm}
\begin{exm}\label{ex:A:2}
    Consider the case where $n=4$. Let $p_1$, $p_2$, $p_3$, and $p_4\in\mathbb{R}\mathbf{P}^1$ and denote their cross-ratio by $R = R_\times (p_1,p_2;p_3,p_4)$. The cross-ratios of these points in different orders are as follows:
    \[
    R_\times (p_2,p_3;p_4,p_1) = \dfrac{R}{R-1},\ R_\times (p_3,p_4;p_1,p_2) = R,\ R_\times (p_4,p_1;p_2,p_3) = \dfrac{R}{R-1}.
    \]
    Therefore, $M_4 = \mathbb{R}_{>1}/\{R\sim \frac{R}{R-1}\}$. For any $R\in M_4$ and $(A,B)\in \Sigma_4^R$ with generalized eigenvalues $\lambda_1<\lambda_2<\lambda_3<\lambda_4$, both choices of the singular elements $C_i$, $i=1,2,3,4$, satisfy the criterion (2) in Proposition \ref{prop:A:1:4} with $R_\times(C_1,C_2;C_3,C_4) = R$:
    \[
    C_1 = A-\lambda_1 B,\ C_2 = A-\lambda_2 B,\ C_3 = A-\lambda_3 B,\ C_4 = A-\lambda_4 B,
    \]
    or
    \[
    C_1 = A-\lambda_3 B,\ C_2 = A-\lambda_4 B,\ C_3 = -(A-\lambda_1 B),\ C_4 = -(A-\lambda_2 B).
    \]
    Thus, the function $\varphi_{(A,B)}$ satisfies \eqref{equ:A:2} for both choices of $C_i$. Similarly to Example \ref{ex:A:1}, this condition implies that
    \[
    \theta_1 = \theta_3,\ \theta_2 = \theta_4.
    \]
    This condition also satisfies equation \eqref{equ:a2}. Moreover, Proposition \ref{prop:A:1:2} implies that
    \[
    R = R_\times(\theta_1,\theta_2,\theta_3) = \frac{\sin\pi/2\cdot \sin\pi/2}{\sin(\pi/2-\theta_1)\sin(\pi/2+\theta_1)} = \frac{1}{\cos^2\theta_1},
    \]
    hence we have:
    \[
    \theta_1 = \theta_3 = \arccos\sqrt{1/R},\ \theta_2 = \theta_4 = \arcsin\sqrt{1/R}.
    \]
    Consequently, the criterion (1) in Proposition \ref{prop:A:1:4} implies that $\psi_{(A,B)}: span(A,B)\to \mathbb{R}^2$ is characterized by:
    \[
    \psi_{(A,B)}(A-\lambda_1 B) = k_1(1,0),\ \psi_{(A,B)}(A-\lambda_2 B) = k_2(1,\sqrt{R-1}),
    \]
    \[
    \psi_{(A,B)}(A-\lambda_3 B) = k_3(0,1),\ \psi_{(A,B)}(A-\lambda_4 B) = k_4(-\sqrt{R-1},1),
    \]
    for some positive numbers $k_i$, $i=1,2,3,4$. By a straightforward computation similar to Example \ref{ex:A:1}, we determine these numbers and derive the following formula:
    \begin{equation}\label{equ:A:a2}
    \begin{split}
    &\cos\theta(A^\perp,B^\perp) = \cos\angle(\psi_{(A,B)}(A),\psi_{(A,B)}(B))\\
    & = \frac{\lambda_4\lambda_2-\lambda_3\lambda_1}{\sqrt{{(\lambda_4-\lambda_3+\lambda_2-\lambda_1)}{(\lambda_4\lambda_3\lambda_2-\lambda_4\lambda_3\lambda_1+\lambda_4\lambda_2\lambda_1-\lambda_3\lambda_2\lambda_1)}}}.
\end{split}
    \end{equation}
\end{exm}
\begin{exm}\label{ex:A:3}
    We consider the case where $n=5$. For points $p_i\in\mathbb{R}\mathbf{P}^1$, $i=1,\dots,5$, we define the cross-ratios as usual:
    \[
    R_i = R_\times (p_i,p_{i+1};p_{i+2},p_{i+3}).
    \]
    As asserted by Lemma \ref{lem:A:1:1:1}, cross-ratios $R_3$, $R_4$ and $R_5$ are dependent on $R_1$ and $R_2$. This is clear from the following relationships:
    \[
    R_3 = \frac{R_2}{R_1(R_2-1)},\ R_4 = \frac{1}{R_1+R_2 - R_1R_2},\ R_5 = \frac{R_1}{R_2(R_1-1)}.
    \]
    For any $(A,B)\in \Sigma_5^{R_1,R_2}$ with generalized eigenvalues $\lambda_1<\dots<\lambda_5$, the following choice of singular elements $C_i$, $i=1,\dots, 5$, satisfies the assumption in criterion (2) with cross-ratios $(R_{1+j},\dots,R_{5+j})\in M_5'$:
    \[
    C_1 = A-\lambda_{j+1}B,\ \dots,\ C_{5-j} = A-\lambda_5 B,\ C_{6-j} = -(A-\lambda_1 B),\ \dots,\ C_5 = -(A-\lambda_j B),
    \]
    where $j=0,\dots, 4$, and the indices are taken modulo $5$. 
    
    To find a function $\Theta$ satisfying equations \eqref{equ:a1} and \eqref{equ:a2}, we note that $\theta_3$, $\theta_4$ and $\theta_5$ are dependent on $\theta_1$ and $\theta_2$ according to Proposition \ref{prop:A:1:2}. Specifically, if we set $c_i = \cot \theta_i$, $i=1,\dots, 5$, then the second equation in \eqref{equ:a1} implies that
    \[
        R_i = \frac{(c_{i+1}+c_i)(c_{i+1}+c_{i+2})}{c_ic_{i+1}+c_ic_{i+2}+c_{i+1}c_{i+2}-1},\quad i=1,\dots,5.
    \]
    Consequently, we regard $c_3$, $c_4$ and $c_5$ as rational functions of $c_1$ and $c_2$, with parameters $R_1$ and $R_2$. After straightforward computation, the condition $\sum\theta_i = \pi$ appears superfluous. We define
    \[
        F(c_1,c_2;R_1,R_2) = c_1+c_2+\sum_{j=3}^5c_j(c_1,c_2;R_1,R_2),
    \]
    which is a rational function of $(c_1,c_2)\in\mathbb{R}^2$ with parameters $R_1$ and $R_2$. Further computation reveals that $F(c_1,c_2)$ has a unique minimum point at:
    \begin{equation}\label{equ:A:3}
        \begin{split}
            c_1 =  \frac{(R_1R_2-R_1-R_2)(R_1R_2-R_1+R_2+1)}{\sqrt{\splitfrac{(R_1-1)(R_2-1)(R_1R_2-R_1-R_2)}{\cdot(3R_1^2R_2^2-6R_1R_2^2-6R_1^2R_2+3R_1^2+3R_2^2+5R_1R_2-3R_1-3R_2)}}},\\
            c_2  = \frac{(R_1-1)(R_2^2+2R_1 R_2 + R_2-R_1R_2^2-R_1)}{\sqrt{\splitfrac{(R_1-1)(R_2-1)(R_1R_2-R_1-R_2)}{\cdot(3R_1^2R_2^2-6R_1R_2^2-6R_1^2R_2+3R_1^2+3R_2^2+5R_1R_2-3R_1-3R_2)}}}.
        \end{split}
    \end{equation}
    \begin{prop}
        Let $\Theta: M_5'\to (\mathbf{S}^1)^5$, $\Theta(R_1,\dots,R_5) = (\theta_1,\dots,\theta_5):=(\mathrm{arccot}c_1,\dots,\mathrm{arccot}c_5)$, where $c_1 = c_1(R_1,R_2)$ and $c_2 = c_2(R_1,R_2)$ are given by \eqref{equ:A:3}, and $c_j = c_j(c_1,c_2;R_1,R_2)$ for $j=3,4,5$ are described above. Then, $\Theta$ satisfies the conditions \eqref{equ:a1} and \eqref{equ:a2}.
    \end{prop}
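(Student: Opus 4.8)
The plan is to deduce \eqref{equ:a1} and \eqref{equ:a2} from the projective dictionary of Propositions \ref{prop:a:1:1} and \ref{prop:A:1:2} and Lemma \ref{lem:A:1:1:1}, together with the fact that $\Theta$ is defined by an intrinsically symmetric extremal condition. By construction, $c_3$ is the root of the equation obtained from $R_\times(\theta_1,\theta_2,\theta_3)=R_1$ after clearing denominators (which is affine in $c_3$), $c_4$ the analogous root of $R_\times(\theta_2,\theta_3,\theta_4)=R_2$, and $c_5$ the root of $R_\times(\theta_3,\theta_4,\theta_5)=R_3$, with $R_3=R_3(R_1,R_2)$; so the identities $R_\times(\theta_i,\theta_{i+1},\theta_{i+2})=R_i$ for $i=1,2,3$ hold identically in $(c_1,c_2)$. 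For the two remaining identities, $i=4,5$, and for $\sum_{i=1}^5\theta_i=\pi$, I would use the geometric realization: given $(R_1,\dots,R_5)\in M_5'$, Proposition \ref{prop:a:1:1} and Lemma \ref{lem:A:1:1:1} produce five points $p_1,\dots,p_5\in\mathbb{R}\mathbf{P}^1$ in cyclic order with $R_\times(p_i,p_{i+1};p_{i+2},p_{i+3})=R_i$; realizing them as lines through the origin in $\mathbb{R}^2$ and putting $\theta_i^{\mathrm{geo}}=\angle(p_i,p_{i+1})$, one gets $\sum_i\theta_i^{\mathrm{geo}}=\pi$ and, by Proposition \ref{prop:A:1:2}, $R_\times(\theta_i^{\mathrm{geo}},\theta_{i+1}^{\mathrm{geo}},\theta_{i+2}^{\mathrm{geo}})=R_i$ for every $i$. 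Fixing a rotation that puts $p_2$ on the $x$-axis, the angles $\theta_1^{\mathrm{geo}},\theta_2^{\mathrm{geo}}$ determine $p_1,p_2,p_3$, and then the cross-ratio conditions determine $p_4,p_5$, so $(\theta_3^{\mathrm{geo}},\theta_4^{\mathrm{geo}},\theta_5^{\mathrm{geo}})$ is a rational function of $(\cot\theta_1^{\mathrm{geo}},\cot\theta_2^{\mathrm{geo}})$, and this function is exactly $c_j\mapsto\mathrm{arccot}\,c_j(c_1,c_2;R_1,R_2)$. Hence on an open set of parameters $(c_1,c_2)$ every tuple $(\mathrm{arccot}\,c_1,\dots,\mathrm{arccot}\,c_5)$ on this graph satisfies all of \eqref{equ:a1}; by rationality, the algebraic identities (the relations $i=4,5$, and the polynomial identity in $c_1,\dots,c_5$ expressing $\sum_i\theta_i\equiv 0\pmod\pi$ via the cotangent addition formula, the precise value $\pi$ being forced by the geometric picture) then hold wherever defined. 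So \eqref{equ:a1} holds for $\Theta(R_1,\dots,R_5)$ once the point \eqref{equ:A:3} is checked to lie in that open set; these identities may alternatively be verified by direct computation.

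For \eqref{equ:a2} I would argue that the construction of $\Theta$ is equivariant. Let $V(R_1,R_2)\subset\mathbb{R}^5$ be the common solution set of the five equations $R_i=\dfrac{(c_{i+1}+c_i)(c_{i+1}+c_{i+2})}{c_ic_{i+1}+c_ic_{i+2}+c_{i+1}c_{i+2}-1}$ (indices mod $5$, with $R_3,R_4,R_5$ at their forced values); by the previous paragraph it is the two-dimensional graph over $(c_1,c_2)$, and in that chart $F(c_1,c_2;R_1,R_2)$ equals the restriction to $V(R_1,R_2)$ of the totally symmetric function $F_0(c_1,\dots,c_5)=c_1+\dots+c_5$. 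The defining system is invariant under the simultaneous cyclic shift $(c_i,R_i)\mapsto(c_{i+1},R_{i+1})$, so the coordinate shift $\sigma\colon(c_1,\dots,c_5)\mapsto(c_2,c_3,c_4,c_5,c_1)$ carries $V(R_1,R_2)$ onto $V(R_2,R_3)$ (here $M_5'$ is stable under cyclic shift, which follows from Lemma \ref{lem:A:1:1:1} applied to a cyclically relabeled configuration); similarly, the reversal of the configuration $p_1,\dots,p_5$ induces a coordinate permutation together with a permutation of the $R_i$ under which $V(R_1,R_2)$ is carried onto the variety attached to the reversed parameter tuple. Since $F_0$ is symmetric it is invariant under both, so — granted the minimizer of $F$ is unique — the minimizer of $F_0$ over $V(R_1,R_2)$ is carried by $\sigma$ onto that over $V(R_2,R_3)$ and by the reversal onto that over the reversed variety. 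Reading off coordinates and applying $\mathrm{arccot}$, this says precisely $\Theta(R_{1+j},\dots,R_{5+j})=(\theta_{1+j},\dots,\theta_{5+j})$ for $j=0,\dots,4$ and $\Theta(R_5,R_4,R_3,R_2,R_1)=(\theta_2,\theta_1,\theta_5,\theta_4,\theta_3)$, which is \eqref{equ:a2}.

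I expect the main obstacle to be the nondegeneracy and domain bookkeeping rather than any single identity. One must verify: (i) that for every $(R_1,\dots,R_5)\in M_5'$, i.e. $R_1,R_2>1$ with the forced values $R_3,R_4,R_5$ also exceeding $1$, the two radicands in \eqref{equ:A:3} are positive, so $c_1,c_2$ are real; (ii) that $F$ has \eqref{equ:A:3} as its only critical point and that this point is a genuine minimum in the admissible region; and (iii) that the resulting $c_1,\dots,c_5$ come from an honest cyclically ordered configuration of five lines, so that $\sum_i\mathrm{arccot}\,c_i$ equals $\pi$ and not a larger multiple of $\pi$. Once the admissible region is pinned down and matched with $M_5'$, the cross-ratio identities of \eqref{equ:a1} and the equivariance in \eqref{equ:a2} follow formally from the arguments above, the only remaining work being the routine polynomial verifications already indicated.
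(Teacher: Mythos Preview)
Your proposal is correct and follows essentially the same route as the paper. Both arguments hinge on the observation that $\Theta$ is defined as the unique minimizer of the symmetric functional $F_0=\sum_i c_i$ on the variety cut out by the five cross-ratio equations, and that this variety transforms equivariantly under cyclic shifts (and reversal) of the $R_i$; uniqueness then forces the equivariance asserted in \eqref{equ:a2}. The paper's proof is terser---it states only the cyclic case and takes \eqref{equ:a1} as implicit in the construction (the remark that ``the condition $\sum\theta_i=\pi$ appears superfluous'')---whereas you spell out a geometric-realization argument for \eqref{equ:a1} and explicitly list the nondegeneracy checks (positivity of the radicands, uniqueness and admissibility of the critical point, the correct multiple of $\pi$); these are exactly the points the paper leaves to computation.
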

    \begin{proof}
        It suffices to show that $\Theta(R_{1+j},R_{2+j}) = (\theta_{1+j},\dots,\theta_{5+j})$ for $j=1,\dots, 4$, where the indices are taken modulo $5$. Suppose that $\Theta(R_{1+j},R_{2+j}) = (\theta_{1+j}',\dots,\theta_{5+j}')$. By definition, both $(\theta_1,\dots,\theta_5)$ and $(\theta_{1}',\dots,\theta_{5}')$ are points on the variety
        \[
        \{(\theta_1,\dots,\theta_5)|R_\times(\theta_i,\theta_{i+1},\theta_{i+2}) = R_i,i=1,\dots, 5\},
        \]
        that minimize $\sum_{i=1}^5\cot \theta_i$. We have verified that such a minimum point is unique. Therefore, $\theta_i' = \theta_i$ for $i=1,\dots, 5$.
    \end{proof}
    From the function $\Theta: M_5\to (\mathbf{S}^1)^5$ constructed above, we derive the following formula by a straightforward computation, analogously to Examples \ref{ex:A:1} and \ref{ex:A:2}:
    \begin{equation}\label{equ:A:a3}
        \begin{split}
            & \cos \theta(A^\perp,B^\perp) = \cos \angle(\psi_{(A,B)}(A),\psi_{(A,B)}(B)) \\
            & = \frac{\sum(3\lambda_1^2\lambda_2\lambda_3\lambda_4+3\lambda_1\lambda_2\lambda_3\lambda_4^2-\lambda_1^2\lambda_3^2\lambda_4 - \lambda_1^2\lambda_3\lambda_4^2-\lambda_1\lambda_2^2\lambda_3\lambda_4-\lambda_1\lambda_2\lambda_3^2\lambda_4)-10\lambda_1\lambda_2\lambda_3\lambda_4\lambda_5}{2\sqrt{\splitfrac{(\sum(\lambda_1^2\lambda_2\lambda_3+\lambda_1\lambda_2\lambda_3^2+\lambda_1^2\lambda_3\lambda_4-\lambda_1^2\lambda_3^2-\lambda_1\lambda_2\lambda_3\lambda_4-\lambda_1\lambda_2^2\lambda_3))}{(\sum(\lambda_1^2\lambda_2^2\lambda_3\lambda_4+\lambda_1^2\lambda_2\lambda_3\lambda_4^2+\lambda_1\lambda_2\lambda_3^2\lambda_4^2-\lambda_1\lambda_2^2\lambda_3^2\lambda_4-\lambda_1^2\lambda_2^2\lambda_4^2-\lambda_1^2\lambda_2\lambda_3\lambda_4\lambda_5))}}}.
        \end{split}
    \end{equation}
\end{exm}
We can further simplify Equation \eqref{equ:A:a3}. Noticeably, the cyclic sums appearing in equation \eqref{equ:A:a3} represent cyclic sums of products of linear terms in $\lambda_1,\dots, \lambda_5$, namely,
\[
    \begin{split}
        & \sum_{cyc}(\lambda_1^2\lambda_2\lambda_3+\lambda_1\lambda_2\lambda_3^2+\lambda_1^2\lambda_3\lambda_4-\lambda_1^2\lambda_3^2-\lambda_1\lambda_2\lambda_3\lambda_4-\lambda_1\lambda_2^2\lambda_3)\\
        = & -\sum_{cyc} (\lambda_1-\lambda_2)(\lambda_2-\lambda_3)(\lambda_3-\lambda_4)(\lambda_4-\lambda_5),
    \end{split}
\]
\[
    \begin{split}
        & \sum_{cyc}(3\lambda_1^2\lambda_2\lambda_3\lambda_4+3\lambda_1\lambda_2\lambda_3\lambda_4^2-\lambda_1^2\lambda_3^2\lambda_4 - \lambda_1^2\lambda_3\lambda_4^2-\lambda_1\lambda_2^2\lambda_3\lambda_4-\lambda_1\lambda_2\lambda_3^2\lambda_4)-10\lambda_1\lambda_2\lambda_3\lambda_4\lambda_5\\
        = & -\sum_{cyc} (\lambda_1+\lambda_5)(\lambda_1-\lambda_2)(\lambda_2-\lambda_3)(\lambda_3-\lambda_4)(\lambda_4-\lambda_5),
    \end{split}
\]
and
\[
    \begin{split}
        & 4 \sum_{cyc}(\lambda_1^2\lambda_2^2\lambda_3\lambda_4+\lambda_1^2\lambda_2\lambda_3\lambda_4^2+\lambda_1\lambda_2\lambda_3^2\lambda_4^2-\lambda_1\lambda_2^2\lambda_3^2\lambda_4-\lambda_1^2\lambda_2^2\lambda_4^2-\lambda_1^2\lambda_2\lambda_3\lambda_4\lambda_5)\\
        = & -\sum_{cyc} (\lambda_1+\lambda_5)^2(\lambda_1-\lambda_2)(\lambda_2-\lambda_3)(\lambda_3-\lambda_4)(\lambda_4-\lambda_5),
    \end{split}
\]
implying that
\[
    \cos\theta(A,B) = \frac{\sum_{i=1}^5\frac{\lambda_{i+1}+\lambda_i}{\lambda_{i+1}-\lambda_i}}{\sqrt{\left(\sum_{i=1}^5\frac{1}{\lambda_{i+1}-\lambda_i}\right)\left(\sum_{i=1}^5\frac{(\lambda_{i+1}+\lambda_i)^2}{\lambda_{i+1}-\lambda_i}\right)}},
\]
which is a special case of equation \eqref{equ:main:2} when $k=5$. After obtaining such an equation for general $k$, it is notable that equations \eqref{equ:A:a1} and \eqref{equ:A:a2} are special cases of the equation \eqref{equ:main:2} when $k=3$ and $k=4$, respectively. This observation motivates us to prove that the function given by \eqref{equ:main:2} is an invariant angle function for every $k\geq 3$.
\section{Infinite-sidedness for the integer Heisenberg group}
We consider the Heisenberg group:
\begin{defn}
    The \textbf{Heisenberg group} over a ring $R$ is defined as
    \[
    H_3(R) = \left\{\left.\begin{pmatrix}
        1 & a & b\\ 0 & 1 & c\\ 0 & 0 & 1
    \end{pmatrix}\right|a,b,c\in R\right\}.
    \]
\end{defn}
The Heisenberg group over $\mathbb{R}$ is a $3$-dimensional non-Abelian nilpotent Lie group. The geometric structure of the Heisenberg group, known as the \textbf{Nil geometry}, is one of Thurston's eight model geometries\cite{scott1983geometries}.

It is worth exploring whether the Dirichlet-Selberg domain for the integer Heisenberg group $H_3(\mathbb{Z})$ - as a discrete subgroup of $SL(3,\mathbb{R})$ - is infinitely-sided when centered at a given point $X\in \mathcal{P}(3)$. At least for certain choices of $X$, the answer is affirmative:
\begin{prop}\label{heis}
Suppose that $X\in\mathcal{P}(3)$, with $X^{-1} = (x^{ij})_{i,j=1}^3$, and $x^{23}= 0$. Then the Dirichlet-Selberg domain $DS(X,H_3(\mathbb{Z}))$ is infinitely-sided.
\end{prop}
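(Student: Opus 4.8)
The plan is to apply Lemma~\ref{lem:s4:1} with $\Lambda=\mathbb{Z}^3$ and the parametrization $g\colon\mathbb{R}^3\to SL(3,\mathbb{R})$, $g(a,b,c)=\begin{pmatrix}1&a&b\\0&1&c\\0&0&1\end{pmatrix}$, so that $H_3(\mathbb{Z})=g(\mathbb{Z}^3)$ and $g(\mathbf 0)=e$. Write $X^{-1}=(x^{ij})$. The crucial computation is that after the substitution $(a,b,c)\mapsto(a,c,u)$ with $u=ac-b$ — which is a bijection of $\mathbb{Z}^3$, reflecting $g(a,b,c)^{-1}=g(-a,u,-c)$ — and using the hypothesis $x^{23}=0$, a direct expansion of $(g(a,b,c).X)^{-1}=g(a,b,c)^{-1}X^{-1}\bigl(g(a,b,c)^{-1}\bigr)^{\mathrm{T}}$ shows that $s^g_{X,A}(a,b,c)$ becomes a quadratic polynomial $Q_A(a,c,u)$ whose Hessian is block-diagonal, $H=[\,x^{22}a_{11}\,]\oplus x^{33}\begin{pmatrix}a_{22}&-a_{12}\\-a_{12}&a_{11}\end{pmatrix}$ with respect to the splitting $(a)\oplus(c,u)$; moreover the center of $Q_A$ has $a$-coordinate $a_c=\tfrac{x^{12}}{x^{22}}+\tfrac{a_{12}}{a_{11}}$, its $(c,u)$-coordinates may be prescribed freely through $a_{13},a_{23}$, and $a_{33}$ enters only an additive constant. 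Since $a_{11}>0$, $x^{22},x^{33}>0$ and $a_{11}a_{22}-a_{12}^2>0$, the Hessian $H$ is positive definite, so $E:=\{Q_A\le Q_A(\mathbf 0)\}$ is an ellipsoid through $\mathbf 0$, and — when $Q_A(\mathbf 0)$ is the minimum of $Q_A$ over $\mathbb{Z}^3$ — the minimizers of $s^g_{X,A}$ on $\mathbb{Z}^3$ are exactly the images of $E\cap\mathbb{Z}^3$ under the inverse substitution.

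Set $\mu=x^{12}/x^{22}$ and assume $\mu\ge 0$ (the case $\mu<0$ is symmetric). There are infinitely many primitive pairs $(c_0,u_0)\in\mathbb{Z}^2$ with $c_0,u_0>0$ and $|u_0/c_0-\mu|$ as small as we like (say $<\tfrac14$), and $\|(c_0,u_0)\|\to\infty$ along such a family. For each such pair I would take $a_{11}=1/x^{33}$, choose a real number $\beta$ slightly smaller than $u_0/c_0$, and set $a_{12}=-\beta/x^{33}$, $a_{22}=\bigl(1+\beta\tfrac{u_0^2-c_0^2}{c_0u_0}\bigr)/x^{33}$, so that the $(c,u)$-block $H_2$ of $H$ has $(c_0,u_0)$ as its small-eigenvalue eigenvector, with eigenvalues $\lambda_-=1-\beta c_0/u_0>0$ and $\lambda_+=1+\beta u_0/c_0\ge1$; then choose $a_{13},a_{23}$ so that the $(c,u)$-center of $Q_A$ is $\tfrac12(c_0,u_0)$, which forces $a_c=\mu-\beta$ (so $|a_c|<\tfrac14+o(1)$ as $\beta\to u_0/c_0$); finally take $a_{33}$ large enough that $A$ is positive definite and rescale $A$ to $\det A=1$ — neither the choice of $a_{33}$ nor the rescaling changes the minimizers of $Q_A$ on $\mathbb{Z}^3$.

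By block-diagonality and the symmetric placement of the $(c,u)$-center, $E$ passes through both $\mathbf 0$ and $\mathbf v_0:=(0,c_0,u_0)$ on its boundary; its $a$-extent is $[a_c-\rho_a,\,a_c+\rho_a]$ with $\rho_a=\sqrt{a_c^2+\lambda_-\|(c_0,u_0)\|^2/(4\kappa)}$, $\kappa:=x^{22}/x^{33}$, while its slice $E\cap\{a=0\}$ is exactly the two-dimensional ``type~(1')'' ellipse through $\mathbf 0$ and $(c_0,u_0)$, of minor semi-axis $\tfrac12\|(c_0,u_0)\|\sqrt{\lambda_-/\lambda_+}$. Choosing $\beta$ close enough to $u_0/c_0$ sends $\lambda_-\to0$, so one can simultaneously arrange $|a_c|+\rho_a<1$ — whence $E\subset\{|a|<1\}$ and every lattice point of $E$ has $a=0$ — and minor semi-axis $<1/\|(c_0,u_0)\|$; then, exactly as in the proof of Theorem~\ref{main:s4} for type~(1'), primitivity of $(c_0,u_0)$ yields $E\cap\{a=0\}\cap\mathbb{Z}^2=\{\mathbf 0,(c_0,u_0)\}$. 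Hence $s^g_{X,A}|_{\mathbb{Z}^3}$ attains its minimum exactly at $(0,0,0)$ and $(0,-u_0,c_0)$, so by Lemma~\ref{lem:s4:1} the Dirichlet-Selberg domain $DS(X,H_3(\mathbb{Z}))$ has the facet $F_{g(0,-u_0,c_0)}$; letting $(c_0,u_0)$ range over the infinitely many admissible pairs produces infinitely many distinct facets, so $DS(X,H_3(\mathbb{Z}))$ is infinitely-sided.

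The main obstacle — and the reason for the hypothesis $x^{23}=0$ — is that the Hessian of the three-variable quadratic $Q_A$ depends on only the three entries $a_{11},a_{12},a_{22}$ of $A$, so for general $X$ one cannot make $E$ a thin needle along an arbitrary primitive direction of $\mathbb{Z}^3$ (as the type~(1') argument does in $\mathbb{Z}^2$, where the Hessian is as flexible as the quadratic itself). The hypothesis block-diagonalizes $H$, which decouples the problem: the $(c,u)$-block remains fully flexible and may be degenerated in any direction, but forcing the $a$-coordinate of the ellipsoid's center into $(-1,1)$ pins $a_{12}/a_{11}$ and hence restricts the usable slopes $(c_0,u_0)$ to a cone about $\mu$ — which, crucially, still contains infinitely many primitive vectors. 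The technical heart is the constant-chasing of the third paragraph: checking that $\lambda_-$ can be driven below the threshold needed for both of the last two inequalities while $A$ stays positive definite.
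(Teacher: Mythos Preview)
Your strategy is the same as the paper's: after the substitution $u=ac-b$ (the paper uses $(k,l,m)$ with $l=-u$, $k=c$, $m=a$), the hypothesis $x^{23}=0$ makes the Selberg function a quadratic that splits as $q_1(a)+q_2(c,u)$; the $(c,u)$-part is exactly the ``type $(1')$'' quadratic, and the coupling of $a_c$ to $a_{12}/a_{11}$ forces the thin-ellipse direction to have slope near a fixed value determined by $X$ --- still leaving infinitely many admissible primitive pairs.

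There is, however, a sign error in your eigenvector computation that breaks the explicit construction. With $a_{11}=1/x^{33}$ and $a_{12}=-\beta/x^{33}$, the $(c,u)$-block is $H_2=\begin{pmatrix}x^{33}a_{22}&\beta\\\beta&1\end{pmatrix}$, and the second row of $(H_2-\lambda_-I)v=0$ reads $\beta v_1+(\beta c_0/u_0)v_2=0$, so $v\parallel(c_0,-u_0)$, \emph{not} $(c_0,u_0)$. Since the displacement from your chosen center $(c_0/2,u_0/2)$ to the origin lies along $(c_0,u_0)$, which has a nonzero $\lambda_+$-component, the level ellipse $\{q_2\le q_2(0,0)\}$ has semi-major axis $\to\infty$ along $(c_0,-u_0)$ as $\lambda_-\to 0$, while its semi-minor axis stays bounded away from zero; it therefore swallows many lattice points, not two. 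Your formula for $\rho_a$ inherits the same error.

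The fix is to take $a_{12}=+\beta/x^{33}$ (off-diagonal $-\beta$ in $H_2$), which does make $(c_0,u_0)$ the $\lambda_-$-eigenvector; but then $a_c=\mu+\beta$, so you must choose primitive $(c_0,u_0)$ with $u_0/c_0$ near $-\mu$ rather than $+\mu$. With this correction everything goes through. Alternatively --- and more simply --- exploit the exact splitting $Q_A=q_1(a)+q_2(c,u)$: the minimizers of $Q_A|_{\mathbb Z^3}$ are the product of the minimizers of $q_1|_{\mathbb Z}$ and $q_2|_{\mathbb Z^2}$, so it suffices that $|a_c|<1/2$ and that $q_2|_{\mathbb Z^2}$ be minimized exactly at $(0,0)$ and $(c_0,u_0)$, which is precisely the type-$(1')$ conclusion. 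This is the paper's argument, and it avoids the three-dimensional ellipsoid bookkeeping altogether.
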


\begin{proof}
We view $H_3(\mathbb{Z})$ as a three-parameter family:
    \[
    g(k,l,m) = \begin{pmatrix}1 & m & k m+l\\ 0 & 1 & k\\ 0 & 0 & 1\end{pmatrix} = \begin{pmatrix}1 & -m & -l\\ 0 & 1 & -k\\ 0 & 0 & 1\end{pmatrix}^{-1},\ \forall (k,l,m)\in\mathbb{Z}^3,
    \]
    and express the function $s_{X,A}^g$ as:
    \[
    s_{X,A}^g(k,l,m) = x^{33}(a_{22}(k-k_c)^2+2a_{12}(k-k_c)(l-l_c)+a_{11}(l-l_c)^2) + x^{22}a_{11}(m-m_c)^2+const,
    \]
    where $A = (a_{ij})$ as usual, and
    \[
    k_c = \frac{a_{11}a_{23} - a_{12}a_{13}}{a_{11}a_{22}-a_{12}^2},\ l_c = \frac{x^{13}}{x^{33}}+\frac{a_{22}a_{13} - a_{12}a_{23}}{a_{11}a_{22}-a_{12}^2},\ m_c = \frac{a_{12}x^{22} + a_{11}x^{12}}{a_{11}x^{22}}.
    \]
    We separate the variables as $s_{X,A}^g(k,l,m) = s_1(k,l)+s_2(m)+const$, where
    \[
    s_1(k,l) = x^{33}(a_{22}(k-k_c)^2+2a_{12}(k-k_c)(l-l_c)+a_{11}(l-l_c)^2),\ s_2(m) = x^{22}a_{11}(m-m_c)^2.
    \]
    The function $s_1$ coincides with $s^g_{X,A}$ for two generated groups of type (1') that appeared in Section \ref{sec:s4}. As we have proven, for any coprime pair $(k_0,l_0)\in\mathbb{Z}^2$, there is a family of matrices $A = A(\epsilon) \in\mathcal{P}(3)$, $0<\epsilon<\epsilon_0$, such that $(0,0)$ and $(k_0,l_0)$ are the only minimal points of $s_1|_{\mathbb{Z}^2}$. Here, $\epsilon_0$ is a positive number dependent on $k_0$ and $l_0$, and the matrix $A(\epsilon)$ is dependent on $k_0$, $l_0$ as well as $x^{ij}$.

    Among these coprime pairs, infinitely many pairs $(k_0,l_0)\in \mathbb{Z}^2$ exist for which the matrix $A(\epsilon)$ constructed above ensures that $s_2|_\mathbb{Z}$ has a unique minimum point at $m = 0$ when $\epsilon$ approaches $0$. Indeed, $s_2(m)$ is a quadratic polynomial with a positive leading coefficient. Consequently, $m=0$ is the sole minimum point of $s_2|_\mathbb{Z}$ if and only if $|m_c|<1/2$; the latter condition yields that
    \[
    \frac{-x^{22} - 2x^{12}}{2 x^{22}}(\epsilon^2 l_0^2/k_0+k_0)<-(1-\epsilon^2) l_0 <\frac{x^{22} - 2x^{12}}{2 x^{22}}(\epsilon^2 l_0^2/k_0+k_0).
    \]
    As $\epsilon$ tends to $0$, the above inequality simplifies to:
    \[
    \left(\frac{x^{12}}{x^{22}}-\frac{1}{2}\right)k_0< l_0 <\left(\frac{x^{12}}{ x^{22}}+\frac{1}{2}\right)k_0,
    \]
    which holds for infinitely many coprime pairs of integers $(k_0,l_0)$. For each of these pairs, there exists a sufficiently small $\epsilon$, dependent on $k_0$ and $l_0$, such that a level curve of $s^g_{X,A(\epsilon)}$ solely encloses integer points $(0,0)$ and $(k_0,l_0)$. Thus, these are the only minimum points of $s_1|_{\mathbb{Z}^2}$, while $m=0$ serves as the only minimum point of $s_2|_\mathbb{Z}$. Consequently, $(k,l,m) = (0,0,0)$ and $(k,l,m) = (k_0,l_0,0)$ are the only minimum points of $s^g_{X,A}(k,l,m)$. Lemma \ref{lem:s4:1} with $\Lambda = \mathbb{Z}^3$ implies that the Dirichlet-Selberg domain $DS(X,H_3(\mathbb{Z}))$ is infinitely sided for any $X\in \{X = (x^{ij})^{-1}\in\mathcal{P}(3)|x^{23} = 0\}$.
\end{proof}
% Replace the examples with smarter ones!

\bibliography{y-reference}{}
\bibliographystyle{alpha}
\end{document}